\let\oldtocsection=\tocsection
\let\oldtocsubsection=\tocsubsection
\setlist[itemize]{leftmargin=2em}
\setlist[enumerate]{leftmargin=2em}
\renewcommand{\tocsection}[2]{\hspace{0em}\oldtocsection{#1}{#2}}
\renewcommand{\tocsubsection}[2]{\hspace{1em}\oldtocsubsection{#1}{#2}}
\newcommand{\st}{{\bf\textcolor{red}{*}}}
\newcommand{\C}{\mathbb{C}}
\newcommand{\R}{\mathbb{R}}
\newcommand{\Z}{\mathbb{Z}}
\newcommand{\M}{\mathcal{M}}
\newcommand{\B}{\mathscr{B}}
\newcommand{\CP}{\mathbb{C}P}
\newcommand{\id}{\mathrm{Id}}
\newcommand{\pr}{\mathrm{pr}}
\newcommand{\Aut}{\mathrm{Aut}}
\newcommand{\OP}{\operatorname}
\newcommand{\del}{\partial}
\renewcommand{\st}{\star}
\renewcommand{\P}{\mathcal{P}}
\newcommand{\CO}{\mathcal{CO}}
\renewcommand{\t}{\tilde}
\newcommand{\F}{\mathscr{F}}
\def\co{\colon\thinspace}
\newcommand{\bb}[1]{\mathbf{#1}}
\renewcommand{\SS}{\mathscr{S}}
\newcommand{\kk}{\mathbf{k}}
\newcommand{\bL}{\mathbf{L}}
\newcommand{\uu}{\mathbf{u}}
\newcommand{\oo}{\circ}
\newcommand{\bT}{\mathbf{T}}
\numberwithin{equation}{section}
\newtheorem{thm}{Theorem}[section]
\newtheorem*{thm*}{Theorem}
\newtheorem{prp}[thm]{Proposition}
\newtheorem{lem}[thm]{Lemma}
\newtheorem{cor}[thm]{Corollary}
\newtheorem{que}[thm]{Question}
\theoremstyle{definition}
\newtheorem{dfn}[thm]{Definition}
\theoremstyle{remark}
\newtheorem{rmk}{Remark}[section]
\newtheorem{ex}{Example}[section]
\theoremstyle:=definition,remark,plain,TheoremNum\do{%
\expandafter\g@addto@macro\csname th@\theoremstyle\endcsname{%
\addtolength\thm@preskip\parskip 
}%
} 
\title[Refined disk potentials]
{Refined disk potentials
	for immersed \\ Lagrangian surfaces}
\author{Georgios Dimitroglou Rizell}
\address[GDR]{Department of Mathematics\\
Uppsala University\\
Box 480\\
SE-751 06 Uppsala\\
Sweden}
\email{georgios.dimitroglou@math.uu.se}
\author{Tobias Ekholm}
\address[TE]{Department of Mathematics\\
Uppsala University\\
Box 480\\
SE-751 06 Uppsala\\
Sweden \and 
 Institut Mittag-Leffler, Aurav\"agen 17, 182 60 Djursholm, Sweden 
}
\email{tobias.ekholm@math.uu.se, ekholm@mittag-leffler.se}
\author{Dmitry Tonkonog}
\address[DT]{University of California, Berkeley\\
	United States}
\email{dtonkonog@berkeley.edu}
\thanks{
GDR is supported by the grant KAW 2016.0198 from the Knut and Alice Wallenberg Foundation.
TE is supported by the Knut and Alice Wallenberg Foundation as a Wallenberg Scholar and by the Swedish Research Council.
	DT is partially supported by the Simons
	Foundation grant \#385573, Simons Collaboration on Homological Mirror Symmetry.}
\begin{document}

\begin{abstract}
We define a refined Gromov-Witten disk potential of self-transverse monotone immersed Lagrangian surfaces in a symplectic 4-manifold as an element in a capped version of the Chekanov--Eliashberg dg-algebra of the singularity links of the double points 
(a collection of Legendrian Hopf links). We give a surgery formula that expresses the potential after smoothing a double point. 

We study refined potentials of monotone immersed Lagrangian spheres in the complex projective plane and find monotone spheres that cannot be displaced from complex lines and conics by symplectomorphisms. We also derive general restrictions on sphere potentials using Legendrian lifts to the contact 5-sphere. 
\end{abstract}

\maketitle

\setcounter{tocdepth}{1} 
\tableofcontents

\section{Introduction}
\label{sec:intro}

Recall that a symplectic manifold $(X,\omega)$ is called monotone if the cohomology classes $c_1(X)$ and $[\omega]$ are positively proportional in $H^2(X;\C)$.
A smooth Lagrangian submanifold $L\subset X$ is called monotone if its Maslov class $\mu$ and $[\omega]$ are positively proportional in $H^2(X,L;\C)$. 

Let $L\subset X$ be an embedded monotone (oriented spin)  $n$-dimensional Lagrangian submanifold and let $x_{1},\dots, x_{k}$ be an integral basis of $H_{1}(L;\C)$, $\dim(H_{1}(L;\C))=k$. The \emph{Landau-Ginzburg (LG) potential}, 
$$
W_{L}\in\C[x_1^{\pm 1},\ldots,x_k^{\pm 1}]\cong \C[H_1(L;\C)],
$$
is defined as follows. Recall that the formal dimension of the space of holomorphic disks $u\colon (D,\del D)\to (X,L)$ is $(n-3)+\mu(u)$, where $\mu(u)$ is the Maslov index of $u$. In particular, the space of holomorphic disks $u$ with $\mu(u)=2$ that pass through a fixed reference point $\zeta\in L$ is an oriented compact 0-manifold for generic data. The coefficient in $W_{L}$ of the monomial $x_{1}^{a_{1}}\dots x_{k}^{a_{k}}$ equals the count of Maslov index 2 disks $u$ such that the homology class $[\del u]\coloneqq[u(\partial D)]$ satisfies $[\del u]=a_{1}x_{1}+\dots+a_{k}x_{k}\in H_1(L;\C)$.  

In this paper we generalize the notion of LG potential to immersed Lagrangian surfaces.  Our generalization gives counts of disks that are more refined than the counterparts in immersed Floer cohomology of the counts in homology classes discussed above. We count in what can be described as a counterpart of the homology of the loop space of the immersion rather than in the homology of the immersion. Alternatively, the way we count the disks on the immersion, they can be viewed as deforming the endomorphism algebra of certain generators of the wrapped Fukaya category of a Weinstein neighborhood of the immersion rather than the corresponding endomorphism algebra of the Lagrangian self. See Section \ref{sec:generalizations} for a brief discussion of this perspective and further generalizations.

If $f\colon Q\to X$ is a self-transverse Lagrangian immersion and $L=f(Q)$ is its image, then there is a Maslov index $\mu(u)$ for disks $u\colon (D,\del D)\to(X,L)$ which are allowed to have boundary corners at the double points of $L$, see Section~\ref{ssec:maslovindex}. The formal dimension of the moduli space of holomorphic disks with corners in the homotopy class $u$  equals $(n-3)+\mu(u)$, just like in the smooth case. In particular, the dimension of the space of Maslov index~2 disks with boundary that passes through a fixed (smooth) reference point $\zeta\in L$ is again zero. 

In the immersed case the count of Maslov index 2 disks is however not invariant under deformations.
The reason is that moduli spaces of holomorphic disks with corners have boundary strata that do not appear in the embedded case. In particular, for generic 1-dimensional moduli spaces of disks, new boundary phenomena appear at instances when the boundary of the disk crosses through the double point, see Figure~\ref{fig:bifurcation}. In a sense, the refined disk potential we define below can be characterized as the curve count that retains maximal information about the disks and still remains invariant under such crossings.  

\begin{figure}[h]
	\includegraphics[]{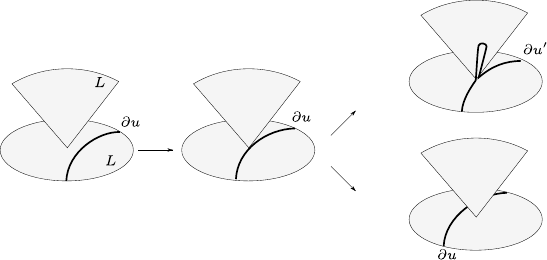}
	\caption{For immersed Lagrangians, a disk with boundary passing through a double point appears  both as a boundary point in a moduli space corresponding to the collision of two boundary corners, and as an interior point of another moduli space of disks with two fewer boundary corners.}
	\label{fig:bifurcation}
\end{figure}

\subsection{Refined disk potentials}\label{sec:genpot}
From now on assume that $n=2$ and let $f\co Q\to X$ be a monotone immersion, see Section~\ref{ssec:monotoneandexact}. To define the refined potential, we puncture $X$ at the double points $\zeta_{1},\dots,\zeta_{m}$ of $L$ (and study holomorphic curves with punctures rather than corners). This gives a monotone symplectic manifold $X^{\circ}$ with negative ends modelled on the negative half of the symplectization of $m$ copies $S_{1}^{3},\dots, S_{m}^{3}$ of the standard contact 3-sphere. Furthermore, $L$ intersects the negative ends along cones $(-\infty,0]\times\Lambda_{j}$ on Legendrian Hopf links $\Lambda_{j}\subset S^{3}_{j}$.

Let $\Lambda=\Lambda_{1}\sqcup\dots\sqcup\Lambda_{m}$ and write $L^{\circ}\subset X^{\circ}$ for the embedded Lagrangian with negative end $\Lambda$ obtained by removing the double points from $L$. Pick basepoints $\st_{j}$, $j=1,\dots,r$, on the connected components $L_{j}^{\circ}$, $j=1,\dots,r$ of $L^{\circ}$, and reference paths from the endpoints of all Reeb chords on $\Lambda$ to these basepoints. The Chekanov--Eliashberg dg-algebra with coefficients in the group algebras $\C[\pi_{1}(L^{\circ}_{j},\star_{j})]$ is the algebra of linear combinations of monomials of homotopy classes and Reeb chords of the form $\gamma_{0}c_{0}\gamma_{1}\dots \gamma_{m-1}c_{m}\gamma_{m}$, where $\gamma_{j}$ is a homotopy class of based loops on the component where the Reeb chord $c_{j}$ begins and $c_{j+1}$ ends. The product is induced by concatenation of loops. The differential counts holomorphic disks in the symplectization $\R\times S^{3}_{j}$ with boundary in $\R\times\Lambda_{j}$. The grading of any homotopy class is zero and the grading of Reeb chords is given by the Conley-Zehnder index relative Maslov potentials on $L_{j}^{\circ}$ that all equal zero. The differential decreases the grading by 1. We denote this Chekanov--Eliashberg dg-algebra with coefficients in the group algebra $CE(\Lambda;\C[\pi_{1}(L^{\circ})])$. It is then a non-commutative algebra and a module over the algebra $\C[e_{1},\dots,e_{r}]$, where $e_{i}e_{i}=e_{i}$ and $e_{i}e_{j}=0$ if $i\ne j$. Here the idempotent $e_{j}$ can be identified with the one-letter word of the constant loop in $L_{j}^{\circ}$. See Section~\ref{sec:dga} for a more detailed definition, also compare \cite{EL}. The refined potential of $L$ is an element in $CE(\Lambda;\C[\pi_{1}(L^{\circ})])$.

Consider almost complex structures $J$ on $X^{\circ}$ that are cylindrical at the negative ends of $X^\circ$ with respect to a generic contact form on the boundary-at-infinity of $X^\circ$ (the union of the contact spheres $S_{j}^{3}$).  Fix an orientation and a spin structure on $L^{\circ}$. Choose one of the basepoints $\zeta=\st_j$ and let $\mathcal{M}(\zeta)$ be the moduli space of Maslov index~2 holomorphic disks with boundary on $L^{\circ}$ that pass through $\zeta$, and with  an arbitrary number of boundary punctures asymptotic to Reeb chords of $\Lambda$ in the negative end of $X^\circ$. If $u\in\mathcal{M}(\zeta)$, then its boundary $\partial u$ defines a monomial $[\del u]$ in $CE(\Lambda;\C[\pi_{1}(L^{\circ})])$ as follows. Starting at the basepoint $\zeta$ and following the boundary $\partial u$, we read off boundary arcs between adjacent punctures (completed to loops using reference paths) and Reeb chords at the punctures. This gives the monomial $[\del u]$; see Section~\ref{subsec:ce_pot} and Figure~\ref{fig:pot_ce_ref}.

\begin{figure}[h]
	\includegraphics[]{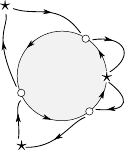}
	\caption{Holomorphic disk with punctures (white dots) passing through a basepoint with boundary arcs completed to loops (all basepoints are denoted $\st$).}
	\label{fig:pot_ce_ref}
\end{figure}

\begin{thm}
	\label{t:general}
	For generic $J$, $\mathcal{M}(\zeta)$ is a compact oriented $0$-manifold. Define
	\[ 
	W_{L}(\zeta)=\sum_{u\in\mathcal{M}(\zeta)}(-1)^{\epsilon (u)}[\del u] \ \in\ CE(\Lambda;\C[\pi_{1}(L^{\circ})]),
	\]
	(recall that $\zeta=\star_{j}$ is one of the base points in the definition of the Chekanov--Eliashberg algebra) where $(-1)^{\epsilon(u)}$ is the orientation sign of the solution $u$ induced by the spin structure on $L^{\circ}$. Then $W_{L}(\zeta)$ is a degree~0 cycle in $CE(\Lambda;\C[\pi_{1}(L^{\circ})])$. Its homology class is independent of the choice of almost complex structure $J$, and of the Lagrangian immersion $f$ up to ambient Hamiltonian isotopy. 
\end{thm}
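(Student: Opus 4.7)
The plan is to prove in turn: (1) compactness, transversality, and orientation of $\mathcal{M}(\zeta)$; (2) the degree-$0$ cycle property of $W_L(\zeta)$; and (3) invariance of its homology class under the choice of $J$ and under Hamiltonian isotopy of $f$.

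For (1), I would apply SFT compactness (Bourgeois--Eliashberg--Hofer--Wysocki--Zehnder) for the cylindrical-end target $X^\circ$, bounding energy via monotonicity of $L$ together with the Maslov-$2$ condition. Beyond SFT breaking at the negative ends---which is intrinsic to how the asymptotic Reeb-chord data of each rigid element is recorded in $[\partial u]$---the codimension-$1$ degenerations to rule out are sphere bubbling (excluded on monotone $X^\circ$ because passing through $\zeta$ drops the formal dimension below $0$ for generic $J$) and disk bubbling onto $L^\circ$ (excluded since the minimum Maslov index on $L^\circ$ is $2$, so a Maslov-$2$ disk cannot split off a non-trivial disk component). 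Transversality for rigid simple curves is standard for generic cylindrical $J$ and generic contact form on the union of the $S_j^3$, and the $0$-manifold orientation follows from coherent orientations of $\bar{\partial}$-determinant lines with oriented spin Lagrangian boundary and Reeb-chord asymptotics.

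For (2), the degree-$0$ statement reduces to an SFT index computation: the $CE$-degree of $[\partial u]$ equals $\mu(u) - 2$, because loops have degree $0$ and Reeb-chord punctures contribute $CE$-degrees coupled to Maslov via the Fredholm formula, so $\mu(u) = 2$ forces $\deg[\partial u] = 0$. For the cycle property $\partial_{CE} W_L(\zeta) = 0$, each term of the Leibniz expansion $\partial_{CE}[\partial u]$---obtained by replacing a chord letter $c$ of $[\partial u]$ by a word in $\partial_{CE}(c)$---corresponds to a two-level broken configuration $(u, v)$, with $v$ a rigid symplectization disk positively asymptotic to $c$. Such $(u, v)$ pairs form the codimension-$1$ SFT boundary of a compact oriented $1$-dimensional moduli, namely the moduli of Maslov-$2$ disks through $\zeta$ asymptotic to a word of $CE$-degree $-1$; after excluding interior bubbling as in (1), the signed boundary count vanishes. (When the singularity-link $CE$-algebra is non-negatively graded no such words exist and the cycle property holds trivially from the grading alone; in the generality of the capped $CE$ one argues via the $1$-dimensional moduli just described.)

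For (3), given two generic cylindrical $J_0, J_1$ pick a generic path $J_t$; the parametric moduli $\bigsqcup_{t \in [0,1]} \mathcal{M}_{J_t}(\zeta) \times \{t\}$ is a compact oriented $1$-manifold whose boundary at $t = 0, 1$ encodes $W_L^{J_1}(\zeta) - W_L^{J_0}(\zeta)$, while its interior codimension-$1$ SFT breakings assemble into $\partial_{CE} K$ for a chain $K$ built from rigid parametric disks; the conclusion is $[W_L^{J_0}(\zeta)] = [W_L^{J_1}(\zeta)]$ in homology. For a Hamiltonian isotopy $f_t$ of self-transverse immersions the same parametric argument applies at generic $t$; isolated bifurcations at which double points collide (or triple points appear) require a local analysis identifying the $CE$-algebras on either side, which is where the surgery picture alluded to in the abstract enters. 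The main obstacle I anticipate is the parametric SFT analysis underlying (2) and (3)---matching broken configurations with $\partial_{CE}$-terms exactly, with correct signs and no missed or doubly-counted strata---and in (3), the passage through double-point collisions in a Hamiltonian isotopy, which changes both $\pi_1(L^\circ)$ and the singularity link $\Lambda$ and requires careful bookkeeping across each bifurcation.
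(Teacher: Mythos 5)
Your overall strategy mirrors the paper's Proposition~\ref{th:potdga}: establish compactness and transversality, deduce the degree from the index formula, then prove invariance by analyzing the ends of a parametric one-dimensional moduli space. Two points deserve correction, one small and one more substantive.

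On the cycle property, your primary argument sets up a one-dimensional moduli space of Maslov-$2$ disks asymptotic to words of $CE$-degree $-1$, but for the Hopf link singularity algebra no such words exist: the chords $p,q$ have degree $0$ and $a_1,a_2$ have degree $1$, and the group-ring coefficients sit in degree $0$, so the algebra is concentrated in non-negative degrees. Since the differential has degree $-1$, $\partial W_L(\zeta)$ lands in degree $-1$ and vanishes automatically. You note this in your parenthetical, but it should be the argument, not the footnote --- the paper uses exactly this one-line grading observation and the moduli space you describe is vacuous in the case at hand.

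On invariance under ambient Hamiltonian isotopy, the bifurcations you anticipate --- double points colliding, triple points appearing, the singularity link $\Lambda$ and $\pi_1(L^\circ)$ changing across the isotopy --- do not occur. An ambient Hamiltonian isotopy is a family of symplectomorphisms $\phi_t$, so $\phi_t(L)$ remains a self-transverse immersion with the same intersection pattern throughout: the double points move smoothly, but never collide, and the topology of $L^\circ$ is constant. The surgery formula (Proposition~\ref{prp:surg}) enters the paper only to compare the refined potential before and after a Lagrangian surgery, which is a deliberate modification of $L$, not something that happens spontaneously during a Hamiltonian isotopy. The parametric moduli argument in the paper therefore needs no bifurcation analysis at all; the boundary of the one-parameter family consists only of broken configurations whose lower level has a positive puncture at a degree-$1$ chord $a_i$, which is how $W_{L^1}(\zeta) - W_{L^0}(\zeta) = \partial K_I$ arises. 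The proof is simpler than you expected, precisely because the scenario you flagged as the main obstacle cannot arise.
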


Theorem \ref{t:general} will be restated and proved in Proposition~\ref{th:potdga}.
The algebra $CE(\Lambda;\C[\pi_{1}(L^{\circ})])$ can be computed explicitly in terms of generators and relations. By a generalization of the surgery result in \cite{EL}, see Section \ref{sec:generalizations} for a discussion, it is isomorphic to 
$$
\mathrm{End}\left(\oplus_{j=1}^r\, T^*_{\st_j}L\right),
$$
the wrapped Floer cohomology endomorphism algebra of the union of the cotangent fibers to the connected components of $L^\circ$ in the Weinstein neighborhood of the immersed Lagrangian $L$ (constructed for example as a self-plumbing of $T^{\ast}Q$).
This algebra can also be expressed in other ways. As shown by Etg\"{u}--Lekili in \cite{EL17}, decomposing $L^{\circ}$ into handles and applying the Legendrian surgery formula to the result provides a quasi-isomorphism with the \emph{derived} higher genus multiplicative pre-projective algebras associated to a certain quiver. 

Multiplicative pre-projective algebras (PPA) of a quiver were introduced by Crawley-Boevey--Shaw \cite{CBS06} and later generalized by Bezrukavnikov--Kapranov \cite{BK16} to a higher genus version. In line with the results mentioned above, we prove in Section~\ref{sec:preprojective} that the algebras $H_0CE(\Lambda,\C[\pi_{1}(L^{\circ})])$ used here indeed are isomorphic to the PPA of a quiver obtained from the plumbing graph of $L^\circ$. Furthermore, in the special case of a sphere with a single positive self-intersection we also establish a quasi-isomorphism of the entire dg-algebra and the PPA, see Theorem \ref{thm:homology}.

\subsection{Immersed spheres and a Lagrangian surgery formula}\label{sec:immsph}
Let $X$ be a monotone symplectic 4-manifold, and consider monotone Lagrangian sphere immersions $f\colon S^{2}\to X$ with a single transverse double point $x\in X$ of positive self-intersection. In this case the capped Chekanov--Eliashberg algebra $CE(\Lambda;\C[\pi_{1}(L^{\circ})])$ discussed above can be described as follows. First, $L^{\circ}\approx S^{1}\times\R$ so $\C[\pi_{1}(L^{\circ})]=\C[t^{\pm 1}]$. Next, $\Lambda$ is the standard Legendrian Hopf link in $S^{3}$ and its dg-algebra can be computed in a Darboux chart, see Section~\ref{sec:monotone}  or \cite[Section 1.4]{EL}, as follows. The Legendrian link $\Lambda$ has four Reeb chords: $a_{1}$, $a_{2}$ of degree $1$ and $p$, $q$ of degree $0$. The differential is (see Figure~\ref{fig:hopf}):
\[ 
d a_{1}= 1-t-pq, \quad da_{2}=1-t-qp,
\]
where we use the bounding spin structure on the components of $\Lambda$ and pick appropriate signs of the generators $p$ and $q$.
It follows, {see Theorem \ref{thm:homology}}, that the homology of the capped Chekanov--Eliashberg algebra is supported in degree~0 and is isomorphic to the following commutative algebra.{
\[
HCE(\Lambda;\C[\pi_{1}(L^{\circ})]) \ = \ H_0CE(\Lambda;\C[\pi_{1}(L^{\circ})]) \ \cong \ 
A \ \coloneqq \ \C[p,q,t^{\pm 1}]/(1-t-pq),
\]}
where we write $1$ for the idempotent associated to the connected component of $L^{\circ}$ ($e_{1}$ in the notation of Section \ref{sec:genpot}).

It follows that the refined potential of a monotone immersed Lagrangian sphere $L\subset X$ can be written in the following form: 
\begin{equation}\label{eq:suppot}
W_{L}(p,q,t)= \sum_{u\in \M(\zeta)} (-1)^{\epsilon(u)}\, p^{a(u)}q^{b(u)}t^{c(u)} \ \in \ A.
\end{equation}
It counts punctured Maslov index~2 disks $u$ that pass through a base-point $\zeta\in L$, have $a(u)\ge 0$ punctures asymptotic to $p$,  $b(u)\ge 0$ punctures asymptotic to $q$, and such that $c(u)\in \Z$ is the total number of times $\del u$ wraps around the non-trivial loop of the cylinder  $L^\circ\cong S^1\times \R$.

Consider the embedded monotone tori $L_{p}, L_{q}\subset X$ obtained by the two zero-area surgeries on $L$, see Section \ref{sec:LagrangeSurgery} or \cite[Section~2, Step~3]{Va17}. The two surgeries correspond to two exact Lagrangian fillings of the Hopf link, and rigid holomorphic disks with boundary on these fillings can be calculated via flow trees as in \cite{EHK}. This leads to the following result relating potentials of immersed spheres and nearby tori.

\begin{thm}
	\label{thm:toric_pot1}
	Fix a spin structure on $ L_p$ that does not bound. 
	There exists a basis $(x,y)$ of $H_1( L_{p})$ such that the LG potential $W_{ L_{p}}(x,y)$ is obtained from the potential $W_L(p,q,t)$ of the immersed sphere by making the following substitutions:
	\begin{equation}
	\label{eq:whitney_to_torus_11}
	\begin{array}{l}
	p\mapsto x\\
	q\mapsto (1+y)x^{-1}\\
	t\mapsto -y.
	\end{array}
	\end{equation}
	Similarly, there exists a basis $(x,y)$ of $H_{1}(L_{q})$ such that the LG potential $W_{ L_{q}}(x,y)$ is obtained from $W_L(p,q,t)$ by making the substitution:
	\begin{equation}
	\begin{array}{l}
	\label{eq:whitney_to_torus_12}
	p\mapsto (1+y)x,\\
	q\mapsto x^{-1},\\
	t\mapsto -y.
	\end{array}
	\end{equation}
\end{thm}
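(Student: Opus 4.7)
The plan is to prove the formula for $L_p$; the formula for $L_q$ follows by symmetry, swapping the roles of the two Lagrangian disk fillings of the Hopf link $\Lambda$. I will realize the zero-area surgery by a neck-stretching along the contact boundary of a small Darboux ball around the double point, so that any Maslov index $2$ disk on $L_p$ decomposes as a punctured disk on $L^\circ$ (which enters the sum defining $W_L$) capped off by flow-tree disks on the filling.

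\textbf{Step 1 (Surgery as concatenation).} I first identify $L_p$, up to Hamiltonian isotopy through monotone Lagrangians, with the concatenation $L^\circ\cup_\Lambda D_p$, where $D_p\subset B^4$ is the exact Lagrangian filling of the Legendrian Hopf link $\Lambda$ realizing one of the two small resolutions of the local node. This is built into the definition of the zero-area surgery (after \cite{Va17}): at the local model it is exactly the replacement of the Whitney neighbourhood of the double point by one of its two smoothings, and globally it is realized as a Weinstein-style attachment of $D_p$ to $L^\circ$ along their common Legendrian boundary.

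\textbf{Step 2 (Neck stretching and SFT compactness).} After choosing an almost complex structure that is cylindrical near $\partial B^4$, I stretch the neck. SFT compactness then expresses every Maslov index $2$ holomorphic disk on $L_p$ through the basepoint $\zeta\in L^\circ\subset L_p$ as a two-level pseudoholomorphic building: a Maslov index $2$ punctured disk $u$ on $L^\circ$ passing through $\zeta$, with positive punctures asymptotic to copies of $p$ and $q$, together with, at each puncture, a rigid holomorphic disk on $D_p$ with the matching positive asymptotic. Monotonicity and the index count rule out bubbling and more complicated limits; conversely, every such configuration glues to a unique Maslov index $2$ disk on $L_p$. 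This sets up a bijection between the moduli space $\mathcal{M}(\zeta)$ used in defining $W_{L_p}$ and the combined data indexed by $\mathcal{M}(\zeta)$ for $L$ and choices of capping disks on $D_p$ at each puncture.

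\textbf{Step 3 (Flow trees on the filling).} I enumerate rigid holomorphic disks on $D_p$ with one positive asymptotic via the flow-tree method of \cite{EHK}. The chord $p$ is the short chord of the $\Lambda$-component that is capped off in $D_p$, and it bounds a unique rigid flow tree disk whose boundary class I declare to be the first basis element $x\in H_1(L_p)$. The chord $q$ lies on the other component; it bounds exactly two rigid flow tree disks whose boundary classes in $H_1(L_p)$ differ by the cocore cycle $y$ dual to the surgery handle, yielding a combined contribution $x^{-1}+yx^{-1}=(1+y)x^{-1}$. Finally, the inclusion $L^\circ\hookrightarrow L_p$ sends the generator $t$ of $\pi_1(L^\circ)$ to $-y\in H_1(L_p)$, the sign being forced by the non-bounding spin structure. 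Substituting these contributions into the monomial $p^{a(u)}q^{b(u)}t^{c(u)}$ of $[\del u]$ and summing over $u\in\mathcal{M}(\zeta)$ produces exactly $W_{L_p}(x,y)=W_L\bigl(x,(1+y)x^{-1},-y\bigr)$.

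The main obstacle I anticipate is the sign and spin structure bookkeeping in Step 3: one must verify that the non-bounding spin structure on $L_p$, restricted to $L^\circ$ and extended across $D_p$, makes the two flow tree disks over $q$ contribute with the same sign (giving $1+y$ rather than $1-y$) and converts the winding generator $t$ into $-y$ rather than $+y$. I expect this can be handled by a direct computation in the local Weinstein model of the surgery, following the spin sign conventions of \cite{EL}; the $L_q$ case then follows by interchanging the roles of the two Hopf components, hence of $p$ and $q$.
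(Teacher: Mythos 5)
Your outline matches the paper's proof: Step 1 corresponds to the surgery-as-filling description in Section~4.1, Step 2 to the neck-stretching bijection of Lemma~\ref{lem:sm_prelim}, and Step 3 to the flow-tree enumeration of Lemma~\ref{l:surgereddisks}, with the resulting substitution being exactly Proposition~\ref{prp:surg} specialized to the Whitney-sphere case (and the $y\mapsto -y$ adjustment for the non-bounding spin structure as noted there). One factual correction in Step~3: $p$ and $q$ are the two degree-$0$ \emph{mixed} Reeb chords, each joining the two components of $\Lambda_{\rm Ho}$ (not ``short chords of the capped component''), and $F_p$ is a single embedded Lagrangian \emph{cylinder} filling both components, obtained by pinching $p$ first as in \cite{EHK}; your flow-tree tally ($p\mapsto x$, $q\mapsto x^{-1}+yx^{-1}$) nonetheless reproduces Lemma~\ref{l:surgereddisks}(b) correctly.
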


Theorem \ref{thm:toric_pot1} is proved at the end of Section \ref{sec:CurvesOnFilling}.
\begin{rmk}
	When $L$ is a general monotone immersed Lagrangian surface, one can choose any double point of $L$ and perform  surgery on that double point with zero area parameter. This gives two monotone immersions $ L_{p}, L_{q}$ with one less double point. Our proof of the surgery formula is local and applies in this more general context relating the potentials of $L$, $ L_p$ and $ L_q$, see Proposition~\ref{prp:surg}. 
\end{rmk}

\begin{rmk} Theorem~\ref{thm:toric_pot1} implies the wall-crossing formula for the mutation of the Lagrangian torus $ L_{p}$ into $ L_{q}$ which was predicted in the context of symplectic topology by Auroux~\cite{Au07,Au09} and proved in \cite{PT17}. 
\end{rmk}

\begin{rmk} The formulas in Theorem \ref{thm:toric_pot1} are closely related to the surgery result of Fukaya, Oh, Ohta and Ono \cite{FO306}. If the immersion $L$ is $n$-dimensional, their result states that holomorphic curves with $p$-corners smooth bijectively to holomorphic curves with boundary on $ L_p$, whereas curves with $q$-corners give rise to $S^{n-2}$-families of curves with boundary on $ L_p$ (which in particular reflects the change in the Maslov index).
	In the case $n=2$, the sphere $S^0$ is just a pair of points and this corresponds to the two summands in the expression $q\mapsto (1+y)x^{-1}$. Another well known appearance of a related surgery formula is Seidel's exact sequence for Dehn twists \cite{Sei03_LES, SeiBook08}. Roughly speaking, in the language of Theorem~\ref{thm:toric_pot1}, it only concerns curves with $p$-corners, without making any statement about curves with at least one $q$-corner.
\end{rmk}

\subsection{Restrictions and applications}
In Section~\ref{sec:application} we apply refined potentials to the symplectic topology of monotone immersed spheres $L\subset \CP^2$ with one transverse double point. {In Section \ref{ssec:basicmonspheres} we show that the double point of such $L$ must be positive and that it admits an embedded Legendrian lift into the pre-quantization space $S^{5}\to\C P^{2}$.}

Using the relation $pq=1-t$, the potential of an immersed sphere can be written in normal form,
$$
W_L(p,q,t)=\sum_{k\ge 0} p^{1+3k}\cdot P_{1+3k}(t)+\sum_{k\ge 0} q^{2+3k}\cdot Q_{2+3k}(t),
$$
where $P_{1+3k}$, $Q_{2+3k}\in\C[t^{\pm 1}]$ are Laurent polynomials in $t$. Here $P_{1}(t)$ can be interpreted directly as the count of Maslov index 2 disks with only one negative puncture (one corner). 

We first discuss a general restriction on the refined potentials. Interpreting the count for $P_{1}(t)$ in terms of the Chekanov--Eliashberg algebra of the Legendrian lift of $L$ to the standard contact $S^{5}$ gives the following.

\begin{thm}
	\label{th:cp2_p1}
	For any monotone Lagrangian immersion $f\colon S^{2}\to\C P^{2}$ with a single double point (which is then positive) if $L=f(S^{2})$ and we expand $W_{L}$ as above then $P_1(1)=\pm 1$.
\end{thm}

We next  construct infinitely many pairwise  not Hamiltonian isotopic immersed monotone Lagrangian spheres $L[(a,b,c),(a,b,3ab-c)]\subset \CP^2$. They are indexed by pairs of Markov triples $[(a,b,c),(a,b,3ab-c)]$ differing by a single mutation. We distinguish them by computing their refined potentials using wall-crossing and smoothing formulas. For example, the polynomial $P_1(-t)$ for the sphere $L[(1,5,13),(1,13,34)]$ reads:
$$
P_1(-t)=
78t^{-4}+225 t^{-3}+210 t^{-2}+68 t^{-1}+4,
$$
and indeed $P_1(1)$ equals $-1$ as stated in Theorem \ref{th:cp2_p1}. For more examples, see Table~\ref{tab:pot}.

We say that an immersed Lagrangian sphere $L\subset \CP^2$ is non-displaceable from complex lines or conics if, for any symplectomorphism $\phi\colon\CP^2\to \CP^2$, the image $\phi(L)$ intersects every complex  line or conic, respectively. We study this property for the spheres in our family and find the following.
\begin{thm}
	\label{th:wh_line_intro}	
	Any sphere $L[(a,b,c),(a,b,3ab-c)]\subset \CP^2$
	with $c\neq 1$ and $3ab-c\neq 1$ is non-displaceable from complex lines.
\end{thm}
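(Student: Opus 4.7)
We argue by contradiction. Suppose $L = L[(a,b,c),(a,b,3ab-c)]$ with $c, 3ab-c \neq 1$ is displaceable from some complex line $\ell \subset \CP^2$. By Gromov's theorem, $\Symp(\CP^2) \simeq PU(3)$ is connected; combined with $H^1(\CP^2;\R) = 0$ (so that the flux homomorphism is trivial), every symplectomorphism of $\CP^2$ is Hamiltonian isotopic to the identity. Hence we may assume, after Hamiltonian isotopy, that $L \subset \C^2 = \CP^2 \setminus \ell$. By Theorem \ref{t:general}, the refined potential $W_L \in A = \C[p,q,t^{\pm 1}]/(1-t-pq)$ is preserved under this isotopy.

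The heart of the argument is a constraint on $W_L$ that arises from $L$ being disjoint from the holomorphic line $\ell$. Choose an $\omega$-tame almost complex structure on $\CP^2$ that is standard near $L \subset \C^2$ and integrable in a tubular neighborhood $\nu\ell$ of $\ell$, so that $\ell$ is $J$-holomorphic. Stretch the neck along the contact hypersphere $\partial\nu\ell \cong S^3$ bounding $\nu\ell$. In the SFT-compactness limit, each Maslov~$2$ punctured disk counted in $W_L$ decomposes into a bottom punctured curve with boundary on $L$ lying in the completion of $\C^2 \setminus \nu\ell$, together with a collection of top holomorphic planes in the disk bundle $\nu\ell \cong \mathcal{O}(1) \to \CP^1$, matched along Reeb-orbit punctures on $\partial\nu\ell$. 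Positivity of intersections forces each top plane to cover $\ell$ with a fixed positive multiplicity, and the Maslov--area identity in the monotone setting bounds the total degree $d(u) := [u] \cdot [\ell]$ in terms of the Maslov index. The $t$-exponent in a monomial $p^a t^k$ (respectively $q^b t^k$) of $W_L$ is a homotopy invariant of $\partial u$ in $L^\circ \cong S^1 \times \R$ and is inherited by the bottom piece, so the SFT splitting expresses it as a function of the corner count and the Reeb-orbit multiplicities of the positive punctures. The upshot is an admissibility condition on the pairs $(a,k)$ (and $(b,k)$) with nonzero coefficient in $W_L$ whenever $L \subset \C^2$.

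The final step is an explicit comparison. Using the wall-crossing substitutions of Theorem \ref{thm:toric_pot1} together with the smoothing/surgery procedure and iterated mutation along the Markov tree (as developed in Section \ref{sec:application}; see Table \ref{tab:pot}), one computes $W_L$ for the whole family $L[(a,b,c),(a,b,3ab-c)]$. The condition $c \neq 1$ and $3ab-c \neq 1$ places the mutation edge strictly off the ``Fibonacci spine'' of Markov triples $(1,F_{2k-1},F_{2k+1})$, which forces the polynomials $P_{1+3k}(t)$ and $Q_{2+3k}(t)$ to carry monomials with $t$-exponents outside the admissible range extracted from the SFT analysis. This contradicts the constraint derived above and proves that no such displacing symplectomorphism can exist. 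The boundary cases $c=1$ or $3ab-c=1$ correspond to mutations adjacent to the spine, where one of the nearby tori $L_p, L_q$ degenerates to a Clifford-type model and the sphere does fit in a ball avoiding a suitable line.

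The main technical obstacle is the SFT compactness and transversality analysis in the neck-stretching limit, in particular ruling out pathological multi-covers of the top planes in $\mathcal{O}(1)$ and sharpening the index/degree identification into a rigid support condition on $(a,k)$. Once that constraint is in place, the Markov-tree comparison with the explicit wall-crossing outputs is combinatorial and mechanical.
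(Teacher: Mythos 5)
Your proposal takes a substantially different and more complicated route than the paper, and it has a couple of genuine gaps.

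First, the constraint you aim to extract from disjointness with the line is not the right one. You propose to control the $t$-exponents in the monomials of $W_L$ via an SFT neck-stretching decomposition near the line. The paper instead uses a direct and much simpler positivity-of-intersections argument (Proposition~\ref{prp:link_0_cp2}): for a Whitney sphere $L$ disjoint from the line $H$, the algebraic intersection number of a disk contributing to the $p^{1+3k}$-term with $H$ equals $-k$, and for the $q^{2+3k}$-term it equals $k+1$. By positivity of intersections, $L\cap H=\varnothing$ therefore forces $P_{1+3k}\equiv 0$ for all $k\ge 1$; the constraint is on the exponent of $p$ (the corner count), not on the exponent of $t$ (the winding of the boundary in $L^\circ$). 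The neck-stretching machinery you invoke — with its attendant issues of multiple covers of $\mathcal O(1)$, matching along Reeb orbits, and transversality of broken configurations — is unnecessary for this, and I do not see that it delivers the correct constraint.

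Second, the combinatorial endgame is essentially asserted rather than argued. You write that being ``off the Fibonacci spine'' forces $t$-exponents outside an admissible range; the paper instead proves (Proposition~\ref{prp:wh_higher}) a clean structural statement: the normal form of $W_L$ involves no higher power of $p$ if and only if $c=1$ or $3ab-c=1$. The proof is not a mechanical traversal of the Markov tree; it relies on Lemma~\ref{l:vertex} (the top $p$-term is a single vertex of the Newton polytope of the resolved torus potential $W_T$), Vianna's duality between this Newton polytope and the moment polytope of $\CP^2(a^2,b^2,c^2)$ with its equations for $m_1,m_2$, and the Frobenius pairwise-coprimality of Markov triple entries. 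This is the step where the hypothesis $c\ne 1$, $3ab-c\ne 1$ actually enters, and it is missing from your plan. You would need to either supply this argument or demonstrate an equivalent one for the SFT-derived constraint — which, as noted, is not the correct one to begin with.
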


\begin{rmk}
	Immersed Lagrangian spheres in $\CP^2$ are null-homologous and therefore have algebraic intersection number with any complex curve equal to zero. 
\end{rmk}
\begin{rmk}
	The spheres $L[(a,b,c),(a,b,3ab-c)]$ are displaceable from a smooth complex \emph{cubic curve} by construction. Furthermore, any Lagrangian torus in $\CP^2$ is displaceable from a complex line, see \cite{DGI16}. The neighborhood of a monotone sphere is the neighborhood of the nearby resolved Lagrangian torus with a Lagrangian disk attached. Therefore, if the torus is displaced from a given complex line, the attached Lagrangian disk must intersect it. 
\end{rmk}

Theorem \ref{th:wh_line_intro} is proved by analyzing how the disks contributing to various terms in $W_{L}$  can link a line or conic disjoint from $L$, and uses positivity of intersections. In Section \ref{sec:application}, 
we also give examples of monotone immersed spheres that are non-displaceable from conics.

\subsection{Immersions with local systems as objects in the Fukaya category}
The constructions used to define the refined potential can be applied to other holomorphic curve problems as well. For example,
assume that $X$ is a Weinstein 4-manifold and let $f\co Q\to X$ be a strongly exact Lagrangian immersion of an orientable surface. (Strong exactness is a very restrictive condition that precludes any holomorphic disk with corners, see Section \ref{sec:monotone} for notation.)  In this situation, if $L=f(Q)$ and  $\rho$ is a complex representation of the homology  algebra of $ CE(\Lambda,\C[\pi_{1}(L^{\circ})])$, then $\rho$ can be used as a local system on $L$, turning the pair $(L,\rho)$ into a \emph{Lagrangian brane}. That is, $(L,\rho)$ becomes an object of the compact exact $\C$-linear Fukaya category $\F(X)${, see Section \ref{sec:FukayaAnalytic} for the definition of $\F(X)$.

Recall that $H_0CE(\Lambda,\C[\pi_1(L^\circ)])$ is isomorphic to a PPA of a quiver related to the plumbing graph of $L$ (see Section \ref{sec:preprojective}). The representation $\rho$ the homology algebra can thus be interpreted as a representation of the PPA. By definition, such a representation associates a vector space $\C^{n_i}$} to each vertex of the quiver (i.e.~to each connected component of $L^\circ$), and a linear map between the respective spaces for every generator of the homology algebra, with source and target vector spaces corresponding to the source and target of the underlying quiver edge.  Our observation leads to the following result that was predicted by Etg\"u and Lekili \cite{EL17}, see Section~\ref{sec:fukaya} for further explanation.

\begin{thm}\label{mainth:general_brane}	
	Let $L\subset X$ be a strongly exact Lagrangian immersion of a surface into a Weinstein 4-manifold. Complex representations of the degree~0 homology of the capped Chekanov--Eliashberg algebra, $CE_{0}(\Lambda,\C[\pi_{1}(L^{\circ})])$, (or equivalently, of the higher genus multiplicative pre-projective algebra $\B(Q)$) can be used as local systems on $L$ and thereby to define an object of the compact exact $\C$-linear Fukaya category $\F(X)$. Isomorphic representations give quasi-isomorphic objects in $\F(X)$. 
\end{thm}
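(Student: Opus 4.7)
The plan is to mimic the standard construction of the Fukaya category but with the immersed Lagrangian $L$ replaced by its embedded punctured version $L^{\circ}\subset X^{\circ}$ with Legendrian ends $\Lambda$, and with all holomorphic polygon counts taking values in $CE(\Lambda;\C[\pi_{1}(L^{\circ})])$; the representation $\rho$ of the degree~0 homology then turns these algebra-valued operations into matrix-valued $A_\infty$-operations. For a single brane $(L,\rho)$, the underlying vector space is obtained by attaching $\C^{n_j}$ to each connected component of $L^{\circ}$ and, after choosing a Morse/perturbation datum, to each generator of a suitable cochain complex on that component. Morphisms between two branes $(L,\rho)$ and $(L',\rho')$ are built analogously from intersection points of $L^{\circ}$ with a small Hamiltonian pushoff $L'^{\circ}$ inside $X^{\circ}$.

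The $A_\infty$-operations are defined by counting punctured holomorphic polygons with boundary on the relevant $L^{\circ}$'s and with arbitrarily many negative boundary punctures asymptotic to Reeb chords of $\Lambda$. Exactly as in the definition of the refined potential, the boundary of such a polygon produces a monomial in $CE(\Lambda;\C[\pi_{1}(L^{\circ})])$ obtained by reading off Reeb chord letters and homotopy classes of completed boundary arcs between them. Applying $\rho$ to this monomial yields a linear map between the vector spaces at the endpoints, and summing over all configurations gives the $A_\infty$-structure map. Strong exactness plays two roles here: it forbids disk and sphere bubbles (so no obstruction terms or curvature appear), and it ensures that every non-trivial holomorphic disk with boundary on $L^{\circ}$ carries at least one positive puncture, so that the SFT-style compactification of the moduli spaces is controlled purely by Reeb chord breaking.

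The key algebraic step is the verification of the $A_\infty$-relations. The boundary of a one-dimensional moduli space of punctured polygons decomposes into (i) the usual breakings along a Floer generator, which produce the quadratic $A_\infty$-terms, and (ii) breakings in the negative symplectization end where an auxiliary holomorphic disk in $\R\times(\sqcup_j S^3_j)$ bubbles off one or several Reeb chord asymptotes. By the very definition of the differential on $CE(\Lambda;\C[\pi_{1}(L^{\circ})])$, the contributions of type (ii) assemble into $d$ applied to the algebra-valued polygon count. Since $\rho$ factors through $H^{0}CE(\Lambda;\C[\pi_{1}(L^{\circ})])$, it vanishes on $d$-exact elements, so all type (ii) terms cancel upon applying $\rho$, leaving precisely the $A_\infty$-relations.

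The main obstacle is carrying out this cancellation coherently: the sign verification, and the compatibility with the idempotent decomposition indexed by the connected components of $L^{\circ}$, must be arranged so that the resulting operations genuinely correspond to composition of matrices acted on by $\rho$. Once this is in place, a standard continuation argument, applied Reeb chord by Reeb chord, shows that the quasi-isomorphism class of $(L,\rho)$ is independent of all auxiliary choices and of the Hamiltonian isotopy class of the immersion $L$. Therefore $\rho\mapsto(L,\rho)$ descends to a well-defined map from the representation variety of $\B(Q)$ into the set of quasi-isomorphism classes of objects of $\F(X)$, as claimed.
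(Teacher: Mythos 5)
Your proposal is correct and follows essentially the same route as the paper: in both cases the $A_\infty$-operations are defined by counting punctured polygons with boundary on $L^{\circ}$ whose SFT punctures are asymptotic to degree-$0$ Reeb chords of $\Lambda$, the boundary is read off as a word in the capped Chekanov--Eliashberg algebra, and $\rho$ is applied. The key cancellation you identify --- that SFT breaking in the boundary of $1$-dimensional moduli spaces produces $\partial$-exact elements of $CE(\Lambda;\C[\pi_{1}(L^{\circ})])$, which are then killed because $\rho$ factors through $H^{0}$ --- is exactly the mechanism the paper invokes when it says the extra bubbling at self-intersections is cancelled by the relations in the capped Chekanov--Eliashberg algebra.
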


Floer cohomology of immersed Lagrangians has been studied before. We discuss the relation between Theorem~\ref{mainth:general_brane} and earlier results in Section \ref{sec:fukaya}, see also Remark~\ref{ex:floer_immersed}.

As in the monotone case, there are smoothing formulas in this Floer-theoretic setting as well. In particular, they give natural morphisms between pre-projective algebras and act on their representation varieties. We give an explicit formula for the smoothing maps in Section~\ref{sec: surgery floer}.

\subsection{Generalizations}
\label{sec:generalizations}
Our approach to refined potentials and Floer cohomology of immersed surfaces can be extended in several directions. For a general self-transverse immersion $L$, the above version of the capped Chekanov--Eliashberg algebra should be replaced by $CE(\Lambda;C_{\ast}(\Omega L^{\circ}))$. Here $\Lambda$ is still a collection of Hopf links around double points, but the coefficients are now chains on the based loop space of $L^{\circ}$, compare \cite{EL}. Again, this algebra is the wrapped endomorphism algebra of a union of cotangent fibers in the Weinstein neighborhood of $L$.

It should be noted that self-transverse immersions are never monotone in the sense of Definition~\ref{dfn:monot} when $\dim L > 2$, see Remark~\ref{rmk:monot_2}. For the same reason, the invariants considered in this paper would have quite a different flavor for higher dimensional self-transverse Lagrangians. On the other hand, there is a rather straightforward generalization of the potential to higher-dimensional Lagrangian immersions having a clean self-intersection of codimension \emph{two}; in this case monotonicity makes sense, and the invariants behave accordingly. 

One can in fact go further and replace $L$ by an embedding of a completely general Lagrangian skeleton of a  Weinstein domain $W$; see Section \ref{sec:outlook} for relevant examples. Using a handle presentation of $W$, it is possible to define a suitable Chekanov--Eliashberg algebra associated to the embedded skeleton, again related to the wrapped Fukaya category of $W$. That algebra can then be used to define potentials and objects in the Fukaya category of the ambient manifold. We leave the study of these generalizations to future work.

\subsection*{Acknowledgments}
We are grateful to Yank\i~Lekili for encouraging us to look into the connection with pre-projective algebras, for careful reading of the manuscript, and many very useful comments. We also thank Mohammed Abouzaid and Nicholas Sheridan for helpful conversations.

\section{Holomorphic curves with corners}
\label{sec:corners}

This section studies holomorphic curves with boundary on a monotone immersed Lagrangian submanifold $L\subset X$ that is allowed to have corners at the double points of the Lagrangian. These holomorphic curves can be identified with holomorphic curves in the non-compact symplectic manifold $X^{\circ}$ punctured at the double points of the immersion, where the Legendrian boundary-at-infinity of $L^\circ$ is a Hopf link with Morse-Bott degenerate Reeb chords. 

In later sections we work with generic singularity links with non-degenerate Reeb chords, obtained by attaching a Lagrangian cobordism to the negative ends of $L^\circ$ that interpolates from the Morse-Bott Hopf link to the one with Morse Reeb chords. In this sense, the results in this section can be thought of as establishing transversality (and other properties) for the parts of the holomorphic curves outside these ends.

\subsection{Disks with corners and Maslov index}\label{ssec:maslovindex}
Let $X$ be a $2n$-dimensional symplectic manifold, $Q$ a smooth oriented manifold, and $f\co Q\to X$ a self-transverse Lagrangian immersion. Denote the image $L=f(Q)$. Let $\xi$ be a double point of $L$. Then $f^{-1}(\xi)$ consists of two points $\xi_{1}$ and $\xi_{2}$. Pick an ordering $(\xi_{1},\xi_{2})$ of the preimages. Note that $L$ intersects sufficiently small neighborhoods of $\xi$ in two sheets (Lagrangian disks) $\ell_{1}$ and $\ell_{2}$, and  the ordering of the preimages of $\xi$ induces an ordering $(\ell_1,\ell_2)$ of the sheets.

Let $D\subset \C$ be the unit disk, and consider a finite set $\Sigma \subset \partial D$.
A \emph{topological disk with corners} is a continuous map $u\colon (D,\del D)\to (X,L)$ whose restriction $u|_{\del D\setminus \Sigma}$ admits a continuous lift $\tilde u$,
$$
\xymatrix{
	\del D \ar^-{\tilde u}[r]\ar_{u|_{\del D\setminus \Sigma}}[rd] & Q\ar^f[d]\\
	& L
}\quad
$$
and such that for every $\delta\in\Sigma$, the right and left limits $\lim_{\zeta\to\delta\pm}\tilde u(\zeta)$
both exist and are distinct elements in $f^{-1}(\xi)=\{\xi_{1},\xi_{2}\}$ for some double point $\xi$ of $L$. 
The set $\Sigma$ is the set of \emph{corners (or switches)} of $u$.

We distinguish two types of corners using the ordering of the local sheets near the double points. If the ordering of the sheets is $(\ell_{1},\ell_{2})$ and the oriented boundary of $u$ traverses first the sheet $\ell_1$ and then the sheet $\ell_2$, then we call the corner a \emph{$p$-corner}. If it goes in the opposite direction, from $\ell_{2}$ to $\ell_{1}$, then we call it a \emph{$q$-corner}.

{
Let $u\colon (D,\del D)\to (X,L)$ be a topological disk with corners. Let $\M(u)$ denote the moduli space of topological disks with corners $v\colon (D,\del D)\to (X,L)$ homotopic to $u$ such that $v$ is holomorphic on $D\setminus \Sigma$, for some conformal structure on this domain, as a map into $X$. We next compute the formal dimension of $\M(u)$.

Consider the oriented family $\gamma'$ of Lagrangian tangent spaces $T_{u(\zeta)} L\subset T_{u(\zeta)}X$ parameterized by $\zeta\in \del D$ moving counterclockwise. The family $\gamma'$ is discontinuous at every corner $\delta\in \del D$. For a double point $\xi$, consider the path of Lagrangian subspaces given by $e^{sJ(\xi)}T\ell_{1}\subset T_{\xi}X$, $0\le s\le \pi$. Since $T\ell_{2}$ is transverse to $T\ell_{1}$, there are points $0< s_{1}<\dots< s_{r}<\pi$, where $1\le r\le n$, such that  $\dim(e^{s_{j}J(\xi)}T\ell_{1})\cap T\ell_{2}=d_{j}>0$. The angles $s_{1},\dots,s_{m}$ are called the \emph{K\"ahler angles} of $(T\ell_{1},T\ell_{2})$. Let $V_{j}\subset T_{\xi}X$ be the complex linear subspace containing $(e^{s_{j}J(\xi)}T\ell_{1})\cap T\ell_{2}$ and consider the path $\gamma''(\ell_{1},\ell_{2})$ of Lagrangian subspaces in $T_{\xi} X$ that rotates $T\ell_{1}$ to $T\ell_{2}$ by moving $T\ell_{1}\cap V_{j}$ in $V_{j}$ according to $e^{-sJ}(T\ell_{1}\cap V_{j})$, $0\le s\le (\pi-s_{j})$. (This path is called the \emph{canonical short path} in \cite{Aur14}.) Define $\gamma''(\ell_{2},\ell_{1})$ analogously.
We close $\gamma'$ up with this canonical short path $\gamma''(\ell_{1},\ell_{2})$ or $\gamma''(\ell_{2},\ell_{1})$ at corners according to which sheet is incoming and outgoing at each puncture. This gives a continuous loop $\gamma$ of Lagrangian tangent planes in $TX$. Define the \emph{Maslov index} $\mu(u)$ as 
\begin{equation}\label{eq:maslovimmersed} 
\mu(u)=\mathrm{Maslov}(\gamma) + m,
\end{equation}
where $\mathrm{Maslov}(\gamma)$ denotes the usual Maslov number of the loop $\gamma$ in the trivialization of $TX|_{u(D)}$, and $m$ is the number of corners of $D$. 

The moduli space $\M(u)$ of  unparameterized  holomorphic disks with corners homotopic to $u$ then has expected dimension, compare e.g.~\cite[Section~4.3]{AJ10},
\begin{equation}\label{eq:dimmaslov} 
\dim \M(u)=(n-3)+\mu(u).
\end{equation}
This dimension formula is well-known and used in the description of the differential in Floer cohomology or the description of Chekanov--Eliashberg algebra differentials as in \cite{EESPtimesR} by counts of holomorphic polygons. We outline a proof: the formal dimension of a holomorphic disk with one puncture and boundary condition given by the path $\gamma(\ell_{1},\ell_{2})$ is $-2$, see \cite[Proposition 6.14]{EES}. Applying additivity of index (leading to super-additivity of formal dimension, $d_{\mathrm{glue}}=d_{\mathrm{part1}}+d_{\mathrm{part2}}+1$) under linearized gluing to our disk of interest with these once punctured disks glued on at each corner and using the dimension formula $\dim = (n-3)+\mathrm{Maslov}(\gamma)$ for the resulting closed disk we find that
\[
\dim\M(u) + m(-2 + 1) = (n-3) +\mathrm{Maslov}(\gamma)
\]
and hence
\[ 
\dim\M(u)=(n-3) + \mu(u).
\]
}
We consider how corners and smoothing affect the Maslov index.
\begin{lem}
	\label{lem:const_mu}	
	Let $u\co (D,\del D)\to (X,L)$ be the constant map into a double point of $L$ where $u$ is declared to have $2m$ switches,  $m\in \Z_+$. Then $$\mu(u)=-m(n-2).$$
\end{lem}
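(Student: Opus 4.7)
\medskip

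The plan is to observe that for a constant disk, every arc of the boundary between corners maps to the single point $\xi$, hence the family $\gamma'$ of tangent Lagrangians is locally constant on each such arc and contributes nothing to $\mathrm{Maslov}(\gamma)$. All of the rotation is therefore concentrated in the canonical short paths inserted at the $2m$ switches, so the computation reduces to understanding those paths in a local model.

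First I would argue that the $2m$ corners alternate in type. Since $u$ is constant at $\xi$, its lift $\tilde u$ to $Q$ between corners is continuous with values in the discrete set $\{\xi_1,\xi_2\}$, hence is locally constant. At each switch $\tilde u$ jumps between $\xi_1$ and $\xi_2$, so consecutive corners alternate between the sheets $(\ell_1,\ell_2)$ and $(\ell_2,\ell_1)$. In particular, exactly $m$ corners are $p$-corners (closed up with $\gamma''(\ell_1,\ell_2)$) and exactly $m$ are $q$-corners (closed up with $\gamma''(\ell_2,\ell_1)$), so $\gamma$ is (up to reparameterization) the $m$-fold concatenation of $\gamma''(\ell_1,\ell_2)\cdot\gamma''(\ell_2,\ell_1)$.

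Next I would compute the Maslov index of one such pair in a local model. By Darboux-type normalization choose complex coordinates on $T_\xi X\cong\C^n$ with $\omega=\sum dx_j\wedge dy_j$ such that
\[
T\ell_1=\R^n,\qquad T\ell_2=e^{i\theta_1}\R\oplus\cdots\oplus e^{i\theta_n}\R,\quad 0<\theta_j<\pi.
\]
The negative K\"ahler-angle rotation factorwise gives $\gamma''(\ell_1,\ell_2)$ as the path $s\mapsto\bigoplus_j e^{-is(\pi-\theta_j)}\R$, $s\in[0,1]$, and $\gamma''(\ell_2,\ell_1)$ as $s\mapsto\bigoplus_j e^{i\theta_j-is\theta_j}\R$. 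Their concatenation is a loop in $\Lambda(n)$ that splits as a product of loops in $\Lambda(1)=\R P^1$, each rotating by total argument $-(\pi-\theta_j)-\theta_j=-\pi$, i.e.\ of Maslov index $-1$. Summing over the $n$ factors, one pair contributes $-n$ to the Maslov index, and the full loop $\gamma$ therefore has
\[
\mathrm{Maslov}(\gamma)= m\cdot(-n)=-mn.
\]
Plugging this into \eqref{eq:maslovimmersed} together with the number of corners $2m$ yields
\[
\mu(u)=\mathrm{Maslov}(\gamma)+2m=-mn+2m=-m(n-2),
\]
as claimed.

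The only delicate point, and the main thing that needs care, is the identification of the canonical short paths in the chosen coordinates: one has to check that negative K\"ahler-angle rotation really corresponds factorwise to rotation by $-(\pi-\theta_j)$ for $\gamma''(\ell_1,\ell_2)$ and by $-\theta_j$ for $\gamma''(\ell_2,\ell_1)$, rather than some other choice that would flip a sign. Once the sign conventions are pinned down, the rest of the argument is a direct accounting of winding numbers.
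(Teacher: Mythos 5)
Your proof is correct and follows essentially the same route as the paper's: both reduce to computing the Maslov index of the loop obtained by concatenating one canonical short path $\gamma''(\ell_1,\ell_2)$ with one $\gamma''(\ell_2,\ell_1)$, observe this is a $(-\pi)$-rotation in each complex coordinate line (hence Maslov index $-n$), and then plug into $\mu(u)=\mathrm{Maslov}(\gamma)+\#\text{corners}$. The only difference is one of level of detail: the paper asserts the $-n$ in one sentence, while you write out the diagonal local model and also make explicit the (correct) observation that the $2m$ switches must alternate in type so the loop is an $m$-fold concatenation; this is implicit in the paper but worth spelling out.
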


\begin{proof}
	Let $r\in L$ be the double point and $T\ell_1$, $T\ell_2$ the tangent spaces to the two sheets of $L$.
	The concatenation of two canonical short paths from $T\ell_1$ to $T\ell_2$ back to $T\ell_1$ results in a $\pi$-rotation in each complex coordinate line, which is a loop of Lagrangian subspaces in $T_rX$ of Maslov index $-n$. Therefore $\mu(u)=-mn+2m=-m(n-2)$.
\end{proof}

\subsection{Lagrange surgery with zero area parameter}
{
\label{sec:LagrangeSurgery}
Lagrange surgery \cite{Pol91} is a smoothing of a Lagrangian near a transverse double point that preserves the Lagrangian condition. More precisely, it replaces an arbitrarily small neighborhood of the transverse double point with an embedded Lagrangian cylinder. By construction, the surgery leaves the complement of an arbitrarily small neighborhood of the double point intact. Up to Hamiltonian isotopy there are precisely two different Lagrangian surgeries at each double point, each of which depends on an `area-parameter'. Here we describe a version of Lagrange surgery with a `zero area-parameter' as in~\cite[Section~2, Step 3]{Va17}. It, in particular, preserves the monotonicity property of Lagrangians.

Recall that a transverse double point of any Lagrangian immersion $L$ has a Darboux $\epsilon$-ball neighborhood $B_\epsilon \subset \C^2$ for some $\epsilon>0$ in which $L$ agrees with the union $(\R^{2} \cup i \R^2) \cap B_\epsilon$. In other words, inside $B_{\epsilon}$, $L$ is identified with 
$$\{xy=0\} \times S^1 \subset \C \times S^1$$
under the (noninjective) map
\begin{gather*}
A \colon \C \times S^1 \to \C^2,\\
(z=x+iy,\theta) \mapsto A_\theta(z) \coloneqq \begin{pmatrix}
\cos \theta & -\sin \theta \\
\sin \theta & \cos \theta
\end{pmatrix}\cdot \begin{pmatrix}
x+iy\\
0
\end{pmatrix}.
\end{gather*}
Choosing the Darboux ball appropriately we may assume that $A(\{x,y\ge 0\}\times \{1\}) \subset \C^2$ is a piece of a holomorphic surface with a single $p$-corner when it is given the complex orientation. 

The matrices $A_\theta$ are elements of $SU(2)$ and hence symplectic. Further, for any $\theta \in S^1$, the image $\frac{d}{d\theta}A_\theta(\C \times \{0\})$ under
$$\frac{d}{d\theta}A_\theta=\begin{pmatrix}
-\sin \theta & -\cos \theta \\
\cos \theta & -\sin \theta
\end{pmatrix},$$
is (symplectically) orthogonal to $A_\theta(\C \times \{0\})$.

The zero area parameter versions of the surgeries can now be described as follows. Consider a proper embedding $\gamma \subset \C$ of $\R$ which satisfies the following:
\begin{itemize}
\item $\gamma$ coincides with the boundary $\{xy=0\} \cup \{x,y \ge 0\}$ of the first quadrant outside of a compact subset, 
\item $-\gamma \cap \gamma = \varnothing$, and
\item the signed area between $\gamma$ and $\{xy=0\} \cup \{x,y \ge 0\}$ equals zero.
\end{itemize}
The two different zero area surgeries are then obtained by replacing the transversely intersecting sheets $A(\{xy=0\} \times S^1)$ with either the Lagrangian cylinder $A(\gamma \times S^1)$, or the Lagrangian cylinder $A(i\gamma \times S^1)$. The first case will be referred to as a \emph{$p$-surgery} on $L$ and the resulting Lagrangian will be denoted by $L_p$, the second as a \emph{$q$-surgery} and the resulting Lagrangian denoted by $L_q$.

It is obvious that the Hamiltonian isotopy class of $L_{p}$ (respectively $L_{q}$) is independent of the choice of curve $\gamma$. We also point out that $\gamma$ can be scaled by an arbitrarily small constant to a new surgery curve for which the surgery leaves the Lagrangian undeformed in the complement of an arbitrarily small neighborhood of the double point.
}

{
Let $\xi\in L$ be a double point on $L$ and $ L_p$, $ L_q$ be the two surgeries of $L$ at that double point using zero area parameter as described above. Both are immersions with one less double point. The next lemma is well-known, we use \cite{ESjams} in the proof below, see also~\cite[Proposition~60.23]{FO306}.  

\begin{lem}
	\label{lem:mu_res}
	Let $u\colon(D,\del D)\to (X,L)$ be a topological disk with corners having a total number $a$ of $p$-switches and a total number $b$ of $q$-switches at $\xi$. Let $L_p$ be the $p$-surgery of $L$ at $\xi$ and $\t u\in H_2(X,L_p)$ a topological disk obtained by smoothing $u$ locally near the double point, $n>1$. Then 
	\begin{equation}
	\label{eq:mu_res}
	\mu(\t u)=\mu(u)+(n-2)b
	\end{equation}
	In particular,  resolution preserves the  Maslov index when $n=2$.
\end{lem}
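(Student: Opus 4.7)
The plan is to reduce the identity to a local Maslov computation near $\xi$ by a Darboux-chart argument. Since the $p$-surgery modifies $L$ only inside a small neighborhood $U\ni\xi$, and $\t u$ agrees with $u$ outside $U$, the Lagrangian-tangent loops $\gamma$ (using canonical short paths at each corner, as in the definition of $\mu(u)$) and $\t\gamma$ (the actual tangent loop of $\del\t u$) differ only on short sub-arcs near each former corner. By additivity of Maslov numbers of paths, this decomposes $\mu(\t u)-\mu(u)$ as a sum of local contributions at each corner, together with the loss of the $+m$ corner correction in \eqref{eq:maslovimmersed}.

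I would then work in the Darboux chart with $\xi=0\in\C^n$, $\ell_1=\R^n$, $\ell_2=i\R^n$, and fix the Polterovich model of $L_p\cong S^{n-1}\times\R$ as the $\mathrm{SO}(n)$-symmetric Lagrangian built from a profile curve $(\phi(t),\psi(t))$ in the quarter-plane. The tangent plane to $L_p$ at $(\phi(t)u,\psi(t)u)$ splits explicitly as the one-dimensional $u$-direction, spanned by $(\phi'(t)+i\psi'(t))u$, together with the $(n-1)$ perpendicular complex lines, each spanned by $(\phi(t)+i\psi(t))e$ for $e\perp u$. This makes the tangent arc of $L_p$ traced by $\del\t u$ at each former corner completely explicit as a path in the Lagrangian Grassmannian $\Lambda(n)$, reducing each local contribution to a comparison with the corresponding canonical short path $\gamma''$.

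The core computation is this corner-by-corner comparison. At a $p$-corner, $\del\t u$ crosses $L_p$ in its natural direction $\ell_1\to\ell_2$ and the tangent arc is homotopic rel endpoints to $\gamma''(\ell_1,\ell_2)$ up to a loop whose Maslov number is exactly cancelled by the loss of the $+1$ corner correction, so the net contribution is zero. At a $q$-corner $\del\t u$ must cross $L_p$ against its natural direction, which in dimension $n\ge 2$ forces it across the $S^{n-1}$-fiber of the handle; the resulting tangent arc differs from $\gamma''(\ell_2,\ell_1)$ by a loop in $\Lambda(n)$ of Maslov number $n-1$, yielding a net contribution of $(n-1)-1=n-2$. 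Summing over the $a$ $p$-corners and the $b$ $q$-corners gives $\mu(\t u)-\mu(u)=(n-2)b$, from which the $n=2$ statement is immediate.

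The main obstacle is the last $\Lambda(n)$ calculation at $q$-corners: one must identify the correct $S^{n-1}$-wrapping and compute the Maslov number of the resulting loop, with careful attention to orientations and the ordering of sheets. This can be carried out directly in the Polterovich model via the explicit tangent-plane splitting above, or one may match sign conventions with \cite[Proposition~60.23]{FO306}, whose local formula is essentially the one needed here. For the application $n=2$ used in the rest of the paper the calculation is especially transparent, since $\Lambda(2)$ has the homotopy type of $S^1$ and the cancellation reduces to a rotation count on the profile curve.
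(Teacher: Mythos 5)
The paper's ``proof'' of this lemma is a single citation to \cite[Proposition~60.23]{FO306}, so your direct Polterovich-model argument is a genuinely different route. Your overall decomposition is sound: $\gamma$ and $\tilde\gamma$ coincide outside small neighborhoods of the corners at $\xi$, so $\mu(\tilde u)-\mu(u)$ is a sum of purely local terms (tangent arc of the handle vs.\ canonical short path, minus the lost $+1$ per corner), and those local terms \emph{can} be read off from the explicit tangent-plane splitting $\R(\phi'+i\psi')u \oplus (\phi+i\psi)u^\perp$. That part is a good way to organize the computation, and the additivity is correct.

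However, two points deserve attention. First, the description of the $q$-corner case is misleading: nothing ``forces'' the smoothed boundary across the $S^{n-1}$-fiber of the handle. Topologically, $\partial\tilde u$ just traverses the neck $\R\times S^{n-1}$ in the reverse $s$-direction, with $u\in S^{n-1}$ fixed; the $S^{n-2}$-families in \cite{FO306} are a phenomenon of the \emph{holomorphic} moduli problem, not of the topological smoothing. What actually distinguishes $p$- from $q$-corners here is that the tangent arc and its reverse bear different relations to the respective canonical short paths $\gamma''(\ell_1,\ell_2)$ and $\gamma''(\ell_2,\ell_1)$, because the latter are both chosen to be the \emph{negative} K\"ahler rotation (they are not reverses of each other). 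Second, and more seriously, the two local Maslov numbers ($1$ at a $p$-corner and $n-1$ at a $q$-corner) are asserted rather than computed. Carrying out the splitting you describe for the first-quadrant profile curve, the radial line turns by $-\pi/2$ and each of the $n-1$ spherical lines by $+\pi/2$, so the traversal from $\R^n$ to $i\R^n$ differs from the negative-K\"ahler short path by a loop of Maslov number $n-1$, and the reverse traversal differs from its short path by a loop of Maslov number $1$. Which one of these is the $p$-corner depends on the correlated conventions for the sheet ordering $(\ell_1,\ell_2)$ and which resolution is called $L_p$; with the most naive choices one obtains $\mu(\tilde u)-\mu(u)=(n-2)a$ rather than $(n-2)b$. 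You flag this yourself in the last paragraph and propose to match conventions with \cite{FO306} --- which is essentially what the paper does. So your sketch correctly identifies the shape of the argument but, as it stands, the critical corner-by-corner bookkeeping (and in particular which corner gets the $n-1$) is not actually pinned down; it would need to be anchored to a fixed Polterovich model and a verification that its consistency with, say, the Chekanov--Eliashberg differential of the Hopf link in Section~\ref{subsec:Hopf}.
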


\begin{proof}
In order to calculate the Maslov index $\mu(\t u)$, we consider the path of tangent planes that starts at one sheet near the double point and ends at the other and then close it up over the respective handles. The contribution to the Maslov index from the close up can be expressed in terms of a phase function that is calculated in \cite[Lemma 2.4]{ESjams}. The result then follows as the Maslov index calculation in \cite[Lemma 2.5]{ESjams}   
\end{proof}

}

\subsection{Monotone and strongly exact immersions}
\label{ssec:monotoneandexact}
Consider a self-transverse immersed Lagrangian surface $L\subset X$.
\begin{dfn}\label{dfn:monot}
	An orientable immersed Lagrangian surface $L\subset X$ is \emph{monotone} if the following two conditions hold.
	\begin{itemize}
		\item For any topological disk $u\colon (D,\del D)\to(X,L)$ with corners, $\mu(u)\ne 1$.
		\item There exists a positive constant $\lambda>0$ such that
		$$
		\mu(u)=\lambda\cdot\omega(u), \quad \lambda>0,
		$$
		for all disks with corners $u$.
	\end{itemize}  
\end{dfn}
\begin{rmk}
Since our goal here is only to construct a well-defined potential function and not to define Floer cohomology (except in the strongly exact case), we leave aside the more involved conditions that would be necessary to impose when the ambient manifold $X$ is not simply connected.
\end{rmk}

\begin{rmk}
	\label{rmk:monot_2}
	In view of Lemma~\ref{lem:const_mu}, $\dim L=2$ is the only case when monotone Lagrangian immersions (with at least one double point) can exist.
\end{rmk}

We next consider the exact case. Let $(X,\omega)$ be an exact symplectic manifold, $\omega=d\lambda$. If $L$ is an exact Lagrangian submanifold of $X$, any holomorphic disk with boundary on $L$ has zero area and must therefore be constant. For immersed Lagrangian submanifolds, exactness does not preclude non-constant disks with corners. Our next notion of strong exactness does. 

\begin{dfn}
	\label{dfn:exact}
	An immersion $f\colon Q\to X$, where $(X,d\lambda)$ is an exact manifold is \emph{strongly exact} if $f^{\ast}\lambda$ admits a primitive function that assumes the same value at the two points $\xi_{1}$ and $\xi_{2}$ in the preimage $f^{-1}(\xi)$ for any double point $\xi$ of $L=f(Q)$.
\end{dfn}

The following is a consequence of Lemma~\ref{lem:mu_res}.

\begin{lem}
	\label{lem:resolv_monot}
	Suppose that $L$ is a monotone (resp.~strongly exact) immersed Lagrangian surface. Then its resolutions $ L_p, L_q$ at any positive double point  are monotone (resp.~strongly exact).
\end{lem}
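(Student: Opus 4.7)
The plan is to reduce both monotonicity and strong exactness of $L_p$ and $L_q$ to the corresponding properties of $L$ via an area- and Maslov-index-preserving correspondence between topological disks on the resolution and topological disks with corners on $L$. The key input is Lemma~\ref{lem:mu_res}: for surfaces ($n=2$) the correction term $(n-2)b$ vanishes, so smoothing literally preserves the Maslov index.

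For the monotone case, given any topological disk $v\colon(D,\partial D)\to(X, L_p)$, I would construct an associated topological disk with corners $u\colon(D,\partial D)\to(X,L)$ by collapsing the surgery neck of $L_p$ back to the double point $\xi$. Each time the boundary $\partial v$ traverses the neck in the orientation dictated by the $p$-surgery, the corresponding point of $\partial u$ becomes a $p$-corner at $\xi$, while $u$ agrees with $v$ outside a small neighborhood of $\xi$. Because the surgery is performed with zero area parameter, the Lagrangian handle swapped in has vanishing symplectic area relative to the two original sheets, so $\omega(u)=\omega(v)$, and Lemma~\ref{lem:mu_res} gives $\mu(u)=\mu(v)$. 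Both monotonicity conditions of Definition~\ref{dfn:monot} then transfer verbatim: $\mu(v)\ne 1$ and $\mu(v)=\lambda\,\omega(v)$ hold for every disk $v$ on $L_p$ because they hold for the associated $u$ on $L$. The argument for $L_q$ is identical with $q$-corners in place of $p$-corners.

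For the strongly exact case, I would start with a primitive $g\colon Q\to\R$ of $f^{\ast}\lambda$ satisfying $g(\xi_{1})=g(\xi_{2})$ at the preimages of every double point, and produce a primitive $g_p\colon Q_p\to\R$ of $f_p^{\ast}\lambda$, where $f_p\colon Q_p\to X$ parametrizes $L_p$. Outside a small neighborhood of $\xi$, $L_p$ coincides with $L$ and I set $g_p=g$ there. On the surgery handle, which is Lagrangian, $f_p^{\ast}\lambda$ is closed; zero-area surgery is implemented by the explicit Darboux model in which this handle has vanishing $\lambda$-periods, so $f_p^{\ast}\lambda$ admits a primitive agreeing with $g$ on one of the two boundary circles of the handle, and strong exactness of $L$ at $\xi$ forces agreement on the other boundary circle as well. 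This yields a well-defined $g_p$, and at every remaining double point the preimages lie outside the surgery region, where $g_p=g$, so the matching property is inherited from $L$. The argument for $L_q$ proceeds identically.

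The main point requiring care is verifying that the neck-collapsing construction of the first step yields a bona fide topological disk with corners in the sense of Section~\ref{ssec:maslovindex}, and that the area identity $\omega(u)=\omega(v)$ holds on the nose rather than merely in cohomology. Both facts are routine using the explicit local Darboux model for zero-area Lagrangian surgery from \cite[Section~2, Step~3]{Va17}, in which the surgery handle is an exact Lagrangian disk of vanishing area, so I do not expect a substantive obstacle.
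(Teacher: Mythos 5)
Your proof is correct, and it works, but it goes in the opposite direction from the paper's. The paper smooths disks from $L$ to $L_p$ and then uses the observation that $H_2(X,L_p)$ has rank one higher than $H_2(X,L)$: the extra class is carried by the Maslov index 0, area 0 Lagrangian disk appearing in the local surgery model, so monotonicity is checked separately on that class and on the image of $H_2(X,L)$ under smoothing. You instead collapse the surgery neck, sending an arbitrary disk $v$ on $L_p$ to a disk with corners $u$ on $L$, and carry monotonicity backwards. Both arguments feed on the same two inputs (Lemma~\ref{lem:mu_res} for Maslov indices and the zero-area parameter for areas), but they play complementary roles: your collapse map is the one-sided inverse of the paper's smoothing map, with kernel generated by the extra Lagrangian-disk class, so your approach avoids the explicit rank bookkeeping while the paper's avoids having to make sense of neck-collapsing as a map on disks with corners. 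The technical burden you flag---verifying that neck-collapsing produces a bona fide topological disk with corners, in particular that boundary arcs wrapping the core circle of the handle can be homotoped away (at the cost of a multiple of the zero-area, zero-Maslov class)---is real, and is exactly what the paper's rank-plus-smoothing reformulation is designed to sidestep. Neither route is substantively harder, but the paper's is a bit slicker because the smoothing map is already established by Lemma~\ref{lem:mu_res}, whereas the collapse map requires a separate (if routine) construction.
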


\begin{proof}
	Orientability of the resolution follows from positivity of the double point.
	
	In the first case, one needs to check monotonicity on each class in $H_2(X, L_p)$. This group is of rank one higher than $H_2(X,L)$, and the new class is generated by the Maslov index 0 Lagrangian disk which arises from the local surgery picture. Since it has zero area, the monotonicity condition holds for this disk.
	
	The topological smoothing of a disk in $H_2(X,L)$ to $H_2(X, L_p)$ has the same Maslov index as the original disk by Lemma~\ref{lem:mu_res}. The disks also have the same areas, by the zero area parameter condition for the surgery. Monotonicity follows.
	
	In the strongly exact case, the primitive on $Q$ can be deformed to a primitive on either resolution.
\end{proof}

\subsection{Number of corners, asymptotic winding, area}
Let $Q$ be a surface and $f\co Q\to X$ a Lagrangian immersion.
We begin our study of holomorphic disks on immersed Lagrangian surfaces $L\subset X$. In order to simplify the analysis at the double points we restrict attention to \emph{admissible} tame almost complex structures $J$ on $X$ that satisfy the following conditions. Every double point $\xi$ of $L$ has a Darboux $\epsilon$-ball neighborhood $B_\epsilon \subset \C^2$ for some $\epsilon>0$ in which $J$ agrees with the standard complex structure on $\C^{2}$ and $L$ agrees with the union $(\R^{2} \cup i \R^2) \cap B_\epsilon$. Note that admissible almost complex structures form a contractible space. 

We find an a priori estimate on the number of corners of a holomorphic disk with boundary on $L$ in terms of its area. An analogous result in the exact case for curves with additional punctures at infinity was established in \cite[Theorem 1.2]{CEL10}. We use a similar proof and therefore recall the setup and notation from \cite{CEL10}, starting with the notion of winding numbers at punctures.

Let $u\colon (D,\partial D)\to (X,L)$ be a holomorphic disk with corners. Recall that we use admissible almost complex structures, i.e.~$J$ and $L$ are standard near the double points. A marked point $\eta\in \del D$ mapping to a double point of $L$ is called a \emph{puncture}, regardless of whether or not it is a switch. For  any puncture of a disk $u$, there are half-strip coordinates $s+it\in[0,\infty)\times[0,1]$ around $\eta$ where $u$ admits Fourier expansion of the form 
\[ 
u(s+it) = \sum_{k>0} c_{k} e^{-k\pi(s+it)},
\]
where $c_{k}\in\R^{2}$ or $c_{k}\in i\R^{2}$, and where $k$ ranges over $\frac12 +\Z_{\ge 0}$ if the puncture is switching and over $\Z_{\ge 1}$ if it is non-switching. Alternatively, one can use a punctured half-disk neighborhood of the origin in $\C$ as coordinates in a neighborhood of the puncture. Then the corresponding expansion is a Taylor expansion
\[ 
u(z) = \sum_{k>0} c_{k} z^{k}.
\]
Following \cite{CELN},  the exponent $k_{0}>0$ of the first non-vanishing Fourier coefficient is called the \emph{asymptotic winding number} of $u$ at $\eta$.

\begin{lem}\label{l:area}
	There exists a constant $C>0$ such that if $u\colon (D,\partial D)\to (X,L)$ is a holomorphic disk with punctures $\eta_{1},\dots,\eta_{m}$ of winding numbers $w_{1},\dots,w_{m}$ then
	\[ 
	\mathrm{Area}(u)=\int_{u}\omega \ > \ C\sum_{j=1}^{m} w_{j}.
	\] 
	In particular, the number of corners of a disk is uniformly bounded in terms of its area.
\end{lem}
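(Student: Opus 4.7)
The strategy is to bound the area that $u$ contributes to a small Darboux ball about each double point of $L$ from below by a universal constant times the winding number of the puncture mapping there, and then sum. Since the double points of $L$ are isolated, I fix a uniform $\epsilon>0$ such that the admissible Darboux balls $B_\epsilon(\xi_j)\subset X$ around the double points $\xi_j$ are pairwise disjoint, with $J=J_{\mathrm{std}}$ and $L\cap B_\epsilon(\xi_j)=(\R^2\cup i\R^2)\cap B_\epsilon$ in each.

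The key local estimate I aim to establish is
\[
\mathrm{Area}\bigl(u\cap B_\epsilon(\xi_j)\bigr)\ \geq\ C_0\,w_j\,\epsilon^2
\]
for a universal $C_0>0$. I would derive it by combining the monotonicity formula for $J$-holomorphic curves with Lagrangian boundary (in the form of Sikorav or Hummel) with an identification of the local density of $u$ at $\xi_j$ as $C_0\,w_j$. The first ingredient is the usual fact that $r\mapsto r^{-2}\mathrm{Area}(u\cap B_r(\xi_j))$ is nondecreasing on $(0,\epsilon]$. The density identification follows from a direct Fourier/Stokes computation on the domain side: using that both $\R^2$ and $i\R^2$ are exact for the standard Liouville primitive $\lambda_{\mathrm{std}}=\tfrac{1}{2}\sum(x_i\,dy_i-y_i\,dx_i)$, on any domain half-disk $H(r)=\{|z|\leq r,\,\Im z\geq 0\}$ whose image lies in $B_\epsilon(\xi_j)$,
\[
\mathrm{Area}(u|_{H(r)})=\int_{C_r}u^{\ast}\lambda_{\mathrm{std}} = \frac{\pi}{2}\sum_{k\ge w_j}k\,r^{2k}|c_k|^2,
\]
where $C_r$ is the semicircle $|z|=r$ and $u(z)=\sum_{k\ge w_j}c_k z^k$ is the local Fourier expansion (the analogous formula with half-integer exponents handles switching punctures). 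This matches to leading order the area of the standard $w_j$-fold branched cover $z\mapsto c_{w_j}z^{w_j}$, which has Lelong density proportional to $w_j$ at the origin and thereby pins down $C_0$.

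Summing the local bound over the pairwise disjoint balls yields $\mathrm{Area}(u)\geq C_0\epsilon^2\sum_j w_j$, so the choice $C=C_0\epsilon^2$ works. The final assertion that the number of corners is bounded by the area then follows because each corner is a switching puncture contributing $w_j\geq\tfrac12$.

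\textbf{Main difficulty.} The delicate point is that near a corner the boundary of $u$ jumps between the two sheets $\R^2$ and $i\R^2$, which meet cleanly at $\xi_j$ but do not form a smooth Lagrangian submanifold. The classical monotonicity argument (based on the differential inequality $\phi'(r)\geq 2\phi(r)/r$ for $\phi(r)=\mathrm{Area}(u\cap B_r(\xi_j))$) nonetheless applies, since each sheet is exact for $\lambda_{\mathrm{std}}$ and Stokes' theorem can be performed sheet by sheet; the Fourier computation above makes this concrete and avoids appealing to a black-box monotonicity result at the singular Lagrangian.
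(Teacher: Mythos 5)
Your overall plan — localize in Darboux balls around the double points and bound the area contribution from below by the winding number via a density/monotonicity argument — is the same as the paper's, but the execution of the monotonicity step has a genuine gap, which is exactly where the paper introduces an idea your sketch is missing.

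The difficulty you flag (the boundary condition $\R^2\cup i\R^2$ is a singular Lagrangian, so the Sikorav/Hummel boundary monotonicity does not directly apply) is real, but your proposed workaround does not close it. The Fourier/Stokes identity you write computes $\int_{H(r)}u^*\omega$ over a \emph{domain} half-disk, whereas the monotone quantity in the monotonicity formula is the area of the curve inside the \emph{target} ball $B_r(\xi_j)$, normalized by $r^2$. These are not interchangeable. To see this concretely, take the leading model of a switching puncture of winding $\tfrac12$, $u(z)=c z^{1/2}$: then $\int_{H(r)}u^*\omega=\tfrac{\pi}{4}|c|^2 r$, so the domain-normalized quantity $r^{-2}\int_{H(r)}u^*\omega$ blows up as $r\to 0$ rather than converging to a density; meanwhile the target-ball quantity $\rho^{-2}\mathrm{Area}\bigl(u(H)\cap B_\rho\bigr)$ is in fact constant. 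Your Stokes computation therefore does not by itself establish the nondecreasing density $r\mapsto r^{-2}\mathrm{Area}(u\cap B_r)$, nor does it pin down a universal $C_0$ independent of the Fourier coefficients $|c_k|$: the identity $\int_{H(r)}u^*\omega=\tfrac{\pi}{2}\sum_k k\,r^{2k}|c_k|^2$ has no lower bound in terms of $w_j$ alone without a further argument relating $r$, $|c_k|$, and the radius $\epsilon$ at which $u$ exits the Darboux ball.

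The paper's resolution is a double Schwartz reflection, which you should incorporate. Taking $\Omega_r^j$ a component of $u^{-1}(B_r(\xi_j))$ meeting $\partial D$, one reflects $u|_{\Omega_r^j}$ first across $\R^2$ and then across $i\R^2$; this produces a holomorphic curve $C$ inside $B_r$ with boundary only on $\partial B_r$ and no Lagrangian boundary at all. One can then apply the classical monotonicity formula for minimal surfaces (no boundary inside the ball), read off the Lelong density at the origin from the leading Taylor coefficients — the quadrupling from two reflections turns the half-disk density $\tfrac12\sum w_j$ into $2\sum w_j$ — and recover the uniform lower bound $\mathrm{Area}(u|_{\Omega_r^j})\geq\tfrac12\bigl(\sum w_j\bigr)\pi r^2$ at a fixed generic $r\in(\tfrac{1}{2}\epsilon,\epsilon)$. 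This sidesteps both the singular-boundary issue and the domain-vs.-target mismatch in your sketch.
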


\begin{proof}
	The Lagrangian and the almost complex structure are standard on $B_{\epsilon}$, a Darboux ball around any double point of $L$.	
	Consider $\Omega_{r}=u^{-1}(B_{r})$ for a generic $r\in(\frac12\epsilon,\epsilon)$. Let $\Omega^{j}_{r}$ be a component of $\Omega_{r}$ such that $\Omega_{r}^{j}\cap\partial D= S\ne \varnothing$. One applies Schwartz reflection to $u|_{\Omega_{r}^{j}}$, first over $\R^{2}$ and then over $i\R^{2}$. The result is a holomorphic curve $C$ with boundary in $\partial B_{r}$. Holomorphic curves are minimal surfaces, and the monotonicity formula for minimal surfaces \cite[\S 17]{Simon} implies that the function
	\[ 
	\rho(r) = \frac{\mathrm{Area}(C)\cap B_{r}}{\pi r^{2}}
	\]
	is non-decreasing in $r$. Assume that $\Omega_{r}^{j}$ contains punctures $\eta_{1},\dots,\eta_{s}$ of weights $w_{1},\dots, w_{s}$. Then the Fourier (or Taylor) expansion says that
	\[ 
	\lim_{r\to 0}\rho(r)\ge 2\sum_{j=1}^{s} w_{j}. 
	\]      
	Together with the monotonicity formula it implies that
	\[ 
	\mathrm{Area}(u|_{\Omega_{j}^{r}})\ge \frac12 \left(\sum_{j=1}^{s} w_{j}\right) \pi r^{2}.
	\]
	Since every puncture is contained in some $\Omega_{r}^{j}$, the lemma follows.
\end{proof}

\subsection{Gromov compactness for $\M(\zeta)$}\label{s:gromovcompact}
Let $f\co Q\to X$ be a monotone Lagrangian immersion.
Let $\M(\zeta)$ be the moduli space of holomorphic disks with boundary on $L$ (possibly with corners), of Maslov index $2$, and whose boundary passes through a specified point $\zeta\in L$.

Let $L_{k}\subset X$, $k=1,2,\dots$ be a sequence of monotone immersed Lagrangians such that $L_{k}\to L$ as $k\to\infty$, let $\zeta_{k}\in L_{k}$ be points so that $\zeta_{k}\to\zeta$ as $k\to\infty$, let $J_{k}$ be a sequence of almost complex structures admissible with respect to $L_{k}$ such that $J_{k}\to J$ as $k\to\infty$, and let $u_{k}$ be a sequence of $J_{k}$-holomorphic disks with boundary on $L_{k}$ and passing through $\zeta_{k}$.

\begin{lem}\label{l:Gromov1}
	Assume that $\zeta$ is not a double point of $L$.	
	The sequence of holomorphic disks $u_{k}$ has a subsequence that Gromov converges as $k\to\infty$ to a stable holomorphic disk $u$ such that exactly one component $u^{0}$ of $u$ is non-constant, has $\mu(u^{0})=2$ and passes through $\zeta$, and all other components are constant maps to double points of $L$. 
\end{lem}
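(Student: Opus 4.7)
The overall strategy is: first extract a Gromov limit using an area bound that comes from monotonicity and the Maslov $2$ condition, and then use an index-counting argument, together with Lemma \ref{lem:const_mu} and monotonicity, to force the limit to have the claimed structure.

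The plan is to start with the area bound. Monotonicity gives $\omega(u_k) = 2/\lambda_k$, and since $\lambda_k \to \lambda > 0$, we obtain a uniform area bound on the sequence $u_k$. Combined with Lemma \ref{l:area}, this also gives a uniform bound on the number of corners (and on their asymptotic winding numbers), so only finitely many combinatorial corner types can appear. Admissibility of the $J_k$ allows the Schwartz reflection trick from the proof of Lemma \ref{l:area}: near each double point the curves lift to genuine holomorphic curves in a standard ball, so standard Gromov compactness (together with bubbling analysis near boundary punctures) applies. Passing to a subsequence, we obtain Gromov convergence to a stable nodal disk $u=u^0\cup u^1\cup\dots\cup u^N$ whose components are $J$-holomorphic disks with corners on $L$, possibly with attached holomorphic sphere bubbles.

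Next I would set up the Maslov index bookkeeping. By additivity of the Maslov index under Gromov convergence (extended to the immersed setting via the canonical short path convention in Section \ref{ssec:maslovindex}),
\[
\sum_i \mu(u^i) + 2\sum_j c_1(X)\cdot[v^j] = 2,
\]
where $v^j$ are the sphere bubbles. Now classify the components. Any non-constant disk component $u^i$ has positive area and monotonicity forces $\mu(u^i) \in 2\Z_{>0}$, so $\mu(u^i)\geq 2$. By Lemma \ref{lem:const_mu} applied with $n=2$, a constant disk mapped to a double point has Maslov index $0$ regardless of its number of switches. And monotonicity of $X$ implies that every non-constant holomorphic sphere contributes $2c_1\geq 2$ to the budget on the left-hand side. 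Since the total is exactly $2$, there can be no sphere bubbles and there is exactly one non-constant disk component $u^0$, with $\mu(u^0)=2$; all remaining disk components must be constant maps, and because the only stable constant disks in a Gromov limit carry either marked points or corners (switches), they must be constant at double points of $L$.

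Finally, I would pin down which component passes through $\zeta$. Since $\zeta_k\in u_k(\partial D)$ and $\zeta_k \to \zeta$, the point $\zeta$ lies in the image of the limit $u$. Because $\zeta$ is not a double point of $L$, $\zeta$ cannot lie in the image of any of the constant ghost components (which map to double points), so it must lie on the boundary of $u^0$, as claimed. The main technical obstacle in the above plan is ensuring that the standard Gromov compactness machinery applies uniformly near the double points in the presence of corners and varying Lagrangians $L_k$; this is handled by admissibility of the $J_k$ (which makes $L_k$ and $J_k$ converge to the standard local model near each double point) together with the Schwartz reflection argument, reducing the local analysis to Gromov compactness for closed holomorphic curves in a ball.
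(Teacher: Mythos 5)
Your proof is correct and takes essentially the same approach as the paper's: bound the number of corners via Lemma~\ref{l:area}, apply Gromov compactness for immersed Lagrangians, then use additivity of $\mu$ together with monotonicity to force exactly one non-constant component of Maslov index~$2$. You spell out a few details the paper leaves implicit (the Schwartz reflection argument near double points, the explicit use of Lemma~\ref{lem:const_mu} to dispatch constant components, and the separate accounting for sphere bubbles, which the paper's ``stable disks $u^0,\dots,u^r$'' notation elides), but these are elaborations rather than a different route.
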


\begin{proof}
	By Lemma \ref{l:area}, the maps in the sequence have a uniformly bounded number of corners.	
	Gromov convergence is well known in the setting of immersed Lagrangian submanifolds, see e.g.~\cite{FO3Book, EES}. Let $u^{0},\dots, u^{r}$ be the stable disks in the limit. Since the Maslov index $\mu$ is additive we find that
	\[ 
	2=\mu(u^{0})+\dots+\mu(u^{r}).
	\]
	By monotonicity, $\mu(v)\ge 2$ if $v$ is non-constant. It follows that exactly one of the disks in the limit is non-constant and passes through $\zeta$. We choose notation so that it is $u^{0}$. Then $\mu(u^{0})=2$. 
\end{proof}

We discuss the limiting constant components from Lemma~\ref{l:Gromov1} in greater detail. Consider a sequence of holomorphic disks as above that Gromov converges to a broken holomorphic disk. The domain of the disks in the sequence is the unit disk with boundary punctures, and the bubbling of a constant disk  corresponds to two or more of these boundary punctures colliding. It is possible to retain  information about asymptotic winding numbers in the limit. To this end, one  defines a constant disk as follows. 

A \emph{constant disk} is a constant map to a double point of $L$ from a disk $D_{m}$ declared to have any number $m\ge 2$ of punctures. One puncture is distinguished and called \emph{positive}, and all other punctures are called \emph{negative}. Each boundary component of $D_m$ is decorated by a sheet $\ell_{j}$, $j=1,2$ of $L$ at the double point, and each negative puncture is decorated with an asymptotic winding number which is a half-integer or an integer according to whether or not the sheet decoration switches at the puncture. This leads to the following refined version of Lemma~\ref{l:Gromov1}.

\begin{lem}\label{l:Gromov2}
	Consider the situation of Lemma \ref{l:Gromov1}. If $\eta$ is a puncture of the limiting non-constant disk $u^0$ to which a (stable) constant disk is attached, the winding number at $\eta$ equals the sum of the winding numbers at the negative punctures of the attached  constant disk.	
\end{lem}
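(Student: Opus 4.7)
The strategy is to realize the asymptotic winding at each puncture as the local order of vanishing of a single auxiliary holomorphic function, combine the argument principle with the $C^0$-convergence provided by Gromov compactness, and rule out absorbed interior contributions using the monotonicity/Maslov bookkeeping that underlies Lemma~\ref{l:Gromov1}.

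Work in the Darboux chart $B_\epsilon\subset\C^2$ around $\xi$ where $J=J_0$ and $L\cap B_\epsilon=\R^2\cup i\R^2$. For a generic $\alpha\in\C\setminus(\R\cup i\R)$, the complex line $H_\alpha=\{z_2=\alpha z_1\}$ meets $L\cap B_\epsilon$ only at $\xi$. For any $J_0$-holomorphic disk $v$ with boundary on $L$, the pullback $\phi_\alpha\circ v:=v_2-\alpha v_1$ is holomorphic and vanishes precisely on $v^{-1}(H_\alpha)$. By the Fourier/Taylor expansion $v(z)=cz^w+O(z^{w+1})$ at a puncture of winding $w$, a generic choice of $\alpha$ ensures $c_2-\alpha c_1\ne 0$, so $\phi_\alpha\circ v$ has a boundary zero of order exactly $w$ at the puncture.

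Choose a small half-disk neighborhood $V\subset D$ of $\eta$ so that $u^0(\b V\setminus\{\eta\})\subset B_\epsilon$ and $\phi_\alpha\circ u^0$ is non-vanishing on $\b V\setminus\{\eta\}$. Gromov convergence produces matching $V_k\subset D$ converging to $V$ and containing precisely the colliding punctures $\eta_{1,k},\dots,\eta_{m,k}$, with $u_k\to u^0$ uniformly on $\b V_k$ away from small shrinking neighborhoods of the $\eta_{j,k}$. An indented-contour application of the argument principle (or equivalently Schwarz doubling across $\partial D$) shows that each boundary puncture of winding $w$ contributes $w$ to the indented winding of $\phi_\alpha\circ v$, while each interior intersection of $v$ with $H_\alpha$ contributes a positive integer by positivity of intersections with a complex submanifold. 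By uniform convergence on the outer portion of the contour (where $\phi_\alpha\circ u^0$ stays bounded away from zero), the total indented winding for $u_k$ on $V_k$ converges to that for $u^0$ on $V$. Since the latter equals $w$ with no interior contribution, one obtains
\[
\sum_{j=1}^m w_{j,k}+N_{\mathrm{int}}(u_k,V_k)=w
\]
for all large $k$, where $N_{\mathrm{int}}(u_k,V_k)\ge 0$ is the total interior intersection count of $u_k$ with $H_\alpha$ in $V_k$.

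The main remaining obstacle is to show $N_{\mathrm{int}}(u_k,V_k)=0$. A positive count for infinitely many $k$ would, after a soft domain-rescaling near the bubble point, extract a non-trivial holomorphic disk $v^\infty$ with boundary on the cone $\R^2\cup i\R^2$ meeting $H_\alpha$ in its interior. Such $v^\infty$ has positive symplectic area, and by the monotonicity condition (which passes to the scale-invariant cone model in the chart) its Maslov index is at least~$2$. This contradicts the Maslov bookkeeping that underlies Lemma~\ref{l:Gromov1}, where the full Maslov index~$2$ of $u_k$ is accounted for by $u^0$ together with constant bubbles only. Hence $N_{\mathrm{int}}(u_k,V_k)=0$ for $k$ large, and the displayed identity yields $\sum_j w_{j,k}=w$, as claimed. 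The principal difficulty is thus this soft-rescaling exclusion of hidden non-trivial bubbles at the cone singularity, which is essentially a local SFT-type compactness statement made possible by monotonicity just as in the proof of Lemma~\ref{l:Gromov1}.
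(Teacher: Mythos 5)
Your strategy mirrors the paper's: reduce to the argument principle applied to a holomorphic projection of the disk to $\C$ near the bubble point, then compare $u_k$ on a small half-disk $V_k$ with $u^0$ on $V$. The main difference is your choice of auxiliary function $\phi_\alpha$ with $\alpha\notin\R\cup i\R$ (so $H_\alpha\cap L=\{0\}$), as opposed to the paper's generic projection to the complexification of a real line (so $\pi(L)\subset\R\cup i\R$). Your choice has the advantage that the boundary zeros of $\phi_\alpha\circ u$ occur only at punctures; on the other hand its boundary values are not totally real, so the parenthetical remark that one may \emph{equivalently} use Schwarz doubling across $\partial D$ does not apply to $\phi_\alpha$.

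The genuine gap is the step asserting $N_{\mathrm{int}}(u_k,V_k)=0$, which is the crux of the equality. You argue that interior intersections escaping to the bubble point would, under a ``soft domain-rescaling,'' produce a non-constant holomorphic disk $v^\infty$ with boundary on the cone $\R^2\cup i\R^2$, with positive area and hence Maslov index at least $2$ by monotonicity, contradicting Lemma~\ref{l:Gromov1}. But the local cone model $(\C^2,\R^2\cup i\R^2)$ is \emph{strongly exact}: $\lambda|_{\R^2\cup i\R^2}=0$, so by Stokes any \emph{compact} holomorphic disk with corners on $\R^2\cup i\R^2$ has zero symplectic area and is constant --- this is precisely why the bubbles in Lemma~\ref{l:Gromov1} are constant disks. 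A domain-only rescaling of a zero-energy sequence of maps thus yields a constant limit, not a positive-area disk. To extract a non-constant $v^\infty$ one would have to rescale the target as well, at which point the Maslov/monotonicity framing (and the comparison with the global Maslov budget of Lemma~\ref{l:Gromov1}) no longer applies as stated. So the reduction of the interior-intersection problem to ``hidden non-constant bubbles violate Lemma~\ref{l:Gromov1}'' does not go through, and the equality $w=\sum_j w_{j,k}$ is not established by your argument. A correct treatment must rule out escaping interior intersections by different means, e.g.\ by passing to the cylindrical (SFT) picture near the double point and using that the lower levels are branched covers of Reeb-chord strips which, for generic $\alpha$, are disjoint from $\R\times\gamma_\alpha$.
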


\begin{proof}
	For sufficiently large $k$ there is a small half-disk in the domain of $u_{k}$ which contains all punctures converging to the constant disk in the limit, and which maps into a neighborhood of the double point. Picking a generic projection to the complexification of a real line through the origin, the statement follows from the argument principle from elementary complex analysis. 
\end{proof}

\subsection{Transversality for $\M_{S}(\zeta)$}\label{s:tv}
In this section we show that any holomorphic disk in $\M(\zeta)$ is somewhere injective. This is used to prove that $\M(\zeta)$ is transversely cut out for a generic admissible almost complex structure, and that the same result holds for families. We use a crude version of an argument of Lazzarini \cite{La00}. 

\begin{lem}\label{l:somewhereinj}
	Let $u\colon (D,\partial D)\to (X,L)$ be a Maslov index~2 holomorphic disk (with corners). Then $u$ is somewhere injective.
\end{lem}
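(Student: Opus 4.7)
The plan is to argue by contradiction using the crude Lazzarini factorization suggested by the authors' citation of \cite{La00}. Suppose $u\co (D,\partial D)\to (X,L)$ is a Maslov index~2 holomorphic disk with corners that fails to be somewhere injective. Then the set $S\subset D$ of points where $u$ is injective and immersive is open in the complement of the finite set of corner punctures, interior critical points, and preimages of double points of $L$. Since $u$ is holomorphic, if $S$ is nonempty it is dense, so our assumption forces $S$ to be empty, and one can extract a well-defined generic fibre cardinality $k\ge 2$ for $u$.

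From this generic fibre structure I would extract a somewhere-injective holomorphic disk with corners $v\co (D,\partial D)\to (X,L)$ and a holomorphic branched cover $\phi\co D\to D$ of degree $k$ such that $u=v\circ \phi$. The key point is to verify that the factorization is compatible with the corner structure: $\phi$ sends $\partial D$ to $\partial D$, and the corners of $u$ (punctures asymptotic to double points where the sheet label switches between $\ell_1$ and $\ell_2$) correspond precisely to the preimages under $\phi$ of corners of $v$, with matching $p$-switch vs $q$-switch type. This compatibility uses admissibility of $J$ near double points together with the Fourier expansions recalled before Lemma~\ref{l:area}, which show that each corner of $u$ is modelled locally by a holomorphic map into a standard Darboux chart and thus descends to a corner of $v$ of the same sheet-switch type.

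Granted the factorization, both area and Maslov index are multiplicative: clearly $\omega(u)=k\,\omega(v)$, and since $\phi$ has degree $k$ on $\partial D$ and multiplies the corner count by $k$, formula \eqref{eq:maslovimmersed} gives $\mu(u)=k\,\mu(v)$. Applying monotonicity (Definition~\ref{dfn:monot}) to the non-constant simple disk $v$, we have $\mu(v)\ge 2$, hence $\mu(u)\ge 2k\ge 4$, contradicting $\mu(u)=2$. This forces $u$ to be somewhere injective.

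The main obstacle is the Lazzarini-type factorization itself in the immersed setting: producing $\phi$ globally despite the possibility that the image of $u$ passes multiple times through double points of $L$ with various sheet orderings, and ensuring that the switching data at the punctures of $u$ is consistently inherited from $v$ rather than created by $\phi$ itself. I expect this to follow from the local model at each double point combined with the standard analytic continuation / argument-principle input used in \cite{La00}, but writing out the sheet bookkeeping carefully is the delicate part.
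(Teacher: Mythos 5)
Your proposal takes a genuinely different route from the paper's, and the factorization at its heart is precisely where the argument breaks down. You want to write a nowhere-injective Maslov index~2 disk as $u=v\circ\phi$ with $v$ somewhere injective and $\phi\colon D\to D$ a degree-$k$ branched cover, so that $\mu(u)=k\mu(v)\ge 4$. But that global factorization is exactly what \emph{fails} for holomorphic disks, as opposed to closed curves; this is the whole reason Lazzarini's analysis in \cite{La00} is subtle. A nowhere-injective disk need not have constant covering multiplicity over its image (the paper's proof explicitly works with the multiplicity stratification $C=C_1+2C_2+\dots+mC_m$ in which several $C_j$ can be nonempty simultaneously), so no single ``generic fibre cardinality $k$'' exists. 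What Lazzarini actually gives is the weaker conclusion that the image of $u$ \emph{contains} a somewhere-injective disk, or in the refined form a ``graph'' decomposition of $u(D)$ into simple pieces with varying multiplicities; neither yields the clean multiplicativity $\mu(u)=k\mu(v)$ you rely on. The ``sheet bookkeeping at double points'' you flag is not the main difficulty; the obstruction to the covering factorization is global, already in the embedded case.

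The paper sidesteps factorization altogether with a calibration/area-minimization argument: monotonicity gives a uniform positive lower bound on the area of any nonconstant holomorphic disk with corners on $L$, so the Maslov~2 disk $u$ is absolutely area-minimizing among holomorphic integral currents with boundary on $L$. Decomposing $C=u(D)$ by multiplicity, nowhere injectivity forces $C_1=\varnothing$; one checks (via Federer's refinement of Sard and the monotonicity formula for minimal surfaces) that the top-multiplicity piece $C_m$ is itself an integral current with boundary in $L$, and then $C-C_m$ is a nonzero holomorphic current of strictly smaller area than $C$, contradicting minimality. No covering theory enters. If you want to pursue your direction instead, the honest input is Perrier's extension \cite{Pe18} of the Lazzarini decomposition to immersed Lagrangian boundary conditions, but even that produces a weighted decomposition $\omega(u)=\sum m_i\omega(v_i)$ rather than a single branched cover, and the Maslov-index bookkeeping across the pieces is genuinely more involved than what you sketch.
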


\begin{proof}
	By monotonicity, $u$ is absolutely area minimizing among holomorphic integral currents with boundary on $L$. Consider the image $C=u(D)$ as a rectifiable current. By unique continuation for holomorphic maps, $C$ can be written as
	\begin{equation}\label{eq:integralcurrentdecomp} 
	C= C_{1}+ 2C_{2} +\dots + mC_{m},
	\end{equation}  
	where $C_{j}$ is the subset of $C$ where the multiplicity equals $j$. It is clear that each $C_{j}$ is a holomorphic rectifiable current. 
	
	Assume that $u$ is nowhere injective. Then $C_{1}=\varnothing$ in \eqref{eq:integralcurrentdecomp}. We show below that $C_{m}$ is an integral current, i.e.~that $\partial C_{m}$ is rectifiable and contained in $L$. Then $C'=C-C_m$ is an integral current with boundary in $L$ of area smaller than $C$, which contradicts minimality and implies that $C_{1}$ is non-empty.
	
	We first show that the boundary of $C_{m}$ is contained in $L$. Assume that $p\notin L$ lies in the support of $C_{m}$. By Federer's refinement of Sard's theorem \cite[Theorem 3.4.3]{Federer} we may assume that $C_{m}$ is a smooth 2-dimensional submanifold around $p$. By unique continuation it follows that there is a disk $\Delta$ centered at a point in $u^{-1}(p)$ such $u(\Delta)$ lies in the support of $C_{m}$. Hence $p$ does not lie on the boundary of $C_{m}$ and its boundary is contained in $L$. 
	
	Consider a point $\xi\in L$ on the boundary of $C_{m}$. Again by the refinement of Sard's theorem,  one may assume that $\xi=u(z)$ is a regular value of $u|_{\partial D}$. By unique continuation, the boundary of $C_{m}$ consists of an arc in $L$ around $\xi$. It follows by monotonicity that the total length of the boundary arcs in the boundary of $C_m$ is finite. Thus $\partial C_{m}$ is a rectifiable curve in $L$ and hence $C_{m}$ is an integral current. The lemma follows.      
\end{proof}

We next establish the transversality result we need. Consider a smooth $s$-parametric family $(L_{s},\zeta_{s},J_{s})$, $s\in S$, of monotone Lagrangian immersions $L_{s}$, points $\zeta_{s}\in L_{s}$, and admissible almost complex structures $J_{s}$. For each $s\in S$ this defines a moduli space $\M_{s}(\zeta_{s})$ of $J_{s}$-holomorphic Maslov index 2 disks with boundary on $L_{s}$. If $u\in \M_{s}(\zeta_{s})$, then $u$ has a finite number of boundary punctures $\eta_{1},\dots,\eta_{m}$ mapping to double points of $L_{s}$. At each puncture there is an asymptotic winding number $w_{m}$. We write $\M_{s}(\zeta_{s};\mathbf{w})$ for the space of holomorphic disks of Maslov index 2 with boundary punctures with asymptotic winding numbers $\mathbf{w}=(w_{1},\dots,w_{m})$. Denote 
\[ 
|\mathbf{w}|=\sum_{j=1}^{m}2|w_{j}-\tfrac12|.
\] 
\begin{lem}\label{l:tv}
	The formal dimension of $\M(\zeta_{s};\mathbf{w})$ equals
	\[ 
	\dim\M(\zeta_{s};\mathbf{w})=-|\mathbf{w}|.
	\]
	For a generic family $S$,
	\[ 
	\M_{S}(\zeta;\mathbf{w})=\bigcup_{s\in S}\M_{s}(\zeta_{s};\mathbf{w})
	\]
	is a $C^{1}$-smooth and transversely cut out manifold of dimension 
	\[ 
	\dim \M_{S}(\zeta;\mathbf{w})=
	\dim S-|\mathbf{w}|.
	\]
\end{lem}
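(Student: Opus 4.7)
The plan is to combine an index computation for the linearized Cauchy-Riemann operator on an appropriate weighted Sobolev space with a Sard-Smale transversality argument, using Lemma \ref{l:somewhereinj} to get around the rigidity of admissible $J$ near the double points. For the dimension, one starts from \eqref{eq:dimmaslov}: disks in the fixed homotopy class with the prescribed switches have expected dimension $(n-3)+\mu(u)=1$ for $n=2$ and $\mu(u)=2$. The requirement that $\del u$ pass through $\zeta$ kills a further $n-1=1$ dimensions, giving $\dim \M(\zeta_s)=0$ when every puncture is a switch of generic winding $\tfrac12$. Adding a non-switching puncture pins an additional boundary marked point to a double point, costing another codimension $1$. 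Prescribing non-generic winding $w$ at a puncture forces the leading Fourier coefficients to vanish: at a switch, $c_{1/2},\dots,c_{w-1}$ (a total of $w-\tfrac12$ coefficients, each an $\R^2$-valued vector), contributing codimension $2(w-\tfrac12)$; at a non-switch, $c_1,\dots,c_{w-1}$ ($w-1$ coefficients), contributing $2(w-1)$ on top of the basic $1$. Both cases yield $2|w-\tfrac12|$ per puncture, so summing gives $\dim \M(\zeta_s;\mathbf{w})=-|\mathbf{w}|$, and the family version adds $\dim S$.

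For transversality I would introduce the universal moduli space $\M_{\mJ,S}(\zeta;\mathbf{w})=\{(s,J,u):u\in\M_{s,J}(\zeta_s;\mathbf{w})\}$ with $J$ in a separable Banach manifold of admissible $C^k$-almost complex structures for $k$ large. At any solution $(s,J,u)$, Lemma \ref{l:somewhereinj} provides an injective regular point $z_0\in D$. Because injective regular points are open and dense in the regular part of $D$, while $u^{-1}(L)$ is confined in the interior to the preimage of the Darboux balls $\bigcup_\xi B_\epsilon$ around the double points, one can arrange $z_0\in\mathrm{int}(D)$ with $u(z_0)$ outside every Darboux ball and away from $L$. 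Perturbations of $J$ compactly supported near $u(z_0)$ are then admissible and, by the standard argument (cf.~\cite{FO3Book,EES}), span the cokernel of the linearization of the defining section on the chosen weighted Sobolev space. Hence $\M_{\mJ,S}(\zeta;\mathbf{w})$ is a $C^{k-1}$-Banach submanifold; the projection to $(s,J)$ is Fredholm of index $\dim S-|\mathbf{w}|$, and Sard-Smale produces a comeager set of admissible almost complex structures over which $\M_S(\zeta;\mathbf{w})$ is $C^1$-smooth of the asserted dimension. A standard approximation extends the conclusion to smooth almost complex structures.

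The main technical obstacle is that admissibility freezes $J$ on precisely the Darboux balls in which the asymptotic analysis at the punctures lives, so one cannot perturb $J$ there. This is exactly what the somewhere injectivity of Lemma \ref{l:somewhereinj} circumvents, by letting us place the injective point $z_0$ in the complement. A secondary subtlety is the weighted Sobolev setup encoding $\mathbf{w}$: the exponential weight at each puncture must sit in the spectral gap of the asymptotic operator between the eigenvalue corresponding to $w_j$ and the next admissible winding number, so that the Fredholm index of the linearized operator matches $-|\mathbf{w}|$. This is directly analogous to the framework in \cite{CEL10}.
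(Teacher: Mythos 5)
Your proposal follows the same route as the paper: the weighted Sobolev Fredholm index analysis (index jumping by $2|w-\tfrac12|$ at each puncture as the weight crosses asymptotic eigenvalues, exactly as in \cite[Proposition 6.5]{EES}, \cite{CEL10}) combined with Lemma~\ref{l:somewhereinj} and Sard-Smale. Your codimension accounting for the dimension formula and your identification of the admissibility obstruction (that $J$ is frozen on the Darboux balls, so the injective point must be placed outside them) are precisely the points the paper compresses into "a well-known argument then shows that the moduli space is transversely cut out."

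One step of your justification is imprecise and should be repaired. You assert that "$u^{-1}(L)$ is confined in the interior to the preimage of the Darboux balls"; this is not true in general, since the interior of a holomorphic disk may meet the Lagrangian $L$ at isolated tangency points anywhere. What you actually need is that there exists an injective regular interior point $z_0$ with $u(z_0)$ outside $\bigcup_\xi B_\epsilon$, and the correct argument runs differently: $u^{-1}(L)\cap\mathrm{int}(D)$ is discrete by unique continuation, so it does not obstruct; and if $u(D)$ were entirely contained in $\bigcup_\xi B_\epsilon$ then, by connectedness of the image, $u(D)$ would lie in a \emph{single} Darboux ball, forcing $\omega(u)\leq C\epsilon^2$, which contradicts monotonicity ($\omega(u)=2/\lambda$ is fixed) once $\epsilon$ is taken sufficiently small relative to the monotonicity constant. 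With that fix your argument is complete and equivalent to the paper's.
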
 

\begin{proof}
	Note that the space of disks with asymptotic winding number $w>\frac12$ at $\eta$ can be modeled on a weighted Sobolev space with weight $w-\delta$ for small $\delta>0$. It is well-known that the Fredholm index jumps each time the weight passes through an eigenvalue of the asymptotic operator, see e.g.~ \cite[Proposition 6.5]{EES}. Lemma \ref{l:somewhereinj} implies that any disk in $\M_{s}(\zeta_{s})$ is somewhere injective, and a well-known argument then shows that the moduli space is transversely cut out for generic data. The lemma follows.     
\end{proof}

Note that being transversely cut out in the above lemma in particular means that:
\begin{itemize}
	\item When $|\mathbf{w}|=0$ and $\dim S=0$, the moduli space consists of disks with corners at which the full one-dimensional family of the moduli space evaluates transversely along the boundary to the point $\zeta$.
	\item When $|\mathbf{w}|=1$ and $\dim S=1$, the moduli space consists of disks with corners for which a unique smooth and regular boundary point is mapped to a double point of $L_s$. Furthermore, at a solution in this moduli space, the full two-dimensional family of disks with corners evaluates transversely along the boundary to \emph{both} the double point \emph{and} to $\zeta$. See Figure~\ref{fig:bifurcation}, middle.
\end{itemize}

\subsection{Punctures at double points of an immersion}
\label{subsec:l_o}
In this section we give an alternative interpretation of the curves with corners studied above.
Let 
$f\colon Q\to X$ be a monotone Lagrangian immersion and $L=f(Q)$. Let  $\xi_{1},\dots,\xi_{r}\in X$ be the double points of $L$. Consider an admissible almost complex structure $J$ on $X$, see Section~\ref{sec:corners}. Let $X^{\circ}=X\setminus\{\xi_{1},\dots,\xi_{r}\}$ be the result of puncturing $X$ at all double points of $L$, and $L^{\circ}=L\setminus \{\xi_{1},\dots,\xi_{r}\}$. Then $X^{\circ}$ is a monotone open symplectic manifold with a concave cylindrical end for each $\xi_{j}$. By the admissibility assumption, there are Darboux coordinates near each $\xi_j$ where $L$ corresponds to $\R^{2}\cup i\R^{2}\subset \C^2$, with the double point $\xi_{j}$ at the origin.

These local coordinates give an identification of the concave end of $X^\circ$ at $\xi_{j}$ with the negative half of the symplectization $(-\infty,2\log \epsilon]\times S^{3}$ of the standard contact $S^{3}$ as follows. Choose Darboux coordinates near $\xi_j$ that identifies its neighborhood with an $\epsilon$-ball $B_{\epsilon}\subset \C^{2}$ and consider the symplectomorphism $$\phi\colon ((-\infty,\, 2\log\epsilon]\times S^{3},\, d(e^{t}\alpha))\to (B_{\epsilon} \setminus \{0\},\, \omega_0),$$
given by
\begin{equation}\label{eq:neg_cyl}
\phi(t,y) = e^{t/2}y.
\end{equation}
Here $\alpha=\frac{1}{2}\sum_i(x_idy_i-y_idx_i)$ is the standard $S^1$-invariant contact form on the unit sphere $S^{3}\subset \C^{2}$. Note that $\phi$ takes the punctured Lagrangian $L^{\circ}$ to the cone $(-\infty,2\log\epsilon]\times\Lambda_{\rm st}$, where $\Lambda_{\rm st}$ is the standard Legendrian Hopf link: 
$$
\Lambda_{\rm st}=S^{3}\cap (\R^{2}\cup i\R^{2}).
$$

Let us describe the Reeb chords of $\Lambda_{\rm st}$. They come in $S^{1}$-families that are non-degenerate in the Bott sense. For each positive half-integer $\frac12 k>0$ there are two families of Reeb chords with action $\frac12k\pi$. The chords of integer actions have endpoints on the same connected component of $\Lambda_{\rm st}$, while the chords of non-integer action connect different components. 

Consider the standard cylindrical $S^1$-invariant complex structure on $(-\infty,2\log\epsilon]\times S^3$, which equals the standard integrable complex structure $i$ on $\C^2$ under the identification \eqref{eq:neg_cyl}. Extend it to a tame almost complex structure on all of $X^\circ$.
A \emph{punctured holomorphic disk} in $X^{\circ}$ with boundary on $L^{\circ}$ is a  holomorphic disk with boundary punctures that are asymptotic to Reeb chord strips of $\Lambda_{\rm st}$.
\begin{lem}\label{l:outside=compact}
	There is a natural 1--1 correspondence between punctured  holomorphic disks in $X^{\circ}$ with boundary on $L^{\circ}$, and  holomorphic disks in $X$ with boundary and corners on $L$ in the sense of Section~\ref{ssec:maslovindex}.
\end{lem}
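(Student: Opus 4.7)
The plan is to build the correspondence directly via the symplectomorphism $\phi$ from \eqref{eq:neg_cyl}. The key observation is that with respect to the standard $S^1$-invariant cylindrical almost complex structure on $(-\infty, 2\log\epsilon] \times S^3$ and the standard $J_{\rm std}$ on $B_\epsilon \subset \C^2$, the map $\phi(t,y) = e^{t/2} y$ is a biholomorphism of the open manifolds. Thus restriction and extension across the ends will preserve $J$-holomorphicity, and the issue is purely to match the asymptotic/local behavior near the singular points.

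For the forward direction, let $u\co(D,\del D)\to(X,L)$ be a holomorphic disk with corners, and consider its restriction to the complement of the preimages of the double points. Near a puncture $\eta$, choose half-strip coordinates $s+it \in [0,\infty)\times[0,1]$ in which
\[
u(s+it) = \sum_{k>0} c_k e^{-k\pi(s+it)}, \qquad c_{k_0}\ne 0,
\]
with $k_0 \in \Z_{\ge 1}$ or $k_0 \in \tfrac12+\Z_{\ge 0}$ according to whether $\eta$ is non-switching or switching. Applying $\phi^{-1}$ gives cylindrical coordinates $(T(s,t), y(s,t))$ with
\[
T(s,t) = -2k_0\pi s + 2\log|c_{k_0}| + O(e^{-\delta s}),
\qquad
y(s,t) = \frac{c_{k_0}}{|c_{k_0}|}\,e^{-ik_0\pi t} + O(e^{-\delta s}),
\]
for some $\delta>0$. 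The leading term is a Reeb chord strip over a chord of action $k_0\pi$ lying in $\Lambda_{\rm std}$ (with endpoints on the same or different components of the Hopf link according to whether $k_0$ is an integer or half-integer); this is exactly the asymptotic condition defining a punctured holomorphic disk in $X^\circ$.

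For the reverse direction, let $v$ be a punctured holomorphic disk in $X^\circ$ with boundary on $L^\circ$ asymptotic to Reeb chord strips. Near each puncture $\eta$, the asymptotic convergence means $v$ takes values in the cylindrical end with $T$-coordinate tending to $-\infty$; composing with $\phi$ identifies the image with a subset of $B_\epsilon\setminus\{0\}$ whose points approach the origin. Extending by sending $\eta$ to the corresponding double point $\xi$, the resulting map into $(X,L)$ is continuous and bounded, and is $J$-holomorphic on the complement of $\eta$; by the standard removal-of-boundary-singularities theorem for holomorphic maps with totally real boundary conditions (applied in the Darboux chart where $J$ is integrable and $L$ is real-analytic), the extension is smooth and $J$-holomorphic, with a Taylor expansion in a half-disk coordinate around $\eta$. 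The Reeb chord asymptotics force the two one-sided boundary limits on $Q$ to lie on distinct points of $f^{-1}(\xi)$ precisely when the puncture is switching, so the extension satisfies the definition of a disk with corner from Section~\ref{ssec:maslovindex}.

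The two constructions are manifestly mutually inverse (restriction and extension across the punctures), which gives the bijection. The only non-trivial point is the removable singularity step in the reverse direction; finite action and the standard model for $L$ and $J$ near the double points make this a direct application of the usual boundary removable singularity statement, so no essentially new analytic input is required.
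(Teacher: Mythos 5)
Your proposal is correct and is essentially the same argument as the paper's, which simply states that compact disks have Taylor expansions near the punctures, punctured disks have the corresponding Fourier expansions, and the identification is immediate via $\phi$. You have filled in the detail of translating between the two expansions under $\phi(t,y)=e^{t/2}y$ and supplied the removal-of-boundary-singularity step in the reverse direction, which the paper leaves implicit (having already set up the Taylor/Fourier framework for disks with corners in Section~\ref{ssec:maslovindex}).
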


\begin{proof}
	This is immediate from the definition: the compact disks have Taylor expansions near the punctures, the non-compact disks have the corresponding Fourier expansion. 
\end{proof}

\section{Lagrangians with negative cylindrical ends and capped Chekanov--Eliashberg algebras}
\label{sec:monotone}
In this section we first introduce capped Chekanov--Eliashberg algebras used in the definition of the refined potential. Then we give the definition of the refined potential and prove its invariance, establishing Theorem \ref{t:general}.  

\subsection{Rigid disks in standard contact spheres}\label{ssec:geomstcontactsphere}
Chekanov--Eliashberg algebras of Legendrian submanifolds are central tools in symplectic topology, introduced by Chekanov \cite{Che02} and Eliashberg--Givental--Hofer \cite{EGH00}. While a definite general reference is still not available, it is possible to combine the established results to define Chekanov--Eliashberg algebras of Legendrians in the standard contact sphere $S^{2n-1}$, and to show that they satisfy the expected properties. This section explains the necessary geometric and analytical  results leading to this definition, and to concrete calculations.

Let $x+iy=(x_{1}+iy_{1},\dots,x_{n}+iy_{n})$ be coordinates on $\C^{n}$. Consider the standard $S^{1}$-invariant contact form
\[ 
\alpha_{\rm st} = \frac12\sum_{j=1}^{n} (x_{j}dy_{j}-y_{j}dx_{j})
\]
on $S^{2n-1}=\{x+iy\colon |x|^{2}+|y|^{2}=1\}$. The Reeb flow of $\alpha$ is periodic and  Bott-degenerate. Indeed, this is the flow along the fibers of the Hopf fibration $S^{2n-1}\to\C P^{n-1}$, so there is a $\C P^{n-1}$-family of orbits of action $\pi k$ for each positive integer $k$. Furthermore, the Conley-Zehnder index of the family of action $\pi k$ equals $(2n-2)k$. (We use the convention for the Conley-Zehnder index in the Bott setting for which the index coincides with the expected dimension of the moduli space of planes inside the symplectization of $S^{2n-1}$  asymptotic to a fixed orbit in the Bott manifold at its positive puncture.)  It follows that for a small perturbation $\alpha$ of $\alpha_{\rm st}$, no Reeb orbit of $\alpha$ has Conley-Zehnder index less than $2n-2$.

Let $\Lambda\subset S^{2n-1}$ be a Legendrian submanifold with trivial Maslov class. Here we use a contact form $\alpha$ on $S^{2n-1}$ close to $\alpha_{\rm st}$ as  above. We will explain how to define Chekanov--Eliashberg algebras of $\Lambda$ with differential that counts rigid holomorphic disks. (This includes the main case used in this paper, where the coefficients lie in the group algebra of a fundamental group, but does not cover the version with coefficients in chains on the based loop spaces, which requires using higher-dimensional moduli spaces.) 

Consider a contact form $\alpha$ for which the Reeb chords of $\Lambda$ are transverse. Let $\R\times S^{2n-1}$ be the symplectization of $S^{2n-1}$ with the symplectic form $\omega=d(e^{t}\alpha)$, and  $J$ an almost complex structure adapted to $\alpha$. We first consider holomorphic disks $u\colon(D,\partial D)\to (\R\times S^{2n-1},\R\times\Lambda)$ with one positive boundary puncture, several negative boundary punctures, and several negative interior punctures. The disk $u$ must be asymptotic to a Reeb chord at a boundary puncture, and a Reeb orbit at an interior puncture. We call them symplectization disks.

Denote by $\mathcal{M}(u)$ the moduli space of disks with such asymptotics in the homotopy class of $u$. Let $a$ be the positive puncture asymptotic of $u$ and $\mathbf{b}=b_{1}\dots b_{m}$ the word of Reeb chords at the negative boundary punctures. Since the Maslov class of $\Lambda$ vanishes, any Reeb chord $c$ has an integer grading $|c|$. We write $|\mathbf{b}|=\sum_{j=1}^{m}|b_{j}|$. 

\begin{lem}\label{l:tvonepos}
	For generic $J$, $\mathcal{M}(u)$ is a transversely cut out manifold. In particular, if $u$ has at least one interior puncture, $\mathcal{M}(u)\ne \varnothing$ only if
	\[ 
	|a|-|\mathbf{b}|> 2n-2\ge 2.
	\]
\end{lem}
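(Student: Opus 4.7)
The plan is to handle the transversality assertion via a standard Dragnev-type argument based on somewhere injectivity, and then read off the index inequality from the SFT dimension formula combined with the Morse-Bott lower bound on Conley-Zehnder indices recalled earlier in the section.

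For transversality, the key input is somewhere injectivity of $u$. Since $u$ carries a positive boundary puncture asymptotic to the non-degenerate Reeb chord $a$ of the perturbed contact form $\alpha$, the Fourier asymptotic expansion at that puncture forces $u$ to be non-constant and injective in a punctured neighborhood of the puncture. Combined with unique continuation for $J$-holomorphic maps, this produces an open dense set of domain points on which $u$ is injective with injective derivative. A Sard-Smale argument over a Banach neighborhood of admissible cylindrical almost complex structures (adapted to $\alpha$ and agreeing with a reference structure outside a compact piece of the symplectization) then yields a residual set of $J$ for which $\mathcal{M}(u)$ is a transversely cut out manifold of the expected dimension.

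For the index inequality, the formal dimension of $\mathcal{M}(u)$ modulo $\R$-translation, using the grading conventions of the paper, takes the form
\[
\dim\mathcal{M}(u)\ =\ |a|-|\mathbf{b}|-\sum_{i=1}^{k}\mu_{CZ}(\gamma_i)-1,
\]
where $\gamma_1,\dots,\gamma_k$ are the Reeb orbits at the interior punctures. By the Morse-Bott discussion preceding the lemma, every Reeb orbit of a sufficiently small perturbation $\alpha$ of $\alpha_{\mathrm{st}}$ satisfies $\mu_{CZ}(\gamma_i)\ge 2n-2$. Transversality together with non-emptiness forces $\dim\mathcal{M}(u)\ge 0$, so if at least one interior puncture is present ($k\ge 1$) then
\[
|a|-|\mathbf{b}|\ \ge\ 1+\sum_{i=1}^{k}\mu_{CZ}(\gamma_i)\ \ge\ 1+(2n-2)\ =\ 2n-1\ >\ 2n-2,
\]
as claimed. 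The flanking inequality $2n-2\ge 2$ is just $n\ge 2$.

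The main technical point I anticipate is the somewhere injectivity step in the presence of both boundary punctures and interior punctures asymptotic to Reeb orbits originating from Bott families: one must ensure that $u$ is not a branched cover of a simpler curve. The non-degeneracy of the positive asymptotic $a$, together with the fact that $a$ is a simple (once-traversed) Reeb chord, is what makes this step work, since a nontrivial covering is incompatible with the Fourier expansion at the positive puncture. Once this is secured, the standard injectivity analysis propagates to a dense open subset of the domain by unique continuation, and the Sard-Smale perturbation argument runs without further difficulty.
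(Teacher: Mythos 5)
Your proposal is correct and follows essentially the same route as the paper: somewhere injectivity deduced from the single positive boundary puncture (the paper cites \cite[Proposition 3.13]{Riz16} for this, which you instead sketch directly), followed by perturbing $J$ near the positive asymptotic to achieve transversality, and then the index inequality from the lower bound $\mu_{CZ}\ge 2n-2$ on Reeb orbits combined with the nonnegativity of the (translation-reduced) expected dimension. Your explicit dimension formula and the resulting chain of inequalities match the paper's implicit reasoning.
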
    

\begin{proof}
	Since $u$ has only one positive puncture, $u$ has injective points (near the positive puncture), see the proof of \cite[Proposition 3.13]{Riz16}. Transversality then follows by perturbing the almost complex structure near the Reeb chord of the positive puncture and the last statement follows from the index bound on orbits.
\end{proof}

The Chekanov--Eliashberg dg-algebra is generated by Reeb chords of the Legendrian and its differential counts rigid disks with one positive and several negative boundary punctures. We will also use dg-algebra maps associated to exact Lagrangian cobordism $L\subset \R\times S^{2n-1}$. More precisely,  consider an exact Lagrangian cobordism $L$ that agrees with $[T,\infty)\times \Lambda^{+}$ in $([T,\infty)\times S^{2n-1},d(e^{t}\alpha_{+}))$ and with $(-\infty,-T]\times\Lambda_{-}$ in $((-\infty,-T]\times S^{2n-1},d(e^{t}\alpha_{-}))$ for some large $T>0$, where both contact forms are constant multiples of forms close to $\alpha_{\rm st}$. 
We write $\mathcal{M}^{\rm co}(u)$ for moduli spaces of disks  $u\colon (D,\partial D)\to (\R\times S^{2n-1},L)$,
called cobordism disks.
The next lemma is used to prove that the Chekanov--Eliashberg differential squares to zero, and that cobordism maps are chain maps.  

\begin{lem}\label{l:mfdwboundary}
	If $\dim(\M(u))=2$, then for generic $J$, $\M(u)/\R$ is a 1-manifold with a natural compactification consisting of two-level broken disks of Fredholm index $1$. If $\dim(\mathcal{M}^{\rm co}(u))=1$, then $\mathcal{M}^{\rm co}(u)$ is a 1-manifold that has a natural compactification consisting of two-level broken disks:  one cobordism disk of Fredholm index~$0$ attached (above or below) to a symplectization disk of Fredholm index~$1$. 	
\end{lem}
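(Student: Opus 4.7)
The plan is to apply SFT compactness to a sequence $u_i$ in $\M(u)$ (or $\M^{\rm co}(u)$), rule out closed-curve bubbling and disk components with interior (Reeb-orbit) punctures using exactness and the index bound of Lemma~\ref{l:tvonepos}, and then use Fredholm index additivity to conclude that the only limits are the two-level buildings described in the statement. A standard gluing argument then identifies these buildings with boundary points of a one-manifold compactification.

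First, I would extract a subsequential limit as a multi-level holomorphic building: in the symplectization case the levels are stacked copies of $\R \times S^{2n-1}$; in the cobordism case there is one cobordism level with possibly several symplectization levels above (with contact form $\alpha_+$) and below (with contact form $\alpha_-$). Since each symplectization form $d(e^t\alpha)$ is exact and $L$ is an exact Lagrangian cobordism, any closed component in any level has zero symplectic area and hence is constant. A standard maximum principle argument in a symplectization then shows that each non-constant component carries exactly one positive puncture.

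The index bound from Lemma~\ref{l:tvonepos} kills components with interior punctures: any such component has Fredholm index strictly greater than $2n-2 \ge 2$, which is too large to fit into our low-dimensional moduli spaces once non-negativity of the reduced Fredholm index of every other component is taken into account. This leaves only disks with Reeb-chord asymptotics. In the symplectization setting, $\dim \M(u)/\R = 1$ forces exactly two levels, each of reduced dimension zero, i.e.~Fredholm index $1$ before quotienting. In the cobordism setting, the total Fredholm index $1$ splits uniquely as a cobordism disk of index $0$ glued (above or below) to a symplectization disk of reduced index $1$.

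The remaining technical point is gluing: each such broken configuration must arise as an actual boundary point of the one-manifold $\M(u)/\R$ or $\M^{\rm co}(u)$. This follows from the standard gluing theorem for rigid broken holomorphic disks with Reeb-chord asymptotics, combined with surjectivity of the linearized Cauchy--Riemann operators furnished by Lemma~\ref{l:tvonepos}. The main obstacle, and where care is needed, is ensuring that the perturbation of the degenerate Bott contact form $\alpha_{\rm st}$ used to obtain non-degenerate Reeb chords does not interfere with the gluing estimates or introduce additional boundary phenomena; this is handled by a continuation argument exploiting that all asymptotic operators and Fredholm indices vary continuously under sufficiently small perturbations of the contact form.
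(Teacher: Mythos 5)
Your proposal is correct and follows essentially the same approach as the paper: SFT compactness to produce a multi-level building, Lemma~\ref{l:tvonepos} both to ensure transversality of each component (hence non-negative reduced index) and to rule out interior punctures via the index bound, index additivity to force exactly two levels with the stated indices, and standard gluing to show every such configuration actually appears. Your extra remarks about exactness/maximum principle (ruling out closed components and pinning down exactly one positive puncture per component) and about the Bott perturbation are implicit in the paper's setup and do not change the route.
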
   

\begin{proof}
	According to the SFT compactness theorem, the boundary of the above moduli spaces considered consists of several-level broken disks. Lemma~\ref{l:tvonepos} shows that the only possible boundary configurations are two-level disks as stated. To show that these two-level disks are precisely the boundary, one needs a standard gluing result, see e.g.~\cite[Chapter 9]{Seidel} and \cite[Appendix A]{Ersft} for an adaptation to this case.    
\end{proof}

The  general property of Chekanov--Eliashberg algebras is that their homology is invariant under Legendrian isotopy. The standard SFT proof of that property uses chain homotopies of cobordism maps. That argument does, however, use abstract perturbations, since one must consider repeated gluings of disks of negative dimension that in turn lead to non-transverse boundary configurations. There is an alternative to that argument, see \cite[Section 2.4]{EESPtimesR}, that we use here. Consider a generic Legendrian isotopy. Away from a finite set of instances, the moduli spaces involved in the differential undergo a cobordism, and the invariance is obvious. At the finitely many exceptional instances, one can deduce the invariance as an algebraic consequence of the fact that the differential squares to zero for a Legendrian $\Lambda\times \R\subset S^{2n-1}\times T^{\ast}\R$ associated to a small neighborhood of the degenerate instance. We will state the desired result as a lemma.

Let $\Lambda$ be any instance in a generic Legendrian isotopy, and $\Lambda'\subset S^{2n-1}\times T^{\ast}\R$ be the Legendrian submanifold associated to the 1-parameter family around $\Lambda$ as in \cite[Proposition 2.6]{EESPtimesR}. Denote by $\M'(u)$  the moduli space of holomorphic disks in $\R\times S^{2n-1}\times T^{\ast}\R$ with boundary in $\R\times\Lambda'$, one positive and several negative boundary punctures.

\begin{lem}\label{l:invmdli}
	If $\dim(\M'(u))=2$, then for a generic $J$, $\M'(u)/\R$ is a 1-manifold admitting a compactification whose boundary consists of two-level broken disks of Fredholm index~$1$.
\end{lem}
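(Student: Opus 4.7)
The plan is to mirror the argument of Lemma~\ref{l:mfdwboundary}, transplanted to the stabilized symplectization $\R \times S^{2n-1} \times T^\ast\R$, using three ingredients: somewhere-injectivity near the positive puncture for transversality, the low-action/low-index budget for Reeb orbits and chords to rule out bubbling at interior punctures and higher broken configurations, and the standard gluing result to identify the boundary of the compactified moduli space.

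First I would establish transversality of $\M'(u)$ for a generic choice of $J$. Since $u$ carries exactly one positive boundary puncture asymptotic to a Reeb chord of $\Lambda'$, the same argument used in the proof of Lemma~\ref{l:tvonepos} (compare \cite[Proposition~3.13]{Riz16}) provides injective points in a neighborhood of the positive puncture, and perturbing $J$ near the image of such a point yields transversality of the Cauchy--Riemann operator. Hence $\M'(u)$ is a smooth $2$-manifold and $\M'(u)/\R$ is a smooth $1$-manifold. The verification that the injective-point argument carries over to the product setting $S^{2n-1}\times T^\ast\R$ is routine because the positive end of $u$ is modeled on a strip sitting in the stabilized contact manifold, and the asymptotic expansion there is the same as in the un-stabilized case.

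Next I would extend the index bound of Lemma~\ref{l:tvonepos} to $S^{2n-1}\times T^\ast\R$. Reeb orbits of the perturbed contact form either come from the standard family on $S^{2n-1}$, which have Conley--Zehnder index at least $2n-2$, or arise from critical points of a small auxiliary Morse function on $\R$ stabilizing the $S^1$-families; neither mechanism lowers the Conley--Zehnder index below $2n-2\ge 2$. Applying SFT compactness to a sequence in $\M'(u)/\R$, the limit is a multi-level broken building, and the sum of Fredholm indices of its levels must equal $\dim\M'(u)=2$. Any level carrying an interior puncture therefore consumes at least $2n-2\ge 2$ units of index; combined with the fact that every non-trivial level has (unquotiented) Fredholm index at least $1$, this exhausts the total budget and forces the limit to have no interior punctures at all. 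Similarly, a broken building with three or more non-trivial levels would have total index at least $3$, which is impossible, so the limit configurations consist of at most two non-trivial levels of Fredholm index~$1$ each.

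Finally, a standard gluing result (as cited in the proof of Lemma~\ref{l:mfdwboundary}, see \cite[Chapter~9]{Seidel} and \cite[Appendix~A]{Ersft}) shows that each such two-level configuration of index~$1$ disks is the unique endpoint of a one-parameter family in $\M'(u)/\R$, producing the desired compactification. The main technical obstacle is checking that the perturbation data can be chosen coherently so that the same generic $J$ simultaneously makes $\M'(u)$ transversely cut out and makes the index-$1$ symplectization disks (to which the boundary strata are glued) transversely cut out; this is handled by a standard diagonal argument since the union of the relevant moduli problems is countable and each individually opens a Baire-generic set of admissible almost complex structures.
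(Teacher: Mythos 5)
Your overall plan coincides with the paper's, which simply declares the proof ``directly analogous to Lemma~\ref{l:mfdwboundary}''; you have supplied the details implied by that one-liner, and most of them are fine: somewhere-injectivity near the positive puncture gives transversality, the index budget rules out buildings with three or more nontrivial symplectization levels, and the cited gluing result identifies the two-level buildings as the boundary.

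The one place where your elaboration misstates the mechanism is the treatment of interior punctures. You argue that Reeb orbits in $S^{2n-1}\times T^*\R$ still have Conley--Zehnder index at least $2n-2$, appealing to ``critical points of a small auxiliary Morse function on $\R$ stabilizing the $S^1$-families.'' This is not the right picture. In the $P\times\R$ framework of \cite{EESPtimesR} that the paper invokes, the contact form on $S^{2n-1}\times T^*\R$ has Reeb vector field equal to the translation field in one of the $\R$-directions, and consequently there are \emph{no closed Reeb orbits at all}. Interior punctures are therefore excluded for the trivial reason that there is nothing for them to be asymptotic to, not because of a Conley--Zehnder index bound; moreover, had one actually performed a Morse--Bott perturbation using a function on a non-compact factor, critical points of positive Morse index could lower the orbit index below $2n-2$, so the bound you assert does not hold as stated. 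The conclusion that limiting configurations have no interior punctures is still correct, but for the reason just given, not the one you gave. With that correction the rest of your argument goes through and reproduces the paper's (implicit) proof.
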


\begin{proof}
	Directly analogous to Lemma \ref{l:mfdwboundary}.	
\end{proof}	

Finally, we state a result that allows  to compute the Chekanov--Eliashberg dg-algebra of $\Lambda\subset S^{2n-1}$ in a Darboux chart.  Since the complement of a point in $S^{2n-1}$ is contactomorphic to the standard contact vector space by \cite[Proposition 2.1.8]{Ge08}, any Legendrian $\Lambda$ obviously lies inside a Darboux chart.  Furthermore, $\Lambda$ has only finitely many Reeb chords inside the Darboux chart. 

\begin{lem}\label{l:smallchords}
	For any $K>0$, there exists a Legendrian isotopy of $\Lambda$ inside the Darboux chart and a contact form $\alpha$ such that the set of Reeb chords inside the Darboux chart does not change, and any Reeb chord not contained in the chart has length as well as index bounded from below by $K$.
\end{lem}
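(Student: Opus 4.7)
The plan is to build the contact form $\alpha$ as a perturbation of $\alpha_{\rm st}$ that is $S^1$-equivariant of Morse-Bott type away from $\Lambda$ and supported in the complement of a neighborhood of the trajectories of all Reeb chords of $\Lambda$ inside the chart. This uses the freedom in modifying $\alpha$ outside a neighborhood of $\Lambda$ so as to not affect the inside chord set, while making any chord that leaves the chart pick up Conley–Zehnder index proportional to the number of Hopf windings it performs.

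First I would use a preliminary Legendrian isotopy of $\Lambda$ inside $U$ to arrange that $\Lambda$ lies in a small compact region $V \Subset U$, well separated from the point $p_0 \in S^{2n-1} \setminus U$. Combined with a small localized perturbation of $\alpha_{\rm st}$ near $V$, this fixes the finite set $C_{\rm in}$ of non-degenerate Reeb chords of $\Lambda$ inside $U$, with all their trajectories contained in $V$.

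Next, I would multiply $\alpha$ by $1 + \epsilon f$, where $f$ descends from a Morse function $\bar f$ on $\C P^{n-1}$ via the Hopf fibration $\pi \colon S^{2n-1}\to \C P^{n-1}$ and where $\epsilon > 0$ is small. The cutoff and support of this perturbation are arranged so that the multiplicative factor equals $1$ on a neighborhood of $V$ and of all $C_{\rm in}$-trajectories, so $C_{\rm in}$ is preserved verbatim. For the resulting form, a standard Bourgeois-type Morse-Bott computation identifies Reeb chords of $\Lambda$ leaving $V$ with pairs consisting of a Hopf winding number $k \geq 1$ and Morse data on $\C P^{n-1}\times (\Lambda\times \Lambda)$ coming from $\bar f$ and the geometry of $\Lambda$. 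The Conley–Zehnder index of such a chord is
\[
\mu(c) \ = \ (2n-2)\,k \ + \ \mu_{\mathrm{corr}}(c),
\]
with $|\mu_{\mathrm{corr}}(c)|$ uniformly bounded by a constant $M$ depending only on $n$, on $\dim\Lambda$, and on $\bar f$. Choosing $k_0$ with $(2n-2)k_0 - M > K$, every chord with $k \geq k_0$ has index greater than $K$.

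The main obstacle is handling the finitely many chord families with low winding $1 \leq k < k_0$; some of these may have index $\leq K$ depending on the Morse data. For these, I would further adjust the pair (isotopy of $\Lambda$ inside $V$, Morse function $\bar f$ on $\C P^{n-1}$) so that each of the finitely many low-multiplicity chord families either acquires a Morse correction bringing its index above $K$, or is cancelled off in pairs by a local modification of $\bar f$ supported away from the trajectories of $C_{\rm in}$. Since both the space of Legendrian isotopies inside $V$ and the space of Morse functions $\bar f$ are infinite-dimensional whereas the low-$k$ families form a finite list, a generic choice suffices; working this out precisely is the technical heart of the proof and relies on the explicit Bourgeois index formula together with somewhere-injectivity of the chord trajectories outside $U$.
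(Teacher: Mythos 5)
Your overall framework matches the paper's: perturb $\alpha_{\rm st}$ by a Morse function $\bar f$ on $\C P^{n-1}$ pulled back via the Hopf map, and use the observation that each Hopf winding contributes $2n-2$ to the Conley-Zehnder index while the remaining correction is bounded. Up to that point the two arguments run in parallel.

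The proposal has a genuine gap in the treatment of the finitely many chord families with low winding $1\le k<k_0$. You propose to kill these by a \emph{generic} further adjustment of the isotopy of $\Lambda$ and of $\bar f$, claiming that since the low-$k$ families form a finite list and the perturbation space is infinite-dimensional, ``a generic choice suffices'' to raise their index above $K$ or cancel them in pairs. This is not justified. Reeb chords of a fixed Legendrian are robust objects: for generic data, they occur as transversely cut out zero-dimensional moduli and cannot be removed by a small perturbation; raising the Conley–Zehnder index of an existing non-degenerate chord above an arbitrary threshold $K$ by a small perturbation is likewise not something that happens generically. The genericity count you invoke (finite list of families versus infinite-dimensional perturbation space) is the right kind of count for making things transverse, not for making things disappear or jump in degree. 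So the ``technical heart'' you defer is precisely the part that fails as stated.

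The paper avoids this issue by a different, purely geometric mechanism that you do not use: it places $\Lambda$ near (but off of) the minimum Reeb orbit $\gamma_{\min}$ of the morsified contact form and then shrinks $\Lambda$. Since the linearized return map near $\gamma_{\min}$ is a rotation by small nonzero angles in each complex direction, the $k$-th iterate of the return map moves the projection of a sufficiently small $\Lambda$ to $\C P^{n-1}$ entirely off itself for all $1 \le k < k_0$. Consequently there simply are no outside Reeb chords of low winding once $\Lambda$ is small enough; there is nothing to cancel or re-grade. High-winding chords then automatically have index $> K$ by the bound you already wrote down. Your proof is missing this shrinking/return-map argument, and without it the treatment of low-winding chords does not go through.
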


\begin{proof}
	First, shrink $\Lambda$. Second, Morsify $\alpha_{\rm st}$ by a Morse function on $\C P^{n-1}$. Place the chart for $\Lambda$ close to the minimum orbit. The return map of the Reeb flow near the minimum orbit is rotation by a small angle in each complex direction. By making $\Lambda$ sufficiently small, one can  make any outside Reeb chord go arbitrarily many times around the $S^1$-fiber. Each time this  adds $2n-2$ to the grading.	
\end{proof}	

\subsection{Chekanov--Eliashberg algebra with a Lagrangian cap}
\label{sec:dga}
Consider a Legendrian submanifold $\Lambda \subset (S^{2n-1},\alpha)$ in the  contact sphere with a contact form $\alpha$ close to $\alpha_{\rm st}$. Its Chekanov--Eliashberg dg-algebra was described in  Section \ref{ssec:geomstcontactsphere}.

Let $L^\circ$ be an arbitrary smooth oriented spin $n$-manifold with boundary, and fix a diffeomorphism $\del L^\circ\to \Lambda$. One thinks of $L^\circ$ as an abstract cap attached to $\Lambda$ at the positive end of the symplectization of $\R\times\Lambda\subset \R\times S^{2n-1}$.

We introduce a version of the Chekanov--Eliashberg dg-algebra of $\Lambda$ that we call \emph{capped}, with coefficients in $\pi_1(L^\circ)$:
\begin{equation}
\label{eq:ce_pi_1L}
CE(\Lambda;\C[\pi_1(L^\circ)]).
\end{equation}
It is a straightforward generalization of the original definition, which also keeps track of the homotopy classes, in the cap, of the boundaries of the holomorphic disks contributing to the differential.

\begin{rmk}
	Early works on Chekanov--Eliashberg algebras used coefficients in the group ring of $H_1(\Lambda)$. The paper  \cite{ES14} used coefficients in the group ring of the fundamental group of the Legendrian:
	$$
	CE(\Lambda;\C[\pi_1(\Lambda)]).
	$$
	The capped version \eqref{eq:ce_pi_1L} is the extension of the latter by the map $\pi_1(\Lambda)\to \pi_1(L^\circ)$.
	Recently, \cite{CDGG18} worked with Chekanov--Eliashberg algebras in relation to the fundamental group of Lagrangian cobordisms. The capped version used here is closely related. 
	
	More generally, algebras like \eqref{eq:ce_pi_1L} belong to a geometric framework involving chains on loop spaces. The capped algebra here is a specialization of the dg-algebra 
	\begin{equation}
	\label{eq:ce_omega_L}
	CE(\Lambda;C_{*}(\Omega L^\circ)),
	\end{equation}
	where $\Omega L^\circ$ is the disjoint union of based loop spaces of the connected components of $L^\circ$.
	The algebra \eqref{eq:ce_omega_L} is obtained from the dg algebra
	$$
	CE(\Lambda;C_{*}(\Omega\Lambda))
	$$
	introduced in \cite{EL} using the push out diagram
	$$
	\begin{CD}
	C_{\ast}(\Omega\Lambda) @>>> CE(\Lambda,C_{\ast}(\Omega\Lambda))\\
	@VVV  @VVV\\
	C_{\ast}(\Omega L^{\circ}) @>>> CE(\Lambda,C_{\ast}(\Omega L^{\circ}))
	\end{CD}\;.
	$$
\end{rmk}

Fix a base-point $\star_i \in L^{\circ}_i$ on each connected component of $L^{\circ}$. Write
$$\C[\pi_1(L^{\circ})] \coloneqq \bigoplus_i \C[\pi_1(L^{\circ}_i)],$$
and view it as a semi-simple algebra, given as the direct sum of the group rings of the fundamental groups of the components of $L_i^\circ$ based at $\st_i$. 

Write $\mathcal{Q}(\Lambda)$ for the set of Reeb chords of $\Lambda$. Assume that $\Lambda$ is generic. Then the number of Reeb chords below any given  length is finite. In fact, since $\alpha$ is a perturbation of the round contact form $\alpha_{\mathrm{st}}$,  the number of Reeb chords below any given index is finite as well.  Consider the $\C$-vector space  $A(\Lambda)$ spanned by this set. The algebra underlying the Chekanov--Eliashberg dg-algebra is defined as follows. 

Start with the free unital graded tensor ring
$$ \mathcal{A} \coloneqq \bigoplus_{k=0}^\infty A(\Lambda)^{k \cdot \otimes_{\C[\pi_{1}(L^{\circ})]}}$$
over $\C[\pi_{1}(L^{\circ})]$ generated by the Reeb chords $\mathcal{Q}(\Lambda)$. Here
\begin{align*}
& A(\Lambda)^{0 \cdot \otimes_{\C[\pi_{1}(L^{\circ})]}}=\C[\pi_{1}(L^{\circ})],\\
& A(\Lambda)^{k \cdot \otimes_{\C[\pi_{1}(L^{\circ})]}}=\underbrace{A(\Lambda) \otimes_{\C[\pi_{1}(L^{\circ})]} \ldots \otimes_{\C[\pi_{1}(L^{\circ})]} A(\Lambda)}_k, \:\:k >0.
\end{align*}
{Here we use the convention that the product $l_1 \cdot l_2 \in \pi_{1}(L^{\circ})$ in the fundamental group is the loop formed by first traversing $l_1$ and then $l_2$.}

Define
$$ CE(\Lambda;\C[\pi_{1}(L^{\circ})]) \subset \mathcal{A}$$
to be the unital subalgebra generated by the elements in $\C[\pi_{1}(L^{\circ})] \subset \mathcal{A}$ together with elements of the form ${[\star_j]c[\star_i]}$, where $c \in \mathcal{Q}(\Lambda)$ is a Reeb chord starting from the component containing $\star_i$ and ending at the component containing $\star_j$.

We point out that there is an identification
$$ CE(\Lambda;\C[\pi_{1}(L^{\circ})]) = \mathcal{A}/I$$
where $I$ is the two-sided ideal generated by elements of the form $[\star_i]c$ and $c[\star_j]$, where $[\star_i],[\star_j] \in \pi_1(L^\circ)$ are constant loops in components \emph{different} from the starting point and endpoint of the chord $c$, respectively. This way the chords themselves  are still generators of the Chekanov--Eliashberg algebra and elements of $CE(\Lambda;\C[\pi_{1}(L^{\circ})])$ are linear combinations of
words 
\begin{equation}
\label{eq:ce_word}
l_1c_1\ldots l_{i-1}c_il_i\ldots l_k
\end{equation}
where $c_i\in \mathcal Q(\Lambda)$ is a Reeb chord on $\Lambda$
and $l_i\in\pi_1(L^\circ)$, that satisfies the following condition. For each $i\ge 1$,
$l_{i-1}$ belongs to  the connected component of $L^\circ$ corresponding to the starting point of the chord $c_i$ (via the identification $\Lambda=\del L^\circ$), and $l_i$ belongs to the connected component of the endpoint of $c_i$.

\begin{rmk}
	Whenever $\pi_1(L^{\circ})$ is nontrivial, the `coefficients' $\C[\pi_1(L^{\circ})]$ do not commute with the Reeb chord generators. 
\end{rmk}

To define the differential, one needs to fix reference paths between base-points and the endpoints of all Reeb chords. Specifically, for each Reeb chord $c\in \mathcal Q(\Lambda)$, let $i,j$ be the indices of the connected components of $L^\circ$ which contain the starting point resp.~the endpoint of $c$ (recall that those endpoints are contained in $\Lambda=\del L^\circ$). One fixes a path from $\st_i$ to the starting point of  $c$, and from its endpoint to $\st_j$.
The differential of the capped Chekanov--Eliashberg algebra satisfies the graded Leibniz rule, acts trivially on the coefficients, while its value on a Reeb chord generator $c \in \mathcal{Q}(\Lambda)$ is given by the following formula:
$$ \partial c= \sum_{u \in \mathcal{M}(c;\mathbf{c};\boldsymbol{\gamma})}l_1 c_1 l_2 \otimes \ldots  \otimes l_{m-1} c_m l_m.$$
Here the sum is taken over the set of rigid (up to translation) holomorphic disks
$$u \in (D^2,\partial D^2 \setminus \{p_0,\ldots,p_m\}) \to (\R \times S^{2n-1}, \R \times \Lambda)$$
for a generic cylindrical almost complex structure, where $u$ satisfies the following conditions:
\begin{itemize}
	\item Its boundary is continuous on $\R \times \Lambda$ away from the $m+1 \ge 1$ cyclically ordered boundary punctures $p_0,\ldots,p_m \in \partial D^2$.
	\item There is a single positive puncture $p_0$ near which $u$ is asymptotic to  the Reeb chord $c$ at $+\infty$.
	\item There is a (possibly empty) sequence of negative punctures $p_i$ where $u$ is asymptotic to the Reeb chord $c_i$ at $-\infty$.
	\item The arc of $\del u$ between the punctures $p_i$ and $p_{i+1}$ (where $i \in \Z_{m+1}$) yields the class $l_i \in \pi_1(L^\circ)$ when concatenated with the chosen reference paths to the base-points as in Figure~\ref{fig:dga_ce_ref}. 
	\item $u$ is a rigid up to translation.
\end{itemize}

According to our grading conventions, a disk is rigid up to translation whenever $|c|=|c_1|+\ldots+|c_m|+1$, so the differential has degree $-1$.

\begin{figure}[h]
	\includegraphics[]{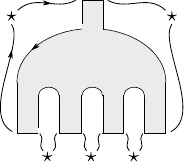}
	\caption{Attaching reference paths for the capped Chekanov--Eliashberg differential.}
	\label{fig:dga_ce_ref}
\end{figure}

\begin{prp}\label{p:dginv}
	The differential on $CE(\Lambda,\C[\pi_{1}(L^{\circ})])$ squares to zero and its homology is invariant under Legendrian isotopy.	
\end{prp}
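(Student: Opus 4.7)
The plan is to follow the standard SFT template for $\partial^2=0$ and Legendrian invariance, using Lemmas~\ref{l:tvonepos}, \ref{l:mfdwboundary}, \ref{l:invmdli}, and \ref{l:smallchords} as the analytical inputs. By the graded Leibniz rule and the fact that $\partial$ acts trivially on coefficients in $\C[\pi_1(L^\circ)]$, it suffices to check $\partial^2 c = 0$ for each Reeb chord generator $c \in \mathcal{Q}(\Lambda)$. Fix a homotopy class of configuration with positive puncture $c$ and a specified word of negative chords whose total index satisfies $|c|-|\mathbf{b}|=2$, so that the Fredholm index modulo $\R$-translation is $2$. By Lemma~\ref{l:mfdwboundary} the corresponding moduli space, quotiented by the $\R$-action, compactifies to a $1$-manifold whose boundary consists precisely of two-level broken disks: an index-$1$ disk with positive chord $c$ attached to a second index-$1$ disk at one of its negative chords. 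These boundary components are in bijection with the monomials appearing in $\partial^2 c$, so they cancel in pairs. Signs are supplied by coherent orientations in the spirit of Ekholm--Etnyre--Sullivan, while the $\pi_1(L^\circ)$-coefficients concatenate correctly because the reference path from the endpoint of the shared Reeb chord to a base-point, used in the downstairs disk, is canceled by its reverse traversal used in the upstairs disk, leaving a well-defined element of $\pi_1(L^\circ)$.

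For invariance, I follow the approach indicated in \cite[Section 2.4]{EESPtimesR} rather than the cobordism-and-chain-homotopy argument of standard SFT. Consider a generic $1$-parameter Legendrian isotopy $\Lambda_t$. Away from a finite set of degenerate parameters, the rigid moduli spaces undergo a cobordism, so the chord counts defining $\partial$ are constant and the dg-algebra does not change. At each degenerate instance, attach to a small neighborhood of the instance the auxiliary Legendrian $\Lambda' \subset S^{2n-1}\times T^\ast\R$ provided by \cite[Proposition~2.6]{EESPtimesR} and apply Lemma~\ref{l:invmdli}: the index-$2$ moduli spaces for $\Lambda'$ compactify to $1$-manifolds whose boundaries are two-level broken index-$1$ configurations. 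Reading off the resulting $\partial^2 = 0$ relation on $CE(\Lambda';\C[\pi_1(L^\circ)])$ and decomposing it according to whether each sub-disk lies in the slice $\R\times\Lambda_{t_0}\times \{\mathrm{pt}\}$ or genuinely crosses the $T^\ast\R$-direction, one obtains the algebraic identity expressing the change of $\partial$ across the exceptional instance as a stable tame isomorphism (and thus a quasi-isomorphism) between the algebras before and after. Since the boundary arcs of all disks involved lie in $\R \times \Lambda'$ and the base-points and reference paths deform continuously, the $\C[\pi_1(L^\circ)]$-bimodule structure is respected by the isomorphism.

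The main obstacle I anticipate is the Bott-degenerate Reeb dynamics on $(S^{2n-1},\alpha_{\mathrm{st}})$, where interior punctures in principle bring in $\C P^{n-1}$-families of Reeb orbits. This is exactly what Lemma~\ref{l:tvonepos} is designed to handle: any disk with at least one interior puncture has Fredholm dimension $>2n-2\ge 2$, so such disks never contribute to a rigid count or to the boundary of the index-$1$ or index-$2$ moduli spaces used above. Combined with Lemma~\ref{l:smallchords}, which lets us arrange that every Reeb chord outside a fixed Darboux chart has index exceeding any prescribed bound, this confines all relevant disks to a Darboux neighborhood of $\Lambda$ and reduces the gluing and transversality steps to the established theory of Legendrian contact homology in $\R^{2n-1}$. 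With these reductions in place, the two steps above go through as sketched.
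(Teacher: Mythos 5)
Your proposal is correct and follows essentially the same route as the paper: $\partial^2 = 0$ is deduced from the boundary structure of the compactified index-$2$ moduli space in Lemma~\ref{l:mfdwboundary}, and invariance is obtained via Lemma~\ref{l:invmdli} together with the auxiliary-Legendrian argument of \cite[Section~2.4]{EESPtimesR}, exactly as the paper does. Your additional remarks on the role of Lemma~\ref{l:tvonepos} in excluding interior punctures, the concatenation of reference paths across a shared Reeb chord, and the use of Lemma~\ref{l:smallchords} to control long chords are all consistent with the surrounding discussion and merely flesh out what the paper leaves implicit.
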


\begin{proof}
	The first statement follows by identifying the terms contributing to $\del^2$ with the ends of the compact oriented 1-manifold from Lemma \ref{l:mfdwboundary}. The second statement is a consequence of Lemma \ref{l:invmdli} and the argument in \cite[Section 2.4]{EESPtimesR}.
\end{proof}

Let $L\subset \R\times S^{2n-1}$ be an exact Lagrangian cobordism interpolating between Legendrian submanifolds $\Lambda_{+}$ and $\Lambda_{-}$ as in Section \ref{ssec:geomstcontactsphere}. Let $L^{\circ}_+$ be a cap of $\Lambda_{+}$. Then $L^{\circ}_{-}=L\cup L^{\circ}_{+}$ is a cap of $\Lambda_{-}$. For simplicity we now assume that the inclusion $L^\circ_+ \subset L^\circ_-$ induces an isomorphism on the level of $\pi_0$. Define the cobordism map $\Phi\colon CE(\Lambda_{+},\C[\pi_{1}(L^{\circ}_{+})])\to CE(\Lambda_{-},\C[\pi_{1}(L^{\circ}_{-})])$ on generators as follows:
\[ 
\Phi_{L}(c)= \sum_{u \in \mathcal{M}^{\rm co}(c;\mathbf{c};\boldsymbol{\gamma})}l_1 c_1 l_2 \otimes \ldots  \otimes l_{m-1} c_m l_m,
\]
where $\mathcal{M}^{\rm co}(c;\mathbf{c};\boldsymbol{\gamma})$ is defined exactly as $\mathcal{M}(c;\mathbf{c};\boldsymbol{\gamma})$ above, replacing the target by $(\R\times S^{2n-1},L)$. Note that there is no $\R$-invariance here, so the curves contributing to the cobordism map satisfy $|c|=|\mathbf{c}|$.

\begin{prp}\label{p:chmap}
	Assume that the inclusion $L^\circ_+ \subset L^\circ_-$ induces an isomorphism on the level of $\pi_0$. Then the map $\Phi_{L}$ is a chain map of dg-algebras.
\end{prp}

\begin{proof}
	This follows from Lemma \ref{l:mfdwboundary}.
\end{proof}

Finally, we show that $CE(\Lambda;\C[\pi_{1}(L^{\circ})])$ can be computed within a Darboux ball. Placing $\Lambda$ in a Darboux ball, one defines a local version of the algebra by restricting to Reeb chords and holomorphic disks lying inside the Darboux ball. {(This defines a \emph{subcomplex} by a standard monotonicity argument.)} Note that there are only finitely many Reeb chord generators in this algebra.

\begin{prp}\label{p:local}
	The local dg-algebra is quasi-isomorphic to $CE(\Lambda;\C[\pi_{1}(L^{\circ})])$.
\end{prp}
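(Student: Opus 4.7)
\medskip

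\noindent\emph{Plan.} The strategy is to combine the Legendrian isotopy invariance from Proposition~\ref{p:dginv} with a confinement argument for holomorphic disks based on action and monotonicity. The starting move is to apply Lemma~\ref{l:smallchords}: for any chosen threshold $K>0$, perform a Legendrian isotopy of $\Lambda$ inside the Darboux chart, together with a perturbation of the contact form, such that the set of Reeb chords contained in the Darboux chart is unchanged (both as a set and with their Conley-Zehnder indices), while every Reeb chord of $\Lambda$ outside the chart has Conley-Zehnder index exceeding $K$. By further shrinking $\Lambda$, I may also assume that the actions of the inside Reeb chords are bounded by a small constant $\epsilon>0$. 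Proposition~\ref{p:dginv} guarantees that the resulting $CE(\Lambda;\C[\pi_{1}(L^{\circ})])$ is quasi-isomorphic to the original one, so I work with the isotoped Legendrian from now on.

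Next, let $CE_{\mathrm{loc}}(\Lambda;\C[\pi_{1}(L^{\circ})])$ denote the subalgebra generated by the inside Reeb chords together with the coefficient ring $\C[\pi_{1}(L^{\circ})]$, with reference paths chosen so that they remain inside the Darboux chart. I claim that this subalgebra inclusion is a map of dg-algebras, i.e.\ that the differential of an inside chord $c$ in the full $CE(\Lambda;\C[\pi_{1}(L^{\circ})])$ is expressible purely in local generators. This has two parts. First, an asymptotic/degree bound: Lemma~\ref{l:area} bounds the number of negative punctures of any disk contributing to $\partial c$ by $\mathrm{Area}(u)/C\le \epsilon/C$, while the maximum of $|c|$ and the minimum of local chord degrees are fixed finite quantities. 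Choosing $K$ larger than a constant depending only on these finitely many pieces of data excludes outside Reeb chords from appearing as asymptotics, since their degrees are too large relative to $|c|-1$. Second, a geometric confinement: such a disk has $\omega$-area at most $\epsilon$, and an application of the monotonicity inequality for minimal surfaces (after a double Schwarz reflection over the two sheets of $L$, as in the proof of Lemma~\ref{l:area}) forces its image in $S^{2n-1}$ to lie in an $O(\sqrt{\epsilon})$-neighborhood of $\Lambda$; for $\epsilon$ small, this neighborhood is contained in the Darboux chart. Hence the local differential agrees with the restriction of the full differential, and the inclusion is a sub-dg-algebra inclusion.

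To finish, I show that this inclusion is a quasi-isomorphism. The quotient $CE(\Lambda;\C[\pi_{1}(L^{\circ})])/CE_{\mathrm{loc}}(\Lambda;\C[\pi_{1}(L^{\circ})])$ is spanned by words containing at least one outside Reeb chord. Since outside chord degrees are at least $K$ and the degrees of inside chords are bounded below by a fixed constant, any such word has degree bounded below by $K-C$, where $C$ depends only on the fixed local data together with an area-derived bound on word length. Consequently, for any prescribed cohomological degree $D$, taking $K$ larger than $D+C$ guarantees that the quotient vanishes in degree $\le D$, so the inclusion induces an isomorphism on $H^{i}$ for $i\le D$. Since $D$ was arbitrary and $CE_{\mathrm{loc}}$ is independent of $K$ (it depends only on the unchanged inside chords), this yields the quasi-isomorphism.

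\medskip

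\noindent\emph{Main obstacle.} The subtle point is the geometric confinement in the second step. One needs a uniform monotonicity estimate in the symplectization of $S^{2n-1}$ for holomorphic disks with boundary on $\R\times\Lambda$ and several negative asymptotics at Reeb chords, so that the diameter of the image in $S^{2n-1}$ is controlled by $\sqrt{\mathrm{Area}}$ with a constant independent of the number and position of the asymptotics. This is handled by a Schwarz reflection argument as in Lemma~\ref{l:area}, but care is required to ensure the reflected surface is honest enough to apply the standard monotonicity inequality, and that the constants remain uniform as $\Lambda$ is shrunk.
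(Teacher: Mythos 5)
Your proposal follows the same route as the paper: invoke Lemma~\ref{l:smallchords} to push outside Reeb chords to arbitrarily high index, shrink $\Lambda$ so inside chords have small action, appeal to Proposition~\ref{p:dginv} for invariance, and use a monotonicity/confinement argument to show disks with a positive puncture at a small chord stay inside the Darboux ball, so that the local and global differentials agree up to any prescribed degree. Your extra degree-counting step (a) is subsumed by the geometric confinement (a disk staying inside the ball automatically has only inside chords as asymptotics), but the argument is correct and matches the paper's proof.
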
 

\begin{proof}
{
The Legendrian $\Lambda$ can be shrunk to $\epsilon\Lambda$ inside the Darboux ball by linear rescaling. This is a Legendrian isotopy that may be assumed to induce a canonical identification of local dg-algebras. Recall that the invariance result of Proposition \ref{p:dginv} is obtained by a dg-morphism which is associated to an exact Lagrangian cobordism in the symplectisation. The cobordisms is obtained from the trace of the isotopy. The associated dg-morphism $\phi$ is a quasi-isomorphism that preserves the local dg-algebra by a monotonicity argument, and which induces a quasi-isomorphism of both the local and full dg-algebras. In fact, there is an homology inverse $\phi^{-1}$ that is a dg-algebra morphism which satisfies the analogous properties.

Since the exact Lagrangian cobordism that defines $\phi$ goes from the larger Legendrian to the smaller, the induced dg-algebra morphism $\phi$ can be assumed to preserve the filtration by action induced by Reeb chord length.

We now consider the canonical inclusion of the local dg-algebra into the full. We need to show that this inclusion induces an isomorphism on the level of homology.

\emph{Injectivity:} Assume that $\partial a=b$ for some element $b$ in the local dg-algebra of $\Lambda$. In view of Lemma \ref{l:smallchords}, when $\epsilon>0$ is sufficiently small we may assume that all generators of $\epsilon\Lambda$ that leave the Darboux ball have length strictly greater than the action of $a$. Since the elements that are not in the local dg-algebra also have action greater than the action of $a$, it follows from the action-preserving properties that $\phi(a)$ must be contained in the local algebra. The chain-map property implies that $\partial \phi(a)=\phi(b)$ and, since $\phi$ is a quasi-isomorphism when restricted to the local algebra, we obtain $[b]=0$ and the inclusion is injective on homology.

\emph{Surjectivity:} Consider a cycle $a$ in the full dg-algebra. Using Lemma \ref{l:smallchords}, $\phi(a)$ may be assumed to be an element in the local dg-algebra if $\epsilon>0$ is chosen sufficiently small. It follows that $[a]=\phi^{-1}[a]$ is represented by the element $\phi^{-1}(a)$ in the local algebra and the inclusion is surjective on homology.}
\end{proof}

\subsection{Chekanov--Eliashberg algebra of the Hopf link}
\label{subsec:Hopf}
We now restrict to $n=2$. Let $\Lambda_{\rm Ho}\subset \R^3$ be the Legendrian Hopf link in a small contact Darboux ball, see Figures~\ref{fig:hopffront} and~\ref{fig:hopf}. (We use notation as there.) Inside $S^3$, it is Legendrian isotopic to the link $\Lambda_{\mathrm{st}}$ of the double point singularity discussed in Section~\ref{subsec:l_o}.
The Reeb chords of $\Lambda_{\rm Ho} \subset (\R^3,\alpha)$ inside the Darboux ball are  as follows:
\begin{itemize}
	\item $a_1$ and $a_2$ have degree $|a_i|=1$, and have both endpoints on a the same component of the link,
	\item $p$ and $q$ have endpoints on different components of the link. For a suitable choice of Maslov potentials on the two Legendrian components they satisfy $|p|=|q|=0$.
\end{itemize}
We compute the differential of the  Chekanov--Eliashberg dg-algebra of $\Lambda_{\mathrm{Ho}}$ with coefficients in the group algebra of its own fundamental group (see the references in Section~\ref{sec:dga}), $CE(\Lambda_{\rm Ho};\C[\pi_1(\Lambda_{\rm Ho})])$.
Here 
$$
\C[\pi_1(\Lambda_{\rm Ho})]=\C[t_1,t_{1}^{-1}]\oplus \C[t_2,t_{2}^{-1}]
$$
is a semi-simple algebra where $t_{1}$ and $t_{2}$ are the generators of the fundamental groups of the Hopf link components and the two corresponding idempotents are $e_1=(t_1)^0$, $e_2=(t_2)^0$. Since the Maslov class of the Hopf link vanishes, the group ring elements have degree zero. The disks contributing to the differential are easy to find in the Lagrangian projection of Figure \ref{fig:hopf} and gives the differential:
\begin{equation}
\label{eq:hopf_dif}
\begin{array}{l}
\partial a_1=e_1-t_1+qp,\\
\partial a_2=e_2-t_2+pq.
\end{array}
\end{equation}

Here we orient the link as the Hopf link with the orientation inducing \emph{positive} linking number. The signs also depend on the choice of a spin structure on each circle, and we choose the bounding spin structure. If we choose the other spin structure on either component, then the result is a change of sign in the coefficient of the variable $t_i$ of the corresponding component.

\begin{figure}[htp]
	\vspace{3mm}
	\labellist
	\pinlabel $z$ at 8 100
	\pinlabel $\color{red}a_1$ at 68 46
	\pinlabel $\color{red}a_2$ at 164 46
	\pinlabel $x$ at 233 7
	\endlabellist
	\includegraphics{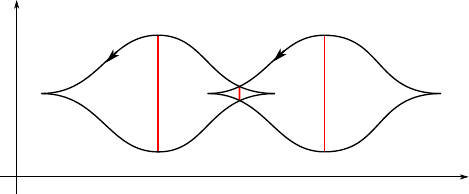}
	\caption{The front projection of   the Hopf link $\Lambda_{\rm{Ho}}$ in a Darboux chart.}
	\label{fig:hopffront}
\end{figure}

\begin{figure}[htp]
	\labellist
	\pinlabel $a_1$ at 65 42
	\pinlabel $t_1$ at 22 14
	\pinlabel $t_2$ at 218 57
	\pinlabel $p$ at 116 60
	\pinlabel $q$ at 116 24
	\pinlabel $a_2$ at 167 42
	\pinlabel $x$ at 233 0
	\endlabellist
	\includegraphics{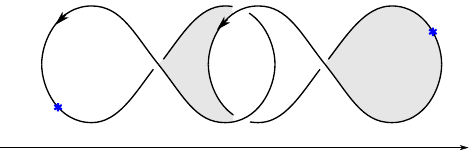}
	\caption{The Lagrangian projection (to the $xy$-plane) of the Hopf link in a Darboux chart. The shaded disc on the right contributes to $\partial a_1 =qp$, and the shaded disk on the right gives $\partial a_2=t_2$.}
	\label{fig:hopf}
\end{figure}

\subsection{Morsifying the Reeb chords}
\label{subsec:l_o_rho}
Recall the standard Hopf link $\Lambda_{\rm st}\subset S^{3}$ which models the negative ends of a punctured Lagrangian $L^\circ$ at the double points of $L$. Its Reeb chords come in $S^{1}$-families and are Bott-degenerate.
We Morsify by replacing $L^\circ$ with $L^\circ_\rho$ with ends 
modeled on the Morse Hopf link $\Lambda_{\rm Ho}$ (shown in Figures~\ref{fig:hopffront},~\ref{fig:hopf}) by attaching a cobordism to $L^\circ$.

\begin{lem}
	\label{lem:Gamma}
	The two Legendrian links $\Lambda_{\rm st},\Lambda_{\rm Ho} \subset S^3$ are Legendrian isotopic. Furthermore, there exists an isotopy whose  induced exact Lagrangian cobordism $$\Gamma\subset (\R\times S^{3},d(e^t\alpha)):$$
	\begin{itemize}
		\item agrees with $(-\infty,-T]\times \Lambda_{\rm Ho}$ in $(-\infty,-T]\times S^{3}$;
		\item agrees with $[T,+\infty)\times \Lambda_{\rm st}$ in $[T,+\infty)\times S^{3}$;
		\item intersects each slice $\{t\} \times S^3$ transversely; and
		\item admits a primitive of $e^t\alpha|_{\Gamma}$ which vanishes outside of a compact subset.
	\end{itemize}
\end{lem}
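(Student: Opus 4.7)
The plan is to first show that $\Lambda_{\rm st}$ and $\Lambda_{\rm Ho}$ are Legendrian isotopic in $(S^{3},\ker\alpha_{\rm st})$, then construct $\Gamma$ as the trace in the symplectization of a contact isotopy realizing this Legendrian isotopy, and finally verify strict exactness by direct computation of the primitive of $e^{t}\alpha|_{\Gamma}$.

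For the Legendrian isotopy, observe that each component of $\Lambda_{\rm st}$ is a great circle, and a direct calculation shows $\alpha_{\rm st}|_{S^{3}\cap\R^{2}}\equiv 0$: tangent vectors along $S^3 \cap \R^2$ have no $y$-components, so only the terms $-y_{j}\,dx_{j}$ could contribute, and these vanish because $y_{j}=0$ on $\R^{2}$. The same holds for $S^{3}\cap i\R^{2}$. Each component bounds the Lagrangian half-disk $B^{4}\cap\R^{2}$ (resp.\ $B^{4}\cap i\R^{2}$), whence the rotation number vanishes, and a framing computation gives $\mathrm{tb}=-1$. Thus each component is a standard Legendrian unknot. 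The two components form a Hopf link of linking number $+1$, matching $\Lambda_{\rm Ho}$. Invoking Eliashberg--Fraser's classification of Legendrian unknots (applied componentwise in disjoint Darboux charts) together with the matching linking number produces the sought Legendrian isotopy $\Lambda_{s}$, $s\in[0,1]$, from $\Lambda_{0}=\Lambda_{\rm Ho}$ to $\Lambda_{1}=\Lambda_{\rm st}$.

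For the trace cobordism, the Legendrian isotopy extension theorem yields a contact Hamiltonian $H_{s}\colon S^{3}\to\R$ with $H_{s}|_{\Lambda_{s}}\equiv 0$, generating a contact isotopy $\varphi_{s}$ with $\varphi_{s}(\Lambda_{\rm Ho})=\Lambda_{s}$. Choose a smooth cutoff $\chi\colon\R\to[0,1]$ with $\chi\equiv 0$ on $(-\infty,-T]$ and $\chi\equiv 1$ on $[T,\infty)$, and let $\Gamma\subset\R\times S^{3}$ be the image of $(t,x)\mapsto(t,\varphi_{\chi(t)}(x))$ for $x\in\Lambda_{\rm Ho}$. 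The Lagrangian-suspension construction (Chantraine) shows that $\Gamma$ is a Lagrangian cobordism agreeing with $(-\infty,-T]\times\Lambda_{\rm Ho}$ and $[T,\infty)\times\Lambda_{\rm st}$ on the ends, and each slice $\{t\}\times S^{3}$ meets $\Gamma$ transversely in $\Lambda_{\chi(t)}$.

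For strict exactness, pair $e^{t}\alpha$ with the tangent vector $\partial_{t}+\chi'(t)X_{H_{\chi(t)}}$ to $\Gamma$ at $(t,\varphi_{\chi(t)}(x))$. Using $\alpha(X_{H})=H$ for contact Hamiltonian vector fields, and noting that $\varphi_{\chi(t)}(x)\in\Lambda_{\chi(t)}$ combined with $H_{\chi(t)}|_{\Lambda_{\chi(t)}}=0$ implies $H_{\chi(t)}(\varphi_{\chi(t)}(x))=0$, this pairing vanishes. Together with $\alpha|_{\Lambda_{\chi(t)}}\equiv 0$ (Legendrianity) it follows that $e^{t}\alpha|_{\Gamma}\equiv 0$, so the zero function is a primitive and in particular vanishes outside any compact subset. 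The main technical obstacle is the first step: carefully verifying the classical invariants of the singular link $\Lambda_{\rm st}$ and assembling the componentwise Eliashberg--Fraser isotopies into a Legendrian isotopy of the two-component link. The remaining steps are routine consequences of the fact that a contact Hamiltonian vanishing along the moving Legendrian produces a strictly exact trace Lagrangian.
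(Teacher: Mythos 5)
The first paragraph (the Legendrian isotopy itself) is broadly fine, although the phrase ``Eliashberg--Fraser applied componentwise in disjoint Darboux charts'' does not quite work: the two components of a Hopf link cannot be placed in disjoint Darboux charts, so one really needs a classification of Legendrian \emph{Hopf links} (Ding--Geiges), not of unknots; this is a fixable gap. The serious error is in the strict exactness step. You assert that the Legendrian isotopy extension theorem yields a contact Hamiltonian $H_s$ with $H_s|_{\Lambda_s}\equiv 0$. This is false for any non-constant Legendrian isotopy: along $\Lambda_s$ one has no choice, $H_s|_{\Lambda_s}=\alpha(v_s)$ where $v_s$ is the isotopy velocity, and in the local $J^1\Lambda_s$ model (where nearby Legendrians are 1-jets of functions with $\Lambda_s=j^1 0$) this quantity equals $\dot f_s$, the defining speed of the moving family. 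If it vanished for all $s$ the family would be constant. (The simplest example is the Reeb flow, which displaces every Legendrian and has constant contact Hamiltonian $H\equiv 1$.) In fact the intended conclusion $e^t\alpha|_\Gamma\equiv 0$ is structurally impossible for a genuine cobordism: $e^t\alpha|_\Gamma\equiv 0$ means $T\Gamma\subset\ker(e^t\alpha)=\R\,\partial_t\oplus\ker\alpha$, on which $d(e^t\alpha)$ is $e^t$ times the pullback of $d\alpha|_{\ker\alpha}$ under the projection with kernel $\R\,\partial_t$; the Lagrangian condition then forces the image of $T\Gamma$ in $\ker\alpha$ to be $d\alpha$-isotropic, hence 1-dimensional, so $\partial_t\in T\Gamma$ everywhere and $\Gamma=\R\times\Lambda$ is a trivial cylinder, contradicting $\Lambda_{\rm st}\ne\Lambda_{\rm Ho}$.

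The correct argument has to deal with the fact that the trace of the Legendrian isotopy gives an exact Lagrangian $\Gamma$ with $e^t\alpha|_\Gamma\ne 0$, whose primitive is constant on each cylindrical end but with a priori different constants at $t=\pm\infty$; the entire point of the fourth bullet is that these constants can be made to agree. The paper first translates the non-cylindrical part of $\Gamma$ far in the negative $t$-direction, which rescales the constant discrepancy by $e^{-C}$ and so makes it arbitrarily small, and then cancels the remaining small mismatch by a compactly supported Hamiltonian perturbation on a cylindrical piece $[-\epsilon,\epsilon]\times\Lambda_{\rm Ho}$, applied separately to the two components (e.g.\ generated by $\rho(t)e^t$). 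Your third step should be replaced by an argument of this type; it cannot be salvaged by choosing a better Hamiltonian.
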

\begin{proof} First, take a generic Legendrian perturbation of $\Lambda_{\rm st}$ whose projection to $\CP^1$ under the Hopf fibration consists of a pair of figure eight curves. Then consider an appropriate smooth isotopy of such figure eight curves that lifts to a Legendrian isotopy.
	
	The exact Lagrangian cylinder can be constructed from the trace of the isotopy as in \cite{ElGro98}. In order to ensure the last property, we argue as follows. First, we can translate the non-cylindrical part of $\Gamma$ arbitrarily in the negative $t$-direction. This ensures that the difference of the potential between $t=\pm\infty$ at least is arbitrarily small. Finally, it is possible to cancel this potential difference completely by performing a small compactly supported perturbation of the cobordism inside $\{ t \in [-\epsilon,\epsilon]\}$, i.e.~the cylinder $[-\epsilon,\epsilon] \times \Lambda_{\rm Ho}$. Namely, one can use suitable Hamiltonian isotopies applied separately to the two components in that subset, e.g.~generated by Hamiltonians of the form $\rho(t)e^t$.
\end{proof}

Let $\Gamma_{T} \coloneqq \Gamma\cap (-\infty,T]\times S^{3}$ and denote by $\Gamma_{T,\rho}$ a copy of $\Gamma_{T}$ translated so that it lies in $(-\infty,\rho]\times S^{3}$. 
Define $$L^{\circ}_{\rho}\subset X^{\circ}$$ to be the Lagrangian submanifold obtained by removing $(-\infty,-\rho]\times \Lambda_{\rm st}$ in each negative end of $L^{\circ}$, using the model (\ref{eq:neg_cyl}), and inserting $\Gamma_{T,\rho}$ in its place.
This construction applies for any $\rho<2\log\epsilon$ where $\epsilon$ is the size of the Darboux ball from Section~\ref{subsec:Hopf}.

The Maslov index of a disk inside $(X^\circ,L^\circ)$ with strip-like ends asymptotic to Reeb chords on $\Lambda_{\rm Ho}$ can be defined as in Section~\ref{sec:corners}. More precisely, at a \emph{mixed puncture} (i.e.~for which the Reeb chord goes between different components of the Legendrian) we again close up the tangent planes using the shortest rotation along the negative K\"{a}hler angle. At a \emph{pure puncture} (i.e.~for which the Reeb chord has both endpoints on the same component) we instead close up with the path of Lagrangian tangent planes along any choice of \emph{capping path} on the Lagrangian $\R \times \Lambda_{\rm st}$ connecting the endpoint of the Reeb chord with its starting point.

We next introduce continuous paths $G_{p}$ and $G_{q}$ of Lagrangian tangent planes along $\Gamma \subset \C^2 \setminus \{0\}$. We start with $G_{p}$. The Reeb chord $p$ at the negative end of $\Gamma$ connects one component $\Gamma_{0}$ of $\Gamma$ to the other component $\Gamma_{1}$. Let $\theta_{0}$ and $\theta_{1}$ be points in the components of $\Lambda_{\rm st}$ at the positive ends of $\Gamma_{0}$ and $\Gamma_{1}$, respectively, such that there is a length $\frac{\pi}{2}$ Reeb chord connecting $\theta_{0}$ to $\theta_{1}$. Consider path on $\Gamma$ that starts at $\theta_{1}$ and goes down to the endpoint of the chord $p$ in the negative end, and take the path of Lagrangian tangent planes to $\Gamma$ along this path. Transport the Lagrangian tangent plane at the endpoint of $p$ first with the linearized Reeb flow backwards along the Reeb chord $p$ and then with a rotation along the negative K\"{a}hler angle at $p$ to the tangent plane at the start-point of $p$. Concatenate finally with the tangent planes along a path from the start-point of $p$ to $\theta_{0}$. This gives a path of Lagrangian tangent planes connecting the tangent plane to $\Gamma$ at $\theta_{1}$ to that at $\theta_{0}$. The path $G_{q}$ is defined similarly replacing $p$ by $q$ and using a length $\frac{\pi}{2}$ Reeb chord from $\Gamma_{1}$ to $\Gamma_{0}$ at the positive end.

\begin{lem}
	\label{lem:MaslovCobordism}
	The punctured Lagrangian $L^{\circ}_{\rho}$ is monotone. Moreover, the above paths $G_p$ and $G_q$ of Lagrangian tangent planes are homotopic relative endpoints to a rotation along the negative K\"{a}hler angle.
\end{lem}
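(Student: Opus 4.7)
The plan is to deduce both claims from a direct homotopy argument exploiting that $\Gamma$ is the trace of a Legendrian isotopy between $\Lambda_{\rm st}$ and $\Lambda_{\rm Ho}$, and can be chosen arbitrarily $C^1$-close to the cylinder $\R \times \Lambda_{\rm st}$.

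I would first handle the homotopy assertion for $G_p$; the case of $G_q$ is verbatim analogous. Shrink the Legendrian isotopy underlying $\Gamma$ so that the Lagrangian Gauss map along $\Gamma$ stays close to that of $\R \times \Lambda_{\rm st}$. Then the two pieces of $G_p$ coming from paths on $\Gamma_0$ and $\Gamma_1$ become homotopic, relative to endpoints in the Lagrangian Grassmannian, to the corresponding paths on the cylinder. On $\R \times \Lambda_{\rm st}$ the Lagrangian tangent plane is invariant under both the $\R$-translation and the Reeb flow, so these cylindrical paths are constant. The remaining backward transport along the morsified chord $p$ is thereby identified with the backward transport along the $\pi/2$ Reeb chord of the Bott family joining $\theta_1$ and $\theta_0$, which is again trivial in the Lagrangian Grassmannian. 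All that survives in this limit of $G_p$ is the single rotation along the negative K\"ahler angle between $T_{\theta_1} \Lambda_{\rm st}$ and $T_{\theta_0} \Lambda_{\rm st}$, which is precisely the canonical short path. This yields the second assertion of the lemma.

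To prove monotonicity of $L^\circ_\rho$, I would combine this homotopy with Lemma~\ref{l:outside=compact}. Given a disk on $L^\circ_\rho$ with Reeb chord asymptotics on $\Lambda_{\rm Ho}$, I cap its mixed punctures through the cobordism $\Gamma$ to obtain a disk on $L^\circ$ with Bott-Reeb chord asymptotics on $\Lambda_{\rm st}$, which via Lemma~\ref{l:outside=compact} corresponds to a topological disk with corners on $L$ in $X$. Step~1 guarantees that the Maslov contribution at each mixed puncture, computed using $G_p$ or $G_q$, agrees with the canonical short path contribution at the corresponding corner of $L$; contributions at pure punctures coincide by construction (same capping paths on $\R \times \Lambda_{\rm st}$ are used on both sides). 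Hence the Maslov indices are preserved. Since $\Gamma$ admits a compactly supported primitive of $e^t \alpha|_\Gamma$, the symplectic areas of the two disks agree modulo contributions from the chord actions that are determined by the asymptotics alone. Consequently, the monotonicity relation $\mu(u) = \lambda\,\omega(u)$ and the absence of Maslov index~$1$ disks transfer from $L$ to $L^\circ_\rho$.

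The main obstacle is the orientation and homotopy bookkeeping in the first step: one must check that the combination of isotopy shrinking, the backwards linearized Reeb flow transport along $p$, and the identification with a Bott-family $\pi/2$ short path all land in the same path component between the prescribed endpoints in the Lagrangian Grassmannian. This is most transparent by fixing the Darboux model $\C^2$ in which $\Lambda_{\rm st} = S^3 \cap (\R^2 \cup i \R^2)$ and all of the relevant rotations, Reeb flows, and tangent plane transports are given by explicit $U(2)$-orbits.
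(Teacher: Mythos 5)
Your approach is genuinely different from the paper's. The paper proves the claim about $G_p$ and $G_q$ by gluing $\Gamma$ to the cobordism $\Gamma'$ of the inverse isotopy, deforming $\Delta_s = \Gamma \cup \Gamma'$ to the trivial cylinder, and tracking a rigid holomorphic strip asymptotic to $\tfrac{\pi}{2}$-chords through this deformation; the dimension formula for that strip is what pins down the Maslov contribution. You instead propose a direct degeneration of the Gauss map by shrinking the underlying isotopy so that $\Gamma$ is $C^1$-close to $\R \times \Lambda_{\rm st}$, then reading off $G_p$ in the Bott limit. This more elementary strategy can be made to work: since the cone on $S^3 \cap \R^2$ is just $\R^2 \setminus \{0\}$, the Lagrangian tangent plane along each component of $\R \times \Lambda_{\rm st}$ is \emph{literally constant} ($\R^2$ or $i\R^2$ in the Darboux model), which eliminates the winding ambiguity one might worry about when the Morse chord $p$ converges to an unknown member of the Bott $S^1$-family.

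There is, however, a concrete error in exactly the piece of bookkeeping you flag as the ``main obstacle'' but do not carry out. You assert that the backward linearized Reeb transport along the Bott chord is ``trivial in the Lagrangian Grassmannian'' and that the negative K\"ahler rotation is what ``survives''; in fact it is the other way around. The linearized Reeb flow is a complex scalar, so transporting $T_{\theta_1}(\R\times\Lambda_{\rm st}) = i\R^2$ backward along the length-$\tfrac{\pi}{2}$ chord gives the nonconstant path $s \mapsto e^{-is\pi/2}\cdot i\R^2$ ending at $\R^2 = T_{\theta_0}(\R\times\Lambda_{\rm st})$; this \emph{is} the rotation by the negative K\"ahler angle, and it carries all the winding. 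The remaining segment of $G_p$ prescribed in the definition --- the rotation from the Reeb-transported plane to $T\Gamma_0$ at the start of $p$ --- is what degenerates to the constant path in the Bott limit, precisely because the Reeb transport already lands on the correct plane. Your argument reaches the correct conclusion only because you have swapped which segment is trivial and which carries the rotation. The fix is straightforward (identify the Reeb-transport segment with the canonical short path and show the other segments degenerate to constants), but as written the proof does not establish the claimed homotopy. It would also be worth noting explicitly that the Darboux-chart representative $\Lambda_{\rm Ho}$ is reached from the small morsification by a further Legendrian isotopy, so that the Maslov data computed in the shrinking limit transports to the actual cobordism $\Gamma$ of Lemma~\ref{lem:Gamma} by continuity of the Gauss map along the $1$-parameter family of isotopy traces.
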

\begin{proof}
	The last property of	
	Lemma~\ref{lem:Gamma} implies that  the symplectic area of a disk with boundary on $L^{\circ}_{\rho}$ equals the symplectic area of the corresponding disk with boundary and corners on $L$. 
	The assertion about the paths $G_p$, $G_q$ implies that
	the Maslov index of a disk with boundary on $L^{\circ}_{\rho}$ is equal to that of the corresponding disk with boundary and corners on $L$, provided the cobordism $\Gamma$ has a Maslov class satisfying the following property: the value of the Maslov class on any path from the positive to the negative end of $\Gamma$ is the \emph{same} for the two components of the cobordism. So monotonicity follows from the statement about $G_p$ and $G_q$. 
	
	To see that this statement is true, note that $\Gamma$ is a cobordism constructed from an isotopy and consider the cobordism $\Gamma'$ of the inverse isotopy with positive end at $\Lambda_{\rm Ho}$ and negative at $\Lambda_{\rm st}$. Gluing the two gives a cobordism $\Delta$ that can be deformed by shortening the isotopy to the trivial cobordism $\R\times\Lambda_{\rm st}$. We denote the 1-parameter family of cobordism $\Delta_{s}$, $s\in[0,\infty)$, where $\Delta_{0}$ is the trivial cobordism and $\Delta_{\infty}$ is the two level cobordism $\Gamma\cup\Gamma'$. 
	
	Consider the moduli space of holomorphic strips with boundary on $\Delta_{s}$ asymptotic in the positive end to a fixed Reeb chord of length $\frac{\pi}{2}$, connecting the component of $\Lambda_{\rm st}$ in $\Gamma_{0}$ to that in $\Gamma_{1}$, to an unconstrained Reeb chord of the same type in the negative end. It is straightforward to check that this is a transversely cut out moduli space of dimension $0$ for the trivial cobordism at $s=0$. Since the Reeb chords at the ends have minimal action no breaking is possible for $s\in[0,\infty)$. Therefore by SFT compactness the strip breaks into a several level disk at $s=\infty$. As above it is straightforward to show that moduli spaces of disks with one positive puncture are transversely cut out. Noting that $p$ is the only chord of $\Lambda_{\rm Ho}$ that connect $\Gamma_{0}$ to $\Gamma_{1}$, this then implies that there is a unique dimension zero strip connecting a fixed chord in the positive end to $p$ (and a dimension zero strip from $p$ to an unconstrained chord at the negative end). The statement about the Maslov class of $\Gamma$ is then a consequence of the dimension formula for punctured holomorphic curves.
\end{proof}

We study holomorphic disks $u\co (D,\partial D)\to (X^{\circ},L^{\circ}_{\rho})$ that are asymptotic to Reeb chords of $\Lambda_{\rm Ho}$ at boundary punctures. We continue to call them punctured holomorphic disks. The lemma below is an instance of the SFT compactness theorem for neck-stretching \cite{CompSFT03}.

\begin{lem}\label{l:sftcompact}
	Let $u_{\rho}$, $\rho\to +\infty$, be a family of punctured holomorphic disks in $(X^\circ, L_{\rho}^{\circ})$. As $\rho\to+\infty$, the sequence $u_{\rho}$ converges in the SFT sense to a holomorphic building whose top level is a punctured holomorphic disk in $(X^{\circ},L^\circ)$, and whose lower levels are holomorphic buildings in $\R\times S^{3}$ with boundary on $\Gamma$.\qed
\end{lem}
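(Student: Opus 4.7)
The statement is a direct instance of the SFT compactness theorem of Bourgeois-Eliashberg-Hofer-Wysocki-Zehnder \cite{CompSFT03} applied to the neck-stretching family $(X^\circ, L^\circ_\rho)$. The geometric setup is standard: as $\rho \to +\infty$, the Lagrangian $L^\circ_\rho$ contains longer and longer cylindrical pieces on $\Lambda_{\rm st}$ that interpolate between the original Lagrangian $L^\circ$ and the attached cobordism $\Gamma$. This stretching occurs along any contact slice $\{t\} \times S^3$ chosen in the cylindrical region, and produces, in the limit, a splitting of $u_\rho$ into a multi-level holomorphic building.

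The key ingredient needed to invoke the compactness theorem is a uniform a priori bound on the Hofer energy of $u_\rho$, which in the present monotone setting follows from a uniform bound on the Maslov index of $u_\rho$ together with Lemma~\ref{lem:MaslovCobordism}. Having secured this, one extracts a convergent subsequence whose limiting building decomposes into a top level in $(X^\circ, L^\circ)$, which is a punctured holomorphic disk with (possibly empty) negative Reeb chord asymptotics on $\Lambda_{\rm st}$; possibly several intermediate symplectization levels in $(\R \times S^3, \R \times \Lambda_{\rm st})$; and a bottom level in $(\R \times S^3, \Gamma)$ whose positive asymptotics are Reeb chords of $\Lambda_{\rm st}$ and whose negative asymptotics are Reeb chords of $\Lambda_{\rm Ho}$. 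The symplectization levels together with the bottom level assemble into a single holomorphic building in $(\R \times S^3, \Gamma)$, exactly as stated in the lemma.

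The main point requiring care is simply verifying that the standard SFT compactness framework applies in our setting, in which the ambient manifold $X^\circ$ is itself non-compact with its own cylindrical negative ends at the double points of $L$. This is handled in the usual way: the neck stretching is localized to a compact region of $X^\circ$ independent of $\rho$, while at the boundary punctures of $u_\rho$ the only non-compact behavior is the Reeb chord asymptotic behavior at the ends of $X^\circ$, which is controlled exactly as in the classical setup. In particular, no additional bubbling ``at infinity'' on the $X^\circ$ side can occur beyond what is captured by the Reeb chord asymptotics of the top-level limit, and the output of \cite{CompSFT03} is precisely the assertion of the lemma.
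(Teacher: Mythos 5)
Your proposal is correct and takes essentially the same route as the paper: the paper itself gives no proof beyond the one-line remark that the lemma "is an instance of the SFT compactness theorem for neck-stretching \cite{CompSFT03}," stating the lemma with a terminal \qed. Your write-up fills in the standard justifications (uniform energy bound from monotonicity and Lemma~\ref{lem:MaslovCobordism}, localization of the stretching region, and the structure of the limiting building), all of which are consistent with what the paper is implicitly relying on.
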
 

From now on,  denote by $\Lambda$ the collection of  Hopf links $\Lambda_{\rm Ho}$ corresponding to the punctures of $L^\circ_\rho$ where each copy of the Hopf link sits in a separate copy of $S^3$. For example, a Reeb chord of $\Lambda$ means a Reeb chord on one of the copies of the actual Hopf link in the respective copy of $S^3$. For our choice of Maslov potential, the Reeb chords on $\Lambda_{\rm Ho}$ contained in the Darboux chart have degrees $|a_i|=1$ and $|p|=|q|=0$. 

Let  $c_{1},\dots, c_{m}$ be a collection of Reeb chords on the Hopf links $\Lambda$ at the negative ends of $L^{\circ}_{\rho}$. Recall that each of these Reeb chords is a copy of $a_1$, $a_2$, $p$ or $q$ from Section~\ref{subsec:Hopf}, or a long Reeb chord of index $\ge 2$ which leaves the Darboux chart.

The expected dimension of the moduli space of holomorphic disks $u$ with boundary on $L^{\circ}_{\rho}$ and boundary punctures asymptotic to $c_{1},\dots,c_{m}$ equals
\[ 
\mu(u)-1-\sum_{j=1}^{m}|c_{j}|.
\]
Here we have used the computation of the Maslov class of the cobordism $\Gamma$ in Lemma \ref{lem:MaslovCobordism}.

\begin{rmk}Comparing this dimension formula with Lemma~\ref{l:tv}, the chords $p,q$ resp.~$a_1,a_2$ correspond to winding number $1/2$ resp.~$1$.
\end{rmk}

Fix a point $\zeta$ and write  \begin{equation}
\label{eq:m_zeta_punct}
\M(\zeta;c_{1},\dots,c_{m})
\end{equation} for the moduli space of holomorphic disks $u$ passing through $\zeta$ at a boundary point,  and with boundary punctures asymptotic to $c_{1},\dots,c_{m}$. 
Its expected dimension equals
\[ 
\mu(u)-2-\sum_{j=1}^{m}|c_{j}|. 
\]
Below is an analogue of Lemma~\ref{l:tv} for punctured curves.

\begin{lem}\label{l:tvpunctured}
	For any fixed  $\rho>0$ there is a uniform upper bound on the number of punctures of holomorphic disk with boundary on $L_{\rho}^\circ$ in terms of the $\omega$-area of its topological compactification in $(X,L)$. 
	
	Suppose $\rho>0$ is sufficiently large and $L\subset X$ is a monotone immersion.
	Then for a generic compact $s$-dimensional family $S$ of data defining the moduli spaces (\ref{eq:m_zeta_punct}), the union of the components over $S$ consisting of Maslov index 2 disks is a transversely cut out manifold of dimension
	\[ 
	s-\sum_{j=1}^{m}|c_{j}|.
	\] 
	In particular, all punctures of a Maslov index 2 disk in $\M(\zeta;c_{1},\dots,c_{m})$  are asymptotic to degree 0 Reeb chords on $\Lambda$, that is, one has $|c_i|=0$. Equivalently, each $c_i$ is either the $p$-chord or the $q$-chord from Section~\ref{subsec:Hopf} on a copy of the Hopf link.
\end{lem}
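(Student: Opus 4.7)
The plan is to establish the three assertions in turn: (i) the uniform bound on the number of punctures, (ii) transversality of the universal moduli space over $S$, and (iii) the concluding statement that all asymptotic chords have degree $0$.

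First I would handle (i) by appealing directly to Lemma \ref{l:area}. Each Reeb chord on the morsified Hopf link $\Lambda_{\rm Ho}$ has asymptotic winding number bounded below by $1/2$: the mixed chords $p,q$ correspond to winding $1/2$, the pure chords $a_1, a_2$ to winding $1$, and the long chords that leave the Darboux chart have even larger winding. By Lemma \ref{lem:Gamma}, the symplectic area of a disk on $L_\rho^\circ$ coincides, up to a uniformly controlled correction coming from the primitive on the cobordism $\Gamma$, with the area of its topological compactification in $(X,L)$. Combined with the estimate $\mathrm{Area}(u) > C \sum_j w_j$ from Lemma \ref{l:area}, this yields a linear upper bound on the number $m$ of punctures in terms of the area, since each $w_j \ge 1/2$.

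For (ii) I would first prove that any Maslov index $2$ punctured disk on $L_\rho^\circ$ is somewhere injective, and then apply the standard transversality theory for somewhere injective curves to the family $S$. The somewhere injectivity follows by adapting the area-minimizing current argument of Lemma \ref{l:somewhereinj} to the punctured setting: by monotonicity, a Maslov $2$ disk is absolutely area-minimizing within the class of holomorphic integer currents carrying the prescribed Reeb chord asymptotics at the negative ends, and the multiplicity decomposition $C = C_1 + 2C_2 + \cdots$ cannot have $C_1 = \varnothing$, since otherwise $C - C_m$ would give a strictly smaller competitor. With somewhere injectivity in hand, a generic perturbation of the data in $S$ supported away from the cobordism collar $\Gamma$ makes the universal moduli space transversely cut out of the expected dimension $s - \sum_j |c_j|$, as in Lemma \ref{l:tv}.

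The concluding assertion (iii) then follows from the dimension formula combined with non-negativity of the chord degrees. For the fixed Maslov potential, the degrees of the chords in the Darboux chart are $|p|=|q|=0$ and $|a_1|=|a_2|=1$, while by Lemma \ref{l:smallchords}, after taking $\rho$ sufficiently large (equivalently, shrinking $\Lambda_{\rm Ho}$ inside its Darboux ball), every Reeb chord leaving the chart has degree at least $2$. Consequently all $|c_j| \ge 0$. Specializing to the single-datum case $s=0$, transversality forces the dimension $-\sum_j |c_j|$ to be non-negative; combined with $|c_j| \ge 0$, this forces $|c_j| = 0$ for every $j$, so each $c_i$ must be a copy of $p$ or $q$. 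The hard step will be the somewhere injectivity argument in (ii): the current-theoretic proof of Lemma \ref{l:somewhereinj} is written for compact Lagrangians and some care is needed to check that the currents $C_j$ still have rectifiable boundary in the presence of asymptotics along $\R \times \Lambda_{\rm Ho}$. A cleaner alternative is to use the SFT neck-stretching of Lemma \ref{l:sftcompact}: the $\rho \to \infty$ limit produces a building whose top level is a Maslov $2$ disk with corners on $L$, to which Lemma \ref{l:somewhereinj} applies directly, and somewhere injectivity then persists at large finite $\rho$ by standard gluing.
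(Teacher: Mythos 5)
Your proposal is correct and, in its final form, essentially matches the paper's proof. For part (ii), the paper does exactly what you call the ``cleaner alternative'': it argues by contradiction that if some $u_\rho$ fails to be somewhere injective for every $\rho$, then the SFT limit of Lemma~\ref{l:sftcompact} produces (via Lemma~\ref{l:outside=compact}) a non-somewhere-injective Maslov~$2$ disk with corners on $L$, contradicting Lemma~\ref{l:somewhereinj}. You are right to be wary of your first idea of re-running the rectifiable-current argument directly in the punctured setting; that would require redoing the Federer--Sard analysis near the ends and controlling competitor currents with prescribed Reeb asymptotics, which is exactly the technical overhead the stretching trick avoids.

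Two small points of imprecision worth fixing. For (i), the paper does not invoke Lemma~\ref{l:area}; it uses the direct SFT-type energy estimate $\omega(u)\ge\sum_i\mathfrak{a}(c_i)$ for holomorphic disks with negative ends, plus the fact that the chord actions are bounded away from $0$. Your route through the ``topological compactification'' and Lemma~\ref{l:area} is a bit awkward, since Lemma~\ref{l:area} is proved for holomorphic corner disks and the compactification is merely topological; the bound you want is morally the same (winding numbers and actions are proportional for these Reeb chords) but you would have to redo the monotonicity estimate near the ends of $u$ rather than cite Lemma~\ref{l:area} as stated. Also, by Lemma~\ref{lem:MaslovCobordism} the areas agree exactly (not just up to a correction), since the primitive on $\Gamma$ vanishes outside a compact set. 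Finally, your closing phrase ``persists at large finite $\rho$ by standard gluing'' is better framed as a compactness/contradiction argument as in the paper, rather than a gluing statement.
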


\begin{proof}
	Every punctured holomorphic disk $u$ with boundary on $L_\rho^\circ$ and  asymptotic chords  $\{c_i\}$ satisfies 
	$$\omega(u)\ge \sum_i\mathfrak{a}(c_i),$$
	where the action of $\mathfrak{a}(c_i)$ is computed in the sphere $S^3$ of radius $\epsilon$. (These actions are scaled compared to the actions appearing in Section~\ref{subsec:Hopf}.) Since the set of all possible actions is separated from zero, the first claim follows.
	
	To see the second claim, it again suffices to prove that all holomorphic disks considered are somewhere injective, and hence transversely cut out for generic data. Suppose that for any $\rho$ there exists a disk $u_{\rho}$ that is not somewhere injective. Then, taking the SFT limit as in Lemma~\ref{l:sftcompact}, the top level of the resulting holomorphic building corresponds by Lemma~\ref{l:outside=compact} to a holomorphic Maslov index~2 disk in $(X,L)$ with corners which would not be somewhere injective. That however contradicts Lemma~\ref{l:somewhereinj}. 
\end{proof}

\subsection{The refined potential}
\label{subsec:ce_pot}
Let $Q$ be an oriented spin surface, and  $f\colon Q\to X$ a monotone Lagrangian immersion. Write $L=f(Q)$ and let $L^{\circ}_{\rho}\subset X^{\circ}$ be as above, for a sufficiently large $\rho>0$.
Choose base-points and reference paths as in Section~\ref{sec:dga}.

Fix a connected component of $Q$ with index $i$ and write $\zeta=\st_i$.
Let $\M(\zeta)$ be the moduli space of punctured holomorphic disks of Maslov index~2  with  boundary on $L^{\circ}_{\rho}$, and passing through $\zeta$. This is the union of spaces (\ref{eq:m_zeta_punct}) over all index~0 Reeb chords.

Lemma \ref{l:tvpunctured} implies that $\M(\zeta)$ is a transversely cut out oriented $0$-manifold. 
Let $u\in \M(\zeta)$. Following $u(\del D)$ starting from the marked point mapping to $\zeta=\st_i$, we read off a word of loops in $L_\rho^\circ$ and Reeb chords as in \eqref{eq:ce_word}.  Here the chords  $c_i$ are the asymptotics of $u$ and the loops are the boundary arcs between them completed by reference paths to base-points, see Figure~\ref{fig:pot_ce_ref}.

This way we associate to each such $u$ an element of $CE(\Lambda;\C[\pi_{1}(L_{\rho}^{\circ})])$, where $\Lambda$ is the formal union of the Hopf links at the negative ends of $L^{\circ}$. We denote it $[\del u]$. In fact, since
$[\del u]$ begins and ends at the base-point $\zeta=\st_i$, 
\begin{equation}
\label{eq:ce_submodule}
[\del u]\in
CE(\Lambda;\C[\pi_{1}(L_{\rho}^{\circ})],\st_i) \coloneqq
e_i CE(\Lambda;\C[\pi_{1}(L_{\rho}^{\circ})]) e_i
\end{equation}
where $e_i$ is the idempotent corresponding to the $i$th connected component of $L_\rho^\circ$. Note that
$$ CE(\Lambda;\C[\pi_{1}(L_{\rho}^{\circ})],\st_i) \subset CE(\Lambda;\C[\pi_{1}(L_{\rho}^{\circ})])$$ 
is a subalgebra of the Chekanov--Eliashberg algebra.

Define the refined potential of $L$ as follows:
\[ 
W_{L}(\st_i)=\sum_{u\in\M(\zeta)}(-1)^{\sigma(u)} [\del u]\ \in\  CE(\Lambda;\C[\pi_{1}(L_{\rho}^{\circ})],\st_i),
\]  
$(-1)^{\sigma(u)}$ is the  orientation sign of $u$. (Recall that the algebra $CE(\Lambda;\C[\pi_{1}(L_{\rho}^{\circ})])$ has a $\Z$-grading where generators in $\C[\pi_{1}(L_{\rho}^{\circ})]$ have degree zero.)

\begin{prp}\label{th:potdga}
	The potential $W_{L}(\st_i)$ is a degree 0 cycle in $CE(\Lambda;\C[\pi_{1}(L_{\rho}^{\circ})])$. Its homology class is independent of the choices made 
	that is invariant under symplectomorphisms of $L$.
\end{prp}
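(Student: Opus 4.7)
The plan is to follow the standard parametric moduli space argument of Legendrian contact homology, adapted to the capped setting of Section~\ref{subsec:l_o_rho}.

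For the cycle property, I would argue purely by degree. By Lemma~\ref{l:tvpunctured}, every $u\in\mathcal{M}(\zeta)$ has all boundary punctures asymptotic to the degree-$0$ Reeb chords $p$ or $q$ on copies of $\Lambda_{\rm Ho}$, so each $[\partial u]$ lies in degree $0$ of $CE(\Lambda;\C[\pi_1(L_\rho^\circ)])$. Every generator of this algebra has nonnegative degree: loops in $\pi_1(L_\rho^\circ)$ are in degree $0$, while by Section~\ref{subsec:Hopf} and Lemma~\ref{l:smallchords} the Reeb chords of $\Lambda$ have degree $0$, $1$, or $\ge 2$. Since the Chekanov--Eliashberg differential has degree $-1$ and there are no generators of negative degree, $\partial$ vanishes identically on degree-$0$ elements, so $\partial W_L(\st_i)=0$.

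For invariance of the homology class, I would consider a generic path of data $(J_s,L_s,\zeta_s)$, $s\in[0,1]$, interpolating between two admissible choices, and morsify via the cobordism $\Gamma$ of Lemma~\ref{lem:Gamma} to obtain a family $L^\circ_{\rho,s}$. By the parametric version of Lemma~\ref{l:tvpunctured}, for each word of degree-$0$ chords $\mathbf{c}$ the moduli space $\mathcal{M}_{[0,1]}(\zeta;\mathbf{c})$ is a transversely cut out $1$-manifold, whose boundary at $s=0,1$ contributes $W_L^{(1)}(\st_i)-W_L^{(0)}(\st_i)$. Disk bubbling is excluded by monotonicity (any non-constant bubble has Maslov index $\ge 2$, exhausting the budget of the main component), and constant disk bubbling at double points does not arise because we work in the morsified punctured model. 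By SFT compactness (Lemma~\ref{l:sftcompact}) the only remaining codimension-$1$ degeneration is a single SFT breaking at the negative end. A Fredholm index count in the parametric family forces the top piece to be a rigid Maslov-$2$ punctured disk through $\zeta$ with exactly one degree-$1$ asymptote among otherwise degree-$0$ asymptotes, and the bottom piece to be a rigid-up-to-$\R$ symplectization disk from a degree-$1$ chord to a word of degree-$0$ chords, i.e.~a term in the Chekanov--Eliashberg differential of that degree-$1$ chord.

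Assembling these contributions and setting
$$H(\st_i)=\sum_{\mathbf{c}\,:\,\sum|c_i|=1}\ \sum_{u\in\mathcal{M}_{[0,1]}(\zeta;\mathbf{c})}(-1)^{\sigma(u)}[\partial u]\ \in\ CE(\Lambda;\C[\pi_1(L_\rho^\circ)],\st_i),$$
a degree-$1$ element, yields the chain homotopy identity $W_L^{(1)}(\st_i)-W_L^{(0)}(\st_i)=\partial H(\st_i)$, establishing invariance under $J$. The same scheme handles the auxiliary parameter $\rho$ (by varying it through such a family) and self-transverse monotone Hamiltonian deformations of $f$; independence from the chosen reference paths amounts to conjugation by an element of $\C[\pi_1(L_\rho^\circ)]$ and is verified directly. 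The main obstacle I anticipate is an isotopy of $f$ forced to pass through a non-self-transverse instant (birth or death of a double point), at which the Legendrian link at infinity, and hence the target algebra $CE(\Lambda;\cdot)$ itself, undergoes a topological change; one must produce a quasi-isomorphism between the two algebras compatible with the potentials, via a local Whitney-umbrella model together with a parametric version of Lemma~\ref{l:invmdli}.
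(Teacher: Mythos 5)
Your proof is essentially the same as the paper's: the degree-zero cycle property follows from non-negativity of the grading, and invariance is established by a one-parameter SFT-compactness argument showing that the extra boundary of the compactified one-dimensional moduli space consists of two-level configurations with a single degree-one chord, which together contribute the Chekanov--Eliashberg differential of a degree-one chain-homotopy term. Your anticipated obstacle of isotopies passing through non-self-transverse instants is moot in this setting, since the proposition only asserts invariance under \emph{ambient} Hamiltonian isotopy, which preserves the double points and hence fixes the Legendrian link $\Lambda$ at infinity throughout.
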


\begin{rmk}
	Potentials with respect to two different base-points $\st_i$, $\st_j$ (on different connected components of $L^\circ_\rho$) are related by the Lagrangian surgery formula, see Proposition~\ref{prp:surg} and 
	Lemma~\ref{lem:onedimrepr}.
\end{rmk}

\begin{proof}
	The fact that $W_L(\st_i)$ has degree zero follows from the dimension formula in Lemma~\ref{l:tvpunctured}. It is automatically  a cycle because the Chekanov--Eliashberg algebra of the Hopf link is concentrated in degrees~0 and~1.
	
	For the invariance, consider the moduli space $\M_{I}(\zeta)$ where $I$ is an interval that parameterizes monotone Lagrangians $L^{s}$, $s\in I$ (or a variation of other choices). By SFT compactness, this moduli space has a compactification consisting of broken disks. An elementary topological consideration of the symplectic area, using the monotonicity assumption, shows that  the upper level of a broken disk again must consist of a single (punctured) Maslov index~2 disk. 
	
	This implies that the lower part of the building contains exactly one disk of Fredholm index~one, with one positive puncture at a Reeb chord of degree $1$ which hence must be an $a_i$-chord contained inside the Darboux ball. We use Lemma \ref{l:tvonepos} to control the lower level of the disk.  Counting the ends of the compact 1-dimensional moduli space $\M_{I}(\zeta)$ then gives 
	\[ 
	W_{L^{1}}(\st_i)-W_{L^{0}}(\st_i)= \partial(K_{I}), 
	\]
	where $K_{I}$ is the count of curves in $\M_{I}(\zeta)$ with one puncture at a degree $1$ chord, and $\partial$ is the differential of the Chekanov--Eliashberg algebra.
\end{proof}

{
\subsection{Computing the homology for the cylinder cap}

In this section we compute the full homology of the DGA
$$CE(\Lambda_{\rm Ho};\C[\pi_1(L^\circ)])$$
where $L^\circ$ is a cap of the Hopf link $\Lambda_{\rm Ho}$ diffeomorphic to a cylinder. Similar results were announced in \cite{KaplanSchedler}. 

Recall that the underlying unital graded algebra in this case is
$$ \mathcal{A}=\C\langle a_1,a_2,p,q,t,t^{-1} \rangle $$
where $|a_i|=1$ and the remaining generators are in degrees zero. More precisely, it is the free tensor ring over $\C[t,t^{-1}]$ generated by $a_1$, $a_2$, $p$, and $q$. (In particular, it is not a semi-free DGA over $\C$.) The nonzero differentials are
$$ da_1=1-t+qp \:\:\:\text{and}\:\:\: da_2=1-t+pq.$$
Since there are no generators in negative degrees, the homology of the DGA in nonpositive degrees is easily computed; as pointed out before, it is the affine algebra
$$H_0(CE(\Lambda_{\rm Ho};\C[\pi_1(L^\circ)]))= \C[p,q,t]/\langle 1-t+pq\rangle.$$
We show that the homology in degrees different than zero is trivial.
\begin{thm}\label{t:Hopf=0highdeg}
\label{thm:homology}
The homology of the Chekanov--Eliashberg algebra of the Hopf link with the cylinder cap is
$$H(CE(\Lambda_{\rm Ho};\C[\pi_1(L^\circ)]))=\C[p,q][(1+pq)^{-1}],$$
where $\C[p,q][(1+pq)^{-1}]$ is the localization of $\C[p,q]$ at the multiplicative system $\{(1+pq)^{n}\}_{n=1,2,\dots}$, which is a smooth affine commutative algebra concentrated in degree zero.
\end{thm}
\begin{rmk}
The calculation of the degree zero homology in Theorem \ref{thm:homology} is straightforward. It is more involved to show that homology vanishes in all positive degrees.
\end{rmk}

The proof of Theorem \ref{t:Hopf=0highdeg} is a combination of Lemma \ref{lem:homology} on the homology of a sub-DGA of $\mathcal{A}$ and general algebraic results from \cite{BCL18}. Consider the \emph{semi-free} sub-DGA
$$ \mathcal{A}_0=\C\langle a_1,a_2,p,q,t \rangle \subset \mathcal{A},$$
in which $t$ is no longer invertible. Then $\mathcal{A}_{0}$ is freely generated over $\C$ by $a_1$, $a_2$, $p$, $q$, and $t$. We also consider the semifree sub-DGA
$$ \mathcal{B} = \C \langle a \coloneqq a_1-a_2,p,q\rangle \subset \mathcal{A}_0,$$
generated by $a=a_1-a_2$ in degree 1, and $p,q$ in degree $0$. The algebra $\mathcal{B}$ then has the nonzero differential
$$ da=pq-qp, $$
i.e., $da$ equals the commutator of the variables $p$ and $q$. 

\begin{lem}
\label{lem:homology}
The homology $H_i(\mathcal{A}_0)$ is trivial in all nonzero degrees $i  \neq 0$.
\end{lem}
\begin{proof}
	It is straightforward to see that the inclusion $\mathcal{B} \subset \mathcal{A}_0$ is a stable-tame equivalence of semi-free DGAs, and thus a quasi-isomorphism. Therefore, it suffices to show that
    the homology $H_i(\mathcal{B})$ for all $i \neq 0$.

Equip $\mathcal{B}$ with the bidegree $\Z \times \Z$ where a generator $c$ lives in bidegree $\|c\|=(-1,|c|+1)$. Then the total degree agrees with the usual degree of a generator of $\mathcal{B}$.

We have $\|a\|=(-1,2)$ and the differential preserves the second bigrading, $\|da\|=(-2,2)$. It follows that the graded dual complex $\mathcal{B}^\#$ is a complex which is isomorphic to $\mathcal{B}$ with field coefficients. On the other hand, the graded dual complex is the graded dual of the bar resolution of the exterior tensor algebra $A$ of a two-dimensional vector space (with each generator in bidegree $(0,1)$). By a standard computation, the homology of $\mathcal{B}^\#$ is the classical Koszul dual of $A$. In other words, comparing with the small resolution, $H\mathcal{B}^\#$ is a commutative polynomial algebra of two variables concentrated in total degree zero; see \cite[Section 2.3]{KoszulEquivalences}.
\end{proof}

\begin{proof}[Proof of Theorem \ref{thm:homology}]
There is a canonical unital (non-central) sub-DGA $\C[t]\subset\mathcal{A}_0$ that allows us to consider $\mathcal{A}_0$ as a semi-free DGA over $\C[t]$. The homology $H(\mathcal{A}_0)=\C[p,q]$ is commutative by Lemma \ref{lem:homology} and hence the induced unital algebra inclusion $\C[t] \hookrightarrow H(\mathcal{A}_0)$, $t \mapsto 1+pq$ is central. Then \cite[Theorem 5.3]{BCL18} applies and shows that the derived localization $\mathcal{A}_0 \star^{\mathbb{L}}_{\C[t]} \C[t,t^{-1}]$ has homology
$$ H(\mathcal{A}_0 \star^{\mathbb{L}}_{\C[t]} \C[t,t^{-1}])=\C[p,q][(1+pq)^{-1}].$$

The first bullet of \cite[Remark 3.11]{BCL18} shows that the derived localization
$\mathcal{A}_0 \star^{\mathbb{L}}_{\C[t]} \C[t,t^{-1}]$ is quasi-isomorphic to the free product
$\mathcal{A}_0' \star_{\C[t]} \C[t,t^{-1}]$,
where $\mathcal{A}_{0}'$ is a cofibrant replacement of $\mathcal{A}_{0}$ over $\C[t]$. Since $\mathcal{A}_0$ is free and hence cofibrant over $\C[t]$ and 
$$\mathcal{A}=\mathcal{A}_0 \star_{\C[t]} \C[t,t^{-1}]$$ the theorem follows.
\end{proof}

\begin{rmk}
Building on forthcoming work \cite{AE} one can give a geometric proof of Theorem \ref{t:Hopf=0highdeg} that we sketch here. In \cite{AE}, Weinstein sectors are studied from the Legendrian surgery perspective and one of the main results there is the following Legendrian surgery version of the cosheaf property of wrapped Floer cohomolgy in \cite{GPS17b}: consider Weinstein $2n$-manifolds $W_0$ and $W_1$ with a Weinstein $(2n-2)$-submanifold $V\subset \partial W_j$, $j=0,1$ and Legendrians $\Lambda_{j}\subset \partial W_{j} \setminus V$. Then there is a push-out diagram of dg-algebras
\begin{equation}\label{eq:CEpushout}
\begin{CD}
CE(\Lambda_{V},V^0) @>>> CE(\Lambda_{1}, V,W_{1})\\
@VVV  @VVV\\
CE(\Lambda_{0}, V,W_0) @>>> CE(\Lambda_{0}\cup \Lambda_{1}\cup \Lambda_{W_{01}},W_{01}^0).
\end{CD}
\end{equation}
Here $CE(\Lambda_{V},V^0)$ is the Chekanov-Eliashberg algebra of the attaching spheres of the top-dimensional cells in $V$ in the boundary $\partial V^{0}$ of its subcritical part $V^{0}$, $CE(\Lambda_{j},V_j \cup V,W_{j})$ is the Chekanov-Eliashberg algebra of the union of $\Lambda_{j}$ and the Legendrian embedding of the skeleton of $V$ in $\partial W_{j}$, and $CE(\Lambda_{0}\cup \Lambda_{1}\cup \Lambda_{W_{01}},W_{01}^0)$ is the Chekanov-Eliashberg algebra of the union of $\Lambda_{0}\cup \Lambda_{1}$ and the attaching spheres $\Lambda_{W_{01}}$ of the top dimensional disks in the boundary $\partial W_{01}^{0}$ of the complement if the top-dimensional handles corresponding to $V$ in the union $W_{01}=W_{0}\cup_{V} W_{1}$. Furthermore, $CE(\Lambda_{j},V,W_{j})$ is (via Legendrian surgery) quasi-isomorphic to the dg-algebra of $\Lambda_{j}$ in $W_{j}$ stopped at $V$. 

For Theorem \ref{t:Hopf=0highdeg}, we apply this result with $W_{0}$ the sub-critical Weinstein manifold $W_{0}=S^1\times \R^{3}$ obtained by adding a $1$-handle to the ball with its two ends on the Hopf link. Joining the two components over the handle gives a Legendrian knot $\Lambda_{0}\subset \partial W_{0}$. We add a stop along the standard Legendrian knot $\Lambda$ that goes once through the handle, $\Lambda\cap \Lambda_{0}=\varnothing$. It is easy to see that $\partial W_{0}$ stopped at $\Lambda$ (or in the notation above stopped at $V\approx T^{\ast}\Lambda\subset J^{1}\Lambda$, where a neighborhood of the zero-section in $J^{1}\Lambda$ is identified with a neighborhood of $\Lambda\subset\partial W_{0}$) is contactomorphic to $J^{1}S^{1}$ with $\Lambda$ in it. Furthermore, the holomorphic polygons of the knot diagram of $\Lambda_{0}\subset T^{\ast}S^{1}$ shows that $CE(\Lambda,J^{1}S^{1})$ is isomorphic to $\mathcal{A}_{0}$. 

We next apply the push-out diagram \eqref{eq:CEpushout} with $W_{1}$ as in stop removal. That is, 
$W_{1}=T^{\ast}S^{1}\times[0,1]$ with $V$ a neighborhood of $S^{1}\times\{0\}$ which is a loose Legendrian. Then $W_{01}=W_{0}$. Since, $V=T^{\ast}\Lambda=T^{\ast}S^{1}$, the top left corner is the dg-algebra of two points in the circle with homology $\C[t,t^{-1}]$, corresponding to chains on based loops in $S^{1}$ by the Legendrian surgery formula and the fact that the wrapped Floer cohomology of the cotangent fiber is isomorphic to chains on the based loop space of the base. We then get the diagram 
\[
\begin{CD}
CE(2 \text{ points}, \C)\quad @>>> \quad CE(V=(T^{\ast} S^{1})\times\{0\}, T^{\ast}S^{1}\times[0,1])\\
@VVV  @VVV\\
CE(\Lambda,V=T^{\ast}\Lambda_{0},W_{0})\quad @>>> \quad CE(\Lambda,W_{0}),
\end{CD}
\] 
where $CE(\Lambda,T^{\ast}\Lambda_{0},W_{0})$ is quasi-isomorphic to $\mathcal{A}_{0}$ and $CE((T^{\ast} S^{1})\times\{0\}, T^{\ast}S^{1}\times[0,1])$ is quasi-isomorphic to the zero-algebra, since $S^{1}\times\{0\}$ is loose.
Note that $\mathcal{A}$ and $CE(\Lambda,W_0)$ are quasi-isomorphic since (by Legendrian surgery) both computes the wrapped Floer cohomology of the co-core in a Weinstein neighborhood of the Whitney sphere. It follows that the homology is zero in non-zero degrees.   
\end{rmk}

\subsection{Preprojective algebras as the homology of Chekanov--Eliashberg algebras}
\label{sec:preprojective}

In this subsection we represent the degree zero homology of a Chekanov--Eliashberg algebra of a collection of Legendrian Hopf links whose coefficients are induced by a cap $Q^\circ$ by a multiplicative preprojective algebra. 
Roughly speaking, $Q^\circ$ can be seen as the smooth locus of a Lagrangian immersion with transverse positive double points, and the multiplicative preprojective algebra will be associated to a quiver $\Gamma$ that arises from the plumbing graph that describes the configuration of double points of this immersion. That this is the case is in fact no big surprise; it should be expected from the calculations in \cite{EL17} of the wrapped Floer homology of the Weinstein manifold given by the plumbing. Indeed, the article shows that the wrapped Floer homology also is the corresponding (derived) PPA.

We first recall the definition of higher genus multiplicative preprojective algebras (PPA), which is an algebra naturally associated to a quiver $\Gamma$. These were originally introduced in \cite{CBS06} in the genus zero case, the higher genus generalizations then appeared in \cite{BK16}. Consider a quiver $\Gamma$  with vertex set $V$ and edge set $E$. To each vertex $v\in V$ we associate an auxiliary integer $g_v\ge 0$ called the genus. The edges of a quiver are oriented, and for $a\in E$ we let $s(a)$ and $t(a)$ be its source and target vertices, respectively.

We next define the PPA $\B(\Gamma)$ associated to $\Gamma$ with genera $g_{v}$ at vertices. Consider the semi-simple ring 
$$
\kk=\bigoplus_{v\in V}\C e_v
$$
where $e_v^2=e_v$ and $e_ve_w=0$ for $v\neq w$. The algebra $\B(\Gamma)$ will be a quotient of the tensor ring of the $\kk$-bimodule $B(\Gamma)$ with generators
$$
\begin{array}{lll}
p_a,\ q_a,&a\in E,\\
t_a^{\pm 1},& a\in E,\\
x_{i,v}^{\pm 1},\ y_{i,v}^{\pm 1}& v\in V,&i=1,\ldots,g_v,
\end{array}
$$
and bimodule structure over $\kk$ determined by
$$
\begin{array}{l}
e_v x_{i,v}=x_{i,v}e_v=x_{i,v},\\ e_vy_{i,v}=y_{i,v}e_v=y_{i,v}, \\
e_{t(a)}p_a=p_ae_{s(a)}=p_a,
\\
e_{s(a)}q_a=q_ae_{t(a)}=q_a,\\
e_{s(a)}t_a=t_ae_{s(a)}=t_a,
\end{array}
$$
and all other bimodule actions trivial, $e_w x_{i,v}=0=x_{i,v}e_w$ for $v\neq w$, etc. (In other words, the generators $p_a$ should be interpreted a a path from $s(a)$ to $t(a)$, $q_a$ should be interpreted as a path from $t(a)$ to $s(a)$, while $t_a$ is a path from $s(a)$ to itself.)

The PPA $\B(\Gamma)$ is then
\begin{equation}\label{eq:defppa}
\B(\Gamma) \coloneqq \kk \oplus \bigoplus_{i=1}^\infty \underbrace{B(\Gamma) \otimes_\kk \ldots \otimes_\kk B(\Gamma)}_i / \sim
\end{equation}
where the relation $\sim$ is as follows. 

Write
$$R_{g_v}=\prod_{i=1}^{g_v}[x_{i,v},y_{i,v}].$$
(I.e., the group ring of a closed surface of genus $g_{v}$ is equal to the quotient of the free algebra generated by $x_{i,v},y_{i,v}$ by the two-sided ideal generated by $R_{g_v}$.) The relation $\sim$ is then generated by
\begin{align*}
& t_at_a^{-1}=t_a^{-1}t_a=e_{s(a)},& a \in E,\\
& x_{i,v}x_{i,v}^{-1}=y_{i,v}y_{i,v}^{-1}=e_v,& v\in V,\\
& e_{s(a)}+q_ap_a=t_a, & a \in E,\\
& \prod_{t(a)=v}(e_v+p_aq_a)=R_{g_v}^{-1}\cdot \prod_{s(a)=v}(e_v+q_ap_a),& v\in V,
\end{align*}
where the factors in the above products are with respect to some \emph{fixed} order (the choice of which is irrelevant up to isomorphism \cite{CBS06}).
\begin{rmk}
\label{rmk:invertible}
The first and third relations imply that every element $e_{s(a)}+q_ap_a=t_a$ is invertible. A simple induction argument based on this fact together with the last relation above then implies that each factor $e_{t(a)}+p_aq_a$ is invertible as well.
\end{rmk}

As a vector space, $\B(\Gamma)$ splits as
$$
\B(\Gamma)=\bigoplus_{v,w\in V}e_v\B(\Gamma) e_w.
$$
We denote
$$
\B(\Gamma,v,w)\coloneqq e_v\B(\Gamma) e_w.
$$
Note that $\B(\Gamma,v,v)$ is a $\C$-algebra for each vertex $v$. We think of it as supported over loops in $\Gamma$ from the vertex $v$ to itself.

We next explain how to realize $\B(\Gamma)$ as the degree zero homology of the Chekanov--Eliashberg algebra of a suitable collection of Legendrian Hopf links. From the underlying graph of $\Gamma$ together with the genera $\{g_v\}$ one constructs a self-plumbing $X_\Gamma$ of the cotangent bundle of the disjoint union of the surfaces that correspond to the nodes. More precisely, we use the underlying graph of $\Gamma$ (i.e., forgetting the orientation of the edges) as a plumbing graph with the convention that all intersections are positive for a suitable orientation of the surfaces. Denote by $Q_\Gamma \subset X_\Gamma$ the image of the zero sections in this plumbing, which then is a strongly exact Lagrangian immersion with transverse double points in bijective correspondence with the edges of $\Gamma$.

Consider a disjoint union of small standard contact spheres $S_{b}$ inside this plumbing which are in bijection with the edges $b$ of $\Gamma$ and such that each sphere surrounds a unique double point of $Q_\Gamma$. Then for each $b$, $S_{b}\cap Q_\Gamma$ is a standard Legendrian Hopf link. We write $\Lambda_\Gamma$ for the union of all these Legendrian Hopf links. (If one prefers to work with a connected contact manifold at the negative end, one can simply perform a contact connected sum of all the contact spheres giving a single standard contact sphere.)

Recall the definition of the capped Chekanov--Eliashberg algebra
$$CE(\Lambda_\Gamma;\C[\pi_1(Q_\Gamma^\circ])$$
from Sections \ref{sec:dga} and \ref{subsec:Hopf}, where $\Lambda_\Gamma$ is the collection of Hopf links in a disjoint union of spheres indexed by the set of edges $E$, and where $Q_\Gamma^\circ$ is the capping surface. We represent each Hopf link by a standard representative $\Lambda_{\OP{Ho}} \subset S^3$ contained in a small contact Darboux ball, see Section \ref{subsec:Hopf}.

We finally use the orientations of the edges of $\Gamma$. Consider an oriented edge $b$ connecting $v_{1}$ to $v_{2}$ and the corresponding Hopf link $\Lambda_{\mathrm{Ho}}\approx S_{b}\cap Q_{\Gamma}$. After a rotation of $\Lambda_{\mathrm{Ho}}$ shown in Figure \ref{fig:hopf}, we may assume that the Reeb chord $p$ that starts on the leftmost unknot component and ends on the rightmost one has the property that it starts and ends on the components of $Q_\Gamma^\circ$ corresponding to $v_{1}$ and $v_{2}$, respectively.

\begin{prp}\label{prp:ce_and_ppa}
The PPA $\B(\Gamma)$ associated to the graph $\Gamma$ with genera $\{g_v\}$ at vertices is isomorphic to the degree~0 homology of the corresponding capped Chekanov--Eliashberg algebra described above, i.e.
	$$
	\B(\Gamma) \cong H_0(CE(\Lambda_\Gamma;\C[\pi_1(Q_\Gamma^\oo)])).
	$$ 
\end{prp}
\begin{proof}
We define a map
$$H_0(CE(\Lambda_\Gamma;\C[\pi_1(Q_\Gamma^\oo)])) \ \to \ \B(\Gamma)$$
on the generators. Consider the DGA of the collection of Hopf links indexed by the oriented edges $b \in E$ in the (oriented) plumbing graph $\Gamma$ for $Q$, where the link at each edge is the standard Hopf link from Section \ref{subsec:Hopf}. In degree zero there are then the two generators $p_b$ and $q_b$ for the edge $b$,  which are the mixed Reeb chords of the corresponding Hopf link.

We rewrite Formulas \eqref{eq:hopf_dif} for the differential as
\begin{equation}
\label{eq:hopf_dif}
\begin{array}{l}
\partial a_1=e_{s(b)}-t_b+q_bp_b,\\
\partial a_2=e_{t(b)}-s_b+p_bq_b.
\end{array}
\end{equation}
for paths $t_b,s_b \in \pi_1(Q^\circ)$ that depend on choices of capping paths. In particular the equations $e_{s(b)}+q_bp_b=t_b$ and $e_{t(b)}+p_bq_b=s_b$ holds in homology. Now chose capping paths so that
$$ \{s_b\}_{t(b)=v} \cup \{t_b\}_{s(b)=v} \cup \{x_1,\ldots,x_{g_v},y_1,\ldots,y_{g_v}\}$$
is a set of generators for the fundamental group $\pi_1(Q^\circ_v)$ of the component of $Q^{\circ}$ that associated to $v \in V$, and such that the relation
$$ \prod_{t(b)=v} s_b=\left(\prod_{i=1}^{g_v}x_{i,v}y_{i,v}x_{i,v}^{-1}y_{i,v}^{-1}\right)^{-1}\prod_{s(b)=v} t_b$$
holds.

Then a well-defined algebra isomorphism from the degree zero part of the Chekanov--Eliashberg algebra to $\B(\Gamma)$ is obtained by
\begin{itemize}
\item sending the generators $t_b$, $p_b$, $q_b$, $x_{i,v}$, $y_{i,v}$ to the corresponding generators of $\B(\Gamma)$,
\item sending the generators $s_b$ to $e_{t(b)}+p_bq_b\in\B(\Gamma)$ (which are invertible by Remark \ref{rmk:invertible}).
\end{itemize}
\end{proof}

\begin{rmk}
It is clear how to invert the process in the proof of Proposition \ref{prp:ce_and_ppa}. Assume that we are given a union $\Lambda$ of standard Hopf links in a disjoint union of standard contact spheres, together with a cap $Q^\circ$ which can be realized as the smooth locus of an immersed surface $Q$ with only positive transverse double points (in some abstract ambient space). We then obtain a quiver $\Gamma_Q$ with vertices the connected components of the cap, labeled by the corresponding genus and  edges corresponding to the canonical orientation of the mixed Reeb chord `$p$' on the standard Hopf links. We denote the corresponding PPA by $\mathscr{B}(Q) \coloneqq \mathscr{B}(\Gamma_Q)$.
\end{rmk}
}

\section{Lagrangian surgery and Lagrangian fillings of the Hopf link}
\label{sec:lagsurgery}
Recall the procedure of resolving a double point of a Lagrangian immersion in Section~\ref{sec:LagrangeSurgery}. Below, we use the resolutions have zero area parameter, so that monotonicity or strong exactness are preserved under this operation. Up to Hamiltonian isotopy, there are then two ways of resolving a double point. From our perspective, these Lagrangian surgeries are obtained by attaching two Lagrangian fillings of the Hopf link to the punctured Lagrangian $L^{\circ}_\rho$. We compute rigid disks for these Lagrangian fillings in the 4-ball. This then leads to the surgery formula relating refined potentials.

\subsection{Lagrangian surgery}\label{ssec:lagsurg}
In Section \ref{subsec:l_o} we started from an immersed Lagrangian $f\colon Q\to X$, $L=f(Q)\subset X$ and produced a punctured Lagrangian $L^{\circ}\subset X^{\circ}$ with negative ends along Legendrian Hopf links $\Lambda_{\rm st}$ at each double point. In Section \ref{subsec:l_o_rho} we added a Lagrangian cobordism and produced a punctured Lagrangian $L_{\rho}^{\circ}$ with negative end along generic Legendrian Hopf links $\Lambda_{\rm Ho}$. 

To reverse that procedure,
we first obtain $X$ from $X^{\circ}$ by filling the negative ends with 4-balls. In order to recover $L$ from $L^{\circ}_{\rho}$, we fill the Hopf link $\Lambda_{\rm Ho}$ with two intersecting Lagrangian disks. We denote the union of these two disks by $F_{\rm imm}$.  Then $F_{\rm imm}$ is an immersed Lagrangian filling of $\Lambda_{\rm Ho}$ in the 4-ball. 

Note that any other exact Lagrangian filling of $\Lambda_{\rm Ho}$ can be used  in the previous construction. 
There are two basic embedded Lagrangian cylinders filling $\Lambda_{\rm Ho}$. They correspond to the two ways of resolving the self-intersection point of $F_{\rm imm}$. These two cylinders can be described, as in \cite{EHK}, in terms of pinching Reeb chords of $\Lambda_{\rm Ho}$. The difference between the two fillings is the order in which the chords are pinched. We denote these two Lagrangian fillings $F_{p}$ and $F_{q}$, where the subscript indicates which Reeb chord is pinched first. Figure~\ref{fig:slices} shows the slices of the fronts of the above three fillings. Here we use a symplectomorphism from a half-space in $\C^{2}$ to the symplectization of the Darboux ball containing $\Lambda_{\rm Ho}$, see \cite{EHK} for details.

By attaching $F_{p}$ or $F_{q}$ to $L^\circ_\rho$, we create immersions $(L_{p})^\circ_\rho$ and $(L_{q})^\circ_\rho$ with one less double point, which are the two resolutions of $L$ at the chosen double point (punctured at all remaining double points). To make sure that these resolutions can be done with zero area parameter, one argues as in the proof of the fourth point of Lemma~\ref{lem:Gamma}.

\begin{figure}[htp]
	\centering
	\includegraphics{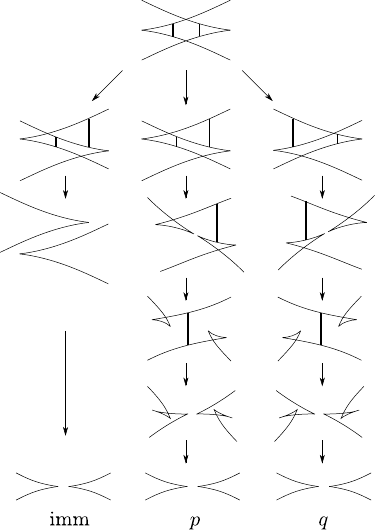}
	\caption{The cobordism from the Hopf link to unlinked knots in slices}
	\label{fig:slices}
\end{figure}

\subsection{Holomorphic curves on the fillings of the Hopf link}
\label{sec:CurvesOnFilling}

We next consider punctured rigid holomorphic disks with boundary on the Lagrangian fillings of $\Lambda_{\rm Ho}$. We use the notation for homotopy classes of paths on the resolved Lagrangians near the double points shown in Figure \ref{fig:ce_glue_basis}.

\begin{figure}[h]
	\includegraphics[]{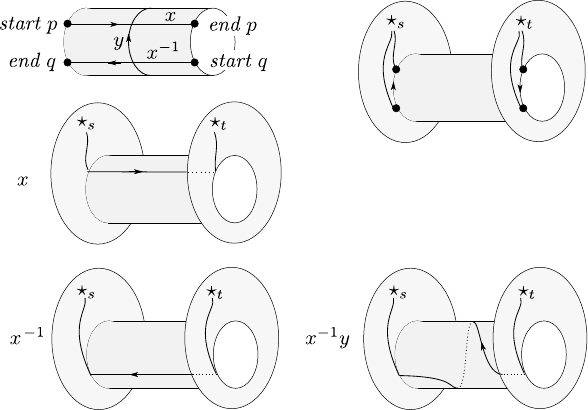}
	\caption{Boundaries of rigid disks in the filling of the Hopf link.}
	\label{fig:ce_glue_basis}
\end{figure}

\begin{lem}\label{l:surgereddisks}
	Consider the three fillings $F_{\rm imm}$, $F_{p}$ and $F_{q}$ of the Hopf link in the 4-ball. The rigid disks in the 4-ball with positive punctures at the Reeb chords $p$ and $q$ and boundary on these fillings are as follows:
	\begin{itemize}
		\item[$(a)$] For $F_{\rm imm}$, there is a unique rigid disk with a positive puncture asymptotic to the Reeb chord $p$ (resp.~$q$) and precisely one corner at the double point. 
		\color{black}
		\item[$(b)$] For $F_{p}$, there is a unique disk with positive puncture at $p$ and its boundary homotopy class equals $x$. There are two disks with positive puncture at $q$ with homotopy classes $x^{-1}$ and $x^{-1}y$, respectively. See Figure~\ref{fig:ce_glue_basis} (top left).
		\item[$(c)$] For $F_{q}$, there is a unique disk with positive puncture at $q$ and its boundary homotopy class equals $x^{-1}$. There are two disks with positive puncture at $p$ with boundary homotopy classes $x$ and $xy$, respectively.
	\end{itemize}	
\end{lem}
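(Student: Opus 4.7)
My plan is to reduce everything to explicit local computations in the Darboux $4$-ball of Section~\ref{subsec:Hopf}, working with the standard integrable complex structure $J$ on $\C^2$. All three fillings can be described inside this $4$-ball (after attaching the small Lagrangian cobordism from Lemma~\ref{lem:Gamma}, which induces an isomorphism of the relevant moduli spaces via its cobordism map from Proposition~\ref{p:chmap}).

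For part $(a)$, $F_{\rm imm}$ is in these coordinates the cone $(\R^2 \cup i\R^2) \cap B^4 \subset \C^2$. By Lemma~\ref{l:outside=compact}, a rigid disk with a positive puncture at $p$ (resp.\ $q$) and a single corner at the double point is equivalent to a compact $J$-holomorphic disk whose boundary alternates between $\R^2$ and $i\R^2$ with a single switch at $0$ and of prescribed action. Such a disk can be written down explicitly after unfolding the corner by a local square-root conformal coordinate: the doubled disk has boundary on a single rotated Lagrangian plane in $\C^2$ and is uniquely determined up to $\Aut(D)$. Uniqueness of the original disk then follows from Lemma~\ref{l:somewhereinj}, while regularity follows from automatic transversality at a disk with an injective point in the integrable setting.

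For parts $(b)$ and $(c)$, the symmetry exchanging $p$ and $q$ interchanges $F_p$ and $F_q$, so it suffices to prove $(b)$. My approach is a neck-stretching argument: as the surgery parameter of $F_p$ tends to zero, $F_p$ SFT-degenerates (Lemma~\ref{l:sftcompact}) into $F_{\rm imm}$ together with a small Maslov-zero Lagrangian cap attached along the $p$-chord. A rigid disk on $F_p$ with positive puncture at $p$ or $q$ therefore converges to a building whose top level is a disk from part~$(a)$ on $F_{\rm imm}$ and whose lower level sits in the symplectization near the surgery region. A standard gluing analysis, combined with a direct count of the possible lower-level configurations on the local pinching cobordism of Figure~\ref{fig:slices}, then yields the local smoothing formula of Fukaya--Oh--Ohta--Ono recalled in the remark after Theorem~\ref{thm:toric_pot1}: a $p$-corner of $F_{\rm imm}$ lifts to a unique disk on $F_p$, whereas a $q$-corner lifts to an $S^0$-family, that is, to exactly two disks on $F_p$.

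It remains to identify the homotopy classes of the boundaries in the basis $(x,y)$ of Figure~\ref{fig:ce_glue_basis}. These I would read off by tracking the top-level boundary arc of the limit building through the neck under gluing. The unique $p$-disk wraps once along $x$; of the two $q$-disks, one wraps along $x^{-1}$ (from the ``inner'' side of the neck), while the other additionally picks up the $y$-cycle and gives the class $x^{-1}y$. The main obstacle is precisely this book-keeping of homotopy classes. An alternative and more direct route is available via the Ekholm--Honda--K\'{a}lm\'{a}n flow-tree correspondence applied to the front-projection slicings of Figure~\ref{fig:slices}: each rigid disk corresponds to an explicit Morse flow tree on $F_p$, from which its boundary class in $\pi_1(F_p)$ can be read off directly.
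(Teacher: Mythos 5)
Your proposal differs from the paper's proof and contains genuine gaps. The paper proves all three parts uniformly and directly by invoking the Ekholm--Honda--K\'{a}lm\'{a}n correspondence between rigid punctured holomorphic disks and rigid Morse flow trees, applied to the three fillings $F_{\rm imm}$, $F_p$, $F_q$; the flow trees are exhibited explicitly in Figure~\ref{fig:trees}, from which the count and the boundary classes of the rigid disks are read off directly. You do mention this route in your final sentence, but only as a bare alternative, and you do not carry it out.

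Your primary route has two concrete problems. First, your description of $F_{\rm imm}$ is off: by definition (Section~\ref{ssec:lagsurg}), $F_{\rm imm}$ is the filling of the \emph{Morse} Hopf link $\Lambda_{\rm Ho}$ by two intersecting Lagrangian disks, not the cone $(\R^2\cup i\R^2)\cap B^4$ over the Bott-degenerate link $\Lambda_{\rm st}$. Lemma~\ref{l:outside=compact} is a statement about the cone/$\Lambda_{\rm st}$ setup, and the cobordism $\Gamma$ of Lemma~\ref{lem:Gamma} has its \emph{negative} end at $\Lambda_{\rm Ho}$ and its positive end at $\Lambda_{\rm st}$, so one cannot use Proposition~\ref{p:chmap} to pass from disks on $F_{\rm imm}$ to disks on the cone in the direction you need; the moduli spaces you want to identify contain punctured disks with a positive end at a chord of $\Lambda_{\rm Ho}$, and those are not captured by the cone model alone. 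Second, for parts $(b)$, $(c)$ you invoke the Fukaya--Oh--Ohta--Ono smoothing result to get the multiplicities $1$ and $2$, and then acknowledge that ``book-keeping of homotopy classes'' is an unresolved obstacle. But the boundary classes $x$, $x^{-1}$, $x^{-1}y$ (resp.\ $x$, $xy$, $x^{-1}$) are precisely the content of the lemma: that is exactly what feeds into the surgery formula \eqref{eq:whitney_to_torus_11}--\eqref{eq:whitney_to_torus_12}, and deferring it to ``gluing analysis'' is not a proof. In the paper the multiplicities and the boundary classes come out simultaneously, with no appeal to FOOO, by inspecting the rigid flow trees: for $F_p$ one sees one $Y_0$-vertex tree with a switch for $q$ (giving the $x^{-1}y$ class) alongside the unstable-manifold tree (giving $x^{-1}$), and the unique $p$-tree (giving $x$). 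I suggest you discard the neck-stretching detour and directly verify the flow trees of Figure~\ref{fig:trees}, tracking their boundaries in the basis of Figure~\ref{fig:ce_glue_basis}.
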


\begin{proof}
	For simpler notation, let the symbol $\sharp$ denote one of $\mathrm{imm}$, $p$ or $q$.
	It follows from \cite[Sections 5.5 and 5.6]{EHK} that there is a natural one-to-one correspondence between rigid holomorphic disks with boundary on $F_{\sharp}$ and positive punctures, and rigid flow trees determined by $F_{\sharp}$. A brief description of this result is as follows. As the Lagrangian filling $F_{\sharp}$ is scaled to lie in a small neighborhood of a Lagrangian plane, the holomorphic disks with boundary on $F_{\sharp}$ limit to flow trees and for all sufficiently small scaling parameters, there is a unique rigid disk near each rigid flow tree. 
	
	The flow trees determined by $F_{\sharp}$ are drawn in Figure \ref{fig:trees}. In the terminology of \cite{Eflowtree} they can be described as follows. 
	
	The dashed flow tree in the leftmost picture is a flow line starting at the unstable manifold of the Reeb chord and flowing into the Reeb chord corresponding to the double point. There are two rigid flow trees in this picture. The first is simply the unstable flow line of the second Reeb chord flowing to an end vertex. The second is more involved: it starts along the unstable manifold of the second Reeb chord that breaks at a $Y_{0}$-vertex, the flow line going to the left from this vertex has a switch vertex and then an end, the flow line going to the right goes straight to an end vertex. 
	
	Finally, the flow trees in the right picture are exactly as in the middle picture with the roles of the two Reeb chords interchanged.
\end{proof}

\begin{figure}[htp]
	\centering
	\includegraphics{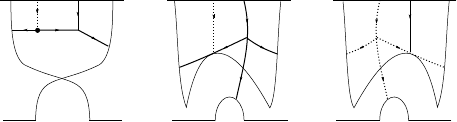}
	\caption{Rigid flow trees for the three fillings of the Hopf link.}
	\label{fig:trees}
\end{figure}

Together with SFT stretching, Lemma \ref{l:surgereddisks} determines rigid holomorphic disks with boundary on $L$, $L_{p}$, and $L_{q}$ from the punctured holomorphic disks on $L^{\circ}$. Introduce a stretching variable $R$ and write $(L_{\sharp})^\circ_\rho$ for the result of attaching $F_{\sharp}$ to $L^\circ_\rho$ at the level $t=-R$ in the negative end of the chosen puncture (the choice of the puncture and $R$ is omitted from the notation). Fix a point $\zeta\in L^{\circ}_\rho$.

\begin{lem}
	\label{lem:sm_prelim}
	For all sufficiently large $R$ there is a 1-1 correspondence between Maslov index~2 holomorphic disks with boundary on $(L_{\sharp})^\circ_\rho$ passing through $\zeta$, and two level buildings of holomorphic disks with first level a Maslov 2 disk on $ L^{\circ}_\rho$ with negative punctures at (degree 0) Reeb chords of $\Lambda_{\rm Ho}$, and second level a rigid disk with one positive puncture and boundary on $F_{\sharp}$ studied in Lemma~\ref{l:surgereddisks}.   
\end{lem}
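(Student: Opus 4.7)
The plan is to establish the bijection via SFT neck-stretching in one direction, combined with a standard gluing theorem in the reverse direction.

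For the forward direction, let $u_R$ be a sequence of Maslov index~$2$ holomorphic disks on $(L_\sharp)^\circ_\rho$ passing through $\zeta$ with $R \to +\infty$. Invoking Lemma~\ref{l:sftcompact} applied to the neck around $t = -R$, I would extract a subsequential SFT limit consisting of: a top level in $(X^\circ, L^\circ_\rho)$ with negative punctures at Reeb chords of $\Lambda_{\rm Ho}$; possibly intermediate levels in the symplectization $(\mathbb{R} \times S^3, \mathbb{R} \times \Lambda_{\rm Ho})$; and a bottom level in the $4$-ball with boundary on $F_\sharp$ with positive punctures at Reeb chords. I would then constrain the building's shape using monotonicity (which forces the total Maslov index to be $2$ and balances areas via the chord actions), Lemma~\ref{l:tvpunctured} (which forces the top level to be a Maslov~$2$ disk with all negative punctures at the degree-zero chords $p, q$ on the Hopf links), and Lemma~\ref{l:surgereddisks} (which classifies the rigid disks on $F_\sharp$ with positive punctures at these chords). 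Since $\zeta$ lies away from the stretching region, it must be absorbed by the top level. A Fredholm dimension count using Lemma~\ref{l:tvonepos} rules out non-trivial intermediate symplectization levels: any such disk, after modding out by $\mathbb{R}$-translation, would have strictly positive codimension for generic data. This pins down the limit to be exactly a two-level building of the stated form.

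For the reverse direction, I would apply the standard SFT gluing theorem (as developed in \cite{CompSFT03} and adapted in \cite{Ersft, EHK} to settings with Lagrangian fillings) to each such two-level configuration. Transversality holds for both levels: on the top by Lemma~\ref{l:tvpunctured}, and on the bottom by the rigidity statement in Lemma~\ref{l:surgereddisks}. Consequently, the gluing construction produces, for every sufficiently large $R$, a unique smooth Maslov~$2$ holomorphic disk on $(L_\sharp)^\circ_\rho$ passing through $\zeta$ that lies in a neighborhood of the pre-glued configuration. Combined with the compactness argument above, this gives the claimed bijection for all $R$ large enough.

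I expect the main technical obstacle to be the simultaneous gluing at several punctures of the top disk, particularly when two rigid disks on $F_\sharp$ share the same Reeb chord asymptotic (as in cases $(b)$ and $(c)$ of Lemma~\ref{l:surgereddisks}, where the chord $q$ on $F_p$ is capped by two distinct rigid disks of boundary classes $x^{-1}$ and $x^{-1}y$). Since every piece is transversely cut out, each combinatorial choice of rigid cap at each puncture yields a distinct glued disk, and the orientation signs combine via the standard SFT sign rules. The resulting product enumeration is precisely what will feed into the surgery formula relating $W_L$, $W_{L_p}$, and $W_{L_q}$ in the next subsection.
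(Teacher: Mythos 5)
Your approach is essentially the paper's: neck-stretching at the $F_\sharp$-attaching region (forward direction) and SFT gluing (reverse), with Lemmas~\ref{l:sftcompact}, \ref{l:tvpunctured}, and \ref{l:surgereddisks} doing the real work. The paper's own proof is a two-sentence sketch that leaves implicit exactly the details you spell out---ruling out intermediate symplectization levels via a Fredholm index count, and the combinatorics of simultaneously capping several negative punctures when a chord bounds more than one rigid disk on $F_\sharp$---so there is no genuine difference in method.
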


\begin{proof}
	As $R\to+\infty$, the Maslov index 2 holomorphic disks on $(L_\sharp)^\circ_\rho$ converge to buildings whose top component is an analogous Maslov index~2 disk on $L_\rho^\circ$, and the bottom component is a disk on the filling $F_\sharp$. Conversely, any limiting configuration can be glued uniquely to a Maslov index~2 disk on $(L_\sharp)^\circ_\rho$ for all sufficiently large $R$.
\end{proof}

{
The above results on holomorphic curves near a double point can be used to express how potentials change under surgery and gives a proof of Theorem \ref{thm:toric_pot1}.	
\begin{proof}[Proof of Theorem \ref{thm:toric_pot1}]
Lemma \ref{lem:sm_prelim} gives a 1-1 correspondence between disks on the immersed or surgered Lagrangian and two level disks of the following form. The two level disk has an outside disk with Reeb chord punctures in the negative end near the double point that is independent of the inside Lagrangian and an inside part which is a disk with punctures and boundary on the inside Lagrangian which is either immersed or surgered. The inside disks are described in Lemma \ref{l:surgereddisks}. Replacing the disks on the immersed inside fillings by those on the surgered inside fillings then proves Theorem \ref{thm:toric_pot1}.  	
\end{proof}
}
\subsection{Lagrangian surgery formula}
\label{sec:SmoothingFormula}
Fix a positive double point $\xi$ of the Lagrangian immersion $L$. By resolving $\xi$ as in Section~\ref{ssec:lagsurg}, we obtain monotone Lagrangians
$(L_p)^\circ_\rho$ and $(L_q)^\circ_\rho$ with one puncture less than $L^\circ_\rho$. Write $\Lambda$ for the singularity link of $L$ and $\Lambda'$ for that of the resolved Lagrangians. Lemma~\ref{lem:sm_prelim} explains how the refined potential changes under Lagrangian surgery. {In this section we give a formula expressing the potentials of the resolutions from that of the immersed Lagrangian. We adapt the same sign conventions as in Section \ref{sec:immsph} (in particular, the bounding spin structure is used on the components of the Legendrian Hopf link), which implies that the differential of the Hopf link takes the form described there.

Recall that the algebra $CE(\Lambda;\C[\pi_1(L^\circ_\rho)]$ has a canonical set of idempotents corresponding to the trivial loops at the base-points of the connected components of $L^\circ_\rho$. Choose one such connected component, and let $\st$ be its base-point.
There is a subalgebra $CE(\Lambda;\C[\pi_1(L_\rho^\circ)],\star)$ as in~\eqref{eq:ce_submodule}, corresponding to words  \eqref{eq:ce_word} beginning and ending with an element of $\pi_1(L^\circ_\rho,\st)$.

Let
$$
p,q\quad\text{ and }\quad a_{1},a_{2} \in CE(\Lambda;\C[\pi_1(L^\circ_\rho)])
$$
denote the two Reeb chords connecting distinct components and the same components, respectively, of the Hopf link in the negative end corresponding to $\xi$, see Section~\ref{subsec:Hopf}. We consider first the case of the $p$-smoothing. Let $\st_s$ and $\st_t$ be the base-points on the connected components of the endpoints of the chord $p$. Note that, by construction, $\st_s$ and $\st_t$ coincide if the double point is a self-intersection, and that $\st$ coincides with $\st_s$ or $\st_t$ except in the case when it is situated on a different component.

By definition of the Chekanov--Eliashberg algebra, there are fixed choices of reference paths from $\st_s$ and $\st_t$ to the endpoints of the chords $p$ (so-called capping paths). Consider the paths $x$ and $x^{-1}$, and the loop $y$ in the top row of Figure~\ref{fig:ce_glue_basis}. Concatenation of these paths with the appropriate capping paths gives the following in the smoothed Lagrangian $(L_p)^\circ_\rho$: 
\begin{itemize}
	\item A path from $\st_s$ to $\st_t$. We still denote it by $x$, see Figure~\ref{fig:ce_glue_basis}, middle row.
	\item Two paths from $\st_t$ to $\st_s$. We still denote them by $x^{-1}$ and $x^{-1}y$, respectively, see Figure~\ref{fig:ce_glue_basis}, bottom row.
\end{itemize}
In the case $s=t$, all these paths are loops and we let $\st_{s}=\st_{t}$ be the base point of its connected component of $(L_p)^\circ_\rho$.

In the case $s\ne t$ we choose notation (possibly switching the chords $p$ and $q$) so that $\st\neq\st_t$. The points $\st_s$ and $\st_t$ lie in the same connected component of the surgered Lagrangian $(L_p)^\circ_\rho$, and we choose base points and reference paths as follows:
\begin{itemize}
	\item $\st_s$ is the base-point in its connected component (of $(L_p)^\circ_\rho$), $\st_t$ is removed, and all other base-points remain unchanged.
	\item Any reference path in $L^\circ_\rho$ with an endpoint at $\st_{t}$ is concatenated with $x$ or $x^{-1}$ to give a reference path in $(L_p)^\circ_\rho$ with endpoint at $\st_s$.
\end{itemize}

We next define the surgery map as a unital DGA morphism
\begin{align}
\label{eq:s_ce}
\mathscr{S}^p \colon  CE(\Lambda;\C[\pi_1(L_\rho^\circ)],\st) \to CE(\Lambda';\C[\pi_1((L_p)_\rho^\circ)],\st).
\end{align}
An element of the Chekanov--Eliashberg algebra in the left hand side of \eqref{eq:s_ce} is a sum of monomials as in \eqref{eq:ce_word}.
The map $\SS^p$ modifies each monomial in the following way. First, replace each occurrence of the chords $p$, $q$, $a_{1}$, and $a_{2}$ according to the rule: 
\begin{align}\label{eq:Ssubst1}
& p\mapsto x,\\\notag
&q\mapsto x^{-1}-x^{-1}y,\\\notag
&a_{1}\mapsto 0,\quad a_{2}\mapsto 0,
\end{align}
and concatenate the loops resulting after these substitutions.
Second, replace each occurrence of a generator from $\pi_1(L_\rho^\circ,\star_i)$ in a word with its image in $\pi_1((L_p)_\rho^\circ,\star_i)$ under the map induced by the inclusion $L^\circ \subset (L_p)^\circ$. In the case  $s=t$ this gives monomials in $CE(\Lambda';\C[\pi_1((L_p)_\rho^\circ)],\st)$ and describes the map.

In the case  $s\ne t$, there is a second step where each loop in a monomial resulting from the substitution that is based at $\st_{t}$ is conjugated with $x$ to give a loop based at the base point $\st_{s}$. This again gives monomials in $CE(\Lambda';\C[\pi_1((L_p)_\rho^\circ)],\st)$ and defines the map in this case. In other words, in the case $s\ne t$, we apply the change of base point map $$\pi_1(L^\circ_\rho,\st_t)\hookrightarrow \pi_1((L_p)^\circ_\rho,\st_t)\xrightarrow{l\, \mapsto\, x\cdot l\cdot x^{-1}} \pi_1((L_p)^\circ_\rho,\st_s),$$
whenever it is needed.  In both cases described above, the proof of the chain map property is straightforward.

We then similarly define the map 
\begin{align}
\label{eq:s_ceq}
\mathscr{S}^q \colon  CE(\Lambda;\C[\pi_1(L_\rho^\circ)],\st) \to CE(\Lambda';\C[\pi_1((L_q)_\rho^\circ)],\st)
\end{align}
corresponding to the other surgery in a directly analogous way using instead of \eqref{eq:Ssubst1} the following map on generators 
\begin{align}\label{eq:Ssubst2}
& p\mapsto x-xy,\\\notag
&q\mapsto x^{-1}.\\\notag
&a_{1}\mapsto 0,\quad a_{2}\mapsto 0,
\end{align}
}

\begin{rmk}
	Suppose $\st_s\neq\st_t$.
	Denote by $y\in \pi_1((L_p)_\rho^\circ,\st_s)$ the loop going around the core circle of the surgery cylinder, as in Figure~\ref{fig:ce_glue_basis} (top), and by $t$ the analogous loop in $L^\circ_\rho$. 
	We find that $\SS^p$ acts as follows:
	\begin{align*}
	& p\mapsto e_{s},\\
	&q\mapsto e_s-y=e_s(1-y),\\
	& t\mapsto y.
	\end{align*}
\end{rmk}

\begin{rmk}
	\label{rmk:loc}
	When $\st=\st_s=\st_t$, it is easy to check that there is a natural  isomorphism
	$$ H_0CE(L_\rho^\circ;\C[\pi_1(L_\rho^\circ)],\st)[p^{-1}] \cong H_0CE((L_p)^\circ_\rho;\C[\pi_1((L_p)_\rho^\circ)],\st)$$
	where $[p^{-1}]$ stands for localization at the element $p$. Then $\SS^p$ becomes the canonical map to the localization. 
\end{rmk}

\begin{prp}
	\label{prp:surg}
	For any choice of a positive double point of $L$, any base-point $\st\neq \st_t$ and any $\sharp=p,q$, the  surgery map \eqref{eq:s_ce} is a well-defined map of algebras that relates the refined potentials of $L$ and $L_p$,
	$$
	W_{L_\sharp}(\st)=\SS^\sharp(W_L(\st)).
	$$
	Here we use the spin structure on $L_\sharp$ that bounds when restricted to central circle in the surgery cylinder. (For the spin structure that does not bound, substitute $y\mapsto -y$.)
\end{prp}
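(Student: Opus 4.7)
The proof has two distinct parts: (i) verifying that the surgery map $\SS^\sharp$ descends to a well-defined algebra map at the level of $H^0$, and (ii) identifying the two sides of the potential equation via the SFT stretching established in Lemma~\ref{lem:sm_prelim}. I treat the case $\sharp=p$; the case $\sharp=q$ is symmetric by swapping $p\leftrightarrow q$ and the two sheets at the double point.

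For well-definedness, the only relations of $H^0CE(\Lambda;\C[\pi_1(L^\circ_\rho)],\st)$ involving the chords $p,q$ at the smoothed double point come from the Hopf link differentials $\partial a_1 = e_s - t_1 + qp$ and $\partial a_2 = e_t - t_2 + pq$ described in Section~\ref{subsec:Hopf}, where $t_1,t_2$ denote the meridians of the two Hopf link components (as loops in $L^\circ_\rho$). The key computation is that under the substitution $p\mapsto x$, $q\mapsto x^{-1}-x^{-1}y$ together with the change-of-basepoint map $l\mapsto x^{-1}lx$ applied to any loop previously based at $\st_t$, the meridian $t_2$ goes to $y$ and $t_1$ goes to its conjugate $x^{-1}yx$, so that $pq = x(x^{-1}-x^{-1}y) = 1-y \equiv e_t - t_2$ and likewise (after base-point change) $qp \equiv e_s - t_1$. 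Thus the defining relations of the Chekanov--Eliashberg algebra at the smoothed chord pair are respected, and $\SS^p$ descends to an algebra map with target $H^0CE((L_p)_\rho^\circ;\C[\pi_1((L_p)_\rho^\circ)],\st)$. The hypothesis $\st\ne \st_t$ is what ensures that the source and target subalgebras are both based at the preserved base-point $\st$. (Alternatively, as noted in Remark~\ref{rmk:loc}, $\SS^p$ can be identified with a localization map, which automatically respects relations.)

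For the identification of potentials, apply Lemma~\ref{lem:sm_prelim}. For $R$ sufficiently large, every Maslov index~$2$ disk $\tilde u$ on $(L_p)^\circ_\rho$ passing through $\st$ is uniquely obtained by gluing a top level disk $u\in\M(\st)$ on $L^\circ_\rho$, with $m\ge 0$ negative punctures asymptotic to copies of the degree $0$ chords $p$ and $q$, to a rigid disk on the filling $F_p$ at each puncture. By Lemma~\ref{l:surgereddisks}(b), a puncture at $p$ caps off with a unique rigid disk whose boundary realizes the path $x$, and a puncture at $q$ caps off with two rigid disks realizing $x^{-1}$ and $x^{-1}y$. Reading off $[\partial \tilde u]$ along $\partial \tilde u$ (and concatenating with reference paths), the loops in the resulting word are obtained from those in $[\partial u]$ by inserting these path factors and, whenever a reference path in $L^\circ_\rho$ terminated at $\st_t$, concatenating it with $x^{\pm 1}$ to make it terminate at the surviving base-point $\st_s$. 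This is exactly the recipe defining $\SS^p$ on monomials, so summation over all $\tilde u$ gives $W_{L_p}(\st) = \SS^p(W_L(\st))$.

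The sign calculation is the last, and most delicate, ingredient. The orientation sign of $\tilde u$ is the product of the sign of $u$ and the signs of the glued rigid pieces on $F_p$, and the latter depend on the chosen spin structure on $L_p$ through the neck of the surgery. A direct local analysis (in the spirit of the arguments of \cite{EHK, FO306}) shows that with the bounding spin structure on the surgery cylinder the two rigid disks over a $q$-puncture contribute with opposite signs, giving $x^{-1}-x^{-1}y$ as in the statement, while with the non-bounding spin structure they contribute with the same sign, corresponding to the substitution $y\mapsto -y$ in the formula. The main technical hurdle in the whole argument is precisely this bookkeeping: matching the capping paths used to define the Chekanov--Eliashberg differential with the reference paths chosen after surgery, and ensuring that the change-of-basepoint conjugation by $x$ is consistent with the geometric gluing at each puncture. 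Once these identifications are fixed coherently, both the algebraic well-definedness and the formula for $W_{L_p}$ follow directly.
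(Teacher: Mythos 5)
Your proof is correct and follows essentially the same route as the paper's: you verify well-definedness of $\SS^\sharp$ by checking that the Hopf-link relations $pq$, $qp$ are preserved (which is what the paper does, and is the content of Remark~\ref{rmk:loc} as well), and you derive the potential identity by applying the SFT-stretching Lemma~\ref{lem:sm_prelim} together with the enumeration of rigid disks on the filling $F_\sharp$ from Lemma~\ref{l:surgereddisks}. The paper's proof is terser but structurally identical; you have simply made explicit the bookkeeping that the paper leaves implicit (the base-point conjugation, and the fact that the sign of the two rigid $q$-disks is governed by the spin structure along the surgery neck). One small imprecision: you write that $t_1$ goes to the conjugate $x^{-1}yx$ after the base-point change, but since both $x$ and the meridians live inside the surgery cylinder, that conjugation is computed in $\pi_1$ of the cylinder (which is abelian), so in fact $t_1$ and $t_2$ are both sent to $y$ after the change of base point, exactly as the paper asserts; this does not affect your verification that $pq$ and $qp$ land on the correct elements.
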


\begin{proof}
	To check that the surgery map is well-defined, it suffices to check that the relations coming from the differential in the Chekanov--Eliashberg algebra: $pq=e_s-t_1$, $qp=e_t- t_2$, are preserved by $\SS^\sharp$. Here $t_1=t$, $t_2$ is the analogous loop at the other boundary component, and both loops $t_1,t_2$ are sent to $y$. The formulas for $\SS^\sharp$ make it clear that these relations are preserved. The surgery map is a map of algebras because it obviously preserves concatenations.

	{The claim about refined potentials is a direct consequence of Lemma~\ref{lem:sm_prelim}: for a sufficiently stretched almost complex structure the disks contributing to the potential on the Lagrangian with double point correspond to two level curves with top level a disk with negative punctures at chords $p$ and $q$ and lower level the disks with positive puncture at $p$ and $q$ and a corner at the double point. Similarly, the disks on surgered Lagrangian are again two level disks with the same top level and with lower level the disks with boundary on the smoothing. It follows from Lemma \ref{l:surgereddisks} that the effect of this relation on the words representing the boundaries of rigid holomorphic disks in the Chekanov--Eliashberg algebras of $\Lambda$ and $\Lambda'$ are as in \eqref{eq:Ssubst1} and \eqref{eq:Ssubst2} for the two smoothings respectively.}
\end{proof}

\section{Whitney spheres in $\CP^2$}
\label{sec:application}
By a \emph{Whitney sphere} we mean an immersed Lagrangian two-sphere having a single transverse self-intersection with positive intersection number, so that its two resolutions give embedded Lagrangian tori and not Klein bottles.
This section contains two applications of our theory to the symplectic topology of monotone Whitney spheres in $\CP^2$. We first show that there are restrictions on the potentials of a monotone Whitney spheres by considering their Legendrian lift to $S^5$. Second, we construct monotone Whitney spheres which are not displaceable from complex lines and conics by Hamiltonian isotopies (unlike the standard Whitney sphere in $\C^{2}\subset \C P^{2}$ that is disjoint from the line at infinity).

{
\subsection{Basic results on monotone spheres}\label{ssec:basicmonspheres}
Let $L\subset \C P^{2}$ be a Lagrangian sphere with one transverse double point that is monotone.	
	\begin{lem}\label{l:basicmonspheres}
		The double point of $L$ is positive, and there exists a punctured $\mu=2$ disk with boundary on $L$. Monotonicity then implies that the disk has area $\frac{1}{3}\int_{\CP^1}\omega_{\OP{FS}}$. In particular, $L$ has an embedded Legendrian lift into $S^5$, where $S^{5}\to\CP^2$ is the pre-quantization space. 
	\end{lem}
	\begin{proof}
		If the double point is negative then its resolution gives an embedded Lagrangian Klein bottle, which contradicts \cite{Shevchishin}. Hence the double point is positive.
		
		To see that there is a $\mu=2$ disk, consider the tori obtained by resolving the double point. By \cite{DamianAudin}, these monotone tori have a non-zero number of $\mu=2$ disks and  \eqref{eq:mu_res} then implies that there also exists a $\mu=2$ disk with boundary on $L$. 
		
		To see that $L$ has a Legendrian lift, note first that any disk without punctures and boundary on $L$ has symplectic area an integer multiple of $\int_{\CP^1}\omega_{\OP{FS}}$. This implies that $L$ lifts to a Legendrian immersion in $S^5$. To see that the Legendrian is embedded we must check that the double point corresponds to a Reeb chord of non-zero length. By the monotonicity assumption and the fact that the first Chern class satisfies $c_{1}(\C P^{2})=3[\C P^{1}]$, the $\mu=2$ disk has area $\frac{1}{3}\int_{\CP^1}\omega_{\OP{FS}}$ and the Reeb chords on the lift that correspond to the double point thus have non-zero lengths:
		$$\frac{1}{3}\int_{\CP^1}\omega_{\OP{FS}} +k\int_{\CP^1}\omega_{\OP{FS}}, \:\:k\in\Z.$$
\end{proof}}

\subsection{Constructing Whitney spheres}
\label{sec:wh_construct}
In \cite{Vi13,Vi14,Vi16} infinitely many different (i.e., pairwise not Hamiltonian isotopic) monotone Lagrangian tori in $\CP^2$ were constructed. We will use this construction to produce infinitely many different Whitney spheres in $\CP^2$. We review the torus-construction, see \cite{PT17}. 

Start with the monotone Clifford torus $T_{\rm Cl}\subset\C P^{2}$. There are three smooth \emph{Lagrangian} disks $D_j$, $j=1,2,3$, such that for all $j$, $T_{\rm Cl}\cap D_{j}=\partial D_{j}$ and such that the boundary homology classes $[\partial D_{j}]$ are, for some identification $H_1(T,\Z)\cong \Z^2$, the orthogonals of:
\begin{equation}
\label{eq:cliff_seed}
v_1=(1,1),\quad 
v_2=(-2,1),\quad\text{and}\quad
v_3=(1,-2),\quad\text{respectively}.
\end{equation}
These Lagrangian disks form a \emph{Lagrangian seed} $(T_{\rm Cl},\{D_1,D_2,D_3\})$ in the sense of \cite{PT17}, see also \cite{STW15}. This means that the interior of $D_j$ is disjoint from $T_{\rm Cl}$ and from $D_{i}$, $i\ne j$, that $D_{j}$ is attached cleanly to $T_{\rm Cl}$ along its boundary (i.e.,~the normal to $\del D_{j}$ in $D_{j}$ is nowhere tangent to $T_{\rm Cl}$), and finally that the curves $\del D_j$ have pairwise minimal intersections in $T_{\rm Cl}$, $|\del D_j\cap\del D_k|=3$, $j\neq k\in\{1,2,3\}$. We refer to \cite{PT17} for a construction of such disks.

Given a Lagrangian seed $(T,\{D_k\})$ and an index $j$ of one of its Lagrangian disks, e.g.~$T=T_{\rm Cl}$ and $j\in\{1,2,3\}$, one defines \cite{STW15,PT17} its \emph{mutation along $D_{j}$} which is a new Lagrangian seed
$$
\left(\mu_{D_j}T,\ \{\mu_{D_j}D_k\}_{k=1}^3\right).
$$
Here $\mu_{D_j}T$ is a new monotone Lagrangian torus. The key point is that the mutated torus bounds
the same number of (mutated) Lagrangian disks $\mu_{D_j}D_k$, $k\in\{1,2,3\}$, as observed in \cite{STW15}.

Because mutation is defined for any Lagrangian seed, the construction can be iterated indefinitely. For each consecutive mutation, one has to choose an index $j$, i.e.~the Lagrangian disk used for mutation. 	
This gives an infinite trivalent graph of mutations of tori in $\CP^2$. More precisely, there exist monotone Lagrangian tori in $\CP^2$ indexed by the vertices of the infinite trivalent graph, with a fixed initial vertex corresponding to $T_{\rm Cl}$. We call them Vianna tori.

Recall that a Markov triple is a triple of positive integers $(a,b,c)$ satisfying
$$
a^2+b^2+c^2=3abc.
$$
A mutation of an ordered Markov triple in the first direction is the transformation
$$
(a,b,c)\mapsto (3bc-a,b,c).
$$
Mutations in the second and the third directions are defined in an analogous manner. Starting with the Markov triple $(1,1,1)$, one can index the vertices of the infinite trivalent graph (and thus Vianna tori obtained from the above construction) by ordered Markov triples, where two Markov triples connected with an edge differ by mutation.

It is well-known that the unordered Markov triples form an infinite tree with edges corresponding to the mutations, and root given by the solution $(1,1,1)$. Except the root and its adjacent vertex, all other vertices are 3-valent.\color{black}

\begin{table}[h]
	$
	{
		\xymatrixcolsep{-1em}
		\xymatrixrowsep{1em}
		\small
		\xymatrix{
			&&&(1,1,1)\ar@{-}[d]&&&\\
			&&&(1,1,2)\ar@{-}[d]&&&\\
			&&&(1,2,5)\ar@{-}[dll]\ar@{-}[drr]&&&\\
			&(2,5,29)\ar@{-}[dl]\ar@{-}[dr]&&&&(1,5,13)\ar@{-}[dl]\ar@{-}[dr]&\\
			(5,29,433)&&(2,29,169)&&(5,13,194)&&(1,13,34)\\
		}
	}
	$
\end{table}

In \cite{Vi13,Vi14} Vianna constructed a monotone Lagrangian torus $T(a,b,c)$ for every Markov triple $(a,b,c)$ and showed that two such tori are not Hamiltonian isotopic when the unordered Markov triples are different. By construction, any two such Lagrangian tori that correspond to the same \emph{unordered} Markov triple differ by an explicit linear symplectic automorphism of $\CP^2$; they are hence Hamiltoninan isotopic.\color{black}

\begin{thm}\cite{Vi14}
	\label{thm:vianna_tori}
	For each (unordered) Markov triple $(a,b,c)$, there is a monotone Lagrangian torus $T(a,b,c)\subset \CP^2$, and all such tori are pairwise Hamiltonian non-isotopic.\qed
\end{thm}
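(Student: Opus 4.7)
The plan is to construct the tori $T(a,b,c)$ by iterated mutation of Lagrangian seeds starting from the Clifford torus, and then to distinguish them using the Landau-Ginzburg potential as a Hamiltonian isotopy invariant.

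For the construction, I would apply mutation successively to the Clifford seed $(T_{\rm Cl},\{D_1,D_2,D_3\})$ along the trivalent mutation tree. Each mutation preserves monotonicity because it is (up to Hamiltonian isotopy) a zero-area Lagrangian surgery performed on the immersed Whitney sphere that interpolates between the two tori, as in Lemma~\ref{lem:resolv_monot}. Since the initial seed is manifestly $S_3$-symmetric, permutations of an ordered Markov triple label Hamiltonian isotopic tori, so the construction descends to the quotient graph whose vertices correspond to unordered Markov triples. This produces a monotone torus $T(a,b,c)$ for every unordered Markov triple.

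To distinguish the tori I would compute the LG potentials of $T(a,b,c)$ by iterated wall-crossing. Starting from $W_{T_{\rm Cl}}(x,y)=x+y+(xy)^{-1}$, each mutation transforms the potential via a cluster-type birational change of variables; this is Auroux's wall-crossing formula, which in our setting is recovered from Theorem~\ref{thm:toric_pot1} applied to the intermediate immersed Whitney sphere, by first specializing to $W_{L_p}$ and then re-expressing the result via the substitutions giving $W_{L_q}$. An induction on the length of the mutation sequence shows that the Newton polytope of $W_{T(a,b,c)}$ is a triangle in $\Z^2$ whose three primitive edge directions and integer edge lengths encode the triple $(a,b,c)$. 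The key inductive step is that the combinatorial Markov mutation $(a,b,c)\mapsto(3bc-a,b,c)$ corresponds exactly to the $GL(2,\Z)$-transformation of the Newton polytope induced by the wall-crossing birational change of variables.

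Finally, the affine equivalence class of the Newton polytope under the natural $GL(2,\Z)\ltimes\Z^2$ action (corresponding to change of basis of $H_1(T;\Z)$ and change of reference point, the only ambiguities in defining $W_T$) is a Hamiltonian isotopy invariant of $T$ by the embedded case of Proposition~\ref{th:potdga}. Since this invariant is injective on unordered Markov triples (the maximum entry of a Markov triple determines the other two via $a^2+b^2+c^2=3abc$, so the unordered multiset of edge data recovers the triple), the tori $T(a,b,c)$ are pairwise non-Hamiltonian-isotopic. The main obstacle is the inductive identification of the Newton polytope combinatorics with the Markov triple: one must track, in parallel, the mutation of the Lagrangian seed and the wall-crossing of the Laurent polynomial, and verify that each wall-crossing step realizes the expected integer Markov mutation. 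Once this identification is in place the non-isotopy conclusion is immediate.
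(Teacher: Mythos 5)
Theorem~\ref{thm:vianna_tori} is stated in the paper as a citation of \cite{Vi14} and is not proved here, so there is no in-house argument to compare against; your sketch is aimed at Vianna's proof, and it does capture the overall strategy (iterated mutation starting from the Clifford seed, distinguishing the tori by the Newton polytope of the disk potential), which is also how \cite{Vi14,PT17} proceed.

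However, your central inductive step contains a genuine error. You claim that the wall-crossing change of variables induces a $GL(2,\Z)$-transformation of the Newton polytope. If that were true, the polytopes $\P(a,b,c)$ would all lie in a single $GL(2,\Z)\ltimes\Z^2$-orbit and your proposed invariant would distinguish nothing. The substitution $q\mapsto (1+y)x^{-1}$ in Theorem~\ref{thm:toric_pot1} is not a monomial change of variables, so it does not act linearly on exponent vectors; the Newton polytope transforms instead by a \emph{piecewise-linear} combinatorial mutation (a monomial shear applied only to the part of the polytope on one side of a distinguished line), and this is precisely what changes the affine equivalence class. Carrying this out requires the divisibility statement used implicitly before Proposition~\ref{prp:whitney_potentials}, namely that the coefficient $G_k(y)$ of $x^{-k}$ in $W_T$ is divisible by $(1+y)^k$; that divisibility is a consequence of the existence of the intermediate Whitney sphere and the nonnegativity of the exponents of $p,q$ in $W_L$, but it is an input that must be tracked inductively, not a formal property of the change of variables. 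Separately, your parenthetical ``the maximum entry of a Markov triple determines the other two'' is the Markov uniqueness conjecture, which is open; fortunately it is not needed, because the affine equivalence class of $\P(a,b,c)$ already records the unordered multiset of lattice edge lengths $\{a^2,b^2,c^2\}$ directly (the paper points out, following \cite{Vi14}, that $\P$ is the affine dual of the moment polytope of $\CP^2(a^2,b^2,c^2)$), and this recovers the unordered Markov triple.
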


There is a version of the Weinstein neighborhood theorem for the union of a Lagrangian torus and a cleanly attached Lagrangian disk, see e.g.~\cite{PT17}. Such a neighborhood contains a family of Whitney spheres \color{black} obtained by cutting out a neighborhood of the boundary of the disk in the torus and filling the two resulting boundary circles with two intersecting Lagrangian disks close to the original one, see e.g.~\cite{Ha15}. The Hamiltonian isotopy class of the immersed sphere is uniquely determined by the requirement that the two Lagrange surgeries at its double point with \emph{zero area parameter}, see Section \ref{sec:LagrangeSurgery}, are Hamiltonian isotopic to the original torus and its mutation. Observe that such an immersed Lagrangian sphere automatically is monotone since its two zero-area resolutions are by assumption. \color{black}
Thus a Lagrangian seed gives one Whitney sphere for each disk in that seed. In summary we have the following.

\begin{prp}
	\label{prp:whitney_cp2}
	For each unordered pair of unordered Markov triples $(a,b,c)$, $(a,b,3ab-c)$ which differ by a single mutation, there is a monotone Whitney sphere 
	$$L[(a,b,c)(a,b,3ab-c)]\subset \CP^2.$$ 
	Any two Whitney spheres coming from distinct such pairs are not Hamiltonian isotopic. The two resolutions of $L[(a,b,c)(a,b,3ab-c)]$ are $T(a,b,c)$ and $T(a,b,3ab-c)$.
\end{prp}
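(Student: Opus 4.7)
The plan is to build the sphere directly from the Lagrangian seed of one endpoint of the mutation edge, identify its two zero-area resolutions with the Vianna tori at the edge's endpoints, and then deduce pairwise non-isotopy by pushing the Hamiltonian classification of the Vianna family (Theorem~\ref{thm:vianna_tori}) through the resolution operation.

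First, starting from the unordered Markov triple $(a,b,c)$ and the adjacent triple $(a,b,3ab-c)$, I would use the iterated-mutation procedure recalled in Section~\ref{sec:wh_construct} to produce a Lagrangian seed $(T(a,b,c),\{D_1,D_2,D_3\})$ in $\CP^2$. Among the three disks, let $D$ be the Lagrangian disk whose mutation direction corresponds to the chosen edge, so that $\mu_{D}T(a,b,c)=T(a,b,3ab-c)$. Next, appeal to the Weinstein neighborhood theorem for cleanly attached Lagrangian disks and apply the local construction referenced just before the proposition (see \cite{Ha15}): one excises a small cylindrical collar of $\partial D\subset T(a,b,c)$ and fills the two boundary circles by two Lagrangian disks $C_+,C_-$ close to $D$ which meet transversely at a single double point of sign $+1$. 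Calling the result $L[(a,b,c),(a,b,3ab-c)]$, an area bookkeeping (calibrating the collar width to the area of $D$) makes $L$ monotone; one may also verify monotonicity a posteriori from Lemma~\ref{lem:resolv_monot} since both of its zero-area resolutions are monotone tori.

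Second, the two zero-area resolutions of $L[(a,b,c),(a,b,3ab-c)]$ at its unique double point are, by construction, the two ways of smoothing $C_+\cup C_-$ inside the Weinstein neighborhood: one of them glues back into the collar of $\partial D$ and recovers $T(a,b,c)$ up to Hamiltonian isotopy, while the other surgers across $D$ and yields, by the very definition of Lagrangian mutation in~\cite{STW15,PT17}, the mutated torus $\mu_{D}T(a,b,c)=T(a,b,3ab-c)$. This gives the claim about resolutions.

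Finally, if two spheres $L[(a,b,c),(a,b,3ab-c)]$ and $L[(a',b',c'),(a',b',3a'b'-c')]$ were Hamiltonian isotopic, one can track the unique self-intersection point through the isotopy (keeping it self-transverse generically) and carry out the zero-area local surgery uniformly along the isotopy; this produces a Hamiltonian isotopy between the corresponding unordered pairs of resolutions. By Vianna's theorem the unordered pairs of Markov triples must then coincide, giving the desired rigidity. The most delicate point, and the main obstacle of the argument, is exactly this last step: one must ensure that the Hamiltonian isotopy of the immersed sphere descends to a genuine Hamiltonian (not merely Lagrangian) isotopy of the pair of zero-area resolutions, which requires controlling the local surgery parameter throughout the deformation and invoking monotonicity via Lemma~\ref{lem:resolv_monot}.
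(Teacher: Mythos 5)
Your proposal follows the same line as the paper's, and in fact goes into substantially more detail than the paper's own proof, which is a single sentence: the spheres are not Hamiltonian isotopic because their resolutions are not. Your first two paragraphs correctly reproduce the construction from the surrounding discussion (seed, cleanly attached disk, Haug's local model, the two resolutions being $T(a,b,c)$ and its mutation), and this is all taken as given in the paper before the proposition.

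One small caution concerning the ``delicate point'' you rightly flag at the end. The issue is to pass from a Hamiltonian isotopy of the immersed spheres to a Hamiltonian isotopy of an unordered pair of embedded resolved tori, and Lemma~\ref{lem:resolv_monot} is not really the tool for this: that lemma asserts only that the zero-area resolution of a monotone immersion is again monotone, and says nothing about whether a Lagrangian isotopy is Hamiltonian. Your ``track the double point through the isotopy and resolve uniformly'' picture produces a Lagrangian isotopy of tori, and promoting this to Hamiltonian requires either a flux argument for monotone Lagrangians or, more cleanly, a direct argument: if $\phi$ is a Hamiltonian symplectomorphism with $\phi(L)=L'$, then $\phi$ carries a small Darboux ball around the double point of $L$ to one around the double point of $L'$, hence carries $L_p$ to an embedded monotone torus that agrees with $L'$ outside a small ball and with one of the two local resolutions inside; since zero-area resolutions in nearby Darboux charts are themselves Hamiltonian isotopic, $\phi(L_p)$ is Hamiltonian isotopic to $L'_p$ or $L'_q$. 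This gives the Hamiltonian identification of unordered pairs of resolutions without having to control the surgery parameter along an entire isotopy, and then Vianna's Theorem~\ref{thm:vianna_tori} finishes as you say.
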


Note that these Whitney spheres are indexed by the edges of the mutation graph discussed above.

\begin{proof}
	The Whitney spheres are not Hamiltonian isotopic since their resolutions are non-isotopic.
\end{proof}

\subsection{Potentials}
One can compute the LG potentials of the Vianna tori by iteratively applying the wall-crossing formula  starting from the potential $x+1+1/xy$ of the Clifford torus. We refer to \cite{PT17} for a detailed discussion. We point out that  in order to carry out these computations algorithmically, one also needs an explicit formula for how the boundary homology classes of the Lagrangian disks change under mutation. Such a formula is found in \cite{PT17,GU12}. 

Next, knowing the potential of a torus $T$, one computes the potential of the Whitney sphere obtained by collapsing the torus along a Lagrangian disk using Theorem~\ref{thm:toric_pot1}.
Namely, one writes the torus potential $W_T$ using a first homology basis $(x,y)$ for which the smoothing formula reads
$$
\begin{array}{l}
p\mapsto x,\\
q\mapsto (1+y)x^{-1},\\
t\mapsto -y.
\end{array}
$$
(This is a basis in which $(0,1)$ is the boundary of the Lagrangian disk.) Recall from Remark~\ref{rmk:loc} that this map is the map from $\C[p,q,t^\pm]/( 1-t-pq)$ to its localization at $p$, in particular, it is injective. 

Expand the potential $W_T$ in powers of $x$:
$$
W_T(x,y)=\sum_{k\ge 0}x^kF_k(y)+\sum_{k>0}x^{-k}G_k(y)
$$
where $F_k$, $G_k\in\C[y^{\pm 1}]$.
The smoothing formula implies that $G_k(y)$ is divisible by $(1+y)^k$ as a Laurent polynomial. Let $H_k(y)=G_k(y)/(1+y)^k$. Then the potential of the Whitney sphere equals
$$
W_L(p,q,t)=\sum_{k\ge 0}p^kF_k(-t)+\sum_{k>0}q^{k}H_k(-t).
$$

Applying this algorithm, the potentials of Whitney spheres quickly become very complicated. We list a few of them in Table~\ref{tab:pot}. 

\begin{table}[h]
	\small
	\renewcommand*{\arraystretch}{1.4}
	\begin{tabular}{|c|}
		\arrayrulecolor{Gray}
		\hline
		$L[(1,1,1)(1,1,2)]$:
		\\
		\(\ \, p+{q^2}t^{-1}\)\\
		\hline
		$L[(1,1,2)(1,2,5)]$:
		\\
		\(\ \, {p}t^{-1}+2 q^2+q^5 t\)
		\\
		\hline
		$L[(1,2,5)(2,5,29)]$:
		\\
		$
		p
		+
		2 q^2
		+
		5 q^5
		+
		{4 q^8}t^{-1}
		+
		{10 q^{11}}t^{-1}
		+
		2 q^{14}t^{-2}
		+
		{10 q^{17}}t^{-2}
		+
		5 q^{23}t^{-3}
		+
		q^{29}{t^{-4}}
		$
		\\
		\hline
		$L[(1,2,5)(1,5,13)]$:
		\\
		$
		{p (5+4 t)}t^{-2}
		+
		{2 p^4 (5+t)}t^{-3}
		+
		{10 p^7}t^{-4}
		+
		{5 p^{10}}t^{-5}
		+
		{p^{13}}t^{-6}
		+
		{q^2}t^{-1}
		$	
		\\
		\hline
		$L[(1,5,13)(1,13,34)]$:
		\\
		\renewcommand*{\arraystretch}{1}
		$
		\begin{array}[t]{l}
		{p \left(78+225 t+210 t^2+68 t^3+4 t^4\right)}t^{-4}
		+
		{2 p^4 \left(143+300 t+175 t^2+26 t^3\right)}t^{-3}
		\\
		+
		{p^7
			\left(715+1050 t+350 t^2+18 t^3\right)}t^{-2}
		+
		{p^{10} \left(1287+1260 t+210 t^2+2 t^3\right)}t^{-1}
		\\
		+
		2 p^{13} \left(858+525 t+35 t^2\right)
		+
		2 p^{16} t \left(858+300t+5 t^2\right)
		+
		9 p^{19} t^2 (143+25 t)
		\\
		+
		5 p^{22} t^3 (143+10 t)
		+
		p^{25} t^4 (286+5 t)
		+
		78 p^{28} t^5
		+
		13 p^{31} t^6
		+
		p^{34} t^7
		\\
		+
		{q^2 \left(13+24 t+9 t^2\right)}t^{-5}	
		+
		{q^5}t^{-6}	
		\end{array}
		$
		\renewcommand*{\arraystretch}{1.4}
		\\
		\hline
	\end{tabular}
    \caption{Potentials $W(p,q,-t)$ of some Whitney spheres in $\CP^2$.}
	\label{tab:pot}
\end{table}

\begin{rmk}
	Observe the $-t$ substitution to keep all signs positive. Geometrically, the change of variables $t\mapsto -t$ corresponds to a change of spin structure on $L^{\circ}$. {(Positivity of coefficients is a reflection of the local nature of the construction, disks after the modification are obtained from disks before the modification and local data of a disk.)}
\end{rmk}

\begin{rmk}
	\label{rmk:pq}
	Table \ref{tab:pot} uses the canonical choice of ordering of the sheets explained in Lemma~\ref{lem:ordering} below; this means that the $p$-terms appear in the powers $p^{1+3k}$ and the $q$-terms in the powers $q^{2+3k}$.
\end{rmk}

\begin{figure}[h]
	\includegraphics[]{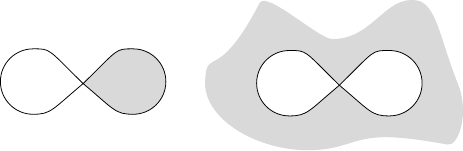}
	\caption{The two holomorphic disks with corners on the standard Whitney sphere in $\CP^2$.}
	\label{fig:eight}
\end{figure}

\begin{ex}
	\label{ex:std_wh}
	The Whitney sphere $L[(1,1,1)(1,1,2)]$ is the usual Whitney sphere in $\C^2$ compactified to $\CP^2$. In particular, it is disjoint from the complex line at infinity. It bounds two families of Maslov~index~2 holomorphic disks with corners: one family with a $p$-corner, and another with two $q$-corners. These disks contribute to the potential with the terms $p$ and $q^2t^{-1}$, respectively. They can be seen explicitly as follows. There is an $S^1$-family of complex lines which intersects $L[(1,1,1)(1,1,2)]$ in a figure eight curve. Fix such a line and take a point on $L$ within that line; then the two desired disks with corners are subsets of that line shown in Figure~\ref{fig:eight}.
\end{ex}

\subsection{Normal form of the refined potential}
Any Whitney sphere in $\CP^2$ has a canonical ordering of the sheets at its double point as follows.
\begin{lem}\label{lem:ordering}
	Let $L\subset \C P^{2}$ be a (not necessarily monotone) Whitney sphere. For the two possible orderings of the sheets at its double point, the Maslov index of a disk $u\colon (D,\partial D)\to (\C P^2,L)$ whose number of $p$-corners resp.~$q$-corners equals $a$ resp.~$b$, is given by 
	$$
	\mu(u) = \pm 2(a-b) + 6d(D),\quad d(D) \in \Z,
	$$
	where the sign $\pm$ depends on this choice. In particular, there is a distinguished ordering of the sheets of $L$ for which a term $p^aq^b$ in the refined potential can be nonzero only when
	\begin{equation}
	\label{eq:pot_cp2_div3}
	a-b \equiv 1\mod 3
	\end{equation}
	is satisfied.\qed
\end{lem}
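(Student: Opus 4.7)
The strategy is to show that $\mu(u) - 6d(u)$ is linear in $a - b$, and then determine the slope via one explicit example. I would rely on three inputs: formula~\eqref{eq:maslovimmersed} for the Maslov index, the divisibility $c_1(\CP^2) = 3H$, and Lemma~\ref{lem:const_mu} for constant disks with alternating corners.

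The first step is to establish that $\mu(u) \bmod 6$ depends only on $(a,b)$, independently of the rest of the homotopy class of $u$. Two topological disks with corners sharing the same $(a,b)$ and boundary structure but differing by a closed cycle in $H_2(\CP^2) = \Z\cdot[H]$ have Maslov indices differing by $6\Delta d$, since $c_1 = 3H$. Any remaining flexibility in the boundary homotopy class is killed by the fact that $L\cong S^2$ has $H^1 = 0$, so its Maslov class is trivial. Concretely, the generator of $H_1(L^\circ)$ bounds a Lagrangian hemisphere of $L$ passing through the double point, and hence has Maslov $0$.

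The second step is to promote this to linearity. Let $K(a,b) := \mu(u) - 6d(u)$. Lemma~\ref{lem:const_mu} applied with $n = 2$ shows $K(m,m) = 0$, so $K$ vanishes on the diagonal. Additivity of $\mu$ under boundary gluing of disks with corners then forces $K(a,b) = c(a-b)$ for some integer $c$. Since the only input to $c$ is the local Darboux model $\R^2 \cup i\R^2$ at the double point together with the chosen ordering of sheets, $c$ is a universal constant for all Whitney spheres.

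The third step is to compute $c$, which I would do using Example~\ref{ex:std_wh}. For the standard Whitney sphere in $\C^2 \subset \CP^2$, the ``small'' half of a line--sphere figure-eight intersection is a Maslov-$2$ holomorphic disk with $(a,b) = (1,0)$ and $d = 0$ (it sits in $\C^2$ and hence misses the line at infinity). Plugging in gives $2 = c\cdot 1$, so $c = 2$. Swapping the ordering of sheets interchanges $p$- and $q$-corners, replacing $(a-b)$ by $-(a-b)$ and hence $c$ by $-c$. This yields $\mu(u) = \pm 2(a-b) + 6 d(D)$; applied to Maslov-$2$ terms $p^a q^b$ in the refined potential with the distinguished ``$+$'' ordering, one obtains $a-b \equiv 1 \mod 3$ immediately. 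The main obstacle is the rigorous justification of the linearity in step two: this requires interpreting ``topological disks with corners'' as elements of an abelian group on which $\mu$ is a homomorphism, which can be done cleanly via the punctured picture by identifying such disks with classes in the relative homology of $(X^\circ, L^\circ)$ enhanced with asymptotic chord data at the punctures, on which $\mu$ is additive by construction.
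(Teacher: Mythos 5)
Your overall strategy is sound and almost certainly mirrors what the authors had in mind for this unproven lemma: combine $c_1(\CP^2)=3H$, the vanishing of the Maslov class of $L^\circ$ on the longitude via the Lagrangian hemisphere, the constant-disk computation of Lemma~\ref{lem:const_mu}, and the explicit Maslov-$2$ disk from Example~\ref{ex:std_wh} to pin down the coefficient. Steps one and two are fine, and you correctly flag that the ``additivity'' argument is cleanest when formulated in the relative homology of the punctured picture (equivalently, via Lemma~\ref{lem:mu_res} one can pass to the resolved torus $L_p$, where $\mu$ is an honest homomorphism on $H_2(\CP^2,L_p)$).

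The genuine gap is in step three, in the claim that ``the only input to $c$ is the local Darboux model, so $c$ is a universal constant for all Whitney spheres.'' This does not follow from what precedes it. Once one knows $K(a,b)=c(a-b)$ for a fixed $L$, the constant $c=K(1,0)$ is computed from a disk with a single $p$-corner, whose boundary arc necessarily travels globally through the cylinder $L^\circ$ from the $\ell_2$-end back to the $\ell_1$-end; there is no such disk supported in a Darboux ball. In fact the local corner contribution to $\mathrm{Maslov}(\gamma)+m$ is $-1+1=0$, so all of $c$ comes from the arcs, which see the global geometry of $L\subset\CP^2$. A clean way to close the gap while keeping your structure: after steps one and two one has $\mu(u)\equiv c(a-b)\pmod 6$ with $c$ even, so $c\equiv 0$ or $\pm 2\pmod 6$; to rule out $c\equiv 0$, resolve the double point via Lemma~\ref{lem:mu_res} and invoke Proposition~\ref{prp:minmaslov}, which supplies a Maslov-index-$2$ topological disk with boundary on the resulting Lagrangian torus $L_p$. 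Since the Maslov class of $L_p$ modulo $6$ vanishes on the longitude by your hemisphere argument, the existence of that disk forces it to be $\pm 2$ on a complementary generator, hence $c=\pm 2$. Your computation on the standard Whitney sphere then merely fixes which ordering of sheets gives the sign $+$, as the lemma requires.
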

The above ordering of the sheets of any Whitney sphere will be used throughout the rest of the section.
We next discuss a  normal form for the refined potential $W_{L}(p,q,t)$ of a monotone Whitney sphere $L\subset \CP^2$. Recall that $W_{L}(p,q,t)$ is a sum of monomials of the form $p^{a}q^{b}t^{k}$, $a,b\ge 0$.
If $a>0$ and $b>0$, the relation $pq=1-t$ gives
\[ 
p^{a}q^{b}t^{c} = p^{a-1}q^{b-1}(t^{c}-t^{c+1}).
\] 
This relation can be used repeatedly to get rid of all mixed terms: the ones which involve a non-trivial power of both $p$ and $q$.
Recalling (\ref{eq:pot_cp2_div3}), it follows that the potential can be written in the  form:
\begin{equation}
\label{eq:pot_collected_terms}
W_L(p,q,t)=\sum_{k\ge 0} p^{1+3k}\cdot P_{1+3k}(t^{\pm 1})+\sum_{k\ge 0} q^{2+3k}\cdot Q_{2+3k}(t^{\pm 1}),
\end{equation}
where $P_{1+3k}$, $Q_{2+3k}\in\C[t^{\pm 1}]$ are Laurent polynomials in $t$. We call \eqref{eq:pot_collected_terms} the \emph{normal form} of the refined potential. 

\subsection{The Hopf fibration and Legendrian lifts}
\label{sec:HopfFibration}

In this section we prove Theorem~\ref{th:cp2_p1}. 
Let $L$ be a monotone Whitney sphere in $\C P^{2}$ and consider the Hopf map
\[ 
\pr\colon S^{5}\to \C P^{2},
\] 
where $S^{5}$ is the standard contact $5$-sphere. {(See Section \ref{ssec:basicmonspheres} for background results on monotone Whitney spheres in the projective plane.)} The fibers of the Hopf map are Reeb orbits and $L$ lifts to a Legendrian sphere in $S^{5}$. Choose the contact form so that the action of a fiber equals $1$, corresponding to $\int_{\CP^{1}}\omega=1$. The lift $\Lambda$ of $L$ has the following Reeb chords:
\begin{itemize}
	\item transverse Reeb chords $a_k$, $k\ge 0$ of action $\mathfrak{a}(a_{k})=\frac13+k$ and grading $|a_{k}|=2+6k$;
	\item transverse Reeb chords $b_k$, $k\ge 0$ of action $\mathfrak{a}(b_{k})=\frac23 + k$ and grading $|b_{k}|= 4+6k$;
	\item $S^{2}$-Bott families $C_{k}$, $k\ge 1$ of Reeb chords of action $\mathfrak{a}(C_{k})=k$ and grading $|C_{k}|=-1+6k$. 
\end{itemize}
The first two computations are immediate since $L$ is monotone and the grading of the Reeb chords $|a_{k}|$ and $|b_{k}|$ equals the Maslov index of a holomorphic disk inside $(\CP^2,L)$ of area $\frac13+k$ and $\frac23+k$, respectively, {\color{red} see Lemma \ref{l:basicmonspheres}}. The degree in the last item follows from the observation that the linearized Reeb flow corresponds to full rotations in the two complex lines in the tangent plane to $\C P^{2}$ and the fact that $c_{1}(\C P^{2})=3[\C P^{1}]$. (The grading can also be identified with the dimension of the space of real rational curve of degree $2k$ inside $(\CP^3,\mathbb{R}P^3)$ satisfying a $(2k-1)$-fold tangency to the divisor at infinity $\mathbb{R}P^2 \subset \CP^2$ at a \emph{fixed} point, i.e.
$n-3+\mu-2k=4(2k)-2-(2k-1)=-1+6k.$)

Note that if we make the Reeb flow generic by a small perturbation, each of those  Bott families gives rise to two non-degenerate Reeb chords as follows:
\begin{itemize}
	\item $C_{k}$ gives $c_{k}^{m}$ of index $|c_{k}^{m}|=-1+6k$ and $c_{k}^{M}$ of index $|c_{k}^{M}|=1+6k$.
\end{itemize} 

\begin{proof}[Proof of Theorem \ref{th:cp2_p1}]
	Let $\Lambda\subset S^{5}$ be the Legendrian lift of $L$, which is embedded by Lemma \ref{l:basicmonspheres} \color{black}. Consider $CE(\Lambda)$, see Proposition \ref{p:dginv}, where we cap $\Lambda$ by a disk (this corresponds to trivial coefficients). It follows from the index calculations above that $CE(\Lambda)$ is supported in degrees $\ge 2$. In particular, $CE(\Lambda)$ admits augmentations. Fix a point $\xi$ on $\Lambda$ and consider the dg-algebra $CE(\Lambda,\xi)$ twisted by a point condition. (I.e.~we add a new generator $\xi$ of degree $1$ to the dg-algebra and count disks in the symplectization passing through $\R\times \xi$ as disks with punctures at the new generator.) It follows from Proposition \ref{p:local} that we can compute $CE(\Lambda,\xi)$ in a Darboux ball. The duality result \cite[Theorem 5.5]{EESa} then implies that the linearized contact homology of $\Lambda$ in degrees $2+k$ and $2-k$ are isomorphic for $k\ne 0$ and that the homology in degree $2$ has a fundamental class that evaluate with degree $1$ to $\Lambda$ and a non-trivial pairing of other classes. Given the degrees of the chords of $\Lambda$ we find that that the only possibility here is that the linearized homology consists only of the fundamental class. Consider now the filtration by word length in $CE(\Lambda,\xi)$, since all generators have positive degrees the corresponding spectral sequence converges. The homology of the first page in this spectral sequence, corresponding to the linearized homology complex with the generator $\xi$ added, consists of only the base ring $\Z$ of $CE(\Lambda,\xi)$. It follows that the homology of $CE(\Lambda,\xi)$ equals $\Z$.
	
	Consider the calculation of $CE(\Lambda,\xi)$ with $\Lambda$ in the original position.
	Here the differential of the degree $2$ chord counts disks in $\R \times S^5$ with boundary on $\R \times \Lambda$ that pass through $\R\times\xi$. Using the holomorphic projection of the Hopf fibration $\R \times S^5 \to \CP^2$ we can identify that count with the algebraic count of disks in $\C P^{2}$ that satisfy the following (see \cite{Di16.2}): 
	\begin{itemize}
		\item the disk have boundary on the projection of $\Lambda$ to $\CP^2$;
		\item the boundary passes through $\xi$;
		\item the disk has a single $p$-corner;
		\item its  area equals $1/3$.
	\end{itemize}
	These are precisely the disks contributing to the term $P_1(1)$ from the normal form~\eqref{eq:pot_collected_terms}. Since $\xi$ is a boundary the corresponding count of pseudoholomorphic discs now translates to $P_{1}(1)=1$.
\end{proof}

\begin{rmk}
	The proof in fact shows that the Chekanov--Eliashberg algebra $CE^{\ast}(\Lambda)$ is quasi-isomorphic to that of the standard unknot $CE^{\ast}(\Lambda_{\rm st})$. To see this, note that by the computation of linearized homology, the $E_{2}$-page of the word length spectral sequence consists exactly of the polynomial algebra $\C[c_{0}]$ and there can be no further differentials.
	
	In fact, the disks with the point constraint and one corner remember the whole linearized differential  determined by:
	\[ 
	c_{k}^{m}\mapsto b_{k-1},\quad a_{k}\mapsto c_{k}^{M},\quad k\ge 1.
	\]
\end{rmk}
\begin{rmk}	
	Theorem \ref{th:cp2_p1} can alternatively be proved using a version of Floer cohomology for monotone Lagrangian immersions with the trivial local system $p=q=0$, $t=1$ (cf.~Theorem~\ref{mainth:general_brane} in the strongly exact case). Recall that the eigenvalues of the quantum multiplication by $c_1(\CP^2) \in H^2(\CP^2)$ are the non-zero numbers $3e^{ik2\pi/3}$. Generalizing \cite[Lemma 2.7]{She13} to this case shows that Floer cohomology is trivial for any immersed sphere with the  local system $p=q=0$, $t=1$. 
	
	Indeed, the number of disks in the potential that is counted with this local system is obviously equal to zero, while it must be equal to one for the above eigenvalues, whenever Floer cohomology is nontrivial. Finally, the analysis in \cite{EESa} shows that, first, the fundamental class on the sphere is a cycle, which is exact by the established acyclicity. Second, that the Floer strips responsible for the differential hitting it are in bijection with the holomorphic disks with boundary on the sphere that contribute to the coefficient $P_1(1)$.
\end{rmk}  

\subsection{Whitney spheres  and divisors}
It is shown in \cite{AGM01}
that for every Lagrangian submanifold $L\subset X$ in a monotone or rational symplectic manifold, there exists a symplectic divisor (i.e.~a symplectic real codimension~2 submanifold) $\Sigma\subset X$ whose homology class is dual to $k\omega$ for some $k>0$, and which is disjoint from $L$. It is called a \emph{Donaldson divisor}. This theorem generalizes to immersions. In general $k$ may be large and for a given $L$, it is an interesting problem to minimize the degree  of a Donaldson divisor $\Sigma$ disjoint from it. For Lagrangian tori there is the following sharp result. 

\begin{thm}[{\cite[Theorem C]{DGI16}}]\label{t:torioffline}
	Let $H\subset \CP^2$ be a complex line.
	For every Lagrangian torus $T\subset \CP^2$, there exists a symplectomorphism $\phi\co \CP^2\to\CP^2$ such that $\phi(T)\cap H=\varnothing$.\qed 
\end{thm}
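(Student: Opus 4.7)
\medskip

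\noindent\textbf{Proof proposal for Theorem~\ref{t:torioffline}.}

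My plan is to reduce the statement to a Lagrangian classification result and then apply a Lagrangian isotopy extension argument. The key idea is that $\CP^2\setminus H$ is symplectomorphic to an open ball (standard Darboux chart modelled on $\C^2$), so it suffices to produce a symplectomorphism of $\CP^2$ that takes $T$ into a Darboux chart avoiding $H$. This is automatic if one can first exhibit a Lagrangian \emph{isotopy} from $T$ to a Lagrangian torus lying in such a chart, and then extend it ambiently.

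First I would prove the following classification claim: every Lagrangian torus $T\subset \CP^2$ is Lagrangian isotopic (through embedded Lagrangian tori) to a torus $T'$ contained in an arbitrarily small Darboux ball around a point of $\CP^2\setminus H$. This is the heart of the matter. The natural approach is to proceed in two stages. In the first stage one shows, via SFT neck-stretching along the line $H$ (equivalently, along the contact boundary of a neighborhood of $H$), that $T$ can be Lagrangian isotoped off any given algebraic curve of small degree, using the analysis of holomorphic foliations of $\CP^2$ by lines (or conics) through generic points in the spirit of Gromov and Hind--Kerman. In the second stage, once $T$ is displaced from some such auxiliary structure, one uses an explicit isotopy inside $\CP^2\setminus H\cong \C^2$ to shrink it into a Darboux ball. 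This first stage is precisely where the hard analytical work lies, and I expect it to be the main obstacle: one has to rule out bubbling-off of exceptional configurations and use positivity of intersections to control how holomorphic disks with boundary on $T$ can link $H$, exactly as in the application of refined potentials in Section~\ref{sec:application} but in a Lagrangian-isotopy-invariant setting.

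With the classification in hand, the remaining two steps are standard. Applying the Lagrangian neighborhood (Weinstein) theorem together with the isotopy extension theorem in the symplectic category, any smooth family of embedded Lagrangian tori $\{T_s\}_{s\in[0,1]}$ from $T=T_0$ to $T'=T_1$ extends to a smooth family $\{\phi_s\}$ of ambient symplectomorphisms of $\CP^2$ with $\phi_s(T)=T_s$ and $\phi_0=\id$. Setting $\phi=\phi_1$ yields $\phi(T)=T'\subset \CP^2\setminus H$, which gives the desired displacement.

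Finally, I would double-check that one does \emph{not} need Hamiltonian extension: indeed the theorem statement only requires a symplectomorphism, and symplectic isotopy extension is available for any Lagrangian isotopy because obstructions coming from the flux homomorphism are irrelevant when one is willing to leave the identity component of $\mathrm{Symp}(\CP^2)$. Since $H^1(\CP^2;\R)=0$ the flux obstruction is in fact vacuous here, so the ambient isotopy can even be chosen Hamiltonian, recovering the stronger displaceability statement, but this is not needed for the theorem as stated.
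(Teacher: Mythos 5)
The statement you are proving is not proved in this paper: it is cited as \cite[Theorem C]{DGI16}, and the \qed after the statement simply records that it is quoted, not re-derived. Your proposal correctly identifies the standard reduction (isotope $T$ through Lagrangian tori into a Darboux ball in $\CP^2\setminus H\cong\C^2$, then extend the Lagrangian isotopy to an ambient symplectic isotopy, which in $\CP^2$ is automatically Hamiltonian since $H^1(\CP^2;\R)=0$), and the extension step is fine. But the central claim you label as ``the heart of the matter''---that every embedded Lagrangian torus in $\CP^2$ is Lagrangian isotopic to one contained in a Darboux ball---is never actually established. You gesture at SFT neck-stretching along $H$, holomorphic foliations by lines or conics, ruling out bubbling, and positivity of intersections, but these are names of techniques rather than an argument. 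In effect you have reformulated Theorem~\ref{t:torioffline} as an equivalent Lagrangian-isotopy statement and then asserted it.

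That reformulation is precisely the content of \cite{DGI16}: the real work there is a detailed analysis of moduli spaces of holomorphic disks and spheres in degenerations of $\CP^2$ (via singular pseudoholomorphic foliations and careful compactness/transversality arguments) that produces the Lagrangian isotopy into a standard position. Without that analysis your proposal has a genuine gap. If you wish to complete it you would need to (i) set up the neck-stretching along $\partial(\nu H)$ and along the torus itself, (ii) identify the limiting building in terms of holomorphic planes/disks in the complement, (iii) prove the relevant moduli spaces are non-empty and cut out transversely, and (iv) convert this curve-theoretic information into an actual one-parameter family of embedded Lagrangian tori ending in a small ball. Each of these is substantial; as written, steps (i)--(iv) are replaced by a single sentence of intent.

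One further small correction: the flux discussion at the end conflates two different fluxes. The obstruction to making the Lagrangian isotopy itself Hamiltonian lives in $H^1(T;\R)$ (via the Lagrangian flux of the isotopy of $T$), not in $H^1(\CP^2;\R)$; the latter only controls whether an ambient symplectic isotopy is Hamiltonian. Since $H^1(\CP^2;\R)=0$, any ambient symplectic isotopy is Hamiltonian, so the conclusion you draw is correct, but the reason is the vanishing of the ambient flux group, not irrelevance of Lagrangian flux.
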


In this section we show that the analogue of Theorem \ref{t:torioffline} fails for Whitney spheres.
We begin with an elementary result.

\begin{lem}\label{l:sphereeasydisplace} 
	The following displaceability statements hold.
	\begin{itemize}
		\item 
		Any Whitney sphere from Proposition~\ref{prp:whitney_cp2} can be displaced from any smooth complex cubic curve by a symplectomorphism.
		\item 	Whitney spheres of the form $L[(1,b,c)(1,b,3b-c)]$ are displaceable from a cubic curve with a single node  by symplectomorphisms.
		\item
		The standard Whitney sphere $L[(1,1,1),(1,1,2)]$ is displaceable from the complex line by symplectomorphisms.
	\end{itemize}
\end{lem}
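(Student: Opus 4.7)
The plan, for each of the three displaceability statements, is to exhibit a single curve of the stated type explicitly disjoint from the Whitney sphere in question, and then to conclude by the standard fact that two curves in $\CP^2$ of the same degree and singularity type are ambiently symplectomorphic, via a Moser argument along a connecting path in the (connected) moduli space of such curves. Thus the three claims each reduce to producing an appropriate disjoint representative.

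For (1), I would invoke Biran's decomposition relative to a smooth anticanonical cubic $C_3 \subset \CP^2$: the complement $\CP^2 \setminus C_3$ is a symplectic disk bundle over $C_3$ whose isotropic skeleton can be arranged to contain, up to Hamiltonian isotopy, every monotone Lagrangian torus in $\CP^2$, and in particular every Vianna torus $T(a,b,c)$. The Whitney sphere $L[(a,b,c),(a,b,3ab-c)]$ is constructed, following \cite{PT17}, as a $C^\infty$-small perturbation of $T(a,b,c)$ together with a Lagrangian disk $D$ from the corresponding seed. Choosing $D$ as a visible Lagrangian disk in a compatible almost toric fibration of $\CP^2$ places it inside the complement of $C_3$, and the perturbation that produces the Whitney sphere can be supported in a Weinstein neighborhood of $T(a,b,c) \cup D$ disjoint from $C_3$.

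For (2), I would modify the previous construction by degenerating the smooth anticanonical cubic to a nodal one, $C_3^{\mathrm{nod}}$, whose node sits at a specific vertex of the base triangle of an almost toric fibration of $\CP^2$. The Markov triples of the form $(1,b,c)$ are characterized by having a distinguished corner at the entry $1$; the vanishing cycle associated to this corner is not touched by the Whitney sphere $L[(1,b,c),(1,b,3b-c)]$ or by its seed disk, since they can be arranged to lie on the opposite side of the base from the collapsing cycle. Consequently, collapsing this vanishing cycle to produce $C_3^{\mathrm{nod}}$ keeps both the monotone fiber $T(1,b,c)$ and the associated Whitney sphere in the complement of the nodal cubic. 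For (3), the sphere $L[(1,1,1),(1,1,2)]$ is, by Example \ref{ex:std_wh}, the classical Whitney sphere in $\C^2 \subset \CP^2$; contained in the affine chart, it is automatically disjoint from the line at infinity, and any other complex line is carried to the line at infinity by a projective symplectomorphism.

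The main obstacle I expect is (2): one must carefully verify that the node of the degenerating cubic can be placed at the Markov corner indexed by the entry $1$ uniformly over the entire subfamily $(1,b,c)$, and that the almost toric geometry near this corner remains compatible with the seed construction of the Whitney sphere throughout the degeneration. Items (1) and (3) should follow relatively directly from standard Biran/almost toric machinery and from the explicit description of the standard Whitney sphere in Example \ref{ex:std_wh}, respectively.
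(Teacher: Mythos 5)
Your overall reduction is sound: exhibit a single curve of the stated type disjoint from the Lagrangian, then invoke the transitivity of $\mathrm{Symp}(\CP^2)$ on symplectic curves of fixed degree and singularity type. Item~(3) is correct and matches the paper. The gap is in how you produce the disjoint cubic in items~(1) and~(2).

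For~(1), your appeal to Biran's decomposition is misstated and does not give you what you need. The complement $\CP^2\setminus C_3$ is a Weinstein domain with a \emph{fixed} skeleton, and the zero section of Biran's disk bundle is the symplectic cubic itself, not a Lagrangian torus; in particular the skeleton cannot "contain, up to Hamiltonian isotopy, every monotone Lagrangian torus in $\CP^2$" --- indeed the Vianna tori are pairwise non-Hamiltonian-isotopic, so no single object can represent all of them up to Hamiltonian isotopy. Moreover, even granting that each $T(a,b,c)$ separately sits in the complement of \emph{some} cubic, you still need the seed disk $D$ and the whole surgery region to be disjoint from that cubic, which you do not establish. The mechanism the paper actually uses is different and more economical: it observes that the \emph{fixed} Clifford Lagrangian skeleton $T_{\rm Cl}\cup D_1\cup D_2\cup D_3$ is disjoint from a smooth cubic $\Sigma$ (the boundary divisor of the moment triangle with all three corners smoothed in Vianna's almost toric fibration), and that every mutation in the construction is supported in an arbitrarily small neighborhood of this one fixed skeleton. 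Hence the \emph{same} $\Sigma$ is automatically disjoint from every Vianna torus and every Whitney sphere, with no case-by-case almost-toric bookkeeping or Biran machinery. Your argument for~(2) has the right intuition (the $1$-corner of the triangle is never touched), but what makes it run cleanly is the same fixed-skeleton observation applied to $T_{\rm Cl}\cup D_1\cup D_2$ and the nodal cubic arising from smoothing only two corners; without articulating that the mutations for the $(1,b,c)$-subfamily are supported near this two-disk skeleton, "they can be arranged to lie on the opposite side of the base" remains a hope rather than a proof.
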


\begin{proof}[Proof]
Recall that any two smooth algebraic curves of the same degrees are Hamiltonian isotopic. The reason is that any two such curves obviously are isotopic through symplectic embeddings, which then can be turned into a Hamiltonian isotopy by a standard result, see \cite[Proposition 0.2]{ST05}.\color{black}

	Consider the Clifford torus $T_{\rm Cl}$ with the three Lagrangian disks $D_j$, $j=1,2,3$ as in Section~\ref{sec:wh_construct}. We claim that $T_{\rm Cl}\cup D_1\cup D_2\cup D_3$ is disjoint from a smooth cubic curve $\Sigma$, and $T_{\rm Cl}\cup D_1\cup D_2$ is disjoint from a cubic curve $\Sigma'$ with one node. This can be seen from the almost toric fibration used in \cite{Vi13} as follows. The curves $\Sigma$ (resp. $\Sigma'$) arise as the boundary divisor of the moment triangle with 3~(resp.~2) smoothed corners. By construction, mutations take place in a small neighborhood of the Lagrangian skeleton $T_{\rm Cl}\cup D_1\cup D_2\cup D_3$ (resp. $T_{\rm Cl}\cup D_1\cup D_2$), and hence $\Sigma$ (resp. $\Sigma'$) is also disjoint from the mutated tori and corresponding Whitney spheres. The last statement is obvious. 
\end{proof}

In general it is interesting to ask the following question.

\begin{que}
	Given a monotone Whitney sphere $L\subset \CP^2$, what is the lowest degree of a smooth symplectic divisor $V\subset \CP^2$ such that $V\cap L=\varnothing$?
\end{que}

Lemma \ref{l:sphereeasydisplace} implies that for any Whitney sphere from Proposition~\ref{prp:whitney_cp2} the answer is 1, 2 or~3. We show that there are infinitely many Whitney spheres in distinct Hamiltonian isotopy classes for which 2 is a lower bound.

First, let us review the terminology. A \emph{symplectic line} (resp.~\emph{symplectic conic)} in $\CP^2$ is a smooth two-dimensional symplectic submanifold representing the homology class $[\CP^1]$ (resp. $2[\CP^2]$).
We say that a set $A\subset \CP^2$ is \emph{displaceable} from $B\subset \CP^2$ if there is a symplectomorphism $\phi\co \CP^2\to\CP^2$ such that $\phi(A)\cap B=\varnothing$.
Because all symplectomorphisms of $\CP^2$ are Hamiltonian \cite{Gr85}, this is the same as to say that $A$ is Hamiltonian displaceable from $B$.
Symplectomorphisms of $\CP^2$ acts transitively on the set of symplectic lines and conics by \cite{ST05}, so when discussing displaceability of a set from a line or a conic, one may assume the latter to be a fixed complex line or conic.

\begin{rmk}
	\label{rmk:higher_degree_displace}
	Non-displaceability from the conic naturally implies non-displaceability from the line, since any neighborhood of a line contains a smooth conic obtained by smoothing the double point of the union of two nearby copies of the line. More generally,  non-displaceability from a higher-degree divisor implies non-displaceability from a lower-degree divisor. 
\end{rmk}

\subsection{Linking properties}\label{sec:linking}
We will prove the non-displaceability results by studying how the disks contributing to the refined potential can link a given  line or conic disjoint from a Whitney sphere.

Consider a symplectic divisor $\Sigma\subset X$ and a monotone Lagrangian embedding or immersion $L\subset X$ which does not intersect $\Sigma$. Then  $L$ is also monotone in $X\setminus \Sigma$.
We will use the following result on the minimal Maslov number of Lagrangian tori in uniruled (possibly open) symplectic manifolds.
\begin{prp}[\cite{CM14}]
	\label{prp:minmaslov}
	Let $X$ be one of $\CP^2$, $\R^4$, $T^*S^2$, or $T^*\mathbb{R}P^2$, with its standard symplectic structure.
	If $T\subset X$ is a Lagrangian torus then there exists a topological disk $D\subset X$ with $\partial D\subset T$, of Maslov index~2 and of positive symplectic area.
\end{prp}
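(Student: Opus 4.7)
The plan is to prove this by a Gromov–style argument using the uniruledness of each of the four listed manifolds, following the strategy of Cieliebak–Mohnke. Fix a point $p\in T$ and choose an $\omega$-tame almost complex structure $J$ that is generic with respect to $T$. In each of the four cases there is a distinguished moduli space of $J$-holomorphic spheres (or planes, in the $\R^4$ case after a suitable compactification to $\CP^2$ or $S^2\times S^2$) through $p$: complex lines for $\CP^2$, sections of the fibration $T^*S^2\to S^2$ for $T^*S^2$ (after compactification), and the analogous spheres for $T^*\mathbb{R}P^2$. Each such sphere $u\co \CP^1\to X$ has positive symplectic area and total Maslov index equal to $2c_1(X)\cdot[u]$, which is $6$ for a line in $\CP^2$ and an explicitly computable positive integer in the other cases.

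For generic $J$, the preimage $u^{-1}(T)\subset\CP^1$ is a smooth $1$-submanifold containing the point $u^{-1}(p)$, and it decomposes $\CP^1$ into topological disks $\Delta_1,\dots,\Delta_N$. Each restriction $u|_{\Delta_i}\co(\Delta_i,\partial\Delta_i)\to(X,T)$ is a topological disk with boundary on $T$ of positive symplectic area, and one has the identity
\[
\sum_{i=1}^N \mu(u|_{\Delta_i}) \;=\; 2c_1(X)\cdot [u].
\]
Thus at least one summand is positive; the problem is to ensure that one of the summands equals $2$ precisely, rather than $4$, $6$, or a higher even number, while still having positive symplectic area.

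The main obstacle is this last step: without assuming monotonicity of $T$, large-Maslov and zero/negative-Maslov disks could in principle coexist, and one cannot simply argue by area-minimality that the smallest piece has Maslov $2$. To overcome this, I would invoke a Donaldson hypersurface $\Sigma\subset X$ Poincar\'e dual to $k[\omega]$ for some large $k$ that is disjoint from $T$ (obtainable by the existence results cited in \cite{AGM01} and generalized to the noncompact cases by an exhaustion argument), and then stretch the neck of $J$ along $\Sigma$. By SFT compactness the sphere $u$ degenerates into a holomorphic building whose top level lives in $X\setminus\Sigma$; controlling the intersection multiplicities of $u$ with $\Sigma$ via $[\omega]\cdot[u]$ gives bounds that force at least one component of the building to be a holomorphic disk with boundary on $T$ of Maslov index exactly $2$ and positive symplectic area.

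The hard part will be in this last step: establishing the required transversality and compactness in the SFT limit, and tracking Maslov indices through the neck-stretching so as to isolate the Maslov $2$ disk. In the $\R^4$, $T^*S^2$, and $T^*\mathbb{R}P^2$ cases there is the additional technical issue of dealing with the open/noncompact target, which would be handled either by a suitable symplectic compactification in which the divisor at infinity plays the role of $\Sigma$, or by working directly with finite-energy planes and using the asymptotics at infinity to recover the Maslov index decomposition formula above.
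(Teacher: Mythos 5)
The paper's proof simply \emph{cites} \cite{CM14} for $\CP^2$, $\R^4$, and $T^*S^2$ (these are uniruled by spheres or planes, and Cieliebak--Mohnke's theorem directly yields the Maslov-$2$ disk of positive area), and then deals with $T^*\mathbb{R}P^2$ by a covering trick: the two-fold symplectic cover $p\colon T^*S^2\to T^*\mathbb{R}P^2$ lifts a Lagrangian torus $L$ to a Lagrangian torus $\widetilde L$ (the preimage is a torus because the cover is degree $\le 2$ and any connected component covering $L$ is still a torus), and a Maslov index~$2$ disk on $\widetilde L$ pushes forward to one on $L$. Your proposal instead attempts to \emph{reprove} the Cieliebak--Mohnke theorem from scratch (Donaldson hypersurface disjoint from $T$, SFT neck-stretching, tracking Maslov indices through the degeneration), and you explicitly acknowledge that the transversality, compactness, and index bookkeeping in that argument are not carried out. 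That is a real gap: those are precisely the technically hard steps that \cite{CM14} was written to handle, and reconstructing them here is well beyond what the proposition requires.

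The second gap concerns $T^*\mathbb{R}P^2$ specifically. You appeal to ``the analogous spheres for $T^*\mathbb{R}P^2$,'' but $T^*\mathbb{R}P^2$ does not admit the same kind of rational fibration structure as $T^*S^2$, so it is not immediate that it is uniruled by planes in a way to which \cite{CM14} applies verbatim. (One can view $T^*\mathbb{R}P^2$ as $\CP^2$ minus a conic, and lines tangent to the conic give planes, but making this precise and checking the hypotheses is nontrivial.) The paper sidesteps this entirely by pulling $T$ back along the double cover to $T^*S^2$, applying \cite{CM14} there, and pushing the resulting Maslov-$2$ disk forward under $p$, using that $p$ is a local symplectomorphism so both area positivity and the Maslov index are preserved. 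You should adopt this reduction rather than asserting a direct uniruling of $T^*\mathbb{R}P^2$.
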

\begin{proof}
	Since $\CP^2$ is uniruled by spheres, and $\R^4,$ and $T^*S^2$  are uniruled by planes, the result follows in these cases from \cite{CM14}. Consider $X=T^*\mathbb{R}P^2$. Let $p \colon T^*S^2 \to T^*\mathbb{R}P^2\color{black}$ be the two-fold symplectic  cover and let $\widetilde{L} \subset T^*S^2$ be a Lagrangian torus for which $p \colon \widetilde{L} \to L$ is c cover of degree one or two. If $D \subset (T^*S^2,\widetilde{L})$ is a Maslov index~2 disk, then  $p(D) \subset (T^*\mathbb{R}P^2,L)$ is a Maslov index~2 disk with boundary on $L$, so the case $X=T^{\ast}\R P^{2}$ follows from the case $X= T^{\ast}S^{2}$. 
\end{proof}

We next study linking. Note that if $L$ is a Whitney sphere then $H_{1}(L,\Z)\cong \Z$. If $D$ is a topological disk with boundary on $L$ having $k$ $p$-switches and $(k-1)$ $q$-switches, then $\partial D$ represents a generator of $H_{1}(L,\Z)$. If $D'$ is another disk with $a$ $p$-switches and $b$ $q$-switches then 
$$[\partial D']=(a-b)[\partial D]\in H_{1}(L,\Z).$$  

\begin{prp}\label{prp:link_0_cp2}
	Consider a monotone Whitney sphere $L\subset \CP^2$ and write its refined potential in normal form \eqref{eq:pot_collected_terms}. Let $H$ and $Q$ be a symplectic line and conic, respectively. 
	\begin{itemize}
		\item[$(i)$] If $L\cap H=\varnothing$ then the algebraic intersection number between $H$ and  any disk contributing  to the $p^{1+3k}$-terms or $q^{2+3k}$-terms equals $-k$ or $k+1$, respectively, where $k \ge 0$.
		\item[$(ii)$] If $L\cap Q=\varnothing$, then there are the following two possibilities.
		\begin{itemize}
			\item[$(a)$] If $\Z=H_1(L) \to H_1(\CP^2 \setminus Q) = \Z_2$ is the zero map, the intersection number between $Q$ and a disk contributing to  the $p^{1+3k}$-terms or $q^{2+3k}$-terms equals $-2k$ or $2(k+1)$, respectively, where $k \ge 0$.
			\item[$(b)$] If $\Z=H_1(L) \to H_1(\CP^2 \setminus Q) = \Z_2$ is surjective, the intersection number between $Q$ and a disk contributing to  the $p^{1+3k}$-terms or $q^{2+3k}$-terms equals $1+k$ or $-k$, respectively, where $k \ge 0$.
		\end{itemize}
	\end{itemize}
\end{prp}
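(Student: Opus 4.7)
The plan is to exploit linearity of algebraic intersection numbers on $H_2(\CP^2,L)$ combined with the Maslov relation of Lemma~\ref{lem:ordering}. Since $L$ is homotopy equivalent to $S^2\vee S^1$ and null-homologous in $\CP^2$, the long exact sequence of the pair $(\CP^2,L)$ yields
\[
H_2(\CP^2,L)\cong \Z[\CP^1]\oplus\Z[D],
\]
where $[D]$ is any class with $\partial_*[D]$ equal to the generator of $H_1(L)=\Z$. In the distinguished ordering, Lemma~\ref{lem:ordering} gives $\mu(u)=2(a-b)+6d(u)$ for some $d(u)\in\Z$; setting $\mu(u)=2$ yields $d(u)=-k$ for a $p^{1+3k}$-disk (with $a-b=1+3k$) and $d(u)=1+k$ for a $q^{2+3k}$-disk (with $a-b=-(2+3k)$). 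Writing $[u]=(a-b)[D]+d(u)[\CP^1]$ and using $[\CP^1]\cdot H=1$ and $[\CP^1]\cdot Q=2$ from Bezout, the proposition reduces to computing $[D]\cdot H$ and $[D]\cdot Q$ for the Maslov index $2$ normalization of $[D]$.

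For Part~(i) and Part~(ii)(a), I would represent $[D]$ by a chain in the complement of the relevant divisor. In Part~(i), $L\cap H=\varnothing$ places $L$ inside the contractible affine chart $\CP^2\setminus H\cong\C^2$; from $H_2(\C^2)=0$ and the long exact sequence of $(\C^2,L)$ one deduces that $H_2(\C^2,L)\cong\Z$ injects into $H_2(\CP^2,L)$, and its generator is the Maslov index $2$ class $[D]$---the identification of the coefficient of $[\CP^1]$ in the decomposition being forced by monotonicity ($\omega(D)=\mu(D)/6=1/3$). Any representative $D_*\subset\C^2$ automatically has $D_*\cdot H=0$, giving $u\cdot H=d(u)$ as claimed. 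In Part~(ii)(a), the hypothesis that $H_1(L)\to H_1(\CP^2\setminus Q)=\Z_2$ vanishes lets the same argument run with $\CP^2\setminus Q\cong T^*\R P^2$ in place of $\C^2$ (using $H_2(T^*\R P^2)=0$); the resulting $[D]\cdot Q=0$ yields $u\cdot Q=2d(u)$ and the formulas $-2k$ and $2(1+k)$.

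For Part~(ii)(b), the nontriviality of $\partial D$ in $H_1(\CP^2\setminus Q)=\Z_2$ means $[D]$ has no representative inside $\CP^2\setminus Q$, so $[D]\cdot Q$ is an odd integer. The main obstacle is to pin down the exact value $[D]\cdot Q=1$, as opposed to $3$, $-1$, or other odd values. My plan is to combine positivity of intersections with the surgery formula of Proposition~\ref{prp:surg}: positivity applied to the holomorphic Maslov~$2$ $p$-disk whose existence is guaranteed by Theorem~\ref{th:cp2_p1} (through the nonvanishing of $P_1(1)=\pm 1$) gives the lower bound $[D]\cdot Q\ge 1$, while the surgery formula relates the refined potential of $L$ to that of the nearby monotone Lagrangian torus $L_p\subset\CP^2\setminus Q$, whose Maslov~$2$ disks have their $Q$-intersections constrained from above by Proposition~\ref{prp:minmaslov} applied inside $T^*\R P^2$. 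These bounds pass back through the surgery formula to pin down $[D]\cdot Q=1$, after which substitution gives $u\cdot Q=(a-b)+2d(u)$, evaluating to $1+k$ for $p^{1+3k}$ and $-k$ for $q^{2+3k}$.
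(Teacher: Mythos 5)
Your overall strategy---decompose $H_2(\CP^2,L)\cong\Z[\CP^1]\oplus\Z[D]$, extract $d(u)$ from Lemma~\ref{lem:ordering} with $\mu(u)=2$, and compute $u\cdot\Sigma$ by linearity---matches the paper's approach, and the final arithmetic (once $[D]\cdot H$ resp.\ $[D]\cdot Q$ are pinned down) is correct. But there is a genuine gap in the normalization step that underlies all three cases. You write that for the generator $[D]$ of $H_2(\C^2,L)\cong\Z$ (resp.\ $H_2(T^*\R P^2,L)$), ``the identification of the coefficient of $[\CP^1]$ is forced by monotonicity ($\omega(D)=\mu(D)/6=1/3$).'' Monotonicity gives $\omega(D)=\mu(D)/6$, but it does \emph{not} give $\mu(D)=\pm2$; a priori $\mu(D)$ could be any nonzero even integer $\equiv\pm2\pmod 6$ (e.g.\ $8$, $-4$, \dots), and then $\omega(D)\ne 1/3$ and your decomposition $[u]=(a-b)[D]+d(u)[\CP^1]$ with the same $d(u)$ as in Lemma~\ref{lem:ordering} would fail. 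This is exactly where the paper invokes Proposition~\ref{prp:minmaslov} (uniruledness of $\R^4$ and $T^*\R P^2$): the resolved torus $L_p\subset Y$ bounds a Maslov-$2$ topological disk of positive area inside $Y$, which, after comparing with $H_2(Y,L)$ via the surgery, pins down the minimal-area normalization $\omega(D)=1/3$. You use Proposition~\ref{prp:minmaslov} only in part (ii)(b), but it is needed already in parts (i) and (ii)(a).

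In part (ii)(b) you also deviate from the paper, and the deviation is not fully closed. First, the step from ``$[D]$ has no representative inside $\CP^2\setminus Q$'' to ``$[D]\cdot Q$ is odd'' needs a reason: what is actually true is that the $\bmod\,2$ reduction of $\cdot Q$ on $H_2(\CP^2,L)$ factors through $\partial_*\colon H_2(\CP^2,L)\to H_1(L;\Z_2)\to H_1(\CP^2\setminus Q;\Z_2)$, and it is the nontriviality of this composite on $[D]$ that gives oddness. Second, your upper bound for $[D]\cdot Q$ via ``surgery formula plus Proposition~\ref{prp:minmaslov}'' is only a plan: Proposition~\ref{prp:minmaslov} produces a Maslov-$2$ disk on the torus $L_p$ disjoint from $Q$, but you would still have to translate that into the statement $[D]\cdot Q\le 1$ about a specific class on $L$, and the relation between $H_2(\CP^2\setminus Q, L_p)$ and $H_2(\CP^2\setminus Q, L)$ under surgery is precisely the nontrivial part. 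The paper avoids this by directly constructing a class $D'=C\cup D''$ with one $p$-corner and $[D']\cdot Q=1$, where $D''$ is half of a complex line through $\R P^2$ and $C$ is a cylinder in $T^*\R P^2$; the Liouville form on $T^*\R P^2$ is used to verify $\omega(D')=1/3$. I would recommend either importing that explicit construction, or at minimum making the surgery/positivity comparison for the upper bound precise rather than asserted.
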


Before proving Proposition \ref{prp:link_0_cp2} we state a corollary. Let $\Sigma\subset \C P^{2}$ be a divisor and $L\subset \C P^{2}$ a monotone Whitney sphere in the complement of $\Sigma$. Positivity of intersections implies that all terms in the potential $W_{L}$ that correspond to disks with negative intersection number with $\Sigma$ vanish, and  those corresponding to disks with zero intersection number give the refined potential in the complement of the divisor. Together with Proposition \ref{prp:link_0_cp2}, this implies to the following.

\begin{cor}\label{cor:disp_higher_term}
	Let $L\subset\C P^{2}$ be a monotone Whitney sphere.	
	If the normal form \eqref{eq:pot_collected_terms}  of the potential $W_{L}$ has $P_{1+3k}\neq 0$ for some $k\ge 1$, then $L$ is non-displaceable from the symplectic line. If also $Q_{2+3k}\neq 0$ for some $k\ge 1$, then $L$ is non-displaceable from the conic.\qed
\end{cor}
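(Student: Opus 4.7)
The plan is to argue by contrapositive, combining Proposition~\ref{prp:link_0_cp2} with positivity of intersections. Suppose first that $L$ is symplectically displaceable from the line $H$. Because all symplectomorphisms of $\CP^2$ are Hamiltonian (\cite{Gr85}) and because Theorem~\ref{t:general} guarantees Hamiltonian invariance of $W_L$, one may replace $L$ by its displaced image and assume from the outset that $L\cap H=\varnothing$.

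Next I would choose an admissible almost complex structure $J$ on $\CP^2$---cylindrical in the Darboux ball around the double point of $L$---that simultaneously makes $H$ a $J$-holomorphic curve. This is possible because admissibility only restricts $J$ near the double point (which is disjoint from $H$), while $J$-holomorphicity of $H$ only restricts $J$ along $TH$. Compute $W_L$ with this $J$. Any disk $u$ contributing to a $p^{1+3k}$-term is $J$-holomorphic, and its boundary on $L$ avoids $H$, so the algebraic intersection number $u\cdot H$ is a well-defined topological invariant. Positivity of intersections between the $J$-holomorphic curves $u$ and $H$ forces $u\cdot H\ge 0$, whereas Proposition~\ref{prp:link_0_cp2}$(i)$ asserts that $u\cdot H=-k$. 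For $k\ge 1$ the two statements are incompatible, so no such disk $u$ exists, and the $J$-count defining $P_{1+3k}$ vanishes. Since the homology class of $W_L$ in the algebra $A=\C[p,q,t^{\pm}]/(1-t-pq)$ is $J$-independent and the normal form~\eqref{eq:pot_collected_terms} is a unique representative in $A$, we conclude $P_{1+3k}=0$ for every $k\ge 1$, contradicting the hypothesis.

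The conic statement is handled identically with $Q$ in place of $H$ and Proposition~\ref{prp:link_0_cp2}$(ii)$ in place of $(i)$. Assuming $L\cap Q=\varnothing$ after displacement, one lands in case $(a)$ or case $(b)$ according to the image of $H_1(L)\to H_1(\CP^2\setminus Q)=\Z_2$. In case $(a)$ the disks contributing to $p^{1+3k}$ have intersection $-2k$ with $Q$, so positivity kills $P_{1+3k}$ for all $k\ge 1$, contradicting the assumption $P_{1+3k}\ne 0$. In case $(b)$ the disks contributing to $q^{2+3k}$ have intersection $-k$ with $Q$, so positivity kills $Q_{2+3k}$ for all $k\ge 1$, contradicting $Q_{2+3k}\ne 0$. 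Either way the displacement hypothesis is untenable, which gives non-displaceability from the conic. The only real point to verify is the simultaneous choice of $J$, which is routine because the two constraints on $J$ are imposed on disjoint subsets of $\CP^2$; no input beyond invariance of $W_L$ and positivity of intersections is required.
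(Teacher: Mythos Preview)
Your argument is correct and follows the same approach as the paper: the corollary is deduced directly from Proposition~\ref{prp:link_0_cp2} together with positivity of intersections, exactly as the paper indicates in the paragraph preceding the statement. Your write-up simply spells out the contrapositive and the choice of $J$ in more detail, and correctly handles the two subcases $(a)$ and $(b)$ for the conic by invoking, respectively, the hypothesis on $P_{1+3k}$ and the hypothesis on $Q_{2+3k}$.
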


\begin{proof}[Proof of Proposition~\ref{prp:link_0_cp2}]
	Let $\Sigma=H$ or $\Sigma=Q$ and let $Y=\CP^2 \setminus \Sigma$. In the first case $Y\cong \R^4$ and in the second $Y\cong T^*\mathbb{R}P^2$. In either case, $Y$ is a rational homology ball and there exists a disk $(D,\del D)\subset (\CP^2,L)$ which is \emph{disjoint} from $\Sigma$, and which is a generator of
	$$
	\Z \cong H_2(Y,L) \xrightarrow{\lambda\cdot \id_\Z} H_1(L)\cong \Z, \quad \lambda =1 \text{ or } 2.
	$$
	Moreover, if $\Sigma=H$, we can choose $D$ to have a single $p$-corner, and if $\Sigma=Q$ we can choose $D$ to have either
	\begin{itemize}
		\item a single $p$-corner if $H_1(L) \to H_1(T^*\mathbb{R}P^2)$ is trivial (i.e.~$\lambda=1$), or
		\item two $q$-corners if $H_1(L) \to H_1(T^*\mathbb{R}P^2)\cong \Z_2$ is non-trivial (i.e.~$\lambda=2$).
	\end{itemize}
	Fix a reference disk $D$ with these properties.

	Recall the convention for the numbering the sheets at the double point of $L$ in Lemma~\ref{lem:ordering}. Using this convention we have  $\int_D \omega=1/3+l$ for the disk with a single $p$-corner, and $\int_D \omega=-2/3+l$ for the disk with two $q$-corners, for some $l \in \Z$. Consider a Lagrangian torus obtained by resolving the double point of $L$ and apply Proposition \ref{prp:minmaslov}. Monotonicity implies $\int_{D}\omega =1/3$ in both cases (i.e.~$l=0$ in the first case and $l=1$ in the second).
	
	This allows us to determine the intersection number of a disk with a divisor from the term of the potential that the disk is contributing to.
	Consider case $(i)$. Any element in $H_2(\CP^2,L)$ can be written as 
	\begin{equation}
	\label{eq:a_decomp}
	A=s[D]+l[H],\quad s,l \in \Z,
	\end{equation}
	where $[D]\cdot [H]=0$ and $[H]\cdot[H]=1$. Then
	$$
	\omega(A)=\frac 1 3 s+l ,\quad A\cdot [H]=l.
	$$
	If $A$ is a Maslov index~2 disk, then $\omega(A)=\tfrac13$ and
	$$A\cdot [H]=\frac {1-s}3.$$
	In the case under consideration, the reference disk $D$ has one $p$-corner. Therefore, a holomorphic disk in class $A$ contributes to the $p^{1+3k}$-term when $s=1+3k$, $k\ge 0$, and $A\cdot [H]=-k$. Similarly, a disk in class $A$ contributes to the $q^{2+3k}$-term when $s=-2-3k$, $k\ge 0$, and $A\cdot [H]=1+k$.
	
	Consider case $(ii)$, $(a)$. Arguing as in case $(i)$, we decompose any class $A\in H_{2}(\C P^{2},L)$ using \eqref{eq:a_decomp} and compute 
	$$
	A\cdot [Q]=\frac {2(1-s)}3.
	$$ 
	As in case $(i)$, this implies the claim.

	Consider case $(ii)$, $(b)$. Now the reference disk $D$ has two $q$-corners and does not generate $H_2(\CP^2,L)$. (It generates an index 2 subgroup.) 
	Recall that $\C P^{2}\setminus Q\cong T^{\ast}\R P^{2}$ with its standard symplectic form $\omega_{0}=d\theta$, where $\theta$ is the Liouville (or action) 1-form.
	
	Since $L \subset T^*\mathbb{R}P^2$ is exact and $\int_{D} \omega_{0}=1/3$, if $\gamma$ is any loop in $L$ with a single $p$-corner, 
	then $\int_{\gamma}\theta=-1/6$.
	Note that $\gamma$ belongs to the non-trivial homotopy class in $T^*\mathbb{R}P^2$. Therefore there exists a cylinder $C\subset T^{\ast}\R P^{2}$ (with a single $p$-corner) $\partial C=\gamma-\gamma'$ where $\gamma'=\R P^{1}$ is a real projective line in the 
	zero-section $\R P^{2}$. The curve $\gamma'$ bounds an obvious disk $D''$ in $\CP^2$ (half of the complex line $\C P^{1}$ in which it lies) such that $D''\cdot Q=1$. If $D'=C\cup D''$ then $D'\subset (\CP^2,L)$ is a disk with a single $p$-corner and since $\theta$ vanishes along the $0$ section, we have
	$$ 
	\int_{D'}\omega=\int_{C}\omega_{0}+\int_{D''}\omega=\int_{\gamma-\gamma'}\theta+\int_{D''}\omega=-1/6+1/2=1/3,
	$$
	and
	$$
	\quad [D']\cdot [Q]=1.
	$$
	An arbitrary class $A\in H_2(\CP^2,L)$ can now be written as 
	$$A=s[D]+l[H]+m[D'],\quad k,l,m \in \Z,$$
	and one computes
	$$
	\omega(A)=\frac {s+m} 3+l,\quad A\cdot [Q]=2l+m.
	$$ 
	If $A$ is a Maslov index 2 disk then $\omega(A)=1/3$ and $A\cdot[Q]=\tfrac 1 3 (2-2s+m)$. 
	Since $D$ has two $q$-corners and $D'$ has one $p$-corner, it follows that $-2s+m$ is the number of $p$-corners of $A$ minus the number of $q$-corners of $A$. This gives the result.
\end{proof}

\begin{ex} We summarize what  Proposition~\ref{prp:link_0_cp2} implies for the Whitney spheres in Table~\ref{tab:pot}. 
	\begin{enumerate}
		\item The standard  Whitney sphere $L[(1,1,1)(1,1,2)]$ is contained in the complement $\CP^2 \setminus (H \cup Q)$ of both the standard line at infinity and standard conic $Q=\{z_1z_2=1\}$. This Whitney sphere is homotopically nontrivial in the complement of this conic, that is, it realizes $(ii)$, $(b)$ in Proposition~\ref{prp:link_0_cp2}.
		\item The spheres $L[(1,1,2)(1,2,5)]$ and $L[(1,2,5)(2,5,29)]$ cannot be placed in the complement of a conic in a homotopically nontrivial way. Indeed, their potentials involve higher powers of $q$ which is not consistent with $(ii)$, $(b)$ in Proposition~\ref{prp:link_0_cp2}. On the other hand $L[(1,1,2)(1,2,5)]$ can be placed in the complement of a symplectic line, as shown in Figure \ref{fig:complement_line}.
		\item The sphere $L[(1, 2, 5)(1, 5, 13)]$ is not displaceable from a line, and cannot be placed inside the complement of a conic in a homotopically trivial way since its potential is neither consistent with $(i)$ nor with $(ii)$, $(a)$ in Proposition~\ref{prp:link_0_cp2}.
		\item The sphere $L[(1, 5, 13)(1, 13, 34)]$ is not displaceable from a conic by Proposition~\ref{prp:link_0_cp2} since its potential involves higher powers of both $p$ and $q$.
		\item We finish with an example where Proposition~\ref{prp:link_0_cp2} does not give full information. We do not know whether $L[(1,2,5)(1,5,13)]$ is displaceable from the conic (in a homotopically non-trivial way).
	\end{enumerate}
\end{ex}

\begin{figure}[h]
	\vspace{5mm}
	\labellist
	\pinlabel $u_1/\pi$ at 81 104
	\pinlabel $u_2/\pi$ at 6 218
	\pinlabel $1/3$ at -8 127
	\pinlabel $-1$ at -8 7
	\pinlabel $1/2$ at -8 151
	\pinlabel $1$ at -2 200
	\pinlabel $\color{red}L$ at 24 135
	\pinlabel $\ell$ at 37 157
	\pinlabel $\frac{1}{3}$ at 37 115
	\pinlabel $1/2$ at 57 91
	\endlabellist
	\includegraphics[]{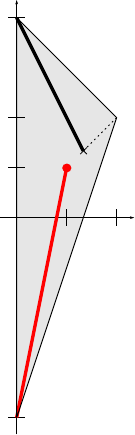}
	\caption{The `moment polytope' for an almost toric fibration on $\CP^2$ where the central fiber over $(1/3,1/3)$ is the Chekanov torus, see \cite{Vi13}. The sphere $L \coloneqq L[(1,1,2)(1,2,5)]$ lies over an arbitrarily small neighborhood of the line connecting $(1/3,1/3)$ with $(0,-1)$. A symplectic line $\ell$ in its complement lies over the straight line between the corner $(0,1)$ and the node (the image of the singular fiber). The line intersects each fiber along a vanishing cycle of the nodal singularity (a curve in the homotopy class $[\theta \mapsto (\theta,-\theta)]$ in the depicted coordinates).}
	\label{fig:complement_line}
\end{figure}

\subsection{General Whitney spheres}
Consider a Whitney sphere $$L=L[(a,b,c)(a,b,3ab-c)]\subset \CP^2$$ and write its refined potential $W_L$ in normal form \eqref{eq:pot_collected_terms}.
Let $T$ be the resolved torus whose potential $W_T$, written in some basis, is obtained from $W_L$ by 
\begin{equation}
\label{eq:markov_sm}
p\mapsto x,\quad q\mapsto(1+y)x^{-1},\quad 
t\mapsto -y,
\end{equation}
see Theorem~\ref{thm:toric_pot1}. Interchanging $c$ and $3ab-c$ if necessary, we can assume that $T=T(a,b,c)$. 

Recall that the Newton polytope of a Laurent polynomial $\sum_{(a,b)\in I} c_{a,b}x^ay^b$, where $I\subset \Z^2$ and $c_{a,b}\neq 0$, is  the convex hull of $I$ in $\R^2$.
Let $\P$ be the Newton polytope of $W_T$, written in the basis $(x,y)$ used above.
It was shown in \cite{Vi13} that $\P$ is a triangle with sides of affine lengths $a^2$, $b^2$ and $c^2$ respectively. 
On the other hand,
the way $W_T$ is obtained from $W_L$ \eqref{eq:markov_sm} leads to the following.
\begin{itemize}
	\item Let $q^rQ_r(t^{\pm 1})$ be the highest non-zero term of $W_L$ in the powers of $q$. Then $\P$ has an edge within the vertical line $\{x=-r\}$.	
	
	\item Let $p^rP_r(t^{\pm 1})$ be the highest non-zero term of $W_L$ in the powers of $p$. If $P_r=ct^s$ is a monomial, then $(r,s)\in \Z^2$ is a vertex of $\P$. If not, $\P$ would have  an edge within the vertical line $\{x=r\}$, in which case $\P$ would not be a triangle.
\end{itemize}

\begin{lem}\label{l:vertex}
	Consider a Whitney sphere $L$  as above.		
	The term $P_r$ by the highest power of $p$ is a monomial in $t$, and corresponds via (\ref{eq:markov_sm}) to a vertex of $\P$.\qed
\end{lem}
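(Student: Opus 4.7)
The plan is to argue by contradiction, combining the triangle property of $\P$ established by Vianna with the two bulleted observations immediately preceding the lemma statement.

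First, I would invoke Vianna's description of the Newton polytope of $W_T$: it is a triangle with sides of affine lengths $a^2$, $b^2$, and $c^2$. In particular, $\P$ has exactly three edges, and no two of them are parallel.

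Second, for the Whitney sphere $L=L[(a,b,c),(a,b,3ab-c)]$, I would verify that the normal form \eqref{eq:pot_collected_terms} of $W_L$ contains both a nontrivial highest $p$-power $p^r$ and a nontrivial highest $q$-power $q^{r'}$, with $r, r'>0$. The existence of the $p$-term is built into the hypothesis of the lemma; the existence of the $q$-term can be verified by tracing through the iterative wall-crossing and surgery computation (as reflected in every entry of Table~\ref{tab:pot}) and ultimately follows from the fact that both sides of the triangle $\P$ at the leftmost $x$-coordinate are non-degenerate as soon as $(a,b,c)\neq(a,b,3ab-c)$.

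Third, the first bullet before the lemma tells us that $\P$ contains an edge inside the vertical line $\{x=-r'\}$. Suppose for contradiction that $P_r$ is not a monomial in $t$. Then the second bullet tells us that $\P$ also contains an edge inside the vertical line $\{x=r\}$. Since $r,r'>0$ we have $r\neq -r'$, so these two edges are distinct and parallel, contradicting the triangle structure of $\P$. Hence $P_r$ is a monomial $c\,t^s$, and the second bullet directly gives that $(r,s)$ is a vertex of $\P$.

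The only non-routine step is the verification in the second paragraph that a highest $q$-term actually exists in $W_L$, which is where one uses the nontriviality of the mutation defining $L$. Given the explicit form of the substitution \eqref{eq:markov_sm} and the fact that all three edges of the triangle $\P$ have positive affine length, this amounts to a direct bookkeeping check; all remaining ingredients are supplied by the bullets just before the lemma.
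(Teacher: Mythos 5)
Your argument is the same as the paper's own $\qed$-level proof (i.e.\ the bulleted discussion immediately preceding the lemma): the vertical edge of $\P$ at $x=-r'$ furnished by the highest $q$-term rules out a second, parallel vertical edge at $x=r$, so $P_r$ must be a monomial whose image under (\ref{eq:markov_sm}) is a vertex of $\P$. The one extra step you flag --- that a nonzero $q$-term must exist so that the first bullet applies --- is indeed a hidden hypothesis left implicit in the paper, but the justification you sketch (appealing to ``non-degeneracy of both sides of $\P$ at its leftmost $x$-coordinate as soon as $(a,b,c)\neq(a,b,3ab-c)$'' together with the entries of Table~\ref{tab:pot}) is circular or merely empirical rather than a proof: a priori a non-degenerate triangle can lie entirely in $\{x\ge 1\}$, and ruling this out requires using more of Vianna's description of $\P$ as the affine dual of the moment triangle of $\CP^2(a^2,b^2,c^2)$ than the side-length count alone.
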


\begin{prp}
	\label{prp:wh_higher}
	Consider a Whitney sphere $L=L[(a,b,c)(a,b,3ab-c)]$  as above.
	The potential $W_L$ involves no higher powers of $p$, i.e.~$P_{1+3k}\equiv 0$ for all $k\ge 1$, only if $c=1$ or $3ab-c=1$.
\end{prp}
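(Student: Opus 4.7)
The plan is to apply the second substitution~\eqref{eq:whitney_to_torus_12} (instead of \eqref{eq:markov_sm} as in the surrounding discussion) to $W_L$, obtaining the LG potential $W_{T_q}$ of the other resolution torus $T_q = T(a,b,3ab-c)$, and analyze the rightmost edge of its Newton polytope $\P_q$.

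Under~\eqref{eq:whitney_to_torus_12}, the factor $(1+y)^{1+3k}$ is now attached to the $p$-terms and absent from the $q$-terms, so the rightmost edge of $\P_q$ is vertical and located at the abscissa equal to the highest $p$-power of $W_L$. Under the hypothesis $P_{1+3k} = 0$ for all $k\ge 1$, the only $p$-contribution to $W_L$ is $pP_1(t)$, and by Lemma~\ref{l:vertex} the coefficient $P_1$ is a monomial $\alpha t^s$. Thus under~\eqref{eq:whitney_to_torus_12} this term becomes $\alpha(1+y)x(-y)^s$, which contributes precisely two lattice monomials at $(1,s)$ and $(1,s+1)$ to $W_{T_q}$. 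As all remaining (i.e., $q$-)contributions have $x$-coordinate $\le -2$, the rightmost edge of $\P_q$ is a vertical segment of affine length exactly~$1$.

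Next, appeal to Vianna's theorem: $\P_q$ is the Markov triangle of $T_q = T(a,b,3ab-c)$. The basis $(x,y)$ produced by~\eqref{eq:whitney_to_torus_12} is chosen so that $(0,1)\in H_1(T_q)$ represents the boundary class of the Lagrangian disk collapsing $T_q$ to the Whitney sphere $L$, and the Vianna/seed structure pairs the side of the Markov triangle parallel to this direction---that is, the rightmost vertical edge just identified---with the Markov number being mutated, namely $3ab-c$. Its affine length (either $3ab-c$ or $(3ab-c)^2$, depending on the convention in the statement of Vianna's theorem) therefore equals~$1$, forcing $3ab - c = 1$. Since the labeling of $c$ versus $3ab-c$ at the outset is a free choice (``interchanging $c$ and $3ab-c$ if necessary''), the conclusion is that $c = 1$ or $3ab - c = 1$.

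The main obstacle will be the final identification---matching the rightmost vertical edge of $\P_q$ with the mutation side of the Vianna triangle (associated to $3ab-c$) rather than with either of the other two sides (associated to $a$ or $b$). This rests on carefully tracking the basis convention of Theorem~\ref{thm:toric_pot1}, which identifies $(0,1)$ with the collapsing disk's boundary in $H_1(T_q)$, together with the Vianna/Lagrangian seed structure that pairs each side of the Markov triangle with one of the three disks in the seed and hence with one of the three Markov numbers.
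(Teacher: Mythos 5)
Your high-level strategy matches the paper's: translate the hypothesis $P_{1+3k}\equiv 0$ for $k\ge 1$ into a constraint on the Newton polytope of a resolved torus and then use Vianna's description of that polytope. Your opening steps are correct and clean: under the $q$-substitution \eqref{eq:whitney_to_torus_12}, the $(1+y)$-factor migrates to the $p$-terms, so the hypothesis together with Lemma~\ref{l:vertex} (applied at the highest $p$-power, here $r=1$) forces a vertical rightmost edge of $\P_q$ of affine length exactly $1$. That part is solid.

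The genuine gap is the one you flag yourself at the end, and it is not a small technicality but the load-bearing step: you need to know that this vertical edge of $\P_q$ is the side of the Vianna triangle attached to the Markov entry $3ab-c$, rather than to $a$ or to $b$. Without this, the argument only yields that \emph{some} side has affine length $1$, i.e.\ that one of $a$, $b$, $3ab-c$ equals $1$ — and that is strictly weaker than the statement. For instance, with $(a,b,c)=(1,5,13)$, $3ab-c=2$, we have $a=1$ while neither $c=1$ nor $3ab-c=1$; so "some side has length $1$" does not suffice. The assertion you invoke — that the side of the Markov triangle parallel to $[\partial D_j]$ carries the Markov number being mutated by $D_j$, and that \eqref{eq:whitney_to_torus_12} aligns $(0,1)$ with $[\partial D]$ for exactly that disk — is true, and it is consistent with every example in Table~\ref{tab:pot}, but it is not established anywhere in the text you can cite; it would require its own bookkeeping through Vianna's seed/mutation construction.

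The paper's proof sidesteps this identification entirely. Instead of trying to match a side of $\P_p$ (it uses the $p$-substitution, not the $q$-substitution, but the difference is cosmetic) with a particular Markov entry, it reads off from Vianna's almost-toric picture the explicit parameters $m_1,m_2$ of the moment polytope edges, records that the vertical-edge condition forces $m_1=m_2=:m$, and then solves the two linear relations $a^2m_1+b^2m_2=c^2$ and $a^2(m_1+1)+b^2(m_2+1)=3abc$ together with the coprimality of Markov triples (Frobenius) to conclude $3ab-c=1$ directly. That computation replaces the geometric "which side is which" identification by pure arithmetic, and is why it closes; to make your version watertight you would need to either prove the side/Markov-number pairing from the seed/wall-crossing structure, or fall back on the paper's explicit computation with $m_1,m_2$.

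One further small caution: you hedge between the side lengths being $3ab-c$ or $(3ab-c)^2$. The paper's statement quotes affine lengths $a^2,b^2,c^2$, but the low examples in Table~\ref{tab:pot} (e.g.\ the Chekanov polytope $(1,0),(-2,\pm 1)$ with side lengths $1,1,2$ for $T(1,1,2)$) indicate the Newton polytope's sides have affine lengths $a,b,c$, with the squares living in the moment polytope of $\CP^2(a^2,b^2,c^2)$. This does not change your conclusion (either way, equality to $1$ forces the Markov number to be $1$), but it should be stated correctly if you pursue this route.
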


\begin{proof}
	Suppose that $W_L$ involves no powers of $p$ other than the term $pP_1(t^{\pm 1})$. Lemma~\ref{l:vertex} implies that $P_1=ct^s$ is a monomial. Then $(1,s)$ is a vertex of the triangle $\P$, and the opposite edge of $\P$ is vertical. We may assume that $s=0$ by applying the $SL(2,\Z)$-transformation given by $(1,s)\mapsto (1,0)$, $(0,1)\mapsto (0,1)$.
	
	As shown in \cite{Vi14}, $\P$ is the affine dual of the corresponding  moment polytope for the weighted projective space $\CP^2(a^2,b^2,c^2)$. After an appropriate $SL(2,\Z)$-transformation, the edges of the moment polytope 
	are given (up to translation) by the vectors $a^2\uu_1$, $b^2\uu_2$, $c^2\uu_3$ where 
	$$
	\uu_1=(b^2,-m_1),\ \uu_2=-(a^2,m_2),\ \uu_3=(0,1)
	$$
	and $m_1,m_2$ are some integers which are co-prime with $a$ resp.~$b$. See \cite[Section~2 and Figure~4]{Vi14}. In fact whenever $\bb u_3=(0,1)$, the vectors $\bb u_1$ and $\bb u_2$ must be of the above form. 
	
	Since $\P$ is the affine dual of the above triangle, the vertices of $\P$ are given by the primitive orthogonal vectors $\bb u_1^\perp, \bb u_2^\perp, \bb u_3^\perp$, possibly after one acts on $\P$ by an $SL(2,\Z)$ transformation. But because we have already arranged one vertex of $\P$ to be $(1,0)=\uu_3^\perp$, the remaining two vertices of $\P$ are equal to $\uu_1^\perp$ and $\uu_2^\perp$ where $\uu_1$ and $\uu_2$ are of the above form.
	
	Recall that $m_1$, $m_2$ satisfy the following equations, see \cite[Proof of Proposition~2.2]{Vi14}:
	$$
	\begin{array}{l}
	a^2m_1+b^2m_2=c^2,\\
	a^2(m_1+1)+b^2(m_2+1)=3abc.
	\end{array}
	$$
	
	Since the edge of $\P$ opposite to $(0,1)$ is vertical, $m_1=m_2$. Write $m=m_1=m_{2}$.
	The equations above then give:
	$$
	3ab=c\, \tfrac{m+1}{m}.
	$$
	If follows that $c$ is divisible by $m$, because $m+1$ and $m$ are co-prime
	Next recall that the elements $a,b,c$ of a Markov triple are pairwise co-prime by a theorem of Frobenius \cite{Frobenius}. It  follows that $c=m$. Therefore
	$$
	3ab=c+1.
	$$
	This means $3ab-c=1$. 
\end{proof}

\begin{thm}
	\label{th:wh_line}
	Consider the Whitney sphere $$L[(a,b,c)(a,b,3ab-c)]\subset \CP^2$$
	where $c\neq 1$ and $3ab-c\neq 1$. This Whitney sphere is Hamiltonian non-displaceable from the symplectic line.
\end{thm}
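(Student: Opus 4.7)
The plan is to combine the structural result Proposition \ref{prp:wh_higher} with the linking/positivity-of-intersection output Corollary \ref{cor:disp_higher_term}, both of which do the real work and leave only a short deduction.

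First, I would apply the contrapositive of Proposition \ref{prp:wh_higher} to $L=L[(a,b,c)(a,b,3ab-c)]$. Under the labeling convention fixed before that proposition (``interchanging $c$ and $3ab-c$ if necessary''), the proposition asserts that the only way for all the higher $p$-terms $P_{1+3k}$, $k\ge 1$, in the normal form \eqref{eq:pot_collected_terms} to vanish is $c=1$ or $3ab-c=1$. Since the statement is built from the Newton polygon of the resolved torus $T(a,b,c)$, and the Whitney sphere has two resolutions $T(a,b,c)$ and $T(a,b,3ab-c)$, the conclusion is symmetric in $c\leftrightarrow 3ab-c$: excluding both values $c=1$ and $3ab-c=1$ therefore forces the existence of at least one index $k\ge 1$ with $P_{1+3k}\not\equiv 0$, regardless of which resolution is named the $p$-resolution under the canonical sheet ordering of Lemma \ref{lem:ordering}.

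Second, I would invoke Corollary \ref{cor:disp_higher_term}: the presence of a nonzero $P_{1+3k}$ with $k\ge 1$ in $W_L$ directly yields non-displaceability from the symplectic line. The underlying mechanism, which is established in the proof of that corollary via Proposition \ref{prp:link_0_cp2}(i), is positivity of intersections. If a symplectomorphism $\phi$ could displace $L$ from a complex line $H$, then every Maslov-index-2 disk contributing to $W_{\phi(L)}=W_L$ would need algebraic intersection number $\ge 0$ with $H$; but the disks contributing to the $p^{1+3k}$-term have intersection number exactly $-k<0$, a contradiction. This completes the argument.

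The ``hard part'' is therefore already done in the preparatory results: Proposition \ref{prp:wh_higher} (whose proof uses the duality between the Newton polygon of $W_T$ and the moment polytope of the weighted projective space $\CP^2(a^2,b^2,c^2)$, together with Frobenius's coprimality theorem for Markov triples), and Corollary \ref{cor:disp_higher_term} (which rests on the Maslov index and area computations of Proposition \ref{prp:link_0_cp2} via Proposition \ref{prp:minmaslov}). The only conceptual care needed for the deduction itself is to ensure that the symmetry $c\leftrightarrow 3ab-c$ is respected by the labeling convention in Proposition \ref{prp:wh_higher}, so that the hypothesis ``$c\neq 1$ and $3ab-c\neq 1$'' is unambiguous and covers both possible identifications of the $p$-resolution with $T(a,b,c)$ or $T(a,b,3ab-c)$.
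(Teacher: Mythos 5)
Your proposal is correct and follows the paper's own argument exactly: the paper deduces Theorem~\ref{th:wh_line} in two lines from Proposition~\ref{prp:wh_higher} (nonvanishing of some $P_{1+3k}$, $k\ge 1$) and Corollary~\ref{cor:disp_higher_term} (positivity of intersections with the line). Your extra remarks on the $c\leftrightarrow 3ab-c$ symmetry and on the linking mechanism are correct clarifications of conventions the paper has already fixed, not a different route.
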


\begin{proof}
	By Proposition~\ref{prp:wh_higher}, the potential of this Whitney sphere in normal form involves higher powers of $p$. The theorem now follows from Corollary~\ref{cor:disp_higher_term}.
\end{proof}

\section{Fukaya category for strongly exact Lagrangian immersions}
\label{sec:fukaya}

{

In this section we sketch an extension the Fukaya category $\mathscr{F}(X)$ of exact Lagrangian submanifolds of a Liouville domain $X$ that includes also certain Lagrangian immersions equipped with what we call a refined, or generalized, local system. The method we use apply more generally and leads to further extension beyond Lagrangian immersions to Lagrangian embeddings of arbitrary skeleta of Weinstein manifolds. Here we will however consider only the simplest case, the ambient space $X$ is four dimensional and the singular Lagrangians are strongly exact Lagrangian immersions of surfaces with transverse double points. In this case, from the view point of Weinstein skeleta, the smooth locus of the Lagrangian immersion can be seen as a cap for a collection of Legendrian Hopf links (these are the links of the singularities of the Lagrangian, which in this case consist of a collection of double points). The refined local system is given by a classical local system that has been extended to a representation of the Chekanov--Eliashberg algebra of the Legendrian Hopf links with coefficients in the fundamental group of the cap. Under the isomorphism from Proposition \ref{prp:ce_and_ppa} the refined local system becomes identified with a representation of the higher genus multiplicative pre-projective algebra that is associated to a plumbing graph that describes the immersion, see Subsection \ref{sec:preprojective}.

\begin{rmk} 
	In the spirit of the more general extensions mentioned above one should use coefficients in chains on the based loop space of the cap. (In the case of surfaces this is often the same thing as a classical local system.) By Legendrian surgery results similar to \cite{EL} the resulting local system is a model for the endomorphism algebra of a collection of generating `co-core' disks in the wrapped Fukaya category of a neighborhood of the singular Lagrangian.
\end{rmk}


Floer theory for immersed Lagrangians has appeared many times in the literature. One approach to turning $L$ into an object of $\F(X)$ is by disallowing holomorphic curves to have corners on $L$, see e.g.~\cite{Ab11,She11,Al13,AlBa14}. In our language, this corresponds to equipping $L$ with the trivial rank-one local system $\rho$ that takes  all Reeb orbit generators of the Chekanov--Eliashberg algebra to zero. The theory developed here allows for deformations by refined local systems that involve also the double points.

A general version of the Fukaya category that includes immersed Lagrangians was constructed by Akaho--Joyce in \cite{AJ10}. In this theory it is possible to use bounding cochains that involve also the double points to deform the complex, provided Novikov coefficients are used to handle convergence issues, see also Remark \ref{ex:floer_immersed}. The realization of the corresponding deformations in the theory considered here have coefficients in $\C$.

In Section \ref{sec:seidel} we show that these singular Lagrangians, treated as described above, really give rise to \emph{new} objects in the Fukaya category of closed exact Lagrangians which, for certain local systems, have no closed embedded representatives.

\subsection{The objects of the refined Fukaya category}
In the following we let $(X,\omega=d\lambda)$ be a four-dimensional Liouville domain with vanishing first Chern class. Our goal is to extend the version of the Fukaya category of closed exact Lagrangians constructed by Seidel in \cite{Sei16} to include also strongly exact immersed surfaces $f \colon Q \to
X$ with transverse double points. Recall that strongly exact means that there exists a primitive of $f^*\lambda$ which assumes the same value at both sheets at each double point. (This strong assumption allows us to avoid considering weakly unobstructed Lagrangians.) As mentioned above, the general case, as well as higher dimensional cases, should also be possible to deal with in a similar manner, but we leave this to future work.

\begin{ex}
	The prime examples of $X$ to which our theory applies are plumbings and self-plumbings of cotangent bundles of surfaces. It is natural to interpret the zero section in such a plumbing as a strongly exact Lagrangian immersion with transverse double points. Note that this also is the standard model of a neighborhood of any Lagrangian with transverse double points.
\end{ex}

Let $f \colon Q \to X$, $L \coloneqq f(Q) \subset X$, be a compact strongly exact (see Section~\ref{sec:corners}) oriented, spin immersed Lagrangian surface with transverse positive double points. (The positivity assumption can be removed, but in this case we have no good understanding of the refined local systems.) As in Section \ref{sec:preprojective} we associate a PPA $\B(\Gamma)$ to a quiver that arises as the plumbing graph corresponding to $L$. By Proposition \ref{prp:ce_and_ppa} this algebra is isomorphic to the degree zero part of the Chekanov--Eliashberg algebra
$$CE_{0}(\Lambda_\Gamma;\C[\pi_1(Q_\Gamma^\oo)])$$
of $\Lambda_\Gamma$ a collection of Hopf links (the link of the singularities of $L$) with coefficients in the fundamental group of the cap $Q^\oo$ (i.e.~the smooth locus of $L$).

By a \emph{path trivialization} $\mathbf{b}$ on $Q$ we mean a choice of capping paths for the construction of the Chekanov--Eliashberg algebra $CE(\Lambda_\Gamma;\C[\pi_1(Q_\Gamma^\oo)])$.

Let $\boldsymbol{d}=(d_v)_{v\in V}$ be a collection of positive integers indexed by the vertices of $\Gamma$, i.e.~by the connected components $\pi_0(Q)$. We call it the \emph{dimension vector}. Let 
$$
\mathrm{Rep}(\B(\Gamma),\boldsymbol{d})
$$
be the space of complex representations of $\B(\Gamma)$ with the given dimension vector. (There are naturally defined quotients which are multiplicative quiver varieties, but we work only at the level of representation spaces.)

\begin{dfn}\label{d:branedef}
	A \emph{rank $\boldsymbol{d}$ local system} on $L$, or also called a \emph{brane supported on $L$}, is the choice of
\begin{itemize}
\item a path trivialization $\mathbf{b}$ on $Q$,
\item a spin structure $s$ on $Q$,
\item a choice of Maslov potential that agrees at the two different sheets at each double point, and
\item a representation
	$
	\rho\in \mathrm{Rep}(\B(\Gamma),\bar d).
	$ 
\end{itemize}
We denote a brane by $\bL=(L,\rho,\mathbf{b},s)$, or sometimes simply $\bL=(L,\rho)$. 
\end{dfn}

The objects in the refined version of the Fukaya category will be taken to consist of such branes $\bL$ with additional choices of perturbation data to be made in the subsequent subsection.

\subsection{The analytical setup of the refined Fukaya category}
\label{sec:FukayaAnalytic}

We now describe how to extend the definitions of the $A_\infty$-operations in the exact Fukaya category so that the branes $\bL$ described above become objects. We will use the SFT analysis from Section \ref{sec:monotone} for handling holomorphic disks with punctures at the double points. Therefore, we begin by deforming the Lagrangian immersions near its double points so that they satisfy the following:
\begin{itemize}
\item[$(1)$] Each singular point of $L$ has a Darboux neighborhood $(D^4,\omega_0)$ in which the punctured Lagrangian $L^{\circ}$ agrees with a cylinder $\R\times\Lambda_{\mathrm{Ho}}$ over Hopf links $\Lambda_{\OP{Ho}} \subset (S^3,\alpha_{\mathrm{st}})$ in the contact boundary which is the 3-sphere with the standard contact structure given by complex tangencies. (Recall that the Hopf link $\Lambda_{\OP{Ho}}$ lies in a small contact Darboux neighborhood where it has front projection as in Figure \ref{fig:hopffront}.)
\end{itemize}
\begin{rmk}
If completed by a point at the origin in $D^{4}$, the cylinders $\R\times\Lambda_{\mathrm{Ho}}$	are cones that are not smooth Lagrangians. Using instead of $\Lambda_{\mathrm{Ho}}$, the Hopf link $\Lambda_{\OP{st}} \subset S^3$ given as the intersection of the real and imaginary parts of $\C^2$, the cones would be smooth. The analysis of holomorphic curves from Section \ref{sec:monotone} would 
then require a Morse-Bott treatment. We choose to use $\Lambda_{\mathrm{Ho}}$ instead since it is simpler overall. From the general point of view on Floer theory for singular Lagrangians discussed above, the Floer cohomology really applies to a Weinstein neighborhood of the singular Lagrangian where the filling of the Hopf link does not matter.
\end{rmk}

It will be convenient when defining Floer cohomology below to use an almost complex structure which is both cylindrical with respect to the conical structure near the cone points of $L$ and which also extends to a smooth almost complex structure at the cone point. Therefore we work with a cylindrical almost complex structure for the \emph{round} contact spheres near the conical singularities. The resulting singularity link is then, unlike the setup in Section \ref{sec:monotone}, not generic with respect to the Reeb flow, since the round contact sphere is foliated by periodic Reeb orbits. However, as the next lemma shows, relevant SFT-limits work out also for the round contact form because of degree properties of the Reeb chords of $\Lambda_{\OP{Ho}}$. We subdivide Reeb chords of $\Lambda_{\mathrm{Ho}}\subset S^{3}$ in two sets: short and long. The short chords lie inside the Darboux ball, there are four short chords $a_{1}$ and $a_{2}$ of index $1$, $p$ and $q$ of index $0$, see Figure \ref{fig:hopffront}. The long chords do not lie inside the Darboux ball.

\begin{lem}
\label{lem:cone}
Consider the small Hopf link $\Lambda_{\OP{Ho}} \subset (S^3,\alpha_{\rm st})$ in a small Darboux neighborhood in the round contact sphere (see Figure \ref{fig:hopffront}). Its exact Lagrangian cone in the symplectization is $\R \times \Lambda_{\OP{Ho}}\subset \R \times S^3 \cong \C^2 \setminus \{0\}$. The standard integrable complex structure $J_{0}$ on $\C^2$ has the following properties.
\begin{itemize}
\item $J_{0}$ is cylindrical with respect to the standard Liouville form $\frac{1}{2}\sum(x_idy_i-y_idx_i)$ in coordinates $(x_{1}+iy_{1},x_{2}+iy_{2})$ on $\C^{2}$.
\item $J_{0}$ is regular for finite energy pseudoholomorphic disks in $\R \times S^3$ with boundary on $\R \times \Lambda_{\OP{Ho}}$, one positive and possibly several negative punctures at small Reeb chords in $\{a_1,a_2,p,q\}$.
\item All long Reeb chords of $\Lambda_{\OP{Ho}}$ and all periodic Reeb orbits in $S^3$ have Conley--Zehnder indices $\ge 2$.
\end{itemize}
\end{lem}

\begin{proof}
	This is standard, see Section \ref{sec:HopfFibration} for a similar index computations.
\end{proof}

\begin{rmk}
It follows from the last property in Lemma \ref{lem:cone} that the long chords and periodic orbits can be excluded from the breaking analysis in SFT-compactifications of pseudoholomorphic disks with boundary on $L$ of dimension $\le 1$.
\end{rmk}

The $A_\infty$-structure of the Fukaya category is defined by a count of rigid solutions of certain perturbed $\overline{\partial}$-equations for disks with boundary on the Lagrangians with (a possibly empty set of) conical singularities. We represent the domains as strips with slits $\Delta_{k+1}$ and decorate each boundary component by a possibly immersed Lagrangian submanifold $L_{j}$, see \cite{EO17}.
Setting up coherent choices of such perturbation schemes is a crucial part of the construction of the category. We follow the setup of the closed exact Fukaya category as constructed by Seidel in \cite{SeiBook08}. We use the following extra requirements. 
\begin{enumerate}
\item[(2)] For each domain $\Delta_{k+1}$ and boundary component $\partial_{j}\Delta_{k+1}$ decorated by $L_{j}$ we pick non-positive $1$-form $B$ with values in Hamiltonian vector fields, see \cite{EO17}, and domain-dependent compatible almost complex structure $J_z$ on the manifold $X$. We require that $B$ vanishes in a neighborhood of the boundary and that $J_{z}$ is the standard complex structure $J_{0}$ in the round Darboux balls from Property (1) that contain the conical singularities of the Lagrangian $L_j$, near $\partial_{j}\Delta_{k+1}$.
\item[(3)] Near the punctures of $\Delta_{k+1}$, in coordinates $s+it\in[0,\infty)\times[0,1]$, $B=X_{H_{t}}\otimes dt$, see \cite{EO17}, for a time dependent Hamiltonian $H_t \colon X \to \R$ (which vanishes in a neighborhood of the boundary). We require that the time dependent Hamiltonians near punctures in $\Delta_{k+1}$ have no time-one Hamiltonian chords that start or end at a singular point of $L_i$. 
\end{enumerate}

\begin{rmk}
Note that properties $(1) - (3)$ are possible to arrange along $1$-parameter family of Lagrangians: for small deformations the double points move inside the Darboux balls and time one Hamiltonian chords from a point hits a Lagrangian in codimension $2$.
\end{rmk}

The $A_\infty$-operations of $\mathscr{F}(X)$ will be defined by counts of rigid Floer holomorphic disks $u\colon \Delta_{k+1}\to X$ that satisfy the equation $(du-B)^{0,1}=0$ and has boundary on the conical Lagrangians associated to the Lagrangian immersions as described in Property (1). The disks that we count are allowed to have additional boundary punctures asymptotic to the small mixed Reeb chords $p$ and $q$ of degree zero on the Hopf links $\Lambda_{\OP{Ho}}$ (i.e.~the links of the singularities of the Lagrangians).

\begin{rmk}
The analytic setup is non-standard in that our disks are holomorphic for an almost complex structure defined on a symplectic manifold without concave ends, while the boundary condition of the disks is a Lagrangian with concave conical singularities as in Property (1). These properties are compatible in the standard complex structure on the small Darboux balls containing the double points is cylindrical in the complement of the cone points by Lemma \ref{lem:cone}. Together with Property (2) this enables us to use the standard SFT-analysis for the pseudoholomorphic disks near the boundary (where the only nonstandard feature of the disks appears).
\end{rmk}

\subsection{A refinement of the Fukaya category}

Using the above analytic setup we now sketch the construction of our refinement of the Fukaya category, i.e.,~a Fukaya category $\F(X)$ that contains objects that correspond to branes whose underlying Lagrangians are strongly exact Lagrangian immersions with local systems that involve Chekanov-Eliashberg algebras of their singularity links.

\begin{thm}	
	\label{th:general_brane}
	The category $\F(X)$ contains the usual exact Fukaya category as a full subcategory.
	Any brane $\bL$, see Definition \ref{d:branedef} supported on an immersed Lagrangian $L$ gives rise to a well-defined non-zero object of the category $\F(X)$.
	Up to quasi-isomorphism, any such object is obtained from a fixed $(\mathbf{b},s)$ for some   representation $\rho\in \mathrm{Rep}(\B(Q),\bar d)$, and isomorphic representations give quasi-isomorphic objects in $\F(X)$.
\end{thm}	

\begin{proof}[Sketch of proof]
The $A_\infty$-operations are defined by counts of Floer holomorphic disks with perturbations as outlined in Section \ref{sec:FukayaAnalytic}. The disks contributing to the operations are then rigid and have the following boundary conditions and punctures.
\begin{itemize}
\item The boundary lies on the conical deformations of the Lagrangian immersions with cones over small Hopf links $\Lambda_{\OP{Ho}} \subset S^3$ near the cone point (see Property (1)).
\item At one output and several input boundary punctures the disk is asymptotic to Hamiltonian chords according to the perturbation $B=X_{H_{t}}\otimes dt$ near puncture in $\Delta_{k+1}$.
\item There are possibly several negative boundary punctures where the disk is asymptotic to the small mixed Reeb chords $p$ or $q$ of the Hopf links at the conical singularities.
\end{itemize}

The inputs and outputs of the operations are the usual Hamiltonian chords $\phi^1_{H_t}(L) \cap L'$ between the Lagrangians in Floer cohomology. The count of disks that define these operations are weighted by the following additional data: the boundary punctures at the small mixed Reeb chords on the Hopf links together with the boundary components of the disks between the punctures naturally give rise to elements in the algebras
$$CE_{0}(\Lambda_{\Gamma_i};\C[\pi_1((Q_i)_{\Gamma_i}^\oo)]),\:\: i=0,1,$$
which we then reduce to finite-dimensional vectors by using the representation $\rho_i$. (See also Figure \ref{fig:hf_dif_corn}.) Here the isomorphism in Proposition \ref{prp:ce_and_ppa} is used to identify the Chekanov--Eliashberg algebra with coefficients in the cap being the singular locus of the Lagrangian and the corresponding PPA. In the case when the representation has dimension greater than one, we follow the approach taken in \cite{Kon17} where higher-dimensional local systems were considered in the ordinary Fukaya category.

The $A_\infty$-equations for the weighted counts just described follow from a bubbling analysis. The arguments are directly analogous to those used to prove Proposition \ref{th:potdga} (i.e., the proof that the refined potential is independent of choices) with one small difference. We work with the round contact form near cone points instead of a small generic perturbation. However, Lemma~\ref{lem:cone} shows the same bubbling analysis works.
\end{proof}

}

\begin{ex}
	Start with a surface $Q$ with marked points $\{p_a,q_a\}_{a\in\{1,\ldots, m\}}$. Consider the self-plumbing of $T^*Q$ along the pairs of points $p_a,q_a$ using orientation-reversing linear isomorphisms $T^*_{p_a}\to T^*_{q_a}$. The result is an exact symplectic manifold $X$ which canonically contains an immersed Lagrangian $L$, the image of the zero~section under plumbing. {These plumbings were studied earlier by Etg\"u and Lekili \cite{EL17b,EL17} who predicted results along the lines of Theorem \ref{th:general_brane}.}
\end{ex}

\begin{rmk}
	We consider an example, the Floer complex $CF^*(T,\bL)$ of $L$ and an embedded Lagrangian $T$, see Figure~\ref{fig:hf_dif_corn}. 
	\begin{figure}[h]
		\includegraphics[]{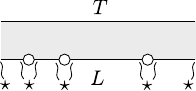}
		\caption{Floer differential where $L$ is immersed. White dots represent SFT-punctures, stars represent base-points}
		\label{fig:hf_dif_corn}
	\end{figure}
	The Floer differential counts holomorphic strips between $T$ and $L^\circ_\rho$ with additional SFT punctures asymptotic to degree~0 Reeb chords on the negative end of $L^\circ_\rho$. Using the path trivialization $\mathbf{b}$ and the reference paths to the endpoints of the Reeb chords, we cut the $L^\circ_\rho$-part of the boundary of a strip to produce a word of loops on $L^\circ_\rho$ and Reeb chords at the double points, i.e., an element of the capped Chekanov--Eliashberg algebra which is identified with $\B(\Gamma)$. Applying the given representation $\rho$ to this element gives a map $V_x\to V_y$.
\end{rmk}

\begin{rmk}
	\label{ex:floer_immersed}
{
For the standard neighborhood of the self-plumbing of a sphere with a single positive transverse double point, the capped Chekanov--Eliashberg algebra was computed in Section \ref{t:Hopf=0highdeg} to be quasi-isomorphic to
$$A=\C[p,q,t^{\pm 1}]/(1-t-pq)$$
where $$\mathrm{Spec}\, A=\{p,q\in \C: 1-pq \neq 0\}.$$
The latter space thus parameterizes branes supported on $L$. Alternative versions of Floer cohomology have been used to study $L$, see e.g.~the recent work \cite{Va17} by Varolgunes. The formality result \cite[Proposition 1]{Va17} allows for the computation of the bounding cochains in the model of the Fukaya category developed in \cite{Ab11} when coefficients are taken in a suitable formal power series ring; see \cite[Lemma 6]{Va17}. In the latter theory we can recover the bounding cochains that correspond to a formal neighborhood of the point $\{p=q=0\}$ inside the space $\mathrm{Spec}\,A$ of deformations in our theory.}
\end{rmk}

\subsection{Unobstructed and weakly unobstructed cases}

Since the holomorphic curve analysis  underlying Theorem~\ref{th:general_brane} applies generally, the result can be modified to apply to different flavors of the Fukaya category. For example, one can drop the assumption that $X$ and $L$ are exact and assume instead that $L$ is tautologically unobstructed, i.e.~there is an admissible $J$ such that $L$ bounds no $J$-holomorphic disks with corners.

The weakly unobstructed case is also important and should similarly follow without too much additional effort. The simplest case should be when $f \colon Q \to L\subset X$ is a monotone immersion with a well-defined potential
$$W_L \coloneqq \sum_{\st \in \pi_0(Q)} W_L(\st) \in \mathscr{B}(\Gamma_Q)$$
and $\rho$ is a representation with dimension vector $\boldsymbol{d}=(1,\ldots,1)$ of length $l=|\pi_0(Q)|$.
\begin{rmk}
There is always a representation with the dimension vector $(1,\ldots,1)$ since it is possible to send all generators $p,q$ to zero and all generators $t_i$ to appropriate diagonal matrices.
\end{rmk}
We call such branes $\bL$ weakly unobstructed, the curvature is given by the value $\rho(W_L)$ on the refined potential. Note that $\rho(W_L)$ is a diagonal matrix in $\C^{l^2}$ by construction and thus, in particular, lies in the center of $\Aut(\C^l)$. We next show that $\rho(W_L)$ is given by a constant multiple of the identity matrix when $L$ is connected.

\begin{lem}
	\label{lem:onedimrepr}
	Assume that $L=f(Q)$ is a monotone connected immersion (but $Q$ is not necessarily connected) inside a simply connected monotone symplectic manifold $X$,  and let $\rho$ be a representation with dimension vector $\boldsymbol{d}=(1,\ldots,1)$. Then the 
	eigenvalue of $\rho(W_L(\st_i))\in \C$ does not depend on the base-point $\st_i \in Q$, i.e., it is independent of the connected component of $Q$.
\end{lem}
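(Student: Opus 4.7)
The plan is to establish the lemma by varying the base-point along a path in the connected image $L$, showing that the potential jumps arising at double-point crossings are absorbed, after applying the one-dimensional representation $\rho$, by the defining relations of $\B(\Gamma) \cong H^0 CE(\Lambda;\C[\pi_1(L^\circ)])$.

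I would first dispose of the case where $\st_i$ and $\st_j$ lie in the same connected component of $Q$. Connecting them by a path $\gamma$ disjoint from all double points, the parameterized moduli space $\M_\gamma = \bigcup_{s\in[0,1]}\{s\}\times\M(\gamma(s))$ is a compact $1$-manifold whose boundary analysis, performed exactly as in the proof of Proposition~\ref{th:potdga}, gives that $W_L(\st_i)$ and $W_L(\st_j)$, identified via the conjugation isomorphism $e_i CE\, e_i \cong e_j CE\, e_j$ induced by $\gamma$, are cohomologous in $CE(\Lambda;\C[\pi_1(L^\circ)])$. Since $\rho$ descends to $H^0 CE \cong \B(\Gamma)$ and complex scalars commute, applying $\rho$ yields $\rho(W_L(\st_i)) = \rho(W_L(\st_j))$.

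For the main case, connectedness of $L$ lets one choose a path $\gamma \subset L$ from $\st_i$ to $\st_j$ crossing double points $\xi_{a_1},\ldots,\xi_{a_N}$. Induction on $N$ reduces to the case $N=1$, where $\gamma$ makes a single transverse $p_a$-type crossing at $\xi_a$ (up to relabeling $p \leftrightarrow q$). Rather than literally moving the marked point across $\xi_a$---which is punctured in the model $L^\circ_\rho$---I would consider an auxiliary one-dimensional moduli space of Maslov index~$2$ disks through $\st_i$ equipped with an additional boundary puncture asymptotic to the Reeb chord $p_a$ and completed to $\st_j$ via the reference path involved in $[\gamma]$. Its two ends correspond, on the one hand, to the product $W_L(\st_i)\cdot[\gamma]$ (in the limit where the extra puncture collides with the marked point at $\st_i$) and, on the other, after invoking the bifurcation of Figure~\ref{fig:bifurcation} together with the Chekanov--Eliashberg relation $p_a q_a = 1-t_a$ from Section~\ref{subsec:Hopf}, to $[\gamma]\cdot W_L(\st_j)$. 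Summing boundary contributions gives an identity
\[
W_L(\st_i)\cdot[\gamma] \;-\; [\gamma]\cdot W_L(\st_j) \;=\; \partial K
\]
in $e_i CE\,e_j$, where $K$ is a chain of disks with an extra degree-one Reeb chord puncture. Applying $\rho$ yields $(\rho(W_L(\st_i)) - \rho(W_L(\st_j)))\cdot \rho([\gamma]) = 0$, which settles the lemma whenever $\rho([\gamma]) \neq 0$.

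The main obstacle is the degenerate case in which $\rho([\gamma]) = 0$ for every admissible path $\gamma$ from $\st_i$ to $\st_j$. To handle this, I would exploit the pre-projective relations at the vertices of $\Gamma$: the vanishing of $\rho(p_a)$ and $\rho(q_a)$ along a separating family of double points forces $\rho$ to decompose consistently with the graph-theoretic decomposition of $L$ along those double points, so that in each piece the already-established same-component case applies and the scalars coincide on each piece. Verifying this algebraic consistency across separating double points---and in particular reconciling the resulting scalars with the pre-projective relations at the vertices meeting the separating double points---is the most delicate part of the argument.
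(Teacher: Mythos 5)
Your approach diverges from the paper's and has a genuine gap that is not plugged. The paper's proof is quite different: it resolves enough double points to make the parameterizing surface connected, uses the surgery formula (Proposition~\ref{prp:surg}) to conclude the statement for all representations pulled back from rank-$1$ local systems on the smoothing, and then observes that such representations form a Zariski dense subset of the representation variety. Since $\rho\mapsto\rho(W_L(\st_i))-\rho(W_L(\st_j))$ is a regular function that vanishes on a dense subset, it vanishes identically. This sidesteps entirely the degenerate locus that you identify as the ``most delicate part.''

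Your degenerate case $\rho([\gamma])=0$ for all $\gamma$ is not a corner case you can wave away. Take $Q=Q_1\sqcup Q_2$ joined by a single double point $a$, and set $\rho(p_a)=\rho(q_a)=0$ (forcing $\rho(t_a)=\rho(\bar t_a)=1$, which is consistent with all relations). Then every monomial of $W_L(\st_1)$ and $W_L(\st_2)$ with a corner vanishes under $\rho$, and the surviving terms count corner-free Maslov~$2$ disks with boundary in $L^\circ_1$ and $L^\circ_2$ respectively. There is no direct geometric reason these two scalars coincide; the equality holds because it is an algebraic identity extended from the dense locus where $\rho(p_a)\neq 0$ or $\rho(q_a)\neq 0$. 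The graph-theoretic decomposition you sketch does not produce this equality on its own, and the pre-projective relations at the vertices give no further information beyond $\rho(t_a)=1$.

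There is also a smaller problem with your moduli space in the main case. A Maslov~$2$ disk passing through $\st_i$ with an extra boundary puncture asymptotic to the degree-zero chord $p_a$ has expected dimension $\mu-2-|p_a|=0$, not $1$, by the dimension formula (Lemma~\ref{l:tvpunctured}). To get a $1$-dimensional moduli space exhibiting the bifurcation of Figure~\ref{fig:bifurcation} you would need to parameterize, e.g., by moving the marked point along a path that approaches the double point, as in the discussion following Lemma~\ref{l:tv} for $|\mathbf{w}|=1$, $\dim S=1$; as written, the two ends you describe do not bound a $1$-manifold. Even were this repaired and the identity $W_L(\st_i)\cdot[\gamma]-[\gamma]\cdot W_L(\st_j)=\partial K$ established, you would still need to borrow the paper's density--algebraicity argument to dispose of the degenerate locus.
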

\begin{proof}
{
Let $\OP{pr}_v \colon \C^l \to \C$ be the projection to the factor corresponding to the vertex $v \in \Gamma$. Thus $\OP{pr}_v \circ \rho$ is a representation of the sub-algebra $e_v \mathscr{B}(\Gamma_Q)e_v \subset \mathscr{B}(\Gamma_Q)$. We show that the value $\OP{pr}_v(W_L) \in \C$ does not depend on $v \in \pi_0(Q)$.

Assume that $\rho(p_a)\neq 0$ or $\rho(q_a)\neq 0$ for some edge $e$. Proposition~\ref{prp:surg} implies that the representations $\OP{pr}_v \circ \rho$ is a pull-back of a representation of the algebra after surgery at double point corresponding to $e$, and that the value $\OP{pr}_v \circ \rho(W_L)$ coincides with the evaluation of the potential after surgery.

We call a representation $\rho$ \emph{connected} if any two vertices of $\Gamma_Q$ (corresponding to connected components of $Q$) can be connected by a sequence of edges (orientations not necessarily matching), such that $\rho$ is nonzero on each edge in the sequence. 

The surgery argument above, together with the fact that the potential is independent of the choice of basepoint on each connected component of $Q$, proves the lemma for connected representations. Indeed, in this case $\OP{pr}_v \circ \rho(W_L)$ for different $v \in \pi_0(Q)$ correspond to the value of a potential for different choices of basepoints on a Lagrangian immersion of a connected surface.

For arbitrary representations $\rho$ we use a continuity argument and the observation that the subspace of connected representations is dense in the space of all representations. To see the dense property, note that if there exists a decomposition of the vertices of $\Gamma$ into two disjoint subsets $\Gamma_{Q}'\cup\Gamma_{Q}''$ such that $\rho$ vanishes on all arrows $p_a$, $q_a$, that connect a vertex in $\Gamma_{Q}'$ to a vertex in $\Gamma_{Q}''$ then we can continuously deform the value $\rho(p_a)$ of the representation to a non-zero morphism while keeping $\rho$ constant on all other generators. Repeating this argument we find a connected representation arbitrarily close to the original representation.}
\end{proof}

\subsection{Surgery formula for Floer theory}
\label{sec: surgery floer}

In this section, we give a sketch of the surgery formula from Proposition~\ref{prp:surg} in the context of Floer theory and the Fukaya category. The proof is based on the same stretching argument as in Section~\ref{sec:CurvesOnFilling}.

\begin{thm}
	In the setting of Theorem~\ref{th:general_brane}, let $a$ be a positive double point of a strongly exact Lagrangian immersion $L \subset (X,d\lambda)$ of a connected compact manifold $Q$. Consider a representation $\rho$ of $\B(Q)$ with any dimension vector. The following quasi-isomorphism holds in the Fukaya category $\F(X)$:
	$$
	(L,\rho\circ \SS^p_a,\mathbf{b},s)\simeq ( L_{p_a},\rho,\mathbf{b},s^*).
	$$
	Here
	$ L_{p_a}$ is the $p$-surgery of $L$ at the double point $a$, and $\SS^p_a\co \B(Q)\to\B(Q_{p,a})$ is the surgery map. The spin structure $s^*$ on $Q_{p,a}$ \emph{extends} over the small surgery disk, and is otherwise pulled back from $s$. The path trivializations on $Q$ and $Q_{p,a}$ are taken to be the same.
	The analogue holds if we replace $p_a$ with $q_a$ everywhere.
\end{thm}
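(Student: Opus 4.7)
The argument parallels the proof of Proposition~\ref{prp:surg}, with Floer strips and higher $A_\infty$-polygons playing the role of the punctured Maslov index 2 disks used there. I would begin by setting up a one-parameter stretching family $\{L_{p_a}^R\}_{R\ge 0}$ for the neck around the double point $\xi=f(p_a)=f(q_a)$, so that $L_{p_a}^0=L_{p_a}$ while as $R\to\infty$ the Lagrangian limits to $L_\rho^\circ$ with the embedded filling $F_p$ of the Hopf link attached below the level $t=-R$ in the negative end. Crucially, outside of a small neighborhood of $\xi$ all of these Lagrangians coincide with $L$, so for any test object $T\in\F(X)$ (arranged by a Hamiltonian perturbation to have all its intersections with $L$ outside this neighborhood) the intersection points can be canonically identified as $T\cap L = T\cap L_{p_a}^R$ throughout the family.

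Next, the underlying graded vector spaces of $CF^*(T,(L,\rho\circ\SS^p_a,\mathbf b,s))$ and $CF^*(T,(L_{p_a},\rho,\mathbf b,s^*))$ must be identified. Each generator $x\in T\cap L = T\cap L_{p_a}$ contributes $\mathrm{Hom}_\C(V_{v(x)},V_{v'(x)})$, where $v(x),v'(x)$ index the vertices of the quiver associated to the two sheets at $x$. Since $\SS^p_a$ is a $\kk$-bimodule map that preserves the decomposition by vertices, the identification $T\cap L=T\cap L_{p_a}$ matches the local coefficient assignments coming from $\rho\circ\SS^p_a$ and from $\rho$ respectively.

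The core of the proof lies in matching the $A_\infty$-operations. Each holomorphic polygon contributing to an operation on $(L_{p_a},\rho)$ converges, as $R\to\infty$ by SFT compactness (the analogue of Lemma~\ref{lem:sm_prelim} for Floer polygons), to a two-level building whose upper level is a polygon with boundary on $L_\rho^\circ$ carrying additional negative boundary punctures at degree $0$ Reeb chords of the Hopf link at $\xi$, and whose lower level is a disjoint union of rigid disks with boundary on $F_p$ capping these chords. By Lemma~\ref{l:surgereddisks}$(b)$, each $p$-chord admits a unique capping whose boundary class is $x$, while each $q$-chord admits exactly two cappings with boundary classes $x^{-1}$ and $x^{-1}y$ that contribute with opposite orientation signs. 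Applying $\rho$ to the resulting boundary word, each $p$-puncture multiplies the operation by $\rho(x)=\rho(\SS^p_a(p))$ and each $q$-puncture multiplies it by $\rho(x^{-1})-\rho(x^{-1}y)=\rho(\SS^p_a(q))$, exactly by the formulas in Section~\ref{subsec:surg_gen}. Passing through the upper-level polygon, this is the same as first replacing each Reeb chord via $\SS^p_a$ and then applying $\rho$, i.e., applying $\rho\circ\SS^p_a$ to the boundary word of a polygon with boundary on $L_\rho^\circ$. Comparing with the definition of the $A_\infty$-operations for $(L,\rho\circ\SS^p_a)$, which involve precisely the same upper-level polygons, these match term by term; hence the two objects are quasi-isomorphic in $\F(X)$.

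The main obstacle, as with Proposition~\ref{prp:surg}, is the sign and orientation analysis showing that the two $q$-cappings on $F_p$ indeed contribute with opposite signs, and that this minus sign is controlled precisely by the choice of spin structure $s^*$ that \emph{extends} over the surgery disk. Concretely, the two rigid flow trees of Figure~\ref{fig:trees} computing the $q$-disks on $F_p$ differ by a loop encircling the core circle of the surgery cylinder, so their spin-induced signs differ by the holonomy of $s^*$ around that loop; the stated choice of $s^*$ forces this holonomy to be $-1$ and reproduces the formula $q\mapsto x^{-1}(1-y)$ of $\SS^p_a$. (The alternate spin structure that does not extend corresponds to the substitution $y\mapsto -y$, as in Proposition~\ref{prp:surg}.) The remaining analytic steps -- uniformity of SFT compactness across all polygon moduli spaces and the standard gluing giving a bijection between two-level buildings and strips for large $R$ -- are routine adaptations of the machinery developed in Sections~\ref{sec:corners}--\ref{sec:monotone}.
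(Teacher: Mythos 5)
Your proposal is correct and takes essentially the same approach as the paper's proof: compare $CF^*(T,-)$ for an arbitrary test object $T$ using the SFT stretching of Section~\ref{sec:SmoothingFormula} (Lemmas~\ref{lem:sm_prelim} and~\ref{l:surgereddisks}), then conclude via Yoneda. The paper's proof is considerably terser; your version fills in the term-by-term matching of $A_\infty$-operations, the identification of intersection points, and the spin/orientation bookkeeping, but the Yoneda step at the end should be invoked explicitly rather than left implicit in ``hence the two objects are quasi-isomorphic.''
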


\begin{rmk}
	\label{rmk:rep_pull}
	Any representation of $\B(Q)$ of form $\rho\circ \SS^p_a$ has the property that the image of
	$p_a\in \B(Q)$ (defined as in Subsection~\ref{sec:preprojective}) is invertible in $\mathrm{End}(\C^{d_{s(a)}})$. In particular not all representations  $\B(Q)$ can be obtained this way. In other words, the surgered Lagrangians support less objects in $\F(X)$ than the more singular one.
\end{rmk}

\begin{proof}
{
The goal is to construct a quasi-isomorphism
$$e_{p_a} \in CF((L,\rho\circ \SS^p_a,\mathbf{b},s),( L_{p_a},\rho,\mathbf{b},s^*))$$
for suitable data. We will pass to the category of twisted complexes, and translate the problem into finding a bounding cochain on the direct sum $L[1] \oplus L_{p_a}$ of objects that represents the cone $C(e_{p_a})$, where this cone moreover becomes acyclic.

We start by considering the acyclic twisted complex which represents the cone $C(e)$ of the identity morphism
$$ e \colon (L,\rho\circ \SS^p_a,\mathbf{b},s) \to (L,\rho\circ \SS^p_a,\mathbf{b},s).$$
For convenience we can perform a Hamiltonian perturbation $L'$ of the target Lagrangian $L$ to yield an acyclic cone $C(e')$ given as a bounding cochain on $L[1] \oplus L'$.

Recall that the Floer complex of $L[1] \oplus L'$ can be tautologically identified with the Floer complex of the immersed union $L \cup L'$ (although this is not a strongly exact immersion). Since $L'$ is strongly exact we can perform a Lagrange surgery, and the analysis from Section~\ref{sec:CurvesOnFilling}. This readily gives an acyclic cone $C(e_{p_a})$ given as a bounding cochain on the complex $L[1] \oplus (L')_{p_a}$, i.e.~where the surgery has been applied to one of the two involved Lagrangians in the direct sum. Since $L_{p_a}$ is Hamiltonian isotopic to $(L')_{p_a}$ the result follows.}
\end{proof}

\section{Floer-essential double point}
\label{sec:seidel}
Consider the exact symplectic manifold
$$
X=\C^2\setminus\{xy=1\}
$$
where $x,y\in \C^2$. It has the standard Liouville structure induced by the Fubini-Study symplectic form on $\CP^2$ and contains the standard Lagrangian sphere with one double point, 
$$L\subset X,$$
which is the Lagrangian skeleton of $X$; see e.g.~\cite{SeiBook13,Riz17}. It is known that the manifold $X$ is self-mirror over $\C$. Considered as an object of the compact exact $\C$-linear Fukaya category $\F(X)$, the immersed sphere $L$ corresponds to some point $z_0$ in the mirror. It was observed by Seidel \cite{SeiBook13} that all other points of the mirror are realized by the exact Clifford or Chekanov torus in $X$ equipped with a $\C^*$-local system, whereas $z_0$ is not realized by a local system on either of those two tori. This led to the conjecture that $z_0$ cannot be realized by any exact Lagrangian torus in $M$ with a local system. 

This conjecture follows from the recent classification result \cite{Riz17}. In this  section we give a different proof of Seidel's conjecture.

\begin{thm}\label{th:seidel_q}
	Let $\bT=(T,\rho)$ be an exact Lagrangian torus $T\subset M$ equipped with a $\C^*$-local system $\rho$. Then $HF^*(L,\bT)=0$.
\end{thm}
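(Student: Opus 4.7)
The plan is to interpret $L$, as an object of $\F(X)$, as a ``skyscraper-type'' brane supported at a particular point of the affine variety
\[
\mathrm{Spec}(A),\qquad A\ \coloneqq\ H^0CE(\Lambda;\C[\pi_1(L^\circ)])\ =\ \C[p,q,t^{\pm 1}]/(1-t-pq),
\]
and then to use the Lagrangian surgery formula together with the classification of exact Lagrangian tori in $X$ to show that no exact torus brane corresponds to this particular point. The theorem then follows because morphism spaces in $\F(X)$ between branes supported at different points of $\mathrm{Spec}(A)$ must vanish.

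As in Remark~\ref{rmk:floer_immersed}, the immersed sphere $L$ has a canonical local system $\rho_0\colon A\to\C$ with $\rho_0(p)=\rho_0(q)=0$ and $\rho_0(t)=1$: it corresponds to running immersed Floer theory with no corner contributions at the self-intersection. Since $L$ is a Lagrangian skeleton of the Weinstein manifold $X$, one has a canonical isomorphism $SH^0(X)\cong A$, and $A$ acts on $HF^*(\mathcal{E}_1,\mathcal{E}_2)$ for any two objects $\mathcal{E}_1,\mathcal{E}_2\in \F(X)$ via the closed-open maps on each factor, the left and right actions being compatible on cohomology. On $HF^*(L,\bT)$ the left action of $A$ factors through the character $\rho_0$, while the right action factors through a character $\chi_\bT\colon A\to \C$ attached to $\bT$.

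For $\bT=(T,\rho)$ with $T$ an exact Lagrangian torus, I would argue that $\chi_\bT\neq \rho_0$. The classification theorem of \cite{Riz17} shows that any exact Lagrangian torus in $X$ is Hamiltonian isotopic to one of the two zero-area resolutions $L_p$ or $L_q$ of the self-intersection of $L$. The Lagrangian surgery formula (Proposition~\ref{prp:surg} and its Fukaya-theoretic version from Section~\ref{sec:preprojective}) then yields a quasi-isomorphism $(T,\rho)\simeq (L,\rho\circ \SS^p)$ or $(T,\rho)\simeq(L,\rho\circ\SS^q)$; in either case, since $\SS^p$ sends $p$ to the invertible variable $x\in\C[x^{\pm 1},y^{\pm 1}]$ (and analogously $\SS^q$ sends $q$ to $x$), and $\rho$ is a character of the torus local-system ring, $\chi_\bT$ must send $p$ (respectively $q$) to a nonzero complex number. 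In particular, $\chi_\bT$ disagrees with $\rho_0$ on one of the generators $p,q$.

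The agreement of the left and right $A$-actions on $HF^*(L,\bT)$ then implies that for every $a\in A$ and every $\alpha\in HF^*(L,\bT)$ one has $\rho_0(a)\cdot\alpha=\chi_\bT(a)\cdot\alpha$, so choosing $a\in \{p,q\}$ for which the two characters disagree forces $\alpha=0$, proving $HF^*(L,\bT)=0$. The main obstacle in this plan is the appeal to the external classification of exact Lagrangian tori in $X$ from \cite{Riz17}; an alternative self-contained route would be to show directly that $\chi_\bT(t)\neq 1$ for any exact torus brane $\bT$, which would give $\chi_\bT(p)\chi_\bT(q)=1-\chi_\bT(t)\neq 0$ from the relation $pq=1-t$ in $A$, but this appears to require an independent geometric computation of the $SH^0(X)$-action on an arbitrary exact torus brane and looks more delicate than the argument above.
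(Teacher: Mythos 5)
Your argument shares the same engine as the paper's: the module map property of $\CO$ equates the left and right $SH^0(X)$-actions on $HF^*(L,\bT)$, and since $\CO_L(u)=\CO_L(v)=0$ (this is your $\rho_0(p)=\rho_0(q)=0$, coming from Proposition~\ref{prp:co_wh} evaluated at the trivial local system), a nonzero class would force $U(\rho)=V(\rho)=0$, where $U,V$ are the symplectic potentials $\CO^{\rm string}_T(u),\CO^{\rm string}_T(v)$ of the torus. The divergence is in how this common vanishing is ruled out. You invoke the classification of exact Lagrangian tori from \cite{Riz17} together with the surgery formula to conclude $\chi_\bT(p)$ or $\chi_\bT(q)$ is a unit — this is valid, but it is precisely the dependence the paper sets out to eliminate: the section opens by noting the conjecture ``follows from the recent classification result \cite{Riz17}'' and announces that a \emph{different} proof will be given. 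The paper's self-contained completion is algebro-geometric: because $\CO^{\rm string}_T$ is an algebra map and $uv-1$ is invertible in $SH^0(X)$, the Laurent polynomial $UV-1$ must be a unit of $\C[x^{\pm1},y^{\pm1}]$, hence a monomial $cx^ay^b$; if $UV\not\equiv 0$ then $\{UV=0\}$ would be singular at $\rho$ yet equal to the smooth curve $\{x^ay^b=1\}$, a contradiction, while if (up to symmetry) $U\equiv 0$ then identifying $U$ with the Landau--Ginzburg potential of $T\subset\C^2$ contradicts Hamiltonian displaceability. Your closing remark correctly identifies that the self-contained route is equivalent to showing $\chi_\bT(t)\neq 1$ (via $pq=1-t$, this is exactly the statement that $U(\rho),V(\rho)$ cannot both vanish), and the Laurent polynomial argument is how the paper delivers it without any input from \cite{Riz17}. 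So your proof is sound but reduces the theorem to an external result the paper deliberately avoids; what the paper's route buys is independence from, and in principle an alternative proof of one corollary of, the classification of exact tori.
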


The Floer cohomology in question is computed in the classical sense: the Floer differential counts holomorphic strips between $T$ and $L$ without corners on $L$. (In the language of the previous section, this corresponds to equipping $L$ with the `trivial' local system $p\mapsto 0$, $q\mapsto 0$, $t\mapsto 1$.) The torus $T$ is not assumed to be of vanishing Maslov class, so the cohomology $HF^*(L,\bT)$ is generally $\Z/2$-graded.

\subsection{Closed-open maps}
Let $X$ be a Liouville domain and $\bT\subset X$ be an exact Lagrangian submanifold which is oriented, spin, and equipped with a $\C^*$-local system $\rho$. The closed-open map  is a linear map
$$
\CO_\bT\co SH^*(X)\to HF^*(\bT,\bT)\cong H^*(T).
$$
Recall that $\CO$ counts appropriately defined pseudo-holomorphic maps from the punctured unit disk
\begin{equation}
\label{eq:CO_domain}
u\co (D\setminus\{0\},\del D)\to (X,T)
\end{equation}
where the puncture $0$ is asymptotic to the input orbit, and $1\in\del D$ is the output marked point. The counts are weighted using the local system $\rho$.
Restricting to degree~0, we get
\begin{equation}
\label{eq:CO_deg0}
\CO_\bT\co SH^0(X)\to HF^0(\bT,\bT)\cong \C\cdot 1_{\bT}.
\end{equation}
Here $1_\bT$ is the unit.
This map counts rigid pseudo-holomorphic maps as in \eqref{eq:CO_domain} with the additional point constraint, 
$u(1)=\zeta$, where $\zeta\in T$ is a fixed point.

It is convenient to define the universal closed-open map:
\begin{equation}
\label{eq:CO_Zpi1}
\CO^{\rm string}_T\co SH^0(X)\to\C[\pi_1(T)],
\end{equation}
counting the same curves as in \eqref{eq:CO_deg0}, but remembering the boundary homotopy class 
$[u(\del D)]\in \C[\pi_1(T)]$. Note that this version of the closed-open map is a map of algebras.

The closed-open map for a fixed representation $\rho$ is then naturally expressed as
$$
\CO_\bT=\rho(\CO^{\rm string}_T)\cdot 1_\bT.
$$
Suppose that $T$ is an $n$-torus, then for every $u\in SH^0(X)$ one can write $\CO^{\rm string}_T(u)$ as a Laurent polynomial
$$
\CO^{\rm string}_T(u)\in\C[x_1^{\pm 1},\ldots x_n^{\pm 1}]
$$
which we call the \emph{symplectic potential} of $u$.
By viewing a local system $\rho$ on $T$ as a point $\rho\in (\C^*)^n$, represented by its monodromies in a fixed basis for $H_1(T;\Z)$, the previous relation translates into
$$
\CO^{\rm string}_T(u)(\rho)=\CO_\bT(u)\cdot 1_\bT
$$
where $\CO^{\rm string}_T(u)(\rho)$ should be interpreted as evaluating the Laurent polynomial at the point $\rho$.

\subsection{The symplectic potential of the Whitney sphere}
Let $X$ and $L$ be as in Theorem~\ref{th:seidel_q}. In direct analogy to the refined disk potential, define the refined closed-open map 
$$
\CO^{\rm string}_{L}\co SH^0(X)\to \C[p,q,t^{\pm 1}]/(pq=1-t).
$$
It counts curves as in \eqref{eq:CO_domain} with any number of $p$- and $q$-corners on $L$, weighted by their boundary classes $[\del u]$ in the capped Chekanov--Eliashberg algebra.
Considering $L$ as an object of $\F(X)$ `classically' amounts to setting $p=q=0$, $t=1$ in the symplectic potential:
$$
\CO_L=\CO^{\rm string}_L|_{p=q=0,\, t=1}\cdot 1_L,\quad \CO_L\co SH^0(X)\to HF^*(L,L)\cong \C\cdot 1_L. 
$$
The isomorphism
$$SH^0(X)\cong \C[u,v,s^{\pm 1}]/(uv=1+s)$$
was established in \cite{Pa13}. In the following proposition we extend this result by showing that the refined closed-open map to the skeleton $L$ of $X$ is an isomorphism.
\begin{prp}
	\label{prp:co_wh}
	Using the algebra isomorphism
	$$SH^0(X)\cong \C[u,v,s^{\pm 1}]/(uv=1+s),$$
	the map $\CO_{ L}^{string}$ is given by
	$$
	u\mapsto p,\quad v\mapsto q,\quad s\mapsto -t.
	$$ 
	In particular, for the 	`classical' closed-open map one has 
	$$
	\CO_L(u)=\CO_L(v)=0.
	$$
\end{prp}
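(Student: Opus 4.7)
The plan is to first observe that $\CO_L^{\rm string}$ is an algebra homomorphism, and then to compute it on the three generators $u, v, s$ of $SH^0(X) \cong \C[u,v,s^{\pm 1}]/(uv - 1 - s)$. The algebra property follows from the standard pair-of-pants degeneration: inserting a thin pair-of-pants at the interior puncture splits an $\CO_L^{\rm string}$-curve with two interior inputs into two $\CO_L^{\rm string}$-curves, and since the target $\C[p,q,t^{\pm 1}]/(1-t-pq)$ is commutative the resulting map respects products. Since $\CO_L^{\rm string}$ is unital, once the images of $u,v,s$ are known the map is determined, provided the image of the relation $uv - 1 - s$ vanishes.

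To compute the images, I would first identify the generators geometrically. Since $L$ is the Liouville skeleton of $X$, the manifold $X$ is Liouville equivalent to a Weinstein neighborhood of $L$ (a self-plumbing of $T^*S^2$). By the surgery formula invoked in Section~\ref{sec:preprojective}, the wrapped Floer endomorphism algebra $HW^0(T^*_\zeta L)$ is isomorphic to $H^0 CE(\Lambda;\C[\pi_1(L^\circ)]) = \C[p,q,t^{\pm 1}]/(1-t-pq)$, and under this identification the refined closed-open map $\CO_L^{\rm string}$ agrees with the natural acceleration map $SH^0(X) \to HW^0(T^*_\zeta L)$. The images of $u,v,s$ are then read off by computing the map on specific cycle representatives: $u$ and $v$ correspond to the two short Reeb orbits bifurcating from the double point after a small Morsification, while $s$ corresponds to the integer-action orbit linking the core circle of $L^\circ \cong S^1\times\R$.

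The actual disk count proceeds by an SFT neck-stretching at the double point, as in Section~\ref{sec:CurvesOnFilling}. Each rigid $\CO_L^{\rm string}$-curve with one interior input and a boundary marked point at $\zeta$ breaks into a top-level punctured disk in $X^\circ$ with boundary on $L^\circ_\rho$ and Reeb chord asymptotics in $\Lambda_{\rm Ho}$, glued to a bottom-level disk on the filling $F_{\rm imm}$ described in Lemma~\ref{l:surgereddisks}. In the $u$-case one finds exactly one rigid configuration, a disk with a single $p$-corner producing the word $p$; the $v$-case is analogous and produces $q$; in the $s$-case, the curve has no corners but its boundary wraps the core circle of $L^\circ$ once, yielding $\pm t$, with the sign fixed to $-t$ by the chosen spin structure as in Proposition~\ref{prp:surg}. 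The assignments are consistent with the defining relations since $pq = 1 - t = 1 + (-t)$, matching $uv = 1 + s$. Setting $p = q = 0$, $t = 1$ in the formulas immediately yields $\CO_L(u) = \CO_L(v) = 0$, giving the corollary.

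The main obstacle I anticipate is step two, namely the unambiguous identification of $u, v, s \in SH^0(X)$ with specific cycle representatives amenable to direct computation (this is where Pascaleff's isomorphism has to be made concrete), together with the orientation analysis that fixes the sign $s\mapsto -t$. A subsidiary but important technical point is verifying the pair-of-pants argument for multiplicativity in the presence of boundary corners, which requires invoking the transversality and Gromov/SFT compactness package developed in Sections~\ref{sec:corners}--\ref{sec:monotone}.
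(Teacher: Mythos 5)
Your approach is genuinely different from the paper's, and the contrast is instructive. You propose a \emph{direct} SFT computation: morsify, identify cycle representatives for $u,v,s$, and enumerate rigid punctured disks with boundary on $L^\circ_\rho$ by neck-stretching at the double point. The paper instead works \emph{backwards from the smooth torus}: it computes $\CO^{\rm string}_T$ for the exact Clifford torus $T$ (a resolution of $L$) by interpreting $u,v$ as Borman--Sheridan classes for the two partial compactifications $X\subset\C^2$ and $X\subset\CP^2\setminus\{xy=1\}$, so the disk counts reduce to known Landau--Ginzburg potentials on smooth monotone tori (Example~\ref{ex:co}). Then the surgery formula of Proposition~\ref{prp:surg}, adapted to closed-open curves, gives $\CO^{\rm string}_T(\cdot)=\SS^p(\CO^{\rm string}_L(\cdot))$; since $\SS^p$ is injective (it is localization at $p$, cf.\ Remark~\ref{rmk:loc}), this determines $\CO^{\rm string}_L$ uniquely without ever enumerating disks with corners on the immersed sphere.

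The gap in your version is precisely the step you flag as the ``main obstacle,'' and it is a real one, not just a technical inconvenience. Pascaleff's isomorphism $SH^0(X)\cong\C[u,v,s^{\pm1}]/(uv=1+s)$ gives $u,v,s$ as abstract generators; you need concrete geometric representatives before the SFT stretching can produce a count, and your proposal asserts the outcome (``one finds exactly one rigid configuration, a disk with a single $p$-corner'') rather than deriving it. Without the Borman--Sheridan identification or some substitute, there is no mechanism in your argument that actually pins down these counts. The paper's detour through the smooth Clifford torus is exactly the device that replaces this missing enumeration with a known one.

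Two smaller remarks. First, once you have established multiplicativity (which you do, correctly, via pair-of-pants degeneration) and computed $u\mapsto p$, $v\mapsto q$, the image of $s$ is forced: $s=uv-1\mapsto pq-1=-t$. Your separate disk count for $s$ is therefore redundant, and in particular the sign $-t$ does not require an independent orientation analysis --- it is an algebraic consequence of the relation in $SH^0(X)$. Second, the final assertion $\CO_L(u)=\CO_L(v)=0$ is indeed just the specialization $p=q=0$, $t=1$, exactly as you say; that part is fine.
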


\begin{proof}
	The surgery formula  for the potential in Proposition~\ref{prp:surg} can be adapted to the present case. Example~\ref{ex:co} below explains the computation of the closed-open map to the Clifford torus, and the surgery formula then recovers the closed-open map to the Whitney sphere. 
\end{proof}

\begin{rmk}
	Alternatively, one can compute $SH^0(X)$ together with $\CO^{\rm string}_L$ using an extension of the Legendrian surgery formula of \cite{BEE}, compare with the discussion in Section~\ref{sec:dga}.
\end{rmk}

\begin{ex}
	\label{ex:co}
	If $T$ is the exact Clifford torus then
	$$
	\begin{array}{l}
	\CO^{string}_T(u)=(x+1)y,\\ \CO^{string}_T(v)=y^{-1}\in \C[x^{\pm 1},y^{\pm 1}].
	\end{array}	
	$$
	This closed-open map can be computed by considering partial compactifications of $X$ to $\C^2$ and $\CP^2\setminus\{xy=1\}$, and interpreting $u,v\in SH^0(X)$ as  Borman-Sheridan classes for the respective compactifications, see \cite{To17}. The formula for the Chekanov torus  differs by interchanging $x$ and $y$. 
\end{ex}

\subsection{Proof of Theorem~\ref{th:seidel_q}}

Let $\bT=(T,\rho)$ an exact torus with a local system from the statement. Let $u,v\in SH^0(X)$ be the elements from Proposition~\ref{prp:co_wh}. Consider the symplectic potentials
$$
\begin{array}{l}
U(x,y)=\CO^{\rm string}_T(u),\\ V(x,y)=\CO^{\rm string}_T(v)\in \C[x^{\pm 1}, y^{\pm 1}].
\end{array}
$$
Viewing the local system on $T$ as a point $\rho\in (\C^*)^2$,
one has
$$
\begin{array}{l}
\CO_\bT(u)=U(\rho)\cdot 1_\bT,\\ \CO_\bT(v)=V(\rho)\cdot 1_\bT.
\end{array}
$$
Assume that $HF^*(L,\bT)\neq 0$ and pick any non-zero element
$$
0\neq z\in HF^*(L,\bT). 
$$
Recall that
$$
\begin{array}{l}
\CO_L(u)=0,\\ \CO_L(v)=0.
\end{array}
$$
Using the module map property of $\CO$ with respect to the Floer products
$$
\begin{array}{l}
\mu^2\co HF^*(L,L)\otimes HF^*(L,\bT )\to HF^*(L,\bT),\\ \mu^2\co HF^*(L,\bT)\otimes HF^*(\bT,\bT )\to HF^*(L,\bT),
\end{array} 
$$
one argues that
$$
0=\mu^2(\CO_L(u),z)=\mu^2(z,\CO_\bT(u))=\mu^2(z,U(\rho)\cdot 1_\bT)=U(\rho)\cdot z.
$$
Analogously, $0=V(\rho)\cdot z$. Since $z\neq 0$, this leads to
$$
U(\rho)=V(\rho)=0.
$$

Now, as $\CO^{\rm string}_T$ is an algebra map and $uv-1$ is invertible in $SH^0(X)$, the function
$$
U(x,y)V(x,y)-1
$$
must be invertible in the Laurent polynomial ring $\C[x^{\pm 1},y^{\pm 1}]$. Since $U,V$ are themselves Laurent polynomials, this is only possible when $UV-1$ is a monomial, that is:
$$
U(x,y)V(x,y)-1=cx^ay^b,\quad a,b\in\Z.
$$
It includes the possibility of $UV=0$.

\begin{rmk}
	This property holds true for the exact Clifford torus, where $U=(x+1)y$ and $V=y^{-1}$.
\end{rmk}

Now consider the following two cases separately. For the first case, assume that neither $U$ nor $V$ vanish identically. Then the zero-locus 
$$
Z=\{x,y\co U(x,y)V(x,y)=0\}\subset (\C^*)^2
$$
is a curve which is singular at the point $\rho$ since $U(\rho)=V(\rho)=0$. This includes the case when $U\equiv V$, when the zero locus is still singular scheme-theoretically.
On the other hand,  
$$
Z=\{x^ay^b-1=0\}
$$
is evidently everywhere smooth, which is a contradiction.

The second case is that either $U(x,y)\equiv 0$ or $V(x,y)\equiv 0$. There is a non-compactly supported symplectomorphism of the Liouville completion of $X$ whose action on $SH^0(X)$ swaps $u$ with $v$, so one may assume that $U(x,y)\equiv 0$. But $u\in SH^0(X)$ is the Borman-Sheridan class corresponding to the partial compactification $X\subset \C^2$, therefore $U(x,y)$ is in fact the Landau-Ginzburg potential of the monotone Lagrangian torus $T\subset \CP^2$ by \cite{To17}. 

Since $U(x,y)\equiv 0$, the Floer cohomology of $T$ in $\C^2$ has to be non-zero with any local system. On the other hand, every Lagrangian submanifold of $\C^2$ is displaceable by Hamiltonian isotopies, which implies that $T$ must have vanishing Floer cohomology therein. This is once again a contradiction; it completes the proof of Theorem~\ref{th:seidel_q}.\qed

\section{Outlook}
\label{sec:outlook}
The study of refined disk potentials and Floer theory for monotone immersed Lagrangians  can be seen as a basic example of Floer theory for singular Lagrangians. Ongoing work is developing similar Floer theories for symplectic embeddings of Weinstein manifolds, which can be thought of as regular neighborhoods of their skeleta. As here,  refined potentials and Floer theory have coefficients in endomorphisms of the wrapped Fukaya category of the domain. 

\begin{ex}
	The Weinstein manifold $X'=\C^3\setminus\{x_1x_2x_3=1\}$ is important for mirror symmetry: it is a building block for SYZ fibrations. Its skeleton is the following singular Lagrangian $L$: a 3-torus with a 2-sub-torus collapsed to a point. The link $\Lambda$ of this singularity is a ``Hopf link'' of two Legendrian 2-tori in $Y=S^5$, and the cap is $L^\circ=T^2\times [0,1]$. A computation of the capped Chekanov--Eliashberg algebra should correctly recover the ring of functions on the mirror $\{uv=1+z+w\}\subset \C^2\times (\C^*)^2$, and provide enough Lagrangian branes supported on $L$ to fill in all points of the mirror.
\end{ex}

We next discuss the origin of the capped Chekanov--Eliashberg algebra by considering cotangent bundles. Let $M$ be a smooth manifold and fix a Morse function $f\colon M\to \R$. Denote by $F$ a cotangent fiber of $T^*M$. 

One can compute $CW^{\ast}(F)$ by cutting $M$ near the maximum $c_{\rm top}$ value of $f$. More precisely, if $c$ is a regular value just below $c_{\rm top}$, then $M_{c}=f^{-1}((-\infty,c])$ is compact manifold with boundary. Its cotangent bundle $T^{\ast}M_{c}$ with rounded corners is a subcritical Weinstein subdomain of $T^*M$. The (ideal) boundary of $T^*M_c$ contains the  Legendrian sphere $\Lambda_{c}$, and Legendrian surgery states that
\[ 
CW^{\ast}(F)\cong CE(\Lambda_{c}).
\]    
The same wrapped Floer complex is also quasi-isomorphic to the chain complex on the based loop space of $M$ with the Pontryagin product:
\[ 
CW^{\ast}(F)\cong C_{\ast}(\Omega M).
\]
One can think of this quasi-isomorphism as provided by not cutting $M$ at all or, paraphrased in a strange way, cutting below the minimum of $f$. It turns out that one can compute $CW^*(F)$ in a similar way by cutting $M$ at any regular value of $f$. If $b$ is a regular value, denote $M_{b}=f^{-1}((-\infty,b])$.  The subcritical domain $T^*M_b$ with rounded corners contains the Legendrian $\Lambda_{b}=\partial M_{b}$. Denoting $M^{b}=f^{-1}([b,\infty))$, one can show that
\[ 
CW^{\ast}(F)\cong CE_{C_{\ast}(\Omega M^{b})}(\Lambda_{b}),
\]  
where the right hand side is the Chekanov--Eliashberg algebra of $\Lambda_b$ with coefficients in chains of the based loop space of the cap $M^{b}$. Similar results hold for singular Lagrangians, and we have used this idea in our definition of the refined potential. 

{There is also another perspective on our refined potentials that can be explained in analogy with the manifold case. Consider the case when we cut below the minimum and compute $CW^{\ast}(F)$ as $C_{\ast}(\Omega M)$. One can also try to compute this 'perturbatively': use the co-algebra which is the dual of the Floer cohomology $A_{\infty}$-algebra of the zero section $M$, $CF^{\ast}(M)'$ and apply the reduced cobar construction to get $\Omega CF^{\ast}(M)'$. By Adams famous result $\Omega CF^{\ast}(M)'$ is quasi-isomorphic to $C_{\ast}(\Omega M)$ if $M$ is simply connected, if not we may view $\Omega CF^{\ast}(M)'$ as a perturbative approximation of $C_{\ast}(\Omega M)$. 
	
Similarly, if we consider the Weinstein manifold that is a regular neighborhood of a self plumbed sphere (a Whitney sphere) and let $F$ be a fiber disk, then our $CE$-algebra $\C[p,q,t^{\pm 1}]/(1-t-pq)$, by the surgery result, is quasi-isomorphic to  $CW^{\ast}(F)$, whereas the cobar on the dual of the usual immersed Floer cohomology algebra of the self-intersecting sphere is a perturbative approximation.}

\bibliography{Symp_bib}

\begin{thebibliography}{10}

\bibitem{Ab11}
M.~Abouzaid.
\newblock {A topological model for the Fukaya categories of plumbings}.
\newblock {\em J. Differential Geom.}, 87(1):1--80, 2011.

\bibitem{AJ10}
M~Akaho and D.~Joyce.
\newblock {Immersed {L}agrangian {F}loer theory}.
\newblock {\em J. Differential Geom.}, 86(3):381--500, 2010.

\bibitem{Al13}
G.~Alston.
\newblock {Floer cohomology of immersed Lagrangian spheres in smoothings of
  $A_N$ surfaces}.
\newblock {\em arXiv:1311.2327}, 2013.

\bibitem{AlBa14}
G.~Alston and E.~Bao.
\newblock {Exact, graded, immersed Lagrangians and Floer theory}.
\newblock {\em arXiv:1407.3871}, 2014.

\bibitem{AE}
J.~Asplund and T.~Ekholm.
\newblock {C}hekanov--{E}liashberg dg-algebras for singular {L}egendrians.
\newblock {\em In preparation}, 2020.

\bibitem{Au07}
D.~Auroux.
\newblock {Mirror symmetry and T-duality in the complement of an anticanonical
  divisor}.
\newblock {\em J. G{\"o}kova Geom. Topol.}, 1:51--91, 2007.

\bibitem{Au09}
D.~Auroux.
\newblock {Special Lagrangian fibrations, wall-crossing, and mirror symmetry}.
\newblock In {\em {Geometry, analysis, and algebraic geometry}}, volume~13 of
  {\em {Surveys in Differential Geometry}}, pages 1--47. Intl. Press, 2009.

\bibitem{Aur14}
D.~Auroux.
\newblock {A Beginner's Introduction to Fukaya Categories}.
\newblock In {\em {Contact and Symplectic Topology}}, volume~26 of {\em {Bolyai
  Soc. Math. Stud.}}, pages 85--136, 2014.

\bibitem{AGM01}
D.~Auroux, D.~Gayet, and J.-P. Mohsen.
\newblock {Symplectic hypersurfaces in the complement of an isotropic
  submanifold}.
\newblock {\em Math. Ann.}, 321(4):739--754, 2001.

\bibitem{BK16}
R.~Bezrukavnikov and M.~Kapranov.
\newblock {Microlocal sheaves and quiver varieties}.
\newblock {\em Ann. Fac. Sci. Toulouse Math. S\'er. 6}, 25(2-3):473--516, 2016.

\bibitem{BEE}
F.~Bourgeois, T.~Ekholm, and Y.~Eliashberg.
\newblock {Effect of {L}egendrian surgery}.
\newblock {\em Geom. Topol.}, 16(1):301--389, 2012.
\newblock With an appendix by Sheel Ganatra and Maksim Maydanskiy.

\bibitem{CompSFT03}
F.~Bourgeois, Y.~Eliashberg, H.~Hofer, K.~Wysocki, and E.~Zehnder.
\newblock {Compactness results in Symplectic Field Theory}.
\newblock {\em Geom. Topol.}, 7:799--888, 2003.

\bibitem{BCL18}
C.~Braun, J.~Chuang, and A.~Lazarev.
\newblock Derived localisation of algebras and modules.
\newblock {\em Adv. Math.}, 328:555--622, 2018.

\bibitem{CDGG18}
B.~{Chantraine}, G.~{Dimitroglou Rizell}, P.~{Ghiggini}, and R.~{Golovko}.
\newblock {Floer theory for Lagrangian cobordisms}.
\newblock {\em arXiv:1511.09471}, 2015.

\bibitem{Che02}
Y.~Chekanov.
\newblock Differential algebra of {L}egendrian links.
\newblock {\em Invent. Math.}, 150(3):441--483, 2002.

\bibitem{CEL10}
K.~Cieliebak, T.~Ekholm, and J.~Latschev.
\newblock {Compactness for holomorphic curves with switching {L}agrangian
  boundary conditions}.
\newblock {\em J. Symplectic Geom.}, 8(3):267--298, 2010.

\bibitem{CELN}
K.~Cieliebak, T.~Ekholm, J.~Latschev, and L.~Ng.
\newblock {Knot contact homology, string topology, and the cord algebra}.
\newblock {\em arXiv:1601.02167}.

\bibitem{CM14}
K.~Cieliebak and K.~Mohnke.
\newblock {Punctured holomorphic curves and {L}agrangian embeddings}.
\newblock {\em arXiv:1411.1870}, 2014.

\bibitem{CBS06}
W.~Crawley-Boevey and P.~Shaw.
\newblock {Multiplicative preprojective algebras, middle convolution and the
  Deligne-Simpson problem}.
\newblock {\em Adv. Math.}, 201(1):180--208, 2006.

\bibitem{DamianAudin}
M.~Damian.
\newblock Floer homology on the universal cover, {A}udin's conjecture and other
  constraints on {L}agrangian submanifolds.
\newblock {\em Comment. Math. Helv.}, 87(2):433--462, 2012.

\bibitem{Riz16}
G.~Dimitroglou~Rizell.
\newblock Legendrian ambient surgery and {L}egendrian contact homology.
\newblock {\em J. Symplectic Geom.}, 14(3):811--901, 2016.

\bibitem{Di16.2}
G.~{Dimitroglou Rizell}.
\newblock {Lifting pseudo-holomorphic polygons to the symplectisation of
  {$P\times\mathbb{R}$} and applications}.
\newblock {\em Quantum Topol.}, 7(1):29--105, 2016.

\bibitem{Riz17}
G.~Dimitroglou~Rizell.
\newblock {The classification of Lagrangians nearby the Whitney immersion}.
\newblock {\em arXiv:1712.01182}, 2017.

\bibitem{DGI16}
G.~{Dimitroglou Rizell}, E.~Goodman, and A.~Ivrii.
\newblock {Lagrangian isotopy of tori in {$S^2\times S^2$} and
  $\mathbb{C}P^2$}.
\newblock {\em Geom. Funct. Anal.}, 26(5):1297--1358, 2016.

\bibitem{Eflowtree}
T.~Ekholm.
\newblock Morse flow trees and {L}egendrian contact homology in 1-jet spaces.
\newblock {\em Geom. Topol.}, 11:1083--1224, 2007.

\bibitem{Ersft}
T.~Ekholm.
\newblock Rational symplectic field theory over {$\Bbb Z_2$} for exact
  {L}agrangian cobordisms.
\newblock {\em J. Eur. Math. Soc. (JEMS)}, 10(3):641--704, 2008.

\bibitem{EES}
T.~Ekholm, J.~Etnyre, and M.~Sullivan.
\newblock {The contact homology of {L}egendrian submanifolds in {${\mathbb
  R}^{2n+1}$}}.
\newblock {\em J. Differential Geom.}, 71(2):177--305, 2005.

\bibitem{EESPtimesR}
T.~Ekholm, J.~Etnyre, and M.~Sullivan.
\newblock Legendrian contact homology in {$P\times\Bbb R$}.
\newblock {\em Trans. Amer. Math. Soc.}, 359(7):3301--3335, 2007.

\bibitem{EESa}
T.~Ekholm, J.~B. Etnyre, and J.~M. Sabloff.
\newblock A duality exact sequence for {L}egendrian contact homology.
\newblock {\em Duke Math. J.}, 150(1):1--75, 2009.

\bibitem{EHK}
T.~Ekholm, K.~Honda, and T.~K{\'a}lm{\'a}n.
\newblock {Legendrian knots and exact {L}agrangian cobordisms}.
\newblock {\em J. Eur. Math. Soc.}, 18(11):2627--2689, 2016.

\bibitem{EL}
T.~Ekholm and Y.~Lekili.
\newblock {Duality between Lagrangian and Legendrian invariants}.
\newblock {\em arXiv:1701.01284}, 2017.
\newblock Preprint, arXiv:1701.01284.

\bibitem{EO17}
T.~Ekholm and A.~Oancea.
\newblock {Symplectic and contact differential graded algebras}.
\newblock {\em Geom. Topol.}, 21(4):2161--2230, 2017.

\bibitem{ES14}
T.~Ekholm and I.~Smith.
\newblock Exact {L}agrangian immersions with one double point revisited.
\newblock {\em Math. Ann.}, 358(1-2):195--240, 2014.

\bibitem{ESjams}
T.~Ekholm and I.~Smith.
\newblock Exact {L}agrangian immersions with a single double point.
\newblock {\em J. Amer. Math. Soc.}, 29(1):1--59, 2016.

\bibitem{EGH00}
Y.~Eliashberg, A.~Givental, and H.~Hofer.
\newblock {Introduction to Symplectic Field Theory}.
\newblock In {\em {Visions in Mathematics: GAFA 2000 Special volume, Part II}},
  pages 560--673, 2000.

\bibitem{ElGro98}
Y.~Eliashberg and M.~Gromov.
\newblock Lagrangian intersection theory: finite-dimensional approach.
\newblock In {\em Geometry of differential equations}, volume 186 of {\em Amer.
  Math. Soc. Transl. Ser. 2}, pages 27--118. Amer. Math. Soc., Providence, RI,
  1998.

\bibitem{EL17}
T.~Etg\"u and Y.~Lekili.
\newblock {Fukaya categories of plumbings and multiplicative preprojective
  algebras}.
\newblock {\em arXiv:1703.04515}, 2017.

\bibitem{EL17b}
T.~Etg\"u and Y.~Lekili.
\newblock {Koszul duality patterns in Floer theory}.
\newblock {\em Geom. Topol.}, 21:3313--3389, 2017.

\bibitem{Federer}
H.~Federer.
\newblock {\em Geometric measure theory}.
\newblock Die Grundlehren der mathematischen Wissenschaften, Band 153.
  Springer-Verlag New York Inc., New York, 1969.

\bibitem{Frobenius}
F.~G. Frobenius.
\newblock {\em {\"U}ber die Markoffschen Zhalen, in Gesammelte {A}bhandlungen
  {III}}.
\newblock Springer-Verlag, Berlin-New York, 1968.

\bibitem{FO306}
K.~Fukaya, Y.-G. Oh, H.~Ohta, and K.~Ono.
\newblock {Chapter 10 of the preprint version of K. Fukaya, Y.-G. Oh, H. Ohta
  and K. Ono, Lagrangian intersection Floer theory: Anomaly and Obstruction}.
\newblock 2006.

\bibitem{FO3Book}
K.~Fukaya, Y.-G. Oh, H.~Ohta, and K.~Ono.
\newblock {\em {Lagrangian Intersection Floer Theory: Anomaly and
  Obstruction}}, volume~46 of {\em {Stud. Adv. Math.}}
\newblock American Mathematical Society, International Press, 2010.

\bibitem{GU12}
S.~Galkin and A.~Usnich.
\newblock {Mutations of potentials}.
\newblock {\em Preprint IPMU 10-0100}, 2012.

\bibitem{GPS17b}
S.~Ganatra, J.~Pardon, and V.~Shende.
\newblock In preparation.
\newblock 2017.

\bibitem{Ge08}
H.~Geiges.
\newblock {\em An introduction to contact topology}, volume 109 of {\em
  Cambridge Studies in Advanced Mathematics}.
\newblock Cambridge University Press, Cambridge, 2008.

\bibitem{Gr85}
M.~Gromov.
\newblock {Pseudo holomorphic curves in symplectic manifolds}.
\newblock {\em Invent. Math.}, 82:307--347, 1985.

\bibitem{Ha15}
L.~Haug.
\newblock {Lagrangian antisurgery}.
\newblock {\em arXiv:1511.05052}, 2015.

\bibitem{KaplanSchedler}
D.~Kaplan and T.~Schedler.
\newblock The 2-{C}alabi-{Y}au property for multiplicative preprojective
  algebras.
\newblock {\em arXiv:1905.12025}, 2019.

\bibitem{Kon17}
M.~Konstantinov.
\newblock {Higher rank local systems in Lagrangian Floer theory}.
\newblock {\em arXiv:1701.03624}, 2017.

\bibitem{La00}
L.~Lazzarini.
\newblock {Existence of a somewhere injective pseudo-holomorphic disc}.
\newblock {\em Geom. Funct. Anal.}, 10(4):829--862, 2000.

\bibitem{KoszulEquivalences}
D.~M. Lu, J.~H. Palmieri, Q.~S. Wu, and J.~J. Zhang.
\newblock Koszul equivalences in {$A_\infty$}-algebras.
\newblock {\em New York J. Math.}, 14:325--378, 2008.

\bibitem{Pa13}
J.~Pascaleff.
\newblock {On the symplectic cohomology of log Calabi-Yau surfaces}.
\newblock {\em arXiv:1304.5298}, 2013.

\bibitem{PT17}
J.~Pascaleff and D.~Tonkonog.
\newblock {The wall-crossing formula and Lagrangian mutations}.
\newblock {\em arXiv:1711.03209}, 2017.

\bibitem{Pol91}
L.~Polterovich.
\newblock The surgery of {L}agrange submanifolds.
\newblock {\em Geom. Funct. Anal.}, 1(2):198--210, 1991.

\bibitem{Sei03_LES}
P.~{S}eidel.
\newblock {A long exact sequence for symplectic Floer cohomology}.
\newblock {\em Topology}, 42:1003--1063, 2003.

\bibitem{SeiBook08}
P.~{S}eidel.
\newblock {\em {Fukaya Categories and Picard-Lefschetz Theory}}.
\newblock European Mathematical Society, Zurich, 2008.

\bibitem{Seidel}
P.~Seidel.
\newblock {\em Fukaya categories and {P}icard-{L}efschetz theory}.
\newblock Zurich Lectures in Advanced Mathematics. European Mathematical
  Society (EMS), Z\"urich, 2008.

\bibitem{SeiBook13}
P.~Seidel.
\newblock {\em {Lectures on categorical dynamics}}.
\newblock Author's website, 2013.

\bibitem{Sei16}
P.~Seidel.
\newblock {Fukaya $A_\infty$-structures associated to Lefschetz fibrations.
  III}.
\newblock {\em arXiv:1608.04012}, 2016.

\bibitem{STW15}
V.~Shende, D.~Treumann, and H.~Williams.
\newblock {On the combinatorics of exact Lagrangian surfaces}.
\newblock {\em arXiv:1603.07449}, 2016.

\bibitem{She11}
N.~Sheridan.
\newblock {On the homological mirror symmetry conjecture for pairs of pants}.
\newblock {\em J. Differential Geom.}, 29(2):271--367, 2011.

\bibitem{She13}
N.~Sheridan.
\newblock {On the Fukaya category of a Fano hypersurface in projective space}.
\newblock {\em Publ. Math. Inst. Hautes {\'E}tudes Sci.}, 2016.

\bibitem{Shevchishin}
V.~V. Shevchishin.
\newblock Lagrangian embeddings of the {K}lein bottle and the combinatorial
  properties of mapping class groups.
\newblock {\em Izv. Ross. Akad. Nauk Ser. Mat.}, 73(4):153--224, 2009.

\bibitem{ST05}
B.~Siebert and G.~Tian.
\newblock On the holomorphicity of genus two {L}efschetz fibrations.
\newblock {\em Ann. of Math. (2)}, 161(2):959--1020, 2005.

\bibitem{Simon}
L.~Simon.
\newblock {\em {Lectures on geometric measure theory}}, volume~3 of {\em
  {Proceedings of the Centre for Mathematical Analysis, Australian National
  University}}.
\newblock Australian National University, Centre for Mathematical Analysis,
  Canberra, 1983.

\bibitem{To17}
D.~Tonkonog.
\newblock {From symplectic cohomology to Lagrangian enumerative geometry}.
\newblock {\em arXiv:1711.03292}, 2017.

\bibitem{Va17}
U.~Varolgunes.
\newblock {On the equatorial Dehn twist of a Lagrangian nodal sphere}.
\newblock {\em arXiv:1612.00354}, 2016.

\bibitem{Vi13}
R.~Vianna.
\newblock {On exotic Lagrangian tori in $\mathbb{C}P^2$}.
\newblock {\em Geom. Topol.}, 18:2419--2476, 2014.

\bibitem{Vi14}
R.~Vianna.
\newblock {Infinitely many exotic monotone Lagrangian tori in $\mathbb{C}P^2$}.
\newblock {\em J. Topol.}, 9(2):535--551, 2016.

\bibitem{Vi16}
R.~Vianna.
\newblock {Infinitely many monotone Lagrangian tori in del Pezzo surfaces}.
\newblock {\em arXiv:1602.03356}, 2016.

\end{thebibliography}
\bibliographystyle{plain}

\end{document}